\newcommand{\D}{\mathsf{D}}
\newcommand{\Db}{\D^{\mathsf{b}}}
\newcommand{\Dsg}{\D_{\mathsf{sg}}}
\newcommand{\Dcsg}{\D_{\mathsf{csg}}}
\newcommand{\End}{\mathsf{End}}
\newcommand{\iEnd}{\mathcal{E}\mathit{nd}}
\newcommand{\Ext}{\mathsf{Ext}}
\newcommand{\HH}{H\!H}
\newcommand{\Hom}{\mathsf{Hom}}
\newcommand{\iHom}{\mathcal{H}\mathit{om}}
\newcommand{\ind}{\mathsf{ind}\,}
\newcommand{\phat}{{}^{^\wedge}_p}
\newcommand{\Proj}{\mathsf{Proj}}
\newcommand{\Thick}{\mathsf{Thick}}
\newcommand{\bB}{\mathbb B}
\newcommand{\bZ}{\mathbb Z}
\newcommand{\cE}{\mathcal E}
\newcommand{\cH}{\mathcal H}
\newcommand{\cT}{\mathcal{T}}
\newcommand{\fa}{\mathfrak{a}}
\newcommand{\sfT}{\mathsf{T}}
\renewcommand{\le}{\leqslant}
\renewcommand{\ge}{\geqslant}
\numberwithin{equation}{section}  
\theoremstyle{plain}
\newtheorem{lemma}[equation]{Lemma}
\newtheorem{theorem}[equation]{Theorem}
\newtheorem{proposition}[equation]{Proposition}
\newtheorem{corollary}[equation]{Corollary}
\theoremstyle{definition}
\newtheorem{definition}[equation]{Definition}
\newtheorem{example}[equation]{Example}
\theoremstyle{remark} 
\newtheorem{remark}[equation]{Remark}
\author{Dave Benson}
\address{Institute of Mathematics, Fraser Noble Building, University of Aberdeen, Aberdeen
  AB24 3UE, United Kingdom}
\author{John Greenlees} 
\address{Mathematics Institute, Zeeman Building, University of
  Warwick, Coventry CV4 7AL, United Kingdom}
\title{The singularity and cosingularity categories of $C^*BG$ for
  groups with cyclic Sylow $p$-subgroups.}
\begin{document}

\begin{abstract}
We construct a differential graded algebra (DGA) modelling certain $A_\infty$ algebras
associated with a finite group $G$ with cyclic Sylow subgroups, namely
$H^*BG$ and $H_*\Omega BG\phat$. We use our
construction to investigate the singularity and cosingularity
categories of these algebras. We give a complete classification of the
indecomposables in these categories, and describe the
Auslander--Reiten quiver. The theory applies to Brauer tree algebras
in arbitrary characteristic, and we end with an example in
characteristic zero coming from the Hecke algebras of symmetric groups.
\end{abstract}

\maketitle

\section{Introduction}
Our purpose is to study the singularity categories of certain
$A_\infty$ algebras over a field $k$.  
We were led to these examples from the 
representation theory in characteristic $p$ of finite groups with 
cyclic Sylow $p$-subgroups, but our earlier
work~\cite{Benson/Greenlees:bg8}, on which we build, showed these examples were members
of a more general family of examples: the general case illuminates
those we first considered, and the other examples also occur
elsewhere. 

In fact the examples occur in Koszul dual pairs $A$ and $B$. The BGG
correspondence shows that it is illuminating to consider both members
of the pair together: the classical example occurs with $A$ an
exterior algebra and $B$ the Koszul dual polynomial algebra. The
singularity category of $A$ is equivalent to the cosingularity
category of $B$, which by a theorem of Serre is the  bounded derived
category of quasicoherent sheaves on $\Proj (B)$. Since $A$ is finite
dimensional, its cosingularity category is trivial; since $B$ is regular, its
singularity category is trivial. In this case both $A$ and $B$ are
formal as $k$-algebras.

We consider here a family of the next simplest cases consisting of a non-formal $A_\infty$ 
$k$-algebra $A$, usually with homology 
$$H_*(A) =k[\tau]\otimes \Lambda (\xi)$$
where $\tau$ has even degree $2b$ and $\xi$ has odd degree $2a-1$.
The family of examples we study is determined by $a, b$ and two
further parameters $h, \ell \ge 2$ related by $ah-b\ell=1$.
The parameter 
$h$ is the length of the shortest non-trivial Massey product 
(when $h=2$ the
homology ring is a little different to that above, since
$\xi^2=-\tau^\ell$).  We give a full description of $A$ in Section \ref{se:A}. 
It is shown in \cite{Greenlees/Stevenson:2020a} that the BGG 
correspondence extends to a more general $A_\infty$ context, and it is
again natural to consider the Koszul dual $B$. In fact $B$ is of
exactly the same form as $A$ but with different degrees, 
and the parameters $h$ and $\ell$
exchanged. In most cases it again has homology of  form
$$H_*(B) =\Lambda (t) \otimes k[x]$$
 where $x$ is of even degree $-2a$ and $t$ is of odd degree $-2b-1$. 
 The parameter 
 $\ell$ is the length of the shortest non-trivial Massey product
 (when $\ell=2$ the homology is a little different to that above, since
 $t^2=-x^h$).  We give a full description of $B$ in Section \ref{se:B}. 
In this case both $A$ and $B$ have singularity categories that are
 non-trivial and we are able to give a complete description: they each have finitely many
 indecomposable objects and we describe their Auslander--Reiten
 quivers. This behaviour is rather special. In general, even when the singularity
category of $A$ has finitely many indecomposables, the singularity
category of $B$ can have infinitely many. The behaviour of $A$ and $B$
is also quite different
to the behaviour of the formal algebras $H_*(A)$ and $H_*(B)$, whose
singularity and cosingularity categories all have infinitely many indecomposable objects.

Our first task  is to describe a small and explicit DG algebra $Q$ in the same quasi-isomorphism 
class as $A$, with some good properties that make it suitable for both 
theoretical and computational work.  As a step towards 
$Q$, we first introduce an auxiliary DG algebra $R$ in Section  
\ref{se:R}, which embodies the algebra of an odd element all of whose
Massey powers  vanish: it is generated by elements $\xi_1, \xi_2, \ldots $,
and has homology $H_*(R)=\Lambda (\xi)$.  The algebra $Q$ can be viewed as a
universal object for an algebra with an $h$-fold Massey power of an
element of odd degree which is an $\ell$th power of an element of even
degree. It is generated by elements $\tau$, $\xi_1,\dots,\xi_{h-1}$, with $\tau$ and $\xi_1$
representing elements $\tau$, $\xi$ in $H_*Q\cong A$. Explicit
formulas for the relations and the action of the differential $d$ are given in
Section~\ref{se:Q}. The element $\tau$ of $Q$ is central, so
$Q$ may be regarded as an algebra over $k[\tau]$. 
Our principal goal is to determine the structure of the singularity
and cosingularity categories of $A$ and $B$ (see Section~\ref{sec:Db}
for definitions). Our main theorems classify the indecomposable
objects in these categories, 
see Theorems~\ref{th:DcsgA} and \ref{th:classification}.

\begin{theorem}\label{th:main}
The equivalences of triangulated categories 
$\Db(A) \simeq\Db(B)$ induce equivalences 
\[ 
\Dcsg(A) \simeq \Db(A[\tau^{-1}]) \simeq
  \Dsg(B). \]
The latter categories satisfy the Krull--Schmidt theorem, and have
$|b|(h-1)$ isomorphism classes of indecomposable objects, in
$[h/2]$ orbits of the shift functor $\Sigma$. The Auslander--Reiten
quiver is isomorphic to $\bZ A_{h-1}/T^{|b|}$, where $T$ is the
translation functor $\Sigma^{-2a}$. This is a cylinder of height $h-1$
and circumference $|b|$.  The functor $\Sigma$ switches the two ends of
the cylinder.
\end{theorem}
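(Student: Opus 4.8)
The plan is to prove the chain of equivalences first and then to carry out the whole classification inside the middle term $\Db(A[\tau^{-1}])$, which is the most hands-on. For the equivalences, recall that $\tau$ is central in $Q$, so inverting it realises $\Db(A[\tau^{-1}])$ as the Verdier quotient of $\Db(A)$ by the thick subcategory of $\tau$-power torsion objects; identifying that subcategory with the one that defines the cosingularity category gives $\Dcsg(A)\simeq\Db(A[\tau^{-1}])$ (see Theorem~\ref{th:DcsgA}). For $\Db(A[\tau^{-1}])\simeq\Dsg(B)$ I would transport this localisation sequence across the Koszul duality equivalence $\Db(A)\simeq\Db(B)$ and check that it carries the $\tau$-torsion objects exactly onto the perfect complexes over $B$; this amounts to tracking the generators through the duality, and I expect it to be bookkeeping rather than a real obstacle. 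Everything then reduces to understanding $\Db(Q[\tau^{-1}])$, with $Q$ the explicit DGA of Section~\ref{se:Q}.

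Over the graded field $\ik:=k[\tau^{\pm1}]$, the DGA $Q[\tau^{-1}]$ has homology $\Lambda_{\ik}(\xi)$ (with the usual modification when $h=2$) and its only nonvanishing higher multiplication is the $h$-fold one, which — since $\tau$ is now invertible — takes a unit value. Passing to a minimal model and Koszul-dualising over $\ik$, this nonvanishing $h$-fold Massey power turns into the single relation $z^{h}=0$, so the Koszul dual is the graded truncated polynomial algebra $\Gamma:=\ik[z]/(z^{h})$ with $\deg z=-2a$ — a graded Brauer tree algebra, namely that of the tree with one edge of exceptional multiplicity $h-1$. One thereby identifies $\Db(Q[\tau^{-1}])$ with the graded stable module category $\stmod(\Gamma)$, which coincides with the singularity category of $\Gamma$ since $\Gamma$ is graded self-injective. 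Because $\ik$ is a graded field, $\Gamma$ is a graded Nakayama algebra of finite representation type, and its indecomposables are the classical ones: the interval modules $\Gamma/(z^{j})$ for $1\le j\le h$ together with their grading shifts, the free module $j=h$ being discarded in the stable category. These correspond to an explicit finite family of DG $Q[\tau^{-1}]$-modules $M_{1},\dots,M_{h-1}$, where $M_{j}$ is assembled out of $j$ of the generators $\xi_{1},\dots,\xi_{h-1}$; each has a local endomorphism ring, which yields the Krull--Schmidt property.

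To count the objects, note that $\tau$ is a homogeneous unit of degree $2b$, so $\Sigma^{2b}$ acts trivially on $\Db(Q[\tau^{-1}])$, while $ah-b\ell=1$ forces $\gcd(a,b)=1$; combining these, $\Sigma^{2an}M_{j}\cong M_{j}$ exactly when $|b|\mid n$, so each $M_{j}$ contributes $|b|$ shifts and the total is $|b|(h-1)$. The shift $\Sigma$ does not preserve the length parameter: on stable $\Gamma$-modules it sends $\Gamma/(z^{j})$ to a shift of $\Gamma/(z^{h-j})$, which is why it interchanges the two extremes $j=1$ and $j=h-1$ and why the $|b|(h-1)$ objects fall into $[h/2]$ orbits of $\Sigma$. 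For the Auslander--Reiten quiver I would read off the almost split sequences $0\to M_{j}\to M_{j-1}\oplus M_{j+1}\to M_{j}\to 0$ (with $M_{0}=M_{h}=0$ stably, and appropriate shifts inserted), which assemble the stable AR quiver of $\Gamma$ into a copy of $\bZ A_{h-1}$. The AR translation is the composite of $\Omega^{2}$ with the Nakayama functor of $\Gamma$ — here just a grading shift by the socle degree $-2a(h-1)$, $\Gamma$ being commutative self-injective — and a short computation evaluates it to $T=\Sigma^{-2a}$; imposing $T^{|b|}\cong\Id$ from the previous step collapses $\bZ A_{h-1}$ to the cylinder $\bZ A_{h-1}/T^{|b|}$ of height $h-1$ and circumference $|b|$, with $\Sigma$ switching the two ends. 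The genuinely delicate steps are the Koszul-duality identification with the grading correctly tracked — so that the translation really comes out as $\Sigma^{-2a}$ and not some other power — and, relatedly, the completeness of the list $\{\Sigma^{2an}M_{j}\}$; once these are secured, the remaining claims are arithmetic consequences of $ah-b\ell=1$.
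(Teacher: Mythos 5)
The chain of equivalences in your first paragraph is essentially the paper's own route (Theorem~\ref{th:DcsgA} via the central cycle $\tau$ in $Q$, plus the Koszul duality equivalence of Theorem~\ref{th:DbAsimDbB}), and that part is fine. The gap is the pivotal claim that $\Db(Q[\tau^{-1}])$ is triangle equivalent to the graded stable module category of $\Gamma=K[z]/(z^h)$. That equivalence is false, and the failure is exactly at the point you flag as ``delicate''. In $\stmod^{gr}(\Gamma)$ the indecomposables are \emph{all} internal shifts $\Sigma^{n}\Gamma/(z^{j})$, and since the homogeneous units of $K$ lie in degrees $2b\bZ$, the shift $n$ is only defined modulo $2|b|$, not $|b|$: there are $2|b|(h-1)$ indecomposables, lying in \emph{two} Auslander--Reiten components, because the syzygy functor and the almost split sequences over $\Gamma$ only move the internal degree by even amounts, so odd and even shifts of the $\Gamma/(z^{j})$ are never identified. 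By contrast $\Dsg(B)$ has $|b|(h-1)$ indecomposables in one component, precisely because of the odd--even identification $X_i\simeq\Sigma^{1-2ia}X_{h-i}$ of Proposition~\ref{pr:X_{h-i}}, which rests on $k[x]/(x^h)$ being perfect over $B$ (Theorem~\ref{th:kx/x^h}), i.e.\ on the nontrivial $m_\ell$; this phenomenon is invisible in the formal truncated-polynomial model, which is the content of the paper's closing remark contrasting $\Dsg(B)$ with $\Dsg(H_*(B))$. A clean test case is $h=2$: there $A[\tau^{-1}]=K[\xi]/(\xi^2+\tau^\ell)$ has $\xi$ invertible, so it is a graded field and $\Db(A[\tau^{-1}])$ has exactly $|b|$ indecomposables in a single $\Sigma$-orbit, whereas $\stmod^{gr}(K[z]/(z^2))$ has $2|b|$ indecomposables in two orbits. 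Your count ``each $M_j$ contributes $|b|$ shifts'' only enumerates the shifts $\Sigma^{2an}M_j$ and silently assumes every indecomposable is such an even shift, which is exactly what your model fails to provide.

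The correct ``truncated polynomial'' comparison is the one the paper makes in Section~\ref{se:models}: $\Dsg(B)$ is equivalent to the singularity category of \emph{differential} $\bZ/2|b|$-graded modules over $\bar B=k[x]/(x^h)$, or to the orbit category $\Db(kA_{h-1})/\sfT^{|b|}$ (Theorem~\ref{th:DbkAh-1/T^b}), and even there a $k$-linear identification of mesh categories is not enough --- Amiot's theorem is invoked to upgrade it to a triangulated equivalence. Because your Koszul-dual shortcut does not deliver either the parity identification or the completeness of the list, you would still need substitutes for the ingredients the paper actually uses: the regularity theorem~\ref{th:ThickAtau-1} (proved via the Hochschild relation $x^h\tau^\ell=0$ of Corollary~\ref{co:x^htau^ell=0}), the Tate duality of Theorem~\ref{th:Bduality} giving the Auslander--Reiten triangles, and the Harada--Sai-type argument of Propositions~\ref{pr:zero} and~\ref{pr:map} yielding Theorem~\ref{th:classification}. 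As written, the classification, the count $|b|(h-1)$, the $[h/2]$ orbits and the cylinder structure are all derived from an equivalence that does not hold.
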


We give explicit descriptions of the indecomposable objects, both as
elements of $\Dcsg(A)\simeq\Db(A[\tau^{-1}])$ and as elements of $\Dsg(B)$. Reversing
the roles of $A$ and $B$, swapping $h$ and $\ell$, and replacing $a$
by $-b$ and $b$ by $-a$ gives us the structure of
$\Dsg(A)\simeq\Dcsg(B)$, with $|a|(\ell-1)$ isomorphism classes of
indecomposable objects, coming in $[\ell/2]$ orbits of $\Sigma$.
We also give an explicit description of the Auslander--Reiten quivers
of these categories, and explain their position in the Amiot's
classification of finite triangulated categories. This leads to some
explicit models such as $\Dsg(B)\simeq \Db(A_{h-1})/\sfT^{|b|}$ in
Section~\ref{se:models}. 

Of course the simplicity of the $A_\infty$ structure is essential for
explicit calculations, but key structural features making a complete analysis
possible are the $\tau$-periodicity and the Tate duality of Theorem
\ref{th:Bduality}. The key ingredient for Tate duality is the fact that finitely
generated modules are automatically dualizable, which we proved in
this case using a Hochschild cohomology calculation. \medskip

We are especially interested in the following occurrence of the $A_\infty$
algebras $A$ and $B$. Let $p$ be an odd prime, 
let $G$ be a finite group with cyclic Sylow $p$-subgroups $P$ of order
$p^n$ and inertial index $q>1$, and let $k$ be a field of characteristic
$p$.  Omitting notation for coefficients in the field $k$, and writing 
$\Omega BG\phat$ for  the loop space on the Bousfield--Kan
mod $p$ completion of $BG$, we showed in \cite{Benson/Greenlees:bg8} 
that  the $A_\infty$ algebra structures on $A=H_*\Omega 
BG\phat$ and $B=H^*BG$
were of the above form with  $a=q$, $b=q-1$,  $h=p^n-(p^n-1)/q$, $\ell=p^n$.
 Then the DG algebra $Q$ describes a model for the DG algebra $C_*\Omega BG\phat$ up to
quasi-isomorphism, and by reversing the roles of $A$ and $B$ we obtain a model for $C^*BG$.

\begin{theorem}\label{th:main}
Let $h=p^n-(p^n-1)/q$, $\ell=p^n$. Then the equivalence of
triangulated categories $\Db(C^*BG)\simeq \Db(C_*\Omega BG\phat)$
induces equivalences
\[ \Dcsg(C_*\Omega BG\phat)\simeq\Db(C_*\Omega
  BG\phat)[\tau^{-1}]\simeq \Dsg(C^*BG). \] 
This is a finite Krull--Schmidt triangulated category with
$(q-1)(h-1)$ indecomposable objects in
$[h/2]$ orbits of the shift functor. It also induces equivalences
\[ \Dcsg(C^*BG)\simeq\Db(C^*BG)[x^{-1}]\simeq\Dsg(C_*\Omega
  BG\phat). \] 
This is a finite Krull--Schmidt
triangulated category with $q(p^n-1)$ indecomposable objects in
$[p^n/2]$ orbits of the shift functor.
\end{theorem}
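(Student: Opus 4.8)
The plan is to deduce this statement from the general classification already in hand — Theorems~\ref{th:DcsgA} and~\ref{th:classification} — once the group-theoretic input has been inserted. First I would recall from~\cite{Benson/Greenlees:bg8} that when $G$ has cyclic Sylow $p$-subgroup of order $p^n$ and inertial index $q>1$, the $A_\infty$ algebra $A=H_*\Omega BG\phat$ lies in the family studied here, with $a=q$, $b=q-1$, $h=p^n-(p^n-1)/q$, $\ell=p^n$ (here $q\mid p^n-1$, so $h\in\bZ$), and that the Koszul dual $B=H^*BG$ has the same shape, with $h$ and $\ell$ interchanged and $(a,b)$ replaced by $(-b,-a)$. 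A one-line check gives the required relation $ah-b\ell=q\bigl(p^n-(p^n-1)/q\bigr)-(q-1)p^n=1$, and $h,\ell\ge 2$, so the constructions of Sections~\ref{se:A}--\ref{se:Q} and~\ref{se:B} apply verbatim: $Q$ is a DG model for $C_*\Omega BG\phat$, and its reverse is a DG model for $C^*BG$.

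Because $\Db$, $\Dsg$, $\Dcsg$ and the relevant localizations depend only on the quasi-isomorphism type of a DG algebra, I would then replace $C_*\Omega BG\phat$ and $C^*BG$ by $A$ and $B$ and simply invoke the general results. The equivalence $\Db(A)\simeq\Db(B)$ is the $A_\infty$ BGG/Koszul duality of~\cite{Greenlees/Stevenson:2020a}; feeding it into Theorems~\ref{th:DcsgA} and~\ref{th:classification} gives the first chain $\Dcsg(C_*\Omega BG\phat)\simeq\Db(C_*\Omega BG\phat)[\tau^{-1}]\simeq\Dsg(C^*BG)$, the Krull--Schmidt property, and $|b|(h-1)$ indecomposables in $[h/2]$ orbits of $\Sigma$. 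Substituting $|b|=|q-1|=q-1$ (using $q>1$) produces the count $(q-1)(h-1)$ exactly as stated.

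For the second chain I would invoke the reversed form of the classification: interchanging the roles of $A$ and $B$, swapping $h$ and $\ell$, and replacing $a$ by $-b$ and $b$ by $-a$ yields $\Dsg(A)\simeq\Dcsg(B)$, that is $\Dcsg(C^*BG)\simeq\Db(C^*BG)[x^{-1}]\simeq\Dsg(C_*\Omega BG\phat)$, again a finite Krull--Schmidt triangulated category, now with $|a|(\ell-1)$ indecomposables in $[\ell/2]$ orbits. Substituting $|a|=q$ and $\ell=p^n$ gives $q(p^n-1)$ indecomposables in $[p^n/2]$ orbits.

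I expect no conceptual obstacle: the theorem is a specialization of what is already proved. The step I would write out most carefully is the bookkeeping that converts the homological parameters $(a,b,h,\ell)$ into the group invariants $(q,p^n)$ — identifying which central generator is inverted on each side ($\tau$ for $C_*\Omega BG\phat$, $x$ for $C^*BG$), keeping the sign convention $(a,b)\mapsto(-b,-a)$ straight, and tracking the degrees $|a|=q$, $|b|=q-1$ — so that the four quantities $(q-1)(h-1)$, $[h/2]$, $q(p^n-1)$, $[p^n/2]$ emerge precisely as claimed.
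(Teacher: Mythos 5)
Your proposal is correct and follows essentially the same route as the paper: the paper's proof likewise passes from $C^*BG$ and $C_*\Omega BG\phat$ to the $A_\infty$ algebras $H^*BG$ and $H_*\Omega BG\phat$ by quasi-isomorphism invariance (via Proposition~\ref{pr:DbQsimDbA}), applies the Koszul duality equivalence of Theorem~\ref{th:DbAsimDbB}, and reads off the classification and counts from the general results (packaged there as Theorem~\ref{th:frt}), with the same parameter substitutions $a=q$, $b=q-1$, and the role-reversed values for $H^*BG$.
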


Other examples of the $A_\infty$ algebras $A$ and $B$ 
occur every time an algebra is described by a Brauer
tree of finite representation type. These occur throughout 
representation theory, both in characteristic zero and in prime 
characteristic. For the sake of describing an example in 
characteristic zero, we discuss the Hecke algebras of 
symmetric groups. Let
$\cH=\cH(n,q)$ be the Hecke algebra of the symmetric group of degree
$n$ over a field $k$ of characteristic zero, where $q$ is a primitive
$\ell$th root of unity with $n=\ell>2$. Then letting $A$ be the
principal block of $\cH$ and $B$ be $\Ext^*_\cH(k,k)$, we obtain an
example with $a=n-1$, $b=n-2$, $h=n-1$ and $\ell=n$. We spell out the
consequences of our main theorem in this case, in Theorem~\ref{th:Hecke}.

\subsection*{Acknowledgements}
The authors are grateful to  EPSRC: the second author is supported by  
EP/P031080/1, which also enabled the first author to visit Warwick. 
The authors are also grateful to Bernhard Keller and Greg Stevenson for 
conversations related to this work.

The authors would also like to thank the Isaac Newton Institute for
Mathematical Sciences, Cambridge, for providing an opportunity to work
on this project during the simultaneous programmes \emph{`$K$-theory, algebraic cycles and motivic homotopy
  theory'} and \emph{`Groups, representations and applications: new
  perspectives'} (one author was supported by each programme). 

\section{The DG Hopf algebra $R$}\label{se:R}

We begin by looking at the DG Hopf algebra $R$ over a field $k$.
As a graded algebra over $k$, $R$ is free with odd degree generators
$\xi_1,\xi_2,\xi_3,\dots$, and the differential is given by
\[ d(\xi_i)=\sum_{j+k=i}\xi_j\xi_k\qquad (i\ge 1). \]
Thus $d(\xi_1)=0$, $d(\xi_2)=\xi_1^2$,
$d(\xi_3)=\xi_1\xi_2+\xi_2\xi_1$, and so on. 

\begin{remark} 
To motivate this, we factor out the differential ideal
  of $R$ generated by $\xi_i$ for $i\ge h+1$ and 
take the DG-subalgebra $R_h$ generated by  $\xi_i$ for $i\le
h-1$. The element $\mu_h=d(\xi_h)$ lies in $R_h$ and 
represents the $h$-fold Massey power of the homology class of $\xi_1$
up to sign. Thus a DGA map from $\theta\colon R_h\to C$ in which $\theta
(\xi_1)=c$
 shows that the $h$-fold Massey power of $[c]$ is
defined and gives an element $[\theta (\mu_h)]\in \pm\langle [c], \cdots,
[c]\rangle$. Accordingly, in $R$ itself, all Massey powers of $[\xi_1]$ contain zero. 
\end{remark}

The antipode $S$ on $R$ is the anti-automorphism of algebras
given on generators by
$S(\xi_i)=-\xi_i$. The comultiplication $\Delta\colon R \to
R\otimes R$ is defined on generators by
\[ \Delta(\xi_i)=\xi_i\otimes 1 + 1\otimes \xi_i. \]

Note that if $\xi_1$ has degree $2a-1$ then $\xi_i$ has degree
$2ia-1$. So $R$ is either connected or coconnected, according to
whether $2a-1$ is positive or negative. We shall assume that $a\ne 0$,
so that $|\xi_1|\ne -1$,
which implies that each graded piece is finite dimensional.

\begin{lemma}\label{le:Rd2=0}
In $R$ we have $d^2=0$.
\end{lemma}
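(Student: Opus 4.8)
The plan is to verify $d^2=0$ by a direct computation on the algebra generators $\xi_i$ and then extend multiplicatively using the Leibniz rule. Since $R$ is free as a graded algebra on the $\xi_i$, the differential $d$ (being a derivation) is determined by its values on the generators, and $d^2$ is again a derivation; hence it suffices to check that $d^2(\xi_i)=0$ for every $i\ge 1$.

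First I would write out $d^2(\xi_i)=d\bigl(\sum_{j+k=i}\xi_j\xi_k\bigr)=\sum_{j+k=i}\bigl(d(\xi_j)\xi_k+(-1)^{|\xi_j|}\xi_j d(\xi_k)\bigr)$. Because every $\xi_j$ has odd degree, the sign $(-1)^{|\xi_j|}$ is always $-1$, so this becomes $\sum_{j+k=i}\bigl(d(\xi_j)\xi_k-\xi_j d(\xi_k)\bigr)$. Now I substitute $d(\xi_j)=\sum_{p+q=j}\xi_p\xi_q$ and $d(\xi_k)=\sum_{r+s=k}\xi_r\xi_s$, obtaining a sum over all compositions $p+q+k=i$ of $\xi_p\xi_q\xi_k$ minus a sum over all $j+r+s=i$ of $\xi_j\xi_r\xi_s$. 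Both of these are sums of all monomials $\xi_a\xi_b\xi_c$ with $a+b+c=i$ (and $a,b,c\ge 1$), so the two sums are literally identical and cancel termwise, giving $d^2(\xi_i)=0$.

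The only point requiring care — and the main (mild) obstacle — is the bookkeeping of Koszul signs: one must confirm that the sign appearing when $d$ passes the odd element $\xi_j$ in the middle term is exactly $-1$ and not something $i$-dependent, and that no further signs are introduced when expanding $d(\xi_j)\xi_k$ and $\xi_j d(\xi_k)$ into cubic monomials (there are none, since $d$ is applied to a single generator in each case). Once the signs are pinned down the cancellation is immediate. I would then remark that since $d^2$ is a derivation vanishing on all algebra generators, it vanishes on all of $R$, completing the proof.
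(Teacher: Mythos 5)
Your proof is correct and follows essentially the same route as the paper: the paper also checks $d^2(\xi_i)=0$ by observing that each composition of $i$ into three positive parts contributes two terms of opposite sign that cancel, with the reduction to generators left implicit. Your explicit remarks on the Koszul sign and on $d^2$ being a derivation simply spell out what the paper takes for granted.
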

\begin{proof}
To show that $d^2=0$, we note that $dd(\xi_i)$ has two terms for
each way of writing $i$ as a sum of three positive integers. They
have opposite signs, so they cancel.
\end{proof}

\begin{lemma}\label{le:RDelta}
The map $\Delta\colon R \to R\otimes R$ is a map of DG algebras.
\end{lemma}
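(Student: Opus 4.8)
The plan is to check that $\Delta$ respects both the algebra structure (multiplicativity, which is essentially built in) and the differential, i.e. that $(d\otimes 1 + 1\otimes d)\circ\Delta = \Delta\circ d$ on generators, and then extend by the Leibniz rule. Since $\Delta$ is defined as an algebra homomorphism to the tensor product DG algebra $R\otimes R$ (with the Koszul-signed differential $d_{R\otimes R} = d\otimes 1 + 1\otimes d$), and since $d_{R\otimes R}$ is a derivation while $\Delta$ is multiplicative, it suffices to verify $d_{R\otimes R}\Delta(\xi_i) = \Delta(d\xi_i)$ for each $i\ge 1$; both sides are then derivations-composed-with-homomorphism agreeing on algebra generators, hence agree everywhere.

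First I would compute the right-hand side: $\Delta(d\xi_i) = \sum_{j+k=i}\Delta(\xi_j)\Delta(\xi_k) = \sum_{j+k=i}(\xi_j\otimes 1 + 1\otimes\xi_j)(\xi_k\otimes 1 + 1\otimes\xi_k)$. Expanding, the terms of type $\xi_j\xi_k\otimes 1$ sum to $d\xi_i\otimes 1$, the terms $1\otimes\xi_j\xi_k$ sum to $1\otimes d\xi_i$, and the cross terms give $\sum_{j+k=i}\bigl(\xi_j\otimes\xi_k \pm \xi_k\otimes\xi_j\bigr)$ with a Koszul sign from moving $\xi_j$ (odd) past $\xi_k$ (odd) in the second product. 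Then I would compute the left-hand side: $d_{R\otimes R}\Delta(\xi_i) = (d\otimes 1 + 1\otimes d)(\xi_i\otimes 1 + 1\otimes\xi_i) = d\xi_i\otimes 1 + 1\otimes d\xi_i$ (the sign $(-1)^{|\xi_i|}$ on the term $1\otimes d\xi_i$ coming from $(1\otimes d)(\xi_i\otimes 1)$ and $(d\otimes 1)(1\otimes\xi_i)$ needs to be tracked, but those mixed terms vanish since $d(1)=0$). Comparing, the two sides agree provided the cross terms $\sum_{j+k=i}\bigl(\xi_j\otimes\xi_k + (\text{sign})\,\xi_k\otimes\xi_j\bigr)$ cancel.

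The main obstacle — really the only point requiring care — is the sign bookkeeping in these cross terms. Since $\xi_j$ and $\xi_k$ both have odd degree, $(\xi_j\otimes 1)(1\otimes\xi_k) = \xi_j\otimes\xi_k$ while $(1\otimes\xi_j)(\xi_k\otimes 1) = (-1)^{|\xi_j||\xi_k|}\xi_k\otimes\xi_j = -\xi_k\otimes\xi_j$. Summing over all ordered pairs $(j,k)$ with $j+k=i$, the pair $(j,k)$ contributes $\xi_j\otimes\xi_k$ and the pair $(k,j)$ contributes $-\xi_j\otimes\xi_k$, so the cross terms cancel in pairs; when $j=k$ (possible when $i$ is even) the single term $\xi_j\otimes\xi_j - \xi_j\otimes\xi_j = 0$ vanishes on its own. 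Hence both sides reduce to $d\xi_i\otimes 1 + 1\otimes d\xi_i$, establishing compatibility with the differential. Finally, $\Delta$ is an algebra map by construction (it is defined on generators and extended multiplicatively to the free algebra $R$, and no relations are imposed on $R$ as a graded algebra), so $\Delta$ is a map of DG algebras. \qed
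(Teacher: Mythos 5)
Your proof is correct and follows essentially the same route as the paper: the paper's statement that $\xi_j^2$ and $\xi_j\xi_k+\xi_k\xi_1$ (more precisely $\xi_j\xi_k+\xi_k\xi_j$) are primitive because the $\xi_j$ are odd primitives is exactly your Koszul-sign cancellation of the cross terms, just packaged as a primitivity claim. Your explicit bookkeeping, together with the observation that $\Delta d$ and $d_{R\otimes R}\Delta$ need only be compared on the free generators, matches the paper's argument.
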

\begin{proof}
As an algebra, $R$ is free, so specifying the map on generators gives
a well defined map of algebras. We must check that it commutes with
the differential. Since the $\xi_j$ have odd degree and are primitive,
$\xi_j^2$ and $\xi_j\xi_k+\xi_k\xi_j$ are also primitive. So
$d(\xi_i)=\sum_{j+k=i}\xi_j\xi_k$ is also primitive, and hence $d\Delta(\xi_i)=\Delta
d(\xi_i)$.
\end{proof}

\begin{proposition}\label{pr:RHopf}
The definitions above make $R$ into a cocommutative DG Hopf algebra.
\end{proposition}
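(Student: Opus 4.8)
The plan is to verify the remaining Hopf algebra axioms, having already established in Lemmas~\ref{le:Rd2=0} and~\ref{le:RDelta} that $R$ is a DG algebra and that $\Delta$ is a morphism of DG algebras. Since $R$ is free as a graded algebra on the $\xi_i$, all the required identities are maps of DG algebras out of $R$ (or out of $R\otimes R$, which is again free on the $\xi_i\otimes 1$ and $1\otimes\xi_j$), so it suffices in each case to check the identity on the algebra generators $\xi_i$, where it is immediate.

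First I would check coassociativity and cocommutativity: both $(\Delta\otimes\Id)\Delta$ and $(\Id\otimes\Delta)\Delta$ send $\xi_i$ to $\xi_i\otimes 1\otimes 1 + 1\otimes\xi_i\otimes 1 + 1\otimes 1\otimes\xi_i$, and the twist $\tau\circ\Delta$ agrees with $\Delta$ on $\xi_i$ since $\xi_i$ is primitive; one should note the Koszul sign in the twist map is trivial here because one of the two tensor factors being swapped is the unit. Next I would introduce the counit $\ep\colon R\to k$ as the algebra map sending every $\xi_i$ to $0$ (equivalently, projection onto the degree-zero part $k$); it is a DG algebra map since $k$ has zero differential and $d(\xi_i)$ is a sum of products of positive-degree elements, hence in the augmentation ideal. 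The counit axiom $(\ep\otimes\Id)\Delta = \Id = (\Id\otimes\ep)\Delta$ then holds on generators. For the antipode, one checks that $S$ as defined is a morphism of DG algebras $R\to R^{\op}$: since $S(\xi_i)=-\xi_i$ one has $S(d\xi_i) = S\bigl(\sum_{j+k=i}\xi_j\xi_k\bigr) = \sum_{j+k=i}S(\xi_k)S(\xi_j) = \sum_{j+k=i}\xi_k\xi_j$, and reindexing this equals $\sum_{j+k=i}\xi_j\xi_k = d(\xi_i) = d(S(\xi_i))$ up to the sign $-1$, so $S$ commutes with $d$. Finally the antipode identity $\mu(S\otimes\Id)\Delta = \eta\ep = \mu(\Id\otimes S)\Delta$ reduces on the generator $\xi_i$ to $S(\xi_i)+\xi_i = 0$, which holds.

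The only genuine subtlety, rather than an obstacle, is bookkeeping of Koszul signs: one must be consistent about the sign rule for the multiplication on $R\otimes R$ and for the twist map, and confirm that $S$ being an \emph{anti}-automorphism is the correct convention for a graded Hopf algebra (so that $S$ is a DG map to the opposite algebra). Because all the structure maps in play are determined by their values on the odd primitive generators $\xi_i$, every diagram that must commute does so after evaluation on a single $\xi_i$, where every term is linear in $\xi_i$ and no sign ambiguity can arise. I therefore expect the proof to be short: assemble Lemmas~\ref{le:Rd2=0} and~\ref{le:RDelta}, add the one-line verifications of coassociativity, cocommutativity, the counit axioms, the compatibility of $S$ with $d$, and the antipode axiom, each checked on generators.
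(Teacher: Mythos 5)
Your proof is correct and follows essentially the same route as the paper: combine Lemmas~\ref{le:Rd2=0} and~\ref{le:RDelta} to get a DG bialgebra, then verify cocommutativity and the antipode identity on the primitive generators $\xi_i$, where everything is immediate (the paper leaves the counit and coassociativity checks implicit, as you could as well). The only point to tidy is your verification that $S$ commutes with $d$: with the graded (Koszul-signed) convention for an anti-automorphism one has $S(\xi_j\xi_k)=-\xi_k\xi_j$, so $S(d\xi_i)=dS(\xi_i)$ holds on the nose rather than only ``up to the sign $-1$''.
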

\begin{proof}
Lemmas~\ref{le:Rd2=0} and~\ref{le:RDelta} show that $R$ is a DG bialgebra. It is easy to check
that the antipode satisfies the identity
$S(x_{(1)})x_{(2)}=x_{(1)}S(x_{(2)})=0$ in Sweedler notation, for elements of non-zero
degree; this only needs checking on the generators, where it is
clear. Cocommutativity also only needs checking on generators.
\end{proof}

\begin{lemma}\label{le:H*R}
$H_*R=\Lambda(\xi_1)$.
\end{lemma}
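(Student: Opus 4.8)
The plan is to exploit the extra \emph{weight} grading on $R$, in which $\xi_i$ is declared to have weight $i$. Since $d(\xi_i)=\sum_{j+k=i}\xi_j\xi_k$ is homogeneous of weight $i$, the differential preserves weight, so $R=\bigoplus_{w\ge 0}R^{(w)}$ as a direct sum of subcomplexes and hence $H_*R=\bigoplus_{w}H_*(R^{(w)})$. Now $R^{(0)}=k$ and $R^{(1)}=k\xi_1$ with zero differential, together contributing $k\oplus k[\xi_1]$, so the whole statement reduces to showing that $R^{(w)}$ is acyclic for every $w\ge 2$. (The exterior algebra structure on $H_*R$ is then automatic, since $[\xi_1]^2=[\xi_1^2]=[d\xi_2]=0$.)

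First I would describe $R^{(w)}$ combinatorially. Because $R$ is free, a $k$-basis of $R^{(w)}$ is given by the monomials $\xi_{i_1}\cdots\xi_{i_r}$ indexed by compositions $(i_1,\dots,i_r)$ of $w$ into positive parts, and applying $d$ as a derivation replaces one part $i_s=j+k$ by the two parts $j,k$ in all possible ways, with Koszul signs. Encoding a composition of $w$ by its set of partial sums $\{i_1,\ i_1+i_2,\ \dots\}\subseteq\{1,\dots,w-1\}$ turns ``split a part'' into ``adjoin an element'', with the length-$r$ monomial corresponding to an $(r-1)$-element subset. Tracking the signs then yields an isomorphism of complexes
\[
R^{(w)}\;\cong\;\bigl(\Lambda^{\bullet}(k^{w-1}),\ e\wedge(-)\bigr),\qquad e=e_1+\dots+e_{w-1},
\]
the Koszul complex for wedging with the vector $e$, where $\Lambda^{r-1}(k^{w-1})$ occupies the slot of the length-$r$ monomials. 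The sign match is forced by the odd-degree hypothesis on the $\xi_i$: commuting the derivation past $s-1$ earlier odd factors produces $(-1)^{s-1}$, precisely the sign of inserting a new element into the $s$-th position of a sorted subset.

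Finally, for $w\ge 2$ the vector $e$ is nonzero, so choosing a basis of $k^{w-1}$ with first member $e$ factors the Koszul complex as $\bigl(\Lambda(ke),\,e\wedge(-)\bigr)\otimes\Lambda(\text{the remaining generators})$; the first tensor factor is $k\xrightarrow{\ \cong\ }ke$, hence acyclic, and therefore so is $R^{(w)}$. Equivalently, and bypassing the sign discussion entirely, one can exhibit the contracting homotopy coming from contraction against the functional dual to $e_1$: on $R^{(w)}$ it sends $\xi_1\xi_{i_2}\cdots\xi_{i_r}\mapsto\pm\,\xi_{1+i_2}\xi_{i_3}\cdots\xi_{i_r}$ and kills monomials with $i_1\ge 2$, and a direct computation gives $sd+ds=\mathrm{id}$. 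Assembling the cases gives $H_*R=k\oplus k[\xi_1]=\Lambda(\xi_1)$. The only genuine obstacle is the sign bookkeeping in the identification with the Koszul complex, and the oddness of the generators is exactly what makes those signs line up; alternatively one can sidestep it with the explicit homotopy, or simply recognise $R$ as the cobar construction on the graded dual of $k[y]$ with $|y|=2a$ and invoke the standard computation of its homology.
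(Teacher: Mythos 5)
Your argument is correct, but its main route differs from the paper's. The paper proves the lemma in one stroke by writing down a global contracting homotopy $\delta\colon R\to R$, $\xi_1\xi_i f\mapsto \xi_{i+1}f$ (all other monomials to zero), and checking by the Leibniz rule that $\delta d+d\delta$ is the identity on every monomial except $1$ and $\xi_1$; note that the ``alternative'' homotopy you mention in passing (contraction against the functional dual to $e_1$, i.e.\ merge a leading $\xi_1$ into the next factor) is precisely the paper's $\delta$, so that aside reproduces the official proof. Your primary argument instead splits $R$ by the weight grading, identifies each weight piece $R^{(w)}$, via the compositions-to-subsets bijection, with the Koszul complex $\bigl(\Lambda^{\bullet}(k^{w-1}),\,e\wedge(-)\bigr)$, $e=e_1+\dots+e_{w-1}$, and kills it for $w\ge 2$ by splitting off $\Lambda(ke)$; the sign check you flag is genuinely the only delicate point, and your observation that the Leibniz sign $(-1)^{s-1}$ (all generators odd) equals the sign of inserting the new partial sum past $s-1$ smaller ones is exactly what makes the identification work, since the inserted partial sum always lies strictly between the $(s-1)$st and $s$th existing ones. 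What each approach buys: the paper's homotopy is shorter, needs no sign bookkeeping beyond the displayed computation, and treats all weights uniformly; your decomposition is more structural, exposing $R$ weightwise as a direct sum of standard acyclic Koszul complexes (equivalently, as the cobar construction on the graded dual of $k[y]$, which is the conceptual reason the answer is $\Lambda(\xi_1)$), at the cost of the bijection-and-sign verification. Both correctly conclude with $[\xi_1]^2=[d\xi_2]=0$ giving the exterior algebra structure.
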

\begin{proof}
Define a linear map $\delta\colon R \to R$ sending a monomial 
of the form $\xi_1\xi_i f$ to $\xi_{i+1}f$,
and sending all other monomials to zero. Then we have
\begin{align*}
\delta d(\xi_1\xi_i f) & = \delta(-\xi_1(\xi_1\xi_{i-1}+\cdots+\xi_{i-1}\xi_1)f+\xi_1\xi_idf)\\
&= -(\xi_2\xi_{i-1}+\cdots + \xi_i\xi_1)f+\xi_{i+1}df \\
d\delta(\xi_1\xi_i f) &= d(\xi_{i+1} f) = (\xi_1\xi_i+\cdots + \xi_i\xi_1)f- \xi_{i+1}df \\
(\delta d + d\delta)(\xi_1\xi_i f) &= \xi_1\xi_i f,
\end{align*}
while for $j>1$ we have
\begin{align*}
d\delta(\xi_jf)&=d(0)=0 \\
\delta  d(\xi_jf)&=\delta((\xi_1\xi_{j-1}+\dots+\xi_{j-1}\xi_1)f-\xi_jdf) =\xi_jf \\
(d\delta + \delta d)(\xi_jf)&=\xi_jf.
\end{align*}
Thus $\delta d + d \delta$ is the identity
on all monomials apart from $1$ and $\xi_1$, on which it vanishes. So 
$\delta$ defines a homotopy from the identity map of $R$ 
to the projection onto the linear span of $1$ and $\xi_1$. 
It follows that $H_* R = \Lambda(\xi_1)$.
\end{proof}

\begin{definition}\label{def:wt}
The \emph{weight} of a monomial in $R$ is the sum of the subscripts
(and zero for constants).
The \emph{height} of a monomial in $R$ is the number of generators
$\xi_i$ that have to be multiplied to give the monomial (we might call
it degree, if that didn't already have a different meaning). Multiplication
in $R$ adds weights, and adds heights.
If $f(\xi_1,\dots,\xi_n)$ 
is an element of $R$, we write $f_{i,j}(\xi_1,\dots,\xi_n)$ for the sum of
the terms of $f$ with weight $i$ and height $j$. Thus $f=\sum_{i,j}f_{i,j}$.
\end{definition}

The differential $d$ preserves weight, and increases height by one,
so that $d(f_{i,j})=(df)_{i,j+1}$. Thus if $df=0$ then each $d(f_{i,j})=0$.

\section{The DG Hopf algebra $Q$}\label{se:Q}

In this section, we  let $h$ and $\ell$ be integers $\ge 2$ 
and describe the DG Hopf algebra $Q=Q_{h, \ell}$. In terms of the previous
section, the idea is that $Q_{h, \ell}$ is obtained from $R_h$ by adjoining an $\ell$th root to the element
$-\mu_h$. Thus a DGA map $\theta \colon Q_{h, \ell}\to C$ shows that the
$h$-fold Massey power of  $c=\theta (\xi_1)$ is defined and contains an
$\ell$th power of the adjoined variable.
 
The generators for $Q$ are $\xi_1,\dots,\xi_{h-1}$ in odd degree and 
$\tau$ in even degree. 
The relations and differential are as follows:
\begin{align*}
\tau\xi_i &= \xi_i \tau \qquad\ \ 1\le i \le h-1 \\
d(\tau)&= 0 \\
\sum_{j+k=i} \xi_j\xi_k &= \begin{cases}
d(\xi_i) & 1\le i \le h-1 \\
-\tau^\ell & i=h \\
0 & h+1 \le i \le 2h-2.
\end{cases}
\end{align*}
The antipode is the algebra anti-automorphism given by
$S(\xi_i)=-\xi_i$, $S(\tau)=-\tau$, and the comultiplication is given by 
\[ \Delta(\xi_i) = \xi_i \otimes 1 + 1\otimes \xi_i, \qquad
  \Delta(\tau)=\tau\otimes 1 + 1 \otimes \tau. \]

We write $|\xi_1|=2a-1$, and we assume that $a\ne 0$.
The relations imply that $|\xi_i|=2ia-1$
and $|\tau^\ell|=2ah-2$. So writing $2b$ for $|\tau|$ we have
$2b\ell = 2ah-2$, or equivalently 
\[ ah-b\ell=1. \] 
In particular, $a$ and $b$ are coprime, as are $h$ and $\ell$.

As well as this homological grading, 
we give $Q$ a second, internal grading by setting
\[ |\xi_i|=(2ia-1,i\ell), \qquad |\tau|=(2b,h). \] 
It is easy to check that
the relations, differential, and Hopf structure above respect this second grading.

\begin{example}\label{eg:h=2}
If $h=2$, the algebra $Q$ is generated by $\xi_1$ and $\tau$ 
with relations
\begin{align*}
d(\tau)&=0&\tau\xi_1&=\xi_1\tau\qquad\phantom{1\le i\le 1} \\
d(\xi_1)&=0&\xi_1^2&=-\tau^\ell
\end{align*}
In this case the differential is zero, and $Q$ is just the graded
algebra $k[\tau,\xi_1]/(\xi_1^2+\tau^\ell)$.
\end{example}

\begin{example}\label{eg:h=3}
If $h=3$, the algebra $Q$ is generated by $\xi_1$, $\xi_2$ and $\tau$
with relations
\begin{align*}
d(\tau)&=0 &\tau\xi_i&=\xi_i\tau\qquad 1\le i\le 2 \\
d(\xi_1)&=0&\xi_1\xi_2+\xi_2\xi_1&=-\tau^\ell \\
d(\xi_2)&=\xi_1^2&\xi_2^2&=0.
\end{align*}
\end{example}

\begin{example}\label{eg:h=4}
If $h=4$, the algebra $Q$ is generated by $\xi_1$, $\xi_2$, $\xi_3$ and 
$\tau$ with relations
\begin{align*}
d(\tau)&=0 &\tau\xi_i&=\xi_i\tau\qquad 1\le i\le 3 \\
d(\xi_1)&=0 & \xi_1\xi_3+\xi_2^2+\xi_3\xi_1&=-\tau^\ell\\ 
d(\xi_2)&=\xi_1^2 & \xi_2\xi_3+\xi_3\xi_2 &=0\\
d(\xi_3)&=\xi_1\xi_2+\xi_2\xi_1 &  \xi_3^2&=0. 
\end{align*}
\end{example}

\begin{lemma}
In the algebra $Q$, every element has a unique expression of the form
\[ f(\xi_1,\dots,\xi_{h-2})+\xi_{h-1}g(\xi_1,\dots,\xi_{h-2}) \] 
with coefficients in $k[\tau]$.
\end{lemma}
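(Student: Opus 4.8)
The plan is to exhibit a spanning set of the stated form and then verify linear independence by a degree/grading argument. First I would show that every monomial in the generators $\xi_1,\dots,\xi_{h-1},\tau$ can be rewritten, using the relations, as a $k[\tau]$-linear combination of monomials in which $\xi_{h-1}$ appears at most once and all such occurrences are on the left. The relation $\sum_{j+k=i}\xi_j\xi_k = d(\xi_i)$ for $i \le h-1$ (together with the cases $i=h$ and $h+1\le i\le 2h-2$) lets us solve for $\xi_{h-1}\xi_k + \xi_k\xi_{h-1}$ in terms of lower-index products whenever $k\ge 1$: reading off the $i = k+(h-1)$ equation, $\xi_{h-1}\xi_k + \xi_k\xi_{h-1}$ equals $d(\xi_i)$ minus a sum of products $\xi_j\xi_{k'}$ with $1\le j,k' \le h-2$. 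Since $d(\xi_i)$ is itself a polynomial in $\xi_1,\dots,\xi_{h-2}$ (it involves only $\xi_j\xi_k$ with $j+k\le h-1$, so $j,k\le h-2$), this is a straightening rule that moves any $\xi_{h-1}$ to the left past any other $\xi_k$ at the cost of terms not involving $\xi_{h-1}$ at all. Iterating, any monomial reduces to $f + \xi_{h-1}g$ with $f,g$ polynomials in $\xi_1,\dots,\xi_{h-2}$ over $k[\tau]$ (using centrality of $\tau$ to collect the $\tau$'s), and moreover $\xi_{h-1}^2 = 0$ from the $i = 2h-2$ relation, so no higher powers of $\xi_{h-1}$ survive. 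This proves the spanning statement.

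For uniqueness I would argue by comparing dimensions, graded piece by graded piece, using the second (internal) bigrading $|\xi_i| = (2ia-1, i\ell)$, $|\tau| = (2b,h)$ introduced just above. Every element of the claimed form lies in a single bidegree once we fix the monomials, so it suffices to show that the proposed spanning set has the "right" size. The clean way is: let $Q'$ be the free $k[\tau]$-algebra on $\xi_1,\dots,\xi_{h-2}$ modulo $d(\xi_i)=\sum_{j+k=i}\xi_j\xi_k$ — but that is circular. Instead I would build a concrete $k$-vector space $V$ with basis the symbols $\{\tau^m \xi_{i_1}\cdots\xi_{i_s}\}$ and $\{\tau^m\xi_{h-1}\xi_{i_1}\cdots\xi_{i_s}\}$ with $i_j \le h-2$, define left multiplication by each generator on $V$ via the straightening rules above, check these operators satisfy the defining relations of $Q$ (this is where the associativity/consistency bookkeeping lives), and conclude $V$ is a $Q$-module on which $1$ generates freely, giving a surjection $Q \twoheadrightarrow V$ inverse to the spanning map $V \to Q$.

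The main obstacle is the consistency check in the last step: one must verify that the straightening operators are well-defined, i.e. that reducing a monomial to normal form does not depend on the order of rewrites. Equivalently, one checks that the relations $\sum_{j+k=i}\xi_j\xi_k = (\text{RHS})$ for $i = 1,\dots,2h-2$ form a (noncommutative) Gröbner basis for the ideal they generate, with the overlaps $\xi_{h-1}\xi_j\xi_k$ resolving. This is a finite but slightly delicate computation; it is essentially the statement that the PBW-type basis survives the deformation of the free-algebra relations of $R_h$ by the terms $-\tau^\ell$ and $0$ on the right-hand sides. A cleaner alternative, which I would try first, is to note that $Q$ is by construction a free $R_h$-module-like object: one could filter $Q$ by powers of $\tau$ and identify the associated graded with (a truncation of) the algebra $R$ of Section~\ref{se:R}, whose normal form is already understood via Lemma~\ref{le:H*R} and the weight/height bookkeeping of Definition~\ref{def:wt}. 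The associated graded statement then lifts to $Q$ because $\tau$ is central and a nonzerodivisor, which sidesteps the Gröbner-basis argument entirely.
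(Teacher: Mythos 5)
Your spanning argument coincides with the paper's: for $h\le i\le 2h-2$ the defining relation lets you rewrite $\xi_m\xi_{h-1}$ (with $m=i-(h-1)$) as $-\xi_{h-1}\xi_m$ plus terms involving only $\xi_1,\dots,\xi_{h-2}$ and $k[\tau]$, and the $i=2h-2$ relation gives $\xi_{h-1}^2=0$. One bookkeeping slip: for $k\ge 1$ the equation with $i=k+(h-1)$ has right-hand side $-\tau^\ell$ (if $i=h$) or $0$ (if $i>h$), not $d(\xi_i)$ --- there is no generator $\xi_i$, hence no $d(\xi_i)$, for $i\ge h$; the cases $1\le i\le h-1$ of the displayed formula define the differential and are not algebra relations. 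This does not affect your conclusion, since the true right-hand sides lie in $k[\tau]$.

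The gap is in the uniqueness half. Your main route --- define the straightening operators on the free $k[\tau]$-module with basis the standard monomials, verify they satisfy the relations, equivalently resolve the overlap ambiguities such as $\xi_m\xi_{h-1}^2$ --- is a correct strategy, but that consistency check is precisely the content of the lemma and you leave it unexecuted. More seriously, the ``cleaner alternative'' you say you would try first does not work as described. Filtering by powers of $\tau$ only replaces $-\tau^\ell$ by $0$: the associated graded is (a quotient of) $k[\tau]$ tensored with $k\langle\xi_1,\dots,\xi_{h-1}\rangle/(\sum_{j+k=i}\xi_j\xi_k,\ h\le i\le 2h-2)$, which is not ``a truncation of $R$'' in any sense whose basis is already known: $R$ is free as an algebra, Lemma~\ref{le:H*R} computes its homology rather than a normal form, and producing a monomial basis of this quotient is exactly the same straightening/confluence problem you were trying to avoid. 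The appeal to ``$\tau$ is central and a nonzerodivisor'' is also circular: that $\tau$ acts freely, i.e.\ that $Q$ is free over $k[\tau]$ on the standard monomials, is part of what is being proved, and a priori one only has a surjection from the expected graded algebra onto $\mathrm{gr}(Q)$. For comparison, the paper disposes of uniqueness with the observation that each relation serves only to rewrite one product $\xi_i\xi_{h-1}$ and that there are no relations among $\xi_1,\dots,\xi_{h-2}$; your instinct that the confluence of the rewriting (or an explicit module realization) is the point needing attention is sound, but as written your proposal neither carries out that check nor supplies a valid substitute for it.
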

\begin{proof}
The algebra relations (ignoring the differential) can be rewritten
in the form 
\[ \xi_i\xi_{h-1}=\xi_{h-1}\phi_i(\xi_1,\dots,\xi_{h-2}), \]
with $1\le i \le h-1$ (note that $\phi_{h-1}=0$). 
Thus all occurrences of $\xi_{h-1}$ may
be moved to the beginning, and $\xi_{h-1}^2=0$. There are no relations among $\xi_1,\dots,\xi_{h-2}$.
\end{proof}

\begin{definition}\label{def:standard}
We shall refer to a monomial in $\xi_1,\dots,\xi_{h-2}$, or
$\xi_{h-1}$ times such a monomial, as a \emph{standard monomial} in
the variables $\xi_1,\dots,\xi_{h-1}$. The lemma shows that the
standard monomials form a basis for $Q$ as a free module over $k[\tau]$.
\end{definition}

\begin{lemma}\label{le:Qd2=0}
In the algebra $Q$, we have $d^2=0$.
\end{lemma}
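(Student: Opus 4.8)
The plan is to reduce $d^2=0$ to a single combinatorial identity in a free algebra. Write $\mu_i=\sum_{j+k=i}\xi_j\xi_k$ (sum over $1\le j,k\le h-1$), so the data defining $Q$ say $d(\tau)=0$, $d(\xi_i)=\mu_i$ for $1\le i\le h-1$, $\mu_h=-\tau^\ell$, and $\mu_i=0$ for $h+1\le i\le 2h-2$. Let $F=k\langle\xi_1,\dots,\xi_{h-1},\tau\rangle$ be the free graded algebra and $d_F$ the unique derivation with $d_F(\tau)=0$, $d_F(\xi_i)=\mu_i$. Since $d_F$ has odd degree, $d_F^2$ is again a derivation of $F$, and a derivation of a free algebra is determined on generators; as $d_F^2(\tau)=0$ and $d_F^2(\xi_i)=d_F(\mu_i)=0$ for $1\le i\le h-1$ by exactly the argument of Lemma~\ref{le:Rd2=0} (the Leibniz expansion of $d_F(\mu_i)$ produces, for each ordered triple of positive integers with $a+b+c=i$, the monomial $\xi_a\xi_b\xi_c$ twice with opposite signs, and for $i\le h-1$ every index that appears is automatically $\le h-1$), we get $d_F^2=0$ on $F$. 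Hence it suffices to show that $d_F$ descends to $Q=F/I$, where $I$ is the two–sided ideal generated by the relations $\tau\xi_i-\xi_i\tau$, $\mu_h+\tau^\ell$, and $\mu_i$ for $h+1\le i\le 2h-2$; then $d^2=0$ on $Q$ is inherited from $F$.

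Since $d_F$ is a derivation, it descends exactly when it carries each generator of $I$ into $I$. For the commutator relations this is routine: $d_F(\tau\xi_i-\xi_i\tau)=\tau\mu_i-\mu_i\tau$, which telescopes into $I$ after expanding $\mu_i$. For $\mu_h+\tau^\ell$ we have $d_F(\mu_h+\tau^\ell)=d_F(\mu_h)$, and this is already zero in $F$: in the ordered–triple expansion a triple with $a+b+c=h$ automatically has $a+b\le h-1$ and $b+c\le h-1$, so both of its cancelling monomials occur. The substantive point is that $d_F(\mu_i)\in I$ for $h+1\le i\le 2h-2$; here $d_F(\mu_i)$ is genuinely nonzero in $F$ (for instance $d_F(\mu_5)=\xi_1^2\xi_3+\xi_1\xi_2^2-\xi_2^2\xi_1-\xi_3\xi_1^2$ when $h=4$), so the defining relations of $Q$ really must be used.

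The key claim, which I would verify by a coefficient comparison, is the identity in $F$
\[ d_F(\mu_i)=\sum_{\substack{j+k=i\\ 1\le j\le h-1,\ k\ge h}}\bigl(\xi_j\mu_k-\mu_k\xi_j\bigr)\qquad(h+1\le i\le 2h-2). \]
Writing the coefficient of a monomial $\xi_a\xi_b\xi_c$ with $a+b+c=i$: on the left it equals $[a+b\le h-1]-[b+c\le h-1]$, on the right $[b+c\ge h]-[a+b\ge h]$ (and on both sides a monomial with some index $\ge h$ gets coefficient zero), and these agree since $[x\le h-1]=1-[x\ge h]$. Keeping the summation ranges and the signs straight — the $\xi_i$ being odd — is the step I expect to be the main obstacle, though it is pure bookkeeping. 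Granting the identity, each term on the right lies in $I$: for $k>h$ the element $\mu_k$ is one of the defining relations, and for $k=h$ we write $\mu_h=(\mu_h+\tau^\ell)-\tau^\ell$ and use that $\xi_j\tau^\ell-\tau^\ell\xi_j\in I$ since $\tau\xi_j-\xi_j\tau\in I$. Therefore $d_F(\mu_i)\in I$, $d_F$ descends to $Q$, and $d^2=0$ on $Q$.
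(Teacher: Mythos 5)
Your argument is correct, but it takes a genuinely different route from the paper. The paper's proof stays inside $Q$: it uses the preceding normal-form lemma (every element is uniquely $f+\xi_{h-1}g$ with $f,g$ in the free subalgebra on $\xi_1,\dots,\xi_{h-2}$ over $k[\tau]$, cf.\ Definition~\ref{def:standard}), writes $d(f+\xi_{h-1}g)=df+(\xi_1\xi_{h-2}+\dots+\xi_{h-2}\xi_1)g-\xi_{h-1}dg$, invokes Lemma~\ref{le:Rd2=0} on that free subalgebra, and observes that the two cross terms $(\xi_1\xi_{h-2}+\dots+\xi_{h-2}\xi_1)dg$ cancel. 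You instead work upstairs in the free algebra $F$, note that $d_F^2$ is an (even) derivation so it suffices to check it on generators — where the cancellation is exactly the Lemma~\ref{le:Rd2=0} argument — and then prove that the defining ideal is a differential ideal via the identity $d_F(\mu_i)=\sum_{j+k=i,\,k\ge h}(\xi_j\mu_k-\mu_k\xi_j)$ for $h+1\le i\le 2h-2$; your coefficient comparison $[a+b\le h-1]-[b+c\le h-1]=[b+c\ge h]-[a+b\ge h]$ is right (and your $h=4$, $i=5$ example checks out). What your approach buys is that it also establishes something the paper's presentation leaves implicit, namely that $d$ is well defined on $Q=F/I$ at all, i.e.\ that the relations span a differential ideal; the cost is the extra combinatorial identity. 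The paper's route is shorter because the normal-form lemma lets it sidestep the quotient entirely and piggyback directly on the computation in $R$. Both proofs are sound; yours is more self-contained, the paper's more economical.
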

\begin{proof}
The differential is given by
\[ d(f+\xi_{h-1}g)=
(df+(\xi_1\xi_{h-2}+\dots+\xi_{h-2}\xi_1)g)-\xi_{h-1}dg. \]
On the free subalgebra generated by $\xi_1,\dots,\xi_{h-2}$, the differential
is the same as in the algebra $R$ above, and so by 
Lemma~\ref{le:Rd2=0} we have $d^2=0$ on this subalgebra.
Thus we have
\begin{align*} 
d^2(f+\xi_{h-1}g)
&=d(df+(\xi_1\xi_{h-2}+\dots+\xi_{h-2}\xi_1)g-\xi_{h-1}dg)\\
&=d^2f + (\xi_1\xi_{h-2}+\dots+\xi_{h-2}\xi_1)dg
-(\xi_1\xi_{h-2}+\dots+\xi_{h-2}\xi_1)dg\\
&=0.
\qedhere
\end{align*}
\end{proof}

\begin{lemma}\label{le:QDelta}
The map $\Delta\colon Q \to Q \otimes Q$ is a map of DG algebras.
\end{lemma}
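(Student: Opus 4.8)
The plan is to mimic the proof of Lemma~\ref{le:RDelta} as closely as possible, exploiting the description of $Q$ as generated freely by $\xi_1,\dots,\xi_{h-2}$ on one side and $\xi_{h-1}$ (with $\xi_{h-1}^2=0$) on the other, over the central polynomial ring $k[\tau]$. Since $\Delta$ is defined on generators and $Q$ is not free as an algebra, the first thing to check is that $\Delta$ is well defined as an algebra map, i.e.\ that it respects the defining relations: the centrality relations $\tau\xi_i=\xi_i\tau$ are immediate since $\tau\otimes 1+1\otimes\tau$ is central and each $\Delta(\xi_i)$ is a sum of terms each involving a single $\xi$; the relations $\sum_{j+k=i}\xi_j\xi_k = -\tau^\ell$ (for $i=h$) and $=0$ (for $h+1\le i\le 2h-2$) need to be verified after applying $\Delta$. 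For this I would use the same primitivity observation as in Lemma~\ref{le:RDelta}: each $\xi_j$ is primitive and of odd degree, so $\xi_j\xi_k+\xi_k\xi_j$ is primitive, hence $\sum_{j+k=i}\xi_j\xi_k$ is primitive for every $i$; a primitive element maps under $\Delta$ to $(\text{itself})\otimes 1+1\otimes(\text{itself})$, and since $-\tau^\ell$ is also primitive (as $\tau$ is primitive of even degree, $\Delta(\tau^\ell)=\tau^\ell\otimes 1+1\otimes\tau^\ell$), the relations are preserved. This shows $\Delta\colon Q\to Q\otimes Q$ is a well-defined algebra homomorphism.

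Next I would check that $\Delta$ commutes with the differential. It suffices to check $d\Delta=\Delta d$ on algebra generators, since both $d\Delta$ and $\Delta d$ are derivations with respect to the algebra structure on $Q\otimes Q$ (using that $\Delta$ is an algebra map and $d$ is a derivation on both $Q$ and $Q\otimes Q$), and a derivation is determined by its values on generators. On $\tau$ both sides are zero. On $\xi_i$ with $1\le i\le h-1$ we have $d(\xi_i)=\sum_{j+k=i}\xi_j\xi_k$, which by the paragraph above is primitive; so $\Delta d(\xi_i)=d(\xi_i)\otimes 1+1\otimes d(\xi_i)$. On the other hand $d\Delta(\xi_i)=d(\xi_i\otimes 1+1\otimes\xi_i)=d(\xi_i)\otimes 1+1\otimes d(\xi_i)$ (here one must be careful with the Koszul sign, but since $d$ applied to a factor $1$ vanishes, the sign issue does not arise). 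Hence $d\Delta(\xi_i)=\Delta d(\xi_i)$, and $\Delta$ is a map of DG algebras.

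The main subtlety — really the only place requiring care — is the verification that the relations $\sum_{j+k=i}\xi_j\xi_k = -\tau^\ell$ and $=0$ are respected, because here, unlike in $R$, the sums $\xi_j\xi_k$ for $j+k=i$ with $h+1\le i\le 2h-2$ are being set to zero as \emph{relations} rather than being automatically zero. One has to confirm that the element $\sum_{j+k=i}\xi_j\xi_k$ of the free algebra is genuinely primitive before quotienting, so that imposing the relation is compatible with $\Delta$; this is exactly the computation $\Delta(\xi_j\xi_k+\xi_k\xi_j)=(\xi_j\xi_k+\xi_k\xi_j)\otimes 1+1\otimes(\xi_j\xi_k+\xi_k\xi_j)$, which uses that $\xi_j,\xi_k$ are odd and primitive so the cross terms $\xi_j\otimes\xi_k$ cancel with $\pm\xi_k\otimes\xi_j$. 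I would also note in passing that $\Delta$ respects the internal bigrading, since on generators each summand of $\Delta(\xi_i)$ and $\Delta(\tau)$ carries the prescribed bidegree. With these checks in place the lemma follows; I expect the whole argument to be short, the only "obstacle" being bookkeeping of Koszul signs in the primitivity computations, which is routine given the oddness of the $\xi_i$.
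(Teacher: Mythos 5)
Your overall route is the same as the paper's: the printed proof likewise reduces everything to the observation that, because the $\xi_j$ are odd and primitive, each $\sum_{j+k=i}\xi_j\xi_k$ is primitive, so that $\Delta$ takes relations to relations and commutes with $d$; your additional remarks (centrality of $\Delta(\tau)$, the derivation argument reducing $d\Delta=\Delta d$ to generators, compatibility with the internal bigrading) are correct and are exactly the routine checks the paper leaves implicit.

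The one step that does not work as you wrote it is the verification of the relation $\sum_{j+k=h}\xi_j\xi_k=-\tau^\ell$. You justify it by asserting that $\tau^\ell$ is primitive ``as $\tau$ is primitive of even degree'', i.e.\ that $\Delta(\tau^\ell)=\tau^\ell\otimes1+1\otimes\tau^\ell$. Since $\Delta$ is multiplicative and $\tau$ is even, what is actually true is $\Delta(\tau^\ell)=(\tau\otimes1+1\otimes\tau)^\ell=\sum_{j=0}^{\ell}\binom{\ell}{j}\,\tau^{j}\otimes\tau^{\ell-j}$, and the cross terms with $0<j<\ell$ are nonzero in $Q\otimes Q$ (the standard monomials make $Q$ free as a $k[\tau]$-module), so they survive unless every $\binom{\ell}{j}$ vanishes in $k$. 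Hence the $i=h$ relation is respected by $\Delta$ precisely when $\mathrm{char}\,k=p>0$ and $\ell$ is a power of $p$ --- which covers the motivating case $\ell=p^n$ in characteristic $p$, but not arbitrary $(k,\ell)$; primitivity of an even element never passes to its powers for free, this being a characteristic-$p$ (divided power) phenomenon. The paper's own proof is silent on this point --- it invokes only the primitivity of the sums $\sum_{j+k=i}\xi_j\xi_k$ --- so your write-up has in fact isolated the one place where something beyond that observation is needed; but your stated justification of it is not a proof. You should either impose the hypothesis on $\mathrm{char}\,k$ and $\ell$ (or otherwise explain why the binomial cross terms vanish in the case at hand) or restrict the claim accordingly; everything else in your argument stands.
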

\begin{proof}
As in Lemma~\ref{le:RDelta}, for each $i>0$, $\sum_{j+k=i}\xi_j\xi_k$
is primitive, so both the relations and the elements $d(\xi_i)$ are
primitive.
Hence $\Delta$ takes relations to relations, and $d\Delta=\Delta d$.
\end{proof}

\begin{proposition}
The definitions above make $Q$ into a cocommutative DG Hopf algebra.
\end{proposition}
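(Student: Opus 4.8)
The plan is to verify the three remaining structural axioms — coassociativity and counitality of the comultiplication, compatibility of the antipode with the unit and counit, and cocommutativity — having already established in Lemmas~\ref{le:Qd2=0} and~\ref{le:QDelta} that $Q$ is a DG algebra and that $\Delta$ is a DG algebra map. Since $Q$ is generated as an algebra by $\xi_1,\dots,\xi_{h-1}$ and $\tau$, and all the structure maps in question ($\Delta$, the counit $\ep$, the antipode $S$) are required to be algebra (anti)homomorphisms, every identity reduces to a check on these generators; the multiplicativity then propagates it to all of $Q$.

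First I would recall that on each generator $\Delta$ has the primitive form $\Delta(x) = x\otimes 1 + 1\otimes x$. For a primitive element, coassociativity $(\Delta\otimes\Id)\Delta = (\Id\otimes\Delta)\Delta$ is immediate — both sides send $x$ to $x\otimes1\otimes1 + 1\otimes x\otimes1 + 1\otimes1\otimes x$ — and both composites extend multiplicatively to give the same map on all of $Q$. Counitality is equally routine: with $\ep$ the algebra map killing all generators (so $\ep(\tau)=0$, $\ep(\xi_i)=0$, $\ep(1)=1$), one has $(\ep\otimes\Id)\Delta(x) = x$ and $(\Id\otimes\ep)\Delta(x)=x$ on generators, hence everywhere. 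The antipode axiom $m(S\otimes\Id)\Delta = m(\Id\otimes S)\Delta = \iota\ep$ was already essentially dealt with in Proposition~\ref{pr:RHopf} for $R$; here it again only needs checking on the generators $\xi_i$ and $\tau$, where $S(x)+x = 0$ gives exactly $m(S\otimes\Id)\Delta(x) = S(x)\cdot1 + 1\cdot x = 0 = \iota\ep(x)$, and similarly on the other side. (One should also note $S(1)=1$, $\ep S = \ep$, which hold trivially.) Cocommutativity, that the flip $T\colon Q\otimes Q \to Q\otimes Q$ satisfies $T\Delta = \Delta$, likewise holds on generators since $T(x\otimes1 + 1\otimes x) = 1\otimes x + x\otimes 1$, and both $T\Delta$ and $\Delta$ are algebra maps, so they agree on all of $Q$.

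The only point needing genuine care — and the one I would single out as the main obstacle — is that $S$ is an algebra \emph{anti}-automorphism while $\Delta$ is an algebra homomorphism into $Q\otimes Q$ with the graded tensor product multiplication, so "multiplicativity propagates the identity from generators" requires a compatible bookkeeping of Koszul signs. Concretely, to know that $m(S\otimes\Id)\Delta$ is determined by its values on generators one uses that it is an algebra map (as a composite of the algebra maps $\Delta$ and $m$ with $S\otimes\Id$, which is an anti-homomorphism in the first slot but the composite with $m$ straightens this out), and the sign conventions must be chosen consistently with those already fixed for $d$ on $Q\otimes Q$ in Lemma~\ref{le:Qd2=0}. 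Once the sign conventions are pinned down exactly as in the classical (ungraded or graded) Hopf algebra case, every computation is formal. Since all of this is standard and the nontrivial content (the DG bialgebra structure) is Lemmas~\ref{le:Qd2=0}–\ref{le:QDelta}, the proof can be given in a couple of lines: it suffices to check coassociativity, counitality, the antipode identities, and cocommutativity on the generators $\xi_1,\dots,\xi_{h-1},\tau$, where each follows immediately from primitivity of the $\Delta(x)$ and from $S(x) = -x$, exactly as in Proposition~\ref{pr:RHopf}.
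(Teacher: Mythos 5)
Your proof is correct and follows essentially the same route as the paper: the DG bialgebra structure comes from Lemmas~\ref{le:Qd2=0} and~\ref{le:QDelta}, and the remaining Hopf axioms (antipode identities, cocommutativity, together with the routine coassociativity and counit checks) are verified on the generators exactly as in Proposition~\ref{pr:RHopf}. The paper simply says "same as for $R$", so your slightly more explicit write-up, including the remark on sign conventions, is a faithful expansion of its argument rather than a different approach.
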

\begin{proof}
The proof of this is similar to the proof of
Proposition~\ref{pr:RHopf}, but using Lemmas~\ref{le:Qd2=0}
and~\ref{le:QDelta} in place of Lemmas~\ref{le:Rd2=0} and~\ref{le:RDelta}.
\end{proof}

\section{\texorpdfstring{The $A_\infty$ algebras $A$ and $B$}
{The A∞ algebras A and B}}\label{se:A}

In this section we introduce an $A_\infty$ algebra $A$ which is
quasi-isomorphic to the DG algebra $Q$ of the last section. We then
describe the Koszul dual $B$ of $A$. These are the $A_\infty$ algebras
discussed in our previous paper~\cite{Benson/Greenlees:bg8}.

Let $h$ and $\ell$ be positive integers with $h\ge 3$, and let $k$ be
a field. We are interested in the following $A_\infty$ algebra $A=A_{h,\ell}$.
The differential $m_1$ is zero, $m_2$ is the strictly associative
multiplication giving $A$ the ring structure of $k[\tau] \otimes
\Lambda(\xi)$, where $|\xi|=(2a-1,\ell)$ and $|\tau|=(2b,h)$, 
with $ah-b\ell=1$. We have
\begin{equation} \label{eq:mh}
m_h(\xi,\dots,\xi)=(-1)^{h(h-1)/2}\tau^\ell, 
\end{equation}
which implies 
\[  m_h(\tau^{j_1}\xi,\dots,\tau^{j_h}\xi)
=(-1)^{h(h-1)/2}\tau^{\ell+j_1+\dots+j_h} \]
for all $j_1,\dots,j_h\ge 0$. All $m_i$ for $i>2$ on all
other $i$-tuples of monomials give zero. We allow the
elements $\tau$ and $\xi$ to be either in positive or
in negative degree, and we grade everything homologically.
The relation \eqref{eq:mh} may be interpreted as saying that the
Massey product of $h$ copies of $\xi$ is equal to $-\tau^\ell$, 
the sign being the standard one relating Massey products with
$A_\infty$ structure;
see~\cite[Theorem~3.1]{Lu/Palmieri/Wu/Zhang:2009a},
\cite[Theorem~3.2]{Buijs/MorenoFernandez/Murillo:2020a}.

We extend the definition to $h=2$ by letting $A$ be the formal
$A_\infty$ algebra $k[\tau,\xi]/(\xi^2+\tau^\ell)$, with
$|\xi|=(2a-1,\ell)$, $|\tau|=(2b,2)$, $2a-b\ell=1$, $a\ne 0$, and with all $m_i$
apart from $m_2$ equal to zero.

We next show that the DG algebra $Q$ (forgetting the Hopf structure)
is quasi-isomorphic to $A$ as an $A_\infty$-algebra. In other words,
$A\cong H_*(Q)$, with
the $A_\infty$ structure given by Kadeishvili's theorem~\cite{Kadeishvili:1982a}.

\begin{theorem}\label{th:QsimA}
There is a quasi-isomorphism from the DG algebra $Q$ to the $A_\infty$
algebra $A$,  sending $\tau$ to $\tau$ and $\xi_1$ to $\xi$.
\end{theorem}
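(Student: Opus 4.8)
The plan is to first compute $H_*(Q)$ as a graded algebra, verifying it is $k[\tau]\otimes\Lambda(\xi)$ (with the degenerate relation $\xi^2=-\tau^\ell$ when $h=2$), and then to pin down the $A_\infty$ structure on this homology by identifying the relevant Massey product. Since $\tau$ is central and $Q$ is free over $k[\tau]$ on the standard monomials, I would argue that $Q\cong R_h\otimes_k k[\tau]$ as a $k[\tau]$-module, where $R_h$ is the DG subalgebra generated by $\xi_1,\dots,\xi_{h-1}$ but with the differential ideal generated by $\xi_{\ge h}$ removed and the extra relations $\sum_{j+k=i}\xi_j\xi_k = 0$ for $i>h$ and $=-\tau^\ell$ for $i=h$. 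The cleanest route is to filter $Q$ by powers of the (central, $d$-closed) element $\tau$: the associated graded is the DG algebra $R_h[\tau]$ in which the relation $\sum_{j+k=h}\xi_j\xi_k$ is set to $0$ rather than $-\tau^\ell$. I would then adapt the contracting homotopy $\delta$ of Lemma~\ref{le:H*R} to this truncated setting, checking that $\delta d + d\delta$ is again the identity away from the span of $1$ and $\xi_1$ times $k[\tau]$; the argument is essentially the same since $\delta$ only ever lowers height and the truncation relations live in height $2$, so the homotopy identities computed there go through verbatim except for bookkeeping of the $\tau^\ell$ term which I must track carefully. This yields $H_*(\mathrm{gr}\,Q) = \Lambda(\xi_1)\otimes k[\tau]$, and since the spectral sequence of the $\tau$-filtration is concentrated in the right way (the filtration is by a regular central element and the $E_1$-page has no room for differentials for degree/weight reasons) it collapses, giving $H_*(Q)\cong k[\tau]\otimes\Lambda(\xi)$ with $\xi=[\xi_1]$, $\tau=[\tau]$.

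Next I would fix the $A_\infty$ structure transported to $H_*(Q)$ via Kadeishvili's theorem. All products $m_i$ for $i>2$ other than the claimed $m_h$ vanish for degree reasons: by the internal grading $|\xi_i|=(2ia-1,i\ell)$, $|\tau|=(2b,h)$, the only homogeneous multilinear operations $H_*(Q)^{\otimes i}\to H_*(Q)$ of the correct bidegree $(\,\cdot\,, \,\cdot\,)$ with the $A_\infty$ sign/degree shift are the ones listed — in particular an $i$-fold product of monomials in $\xi$ and $\tau$ can only land in $k[\tau]$ when the $\xi$-weights sum to a multiple of $\ell$, and the shortest such with nonzero output is $i=h$ copies of $\xi$. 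So the content is the single computation $m_h(\xi,\dots,\xi)=(-1)^{h(h-1)/2}\tau^\ell$. This I would extract directly from the defining relation $\sum_{j+k=h}\xi_j\xi_k = -\tau^\ell$ together with $d(\xi_i)=\sum_{j+k=i}\xi_j\xi_k$ for $i<h$: the standard recursive recipe for Kadeishvili's $m_h$ on $(\xi_1,\dots,\xi_1)$ builds a sequence of chains $\xi_2,\xi_3,\dots,\xi_{h-1}$ bounding the successive partial products, and the final obstruction is precisely $d(\xi_h)=-\tau^\ell$ — this is exactly the statement of the Remark after the definition of $R$, that $\mu_h=d(\xi_h)$ represents the $h$-fold Massey power of $[\xi_1]$. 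Matching the sign convention of \cite{Lu/Palmieri/Wu/Zhang:2009a} gives the $(-1)^{h(h-1)/2}$ factor. To actually promote this to a map of $A_\infty$ algebras, I would construct an explicit $A_\infty$-quasi-isomorphism $Q\to A$ by choosing a chain-level section $A\to Q$ (send $\tau^j\xi^\epsilon$ to $\tau^j\xi_1^\epsilon$) of the projection $Q\to H_*(Q)$ and a homotopy (built from $\delta$ as above, $\tau$-linearly extended), then running the homotopy-transfer / homological-perturbation formulas; these produce the transferred structure on $H_*(Q)$ and the quasi-isomorphism simultaneously, and the $\xi_i$ for $2\le i\le h-1$ are exactly the components of the transfer homotopy applied to powers of $\xi$.

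The main obstacle is the sign computation in the Massey-product/$m_h$ identification: keeping track of the Koszul signs through the iterated homotopy-transfer formula for $m_h$ applied to $h$ odd-degree inputs, and reconciling the resulting sign with the standard Massey-product sign $(-1)^{h(h-1)/2}$ of \cite{Lu/Palmieri/Wu/Zhang:2009a}, \cite{Buijs/MorenoFernandez/Murillo:2020a}. A secondary technical point is justifying collapse of the $\tau$-adic spectral sequence and the $k[\tau]$-freeness of the transferred homotopy — i.e. that the whole transfer can be done $k[\tau]$-linearly so that $\tau$ survives as a polynomial generator on homology — but this follows from centrality and $d$-closedness of $\tau$ together with the fact that our contracting homotopy $\delta$ can be chosen to commute with multiplication by $\tau$. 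Everything else is degree bookkeeping in the bigrading, which rigidly forces all other higher products to vanish.
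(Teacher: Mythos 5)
Your proposal is correct and follows essentially the same route as the paper: extend the contracting homotopy $\delta$ of Lemma~\ref{le:H*R} $k[\tau]$-linearly to $Q$ to identify $H_*Q\cong A$ as a ring, then invoke Kadeishvili/homotopy transfer, with the bigrading forcing all higher products to vanish except $m_h$, which is read off from the relation $\sum_{j+k=h}\xi_j\xi_k=-\tau^\ell$ exactly as in your Massey-product argument. The only real difference is your $\tau$-adic filtration/spectral-sequence detour, which the paper avoids by running $\delta$ directly on $Q$ over $k[\tau]$ (the differential of a standard monomial is again a combination of standard monomials, so the computation of Lemma~\ref{le:H*R} applies verbatim and no collapse argument is needed).
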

\begin{proof}
First, we show that $H_*Q$ is isomorphic to $A$ as an algebra over $k[\tau]$.
The proof is similar to the proof of Lemma~\ref{le:H*R},
but working over $k[\tau]$ instead of $k$.
Namely, we define a linear map $\delta\colon Q \to Q$ sending 
a monomial of the form $\xi_1\xi_i f$ to $\xi_{i+1}f$ for
$1\le i \le h-2$, and all other standard monomials 
(see Definition~\ref{def:standard}) to zero. Thus
$\delta(f+\xi_{h-1}g) = \delta(f)$. The same computation
as in Lemma~\ref{le:H*R} shows that $\delta d + d \delta$
is the identity on all monomials except those in
$k[\tau] + \xi_1 k[\tau]$, where it is zero. Thus $\delta$ defines
a homotopy from the identity map of $Q$ to the projection 
onto $k[\tau] + \xi_1 k[\tau]$. It follows that $H_*Q$ is
isomorphic to $A$ as a ring, with $\tau$ and $\xi_1$
corresponding to $\tau$ and $\xi$. The maps $m_i$ on $H_*Q$ are
easy to calculate using the elements $\xi_i$, and give zero for $i>2$ except in the case
of $m_h$, where it gives
$m_h(\xi,\dots,\xi)=(-1)^{h(h-1)/2}\tau^\ell$.
Using Kadeishvili's theorem~\cite{Kadeishvili:1982a} completes the proof.
\end{proof}

\begin{corollary}\label{co:HQ}
We have $H_*(Q) \cong \begin{cases} k[\tau] \otimes \Lambda(\xi) & h>2
  \\ k[\tau,\xi]/(\xi^2+\tau^\ell) & h=2.
\end{cases}$\qed
\end{corollary}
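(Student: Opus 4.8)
The plan is to read off the corollary directly from Theorem~\ref{th:QsimA}, since that theorem already establishes a quasi-isomorphism $Q\simeq A$ of $A_\infty$ algebras and in particular an isomorphism $H_*(Q)\cong A$ of graded rings. Thus everything reduces to recording what $A$ is as a graded ring in the two cases $h>2$ and $h=2$, which is exactly how $A=A_{h,\ell}$ was defined at the start of Section~\ref{se:A}: for $h\ge 3$ the underlying ring (i.e.\ the $m_2$-structure) is $k[\tau]\otimes\Lambda(\xi)$ with the higher product $m_h$ carrying the non-formality, while for $h=2$ the algebra $A$ was defined to be the formal algebra $k[\tau,\xi]/(\xi^2+\tau^\ell)$.

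Concretely, the proof is a single sentence: apply Theorem~\ref{th:QsimA} to get $H_*(Q)\cong A$ as graded $k[\tau]$-algebras, then substitute the definition of $A$. For $h>2$ this gives $H_*(Q)\cong k[\tau]\otimes\Lambda(\xi)$; for $h=2$ it gives $H_*(Q)\cong k[\tau,\xi]/(\xi^2+\tau^\ell)$. One could alternatively, and perhaps more self-containedly, note that for $h=2$ the DG algebra $Q$ has zero differential (Example~\ref{eg:h=2}), so $H_*(Q)=Q=k[\tau,\xi_1]/(\xi_1^2+\tau^\ell)$ on the nose, while for $h>2$ the homotopy $\delta$ constructed in the proof of Theorem~\ref{th:QsimA} identifies $H_*(Q)$ with the $k[\tau]$-span of $1$ and $\xi_1$, which as a ring is $k[\tau]\otimes\Lambda(\xi)$ because $\xi_1^2$ is a boundary (it equals $d(\xi_2)$ when $h\ge 3$, using the relation $\sum_{j+k=2}\xi_j\xi_k=d(\xi_2)$ valid for $h-1\ge 2$).

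There is essentially no obstacle here: the corollary is a bookkeeping consequence of the theorem just proved, and the only thing to be careful about is the case division at $h=2$, where the $A_\infty$ algebra $A$ was given a different (formal) definition precisely because the homology ring acquires the relation $\xi^2=-\tau^\ell$ rather than $\xi^2=0$. Accordingly I would simply write ``This is immediate from Theorem~\ref{th:QsimA} and the definition of $A$,'' which is presumably why the statement in the excerpt is already marked with \qed and carries no separate proof.
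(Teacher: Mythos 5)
Your proposal is correct and matches the paper's (implicit) argument exactly: the corollary carries a \qed precisely because it is the statement $H_*(Q)\cong A$ as graded rings, immediate from Theorem~\ref{th:QsimA} together with the two-case definition of $A$ in Section~\ref{se:A}. Your alternative remark for $h=2$ (zero differential, so $H_*(Q)=Q$ on the nose) is a harmless extra check consistent with Example~\ref{eg:h=2}.
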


Let $B$ be the $A_\infty$ algebra whose algebra structure 
is $k[x] \otimes \Lambda(t)$ with $|x|=(-2a,-\ell)$,
$|t|=(-2b-1,-h)$, 
\[ m_\ell(x^{j_1}t,\dots,x^{j_\ell}t)
= (-1)^{\ell(\ell-1)/2}x^{h+j_1+\dots+j_\ell}, \]
and all $m_i$ with $i>2$ are zero on all other monomials.
In the exceptional case where $\ell=2$, we define $B$ to be the formal
$A_\infty$ algebra $k[x,t]/(t^2+x^h)$.
It was shown in~\cite{Benson/Greenlees:bg8} that $A$ and $B$ are
Koszul dual. Thus $A$ is quasi-isomorphic to $\iEnd_{\Db(B)}(k)$ and
$B$ is quasi-isomorphic to $\iEnd_{\Db(A)}(k)$. Here, $\iEnd_{\Db(B)}(k)$ denotes the
$A_\infty$ endomorphism ring whose homology is
\[ H_*\iEnd_{\Db(B)}(k)\cong\End_{\Db(B)}(k), \] 
and so on.

\section{Hochschild cohomology}\label{se:HH}

We will recall the definition of the Hochschild cohomology of an
$A_\infty$-algebra and then calculate it for the $A_\infty$ algebras 
$A$ and $B$ described in the previous section. The point
of this is that nilpotent elements in (for example) $\HH^*(A)$ control
certain uniform processes of construction in the category of
$A$-modules: we will make essential use of this in our proof of 
Theorem \ref{th:ThickAtau-1}.

The bar resolution $\bB(\fa)=\bigoplus_{n\ge 0}\fa^{\otimes (n+2)}$  
of an $A_\infty$ algebra $\fa$ is described in 
Section~3 of Getzler and Jones~\cite{Getzler/Jones:1990a},
see also Definition~12.6 of Stasheff~\cite{Stasheff:1970a}.
The action of the differential on $\fa^{\otimes (n+2)}$ in bar notation is
\begin{align*} 
d(x \otimes& [a_1|\dots|a_n]\otimes y) =
\sum_{j=0}^n \pm m_{j+1}(x,a_1,\dots,a_j) \otimes 
[a_{j+1}|\dots|a_n] \otimes y \\
&{} + \sum_{0\le  i+j \le n}\pm
x \otimes [a_1|\dots|a_i|m_j(a_{i+1},\dots,a_{i+j})|a_{i+j+1}|
\dots|a_n] \otimes y \\
&{} + \sum_{j=0}^n \pm x \otimes [a_1|\dots|a_{n-j}]
\otimes m_{j+1}(a_{n-j+1},\dots,a_n,y),
\end{align*}
where the signs are determined by the usual sign conventions.
Taking $\fa$-$\fa$-bimodule homomorphisms to a bimodule $M$, 
we obtain the differential on Hochschild cochains
\[ \Hom_{\fa,\fa}(\fa^{\otimes (n+2)},M) \cong 
\Hom_k(\fa^{\otimes n},M) \]
as follows:
\[ (df)[a_1|\dots|a_n]=d(f[a_1|\dots|a_n]) + \sum_{0\le i + j \le n}\pm
f[a_1|\dots|a_i|m_j(a_{i+1},\dots,a_{i+j})|a_{i+j+1}|\dots|a_n], \]
see also Section~1 
of Roitzheim and Whitehouse~\cite{Roitzheim/Whitehouse:2011a}.
The cohomology of this complex is $\HH^*(\fa,M)$. If $M=\fa$,
we write $\HH^*(\fa)$ for $\HH^*(\fa,\fa)$.

We filter $\bB(\fa)$ by number of bars, 
$F_i\,\bB(\fa)=\bigoplus_{n\le i}\fa^{\otimes (n+2)}$. 
This gives a filtration on Hochschild cochains, for which $F_i$ is
the cochains which vanish on $F_i\,\bB(\fa)$.
With this filtration, $F_0$ is the whole complex and $\bigcap_i F_i = 0$. This 
leads to a spectral sequence
in which the differentials $d_n$ are given by the terms involving $\pm m_{n+1}$. Thus the
$E^1$ page is the Hochschild complex of $H_*\fa$ with coefficients in
$H_*M$, and the $E^2$ page
is $\HH^*(H_*\fa,H_*M)$. So the spectral sequence takes
the form
\begin{equation}\label{eq:HHss} 
\HH^*(H_*\fa,H_*M) \Rightarrow \HH^*(\fa,M). 
\end{equation}
We are numbering everything homologically, so the Hochschild degrees
in $\HH^*H_*\fa$ are negative, and the spectral sequence lives in the
second and third quadrants.

Applying $\displaystyle\lim_\leftarrow$ with respect to $i$ to the exact sequences
$0 \to F_i \to F_0 \to F_0/F_i \to 0$, we get
\[ 0 = \bigcap_i F_i \to F_0 \to \lim_\leftarrow F_0/F_i \to
  {\lim_\leftarrow}^1 F_i \to 0. \]
So the spectral sequence is conditionally convergent, and is strongly
convergent if and only if $\displaystyle{\lim_\leftarrow}^1F_i=0$, 
which is equivalent to $\displaystyle{\lim_\leftarrow}^1E^i=0$ in the
spectral sequence, see for example Theorem~7.1 of 
Boardman~\cite{Boardman:1999a}. In particular, if each
graded piece of $E^r$ is finite dimensional for some $r$, then the spectral
sequence~\eqref{eq:HHss} is strongly convergent.

\begin{theorem}\label{th:Boardman}
Given a map of exact couples $(D,E)\to(D',E')$, where both spectral sequences are
conditionally convergent and live in the (homologically indexed)
left half plane, if the map $E^r \to E'^{\,r}$ is an isomorphism for some
$r$ then $D \to D'$ is an isomorphism of filtered graded groups.
\end{theorem}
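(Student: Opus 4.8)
The plan is to deduce Theorem~\ref{th:Boardman} from the general comparison machinery for conditionally convergent spectral sequences, essentially as in Boardman~\cite{Boardman:1999a}. The point is that a map of exact couples $(D,E)\to(D',E')$ inducing an isomorphism on $E^r$ for some $r$ automatically induces an isomorphism on $E^s$ for all $s\ge r$, hence on $E^\infty$; the remaining work is to bootstrap from the $E^\infty$ isomorphism back to an isomorphism of the (filtered, complete) abutments $D\to D'$, which is exactly where conditional convergence and the left-half-plane hypothesis enter.

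First I would set up notation: write the exact couple as $D\xrightarrow{i}D\xrightarrow{j}E\xrightarrow{k}D$ with the derived couples $(D^{(s)},E^{(s)})$, so that $E^{(s)}=E^s$. The map of exact couples induces maps on all derived couples, and since $E^r\to E'^{\,r}$ is an isomorphism, an easy induction (each $E^{s+1}$ is a subquotient of $E^s$ and the maps are compatible with the $d_s$-differentials) shows $E^s\to E'^{\,s}$ is an isomorphism for every $s\ge r$. Passing to the limit, the induced map on $E^\infty=\lim_\leftarrow E^s$ (the conditional-convergence target) is an isomorphism; one also checks the ${\lim_\leftarrow}^1$ terms match up, so that strong convergence or its failure is transported identically across the map.

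Next I would bring in the left-half-plane hypothesis. In a (homologically indexed) left half plane, in each total degree the filtration on $D$ is complete and exhaustive in the relevant sense: $D$ in a fixed degree is the inverse limit of the $D/(\text{image of }i^n)$ (this is the statement that $\bigcap_n \operatorname{im}(i^n)=0$ together with the Mittag--Leffler-type control that the half-plane condition supplies), and the associated graded of this filtration is built from the $E^\infty$-terms. Now run the standard five-lemma/filtration argument degree by degree: the map $D\to D'$ is a map of complete filtered groups inducing an isomorphism on associated graded pieces (these are the $E^\infty$-subquotients, which we have just shown agree), and in the left half plane the filtration in each degree is finite or at least complete with vanishing $\lim^1$, so an isomorphism on $\operatorname{gr}$ forces an isomorphism on the filtered objects themselves. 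This gives that $D\to D'$ is an isomorphism of filtered graded groups, as claimed.

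The main obstacle is the completeness/convergence bookkeeping: one must be careful that ``conditionally convergent in the left half plane'' really does give enough to run the filtration argument without extra Mittag--Leffler hypotheses. The cleanest route is simply to cite the relevant statement from Boardman~\cite{Boardman:1999a} (the comparison theorem for conditionally convergent spectral sequences, e.g. the result near Theorem~7.2--7.3 there), observing that the left-half-plane condition guarantees the hypotheses of that theorem are met because in each total degree only finitely many filtration quotients are nonzero in a bounded range and the relevant derived $E^\infty$ is the honest $E^\infty$. Thus the proof is essentially: (1) propagate the $E^r$-isomorphism to all later pages and to $E^\infty$; (2) invoke Boardman's comparison theorem, whose running hypotheses are supplied by conditional convergence plus the half-plane restriction, to conclude $D\xrightarrow{\sim}D'$ as filtered graded groups.
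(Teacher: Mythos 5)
Your proposal follows essentially the same route as the paper: propagate the $E^r$-isomorphism to all later pages, conclude that $E^\infty \to E'^{\,\infty}$ and ${\lim_\leftarrow}^1 E^i \to {\lim_\leftarrow}^1 E'^{\,i}$ are isomorphisms, and then invoke Boardman's comparison theorem (Theorem~7.2 of \cite{Boardman:1999a}) for conditionally convergent half-plane spectral sequences, which is exactly the paper's two-line proof. The only caveat is your parenthetical claim that in the left half plane only finitely many filtration quotients are nonzero in each degree: this is neither true in general nor needed, since conditional convergence together with the $E^\infty$ and ${\lim_\leftarrow}^1$ isomorphisms is precisely what Boardman's theorem requires.
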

\begin{proof}
This follows from Theorem~7.2 of Boardman~\cite{Boardman:1999a},
since the hypotheses imply that $E^\infty \to E'^{\,\infty}$ and 
$\displaystyle{\lim_\leftarrow}^1E^i\to
\displaystyle{\lim_\leftarrow}^1E'^{\,i}$
are isomorphisms. See also Theorem~B.7 of 
Greenlees and May~\cite{Greenlees/May:1995a}.
\end{proof}

We need to know that $A, B$ and $Q$ have the same Hochschild 
cohomology. Since our equivalences are slightly indirect we need some 
machinery to see the isomorphism preserves structure. Keller's article
\cite{Keller:dih} recalls the definition of $B_\infty$-algebras. 

\begin{proposition}\label{pr:Keller}
There are isomorphisms in the homotopy category of $B_\infty$ algebras between the 
Hochschild complexes of $Q$, $A$ and $B$.
\end{proposition}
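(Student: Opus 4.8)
The plan is to reduce the statement to two already-established facts: first, that $Q$ is quasi-isomorphic as an $A_\infty$ algebra to $A$ (Theorem~\ref{th:QsimA}), and second, that $A$ and $B$ are Koszul dual. For the first, I would invoke the general principle that the Hochschild complex, viewed with its $B_\infty$-algebra structure (the cup product together with the brace operations, in the sense recalled in Keller~\cite{Keller:dih}), is an invariant of the quasi-isomorphism type of an $A_\infty$ algebra in the homotopy category of $B_\infty$ algebras. This is precisely the content of Keller's derived-invariance results for Hochschild cohomology: a quasi-isomorphism (or, more generally, a derived equivalence) of $A_\infty$ algebras induces a $B_\infty$-quasi-isomorphism of Hochschild complexes. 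So the quasi-isomorphism $Q \to A$ of Theorem~\ref{th:QsimA} gives an isomorphism $\mathcal C^*(Q) \simeq \mathcal C^*(A)$ in $\mathrm{Ho}(B_\infty)$.

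Second, for the comparison of $\mathcal C^*(A)$ with $\mathcal C^*(B)$, I would appeal to the Koszul duality of $A$ and $B$ from~\cite{Benson/Greenlees:bg8}: $B$ is quasi-isomorphic to $\iEnd_{\Db(A)}(k)$ and $A$ to $\iEnd_{\Db(B)}(k)$. Koszul duality is a derived equivalence between (suitable module categories over) $A$ and $B$, and Hochschild cohomology is invariant under derived equivalence not merely as a graded ring but as a $B_\infty$ algebra up to homotopy — this is again in Keller~\cite{Keller:dih} (the Hochschild complex of an algebra agrees with that of any tilting-equivalent algebra in $\mathrm{Ho}(B_\infty)$, and Koszul duality is a special case once one passes to the derived/$A_\infty$ setting). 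Concatenating the two isomorphisms yields $\mathcal C^*(Q)\simeq\mathcal C^*(A)\simeq\mathcal C^*(B)$ in the homotopy category of $B_\infty$ algebras.

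The main obstacle is technical rather than conceptual: one must be careful that the Koszul-duality equivalence between $A$ and $B$ is of the right kind to trigger Keller's invariance, i.e.\ that it is induced by a two-sided tilting complex (or an exact equivalence of derived categories of DG/$A_\infty$ modules respecting the relevant finiteness and compactness conditions), rather than merely an abstract equivalence of triangulated categories. In our situation $k$ is a compact generator of $\Db(A)$ with endomorphism $A_\infty$ algebra $B$, so $\Db(A)$ and the perfect derived category of $B$ are equivalent as enhanced (DG or $A_\infty$) categories, which is exactly what is needed; I would spell this point out and cite the appropriate statement in~\cite{Keller:dih}. A secondary point worth a sentence is that all three algebras are connective (or coconnective) with finite-dimensional graded pieces, so no convergence or completeness subtleties arise in identifying the $B_\infty$ structures. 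With these remarks in place the proposition follows by composing the two cited isomorphisms.
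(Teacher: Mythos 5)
The first half of your argument coincides with the paper's: the quasi-isomorphism $Q\to A$ of Theorem~\ref{th:QsimA} together with Keller's invariance theorem (Section~3.2 of \cite{Keller:dih}) gives the $B_\infty$-isomorphism between the Hochschild complexes of $Q$ and $A$. That part is fine.

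The second half has a genuine gap. You treat the Koszul duality between $A$ and $B$ as "a special case" of derived (tilting/Morita) equivalence and claim that, since $k$ is a compact generator of $\Db(A)$ with endomorphism algebra $B$, Keller's invariance from \cite{Keller:dih} applies. But $k$ is \emph{not} a perfect (compact) object of the derived category of $A$: under the equivalence of Theorem~\ref{th:DbAsimDbB}, $k\in\Db(A)$ corresponds to $B\in\Db(B)$, and $B$ is not in $\Thick(k)$ because $H_*B$ is infinite dimensional while every object of $\Thick(k)$ has finite-dimensional homology; equivalently, $k\notin\Thick(A)$, which is exactly why $\Dsg(A)$ is nontrivial. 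The Koszul duality equivalence $\Db(A)\simeq\Db(B)$ exchanges $\Thick(A)$ with $\Thick(k)$ rather than matching perfect objects with perfect objects, it does not extend to an equivalence of the full derived categories, and so it is not induced by a two-sided tilting complex of the kind required for the invariance theorem of \cite{Keller:dih}. Indeed, Hochschild cohomology is \emph{not} invariant under Koszul duality in general; it is only under finiteness hypotheses that the comparison works, and this is where the real content lies (not the "secondary point" you relegate to a sentence). The paper's proof supplies exactly this: it passes to the cobar construction $C$ of $Q$, which is an augmented DG coalgebra finite dimensional in each degree, invokes Theorem~3.3 of \cite{Keller:2021a} to get a $B_\infty$-equivalence between the Hochschild complexes of $Q$ and of $C$ (Koszul--Moore duality, a different theorem from tilting invariance), and then uses the degreewise finiteness of $C$ to identify the Hochschild complex of $C$ with that of its graded dual $C^*\simeq B$. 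To repair your argument you would need to replace the appeal to derived Morita invariance by this (or an equivalent) algebra--coalgebra duality statement together with the finiteness argument.
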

\begin{proof}
For the algebras $Q$ and $A$, we apply the main theorem in Section~3.2
of~\cite{Keller:dih} to the quasi-isomorphism $Q\to A$ of 
Theorem~\ref{th:QsimA} to obtain an equivalence of Hochschild
complexes in the homotopy category of $B_\infty$ algebras.

Let $C$ be
the cobar construction on $Q$. This is an augmented DG coalgebra,
which is finite dimensional in each degree. As in Section~2 of
Keller~\cite{Keller:2021a}, this is the Koszul--Moore dual of $Q$.
Its graded $k$-linear dual
$C^*$ is ``the'' Koszul dual of $Q$, and is quasi-isomorphic to $B$.
Using the fact that $C$ is finite dimensional in each degree,
the Hochschild complex for $C$ is isomorphic to that for $C^*$ as a
$B_\infty$ algebra. Applying Theorem~3.3 of~\cite{Keller:2021a},
the Hochschild complexes of $Q$ and $C$ are equivalent in the homotopy 
category of $B_\infty$ algebras.
\end{proof}

Certainly an isomorphism in the homotopy category of $B_\infty$ algebras
induces an isomorphism of cohomology rings.

\begin{corollary}
We have $\HH^*Q \cong \HH^*A \cong \HH^*B$.\qed
\end{corollary}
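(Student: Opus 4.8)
The plan is to deduce the corollary immediately from Proposition~\ref{pr:Keller} together with the remark that precedes it. The argument is essentially a one-line functoriality statement: a $B_\infty$-algebra has an underlying DG algebra (given by the first of the $B_\infty$ operations, i.e.\ the cup product together with the Hochschild differential), and passing to cohomology is a functor from the homotopy category of $B_\infty$-algebras to graded rings. Concretely, an isomorphism in the homotopy category of $B_\infty$-algebras is represented by a zig-zag of $B_\infty$-quasi-isomorphisms, each of which in particular is a quasi-isomorphism of the underlying DG algebras, hence induces an isomorphism on cohomology rings. Applying this to the two isomorphisms furnished by Proposition~\ref{pr:Keller}, namely $\HH^*(Q)\leftrightarrow\HH^*(A)$ and $\HH^*(Q)\leftrightarrow\HH^*(C)\leftrightarrow\HH^*(C^*)\simeq\HH^*(B)$, we obtain ring isomorphisms $\HH^*(Q)\cong\HH^*(A)$ and $\HH^*(Q)\cong\HH^*(B)$, and composing gives $\HH^*(A)\cong\HH^*(B)$.

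In writing this up I would keep it to a sentence or two, since all the real content is in Proposition~\ref{pr:Keller}: the only thing to say is that the cup product on $\HH^*$ is recovered from the $B_\infty$-structure (indeed from the underlying DG-algebra structure on the Hochschild complex), so the homotopy equivalences of $B_\infty$-algebras are in particular DG-algebra quasi-isomorphisms and therefore induce isomorphisms of graded rings on homology. Here I am using the standard fact — recorded in Keller's article \cite{Keller:dih} — that the Hochschild complex of an $A_\infty$-algebra carries a canonical $B_\infty$-structure whose underlying DG algebra computes the Gerstenhaber cup product on cohomology.

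There is essentially no obstacle here; the corollary is a formal consequence already flagged by the preceding remark. If anything, the only point requiring a modicum of care is that ``isomorphism in the homotopy category of $B_\infty$-algebras'' should be unwound as a roof of $B_\infty$-quasi-isomorphisms, so that one really is applying a functor to invertible morphisms rather than just to a single quasi-isomorphism; but since every $B_\infty$-quasi-isomorphism forgets to a quasi-isomorphism of DG algebras, and the latter induce ring isomorphisms on cohomology, the conclusion is immediate and no further calculation is needed.

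\begin{proof}
As recalled in~\cite{Keller:dih}, the Hochschild complex of an $A_\infty$-algebra carries a canonical $B_\infty$-algebra structure, and the underlying DG algebra of this $B_\infty$-structure computes the usual (Gerstenhaber) cup product on Hochschild cohomology. An isomorphism in the homotopy category of $B_\infty$-algebras is represented by a zig-zag of $B_\infty$-quasi-isomorphisms, each of which forgets to a quasi-isomorphism of the underlying DG algebras and hence induces an isomorphism of graded rings on cohomology. Applying this to the isomorphisms of Proposition~\ref{pr:Keller} relating the Hochschild complexes of $Q$, $A$ and $B$ yields ring isomorphisms $\HH^*Q\cong\HH^*A$ and $\HH^*Q\cong\HH^*B$, and composing the two gives $\HH^*A\cong\HH^*B$.
\end{proof}
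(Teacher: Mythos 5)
Your proposal is correct and follows exactly the paper's route: the paper deduces the corollary from Proposition~\ref{pr:Keller} together with the one-line observation that an isomorphism in the homotopy category of $B_\infty$ algebras induces an isomorphism of cohomology rings. You merely spell out that observation (zig-zag of $B_\infty$-quasi-isomorphisms, underlying DG structure, cup product) in more detail than the paper does.
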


\begin{theorem}\label{th:HHB}
Suppose that $h>2$ and $\ell>2$.
In the spectral sequence $\HH^*H_*B \Rightarrow \HH^*B$ the $E^2$ page
is given by 
\[ \HH^*H_*B \cong H_*A \otimes H_*B \cong k[x,\tau]\otimes
  \Lambda(t,\xi) \] 
where $|x|=(0,-2a,-\ell)$, $|t|=(0,-2b-1,-h)$,
$|\xi|=(-1,2a,\ell)$, and $|\tau|=(-1,2b+1,h)$.
 The only non-zero
differential is $d_{\ell-1}$, and this is given by $d_{\ell-1}(\xi)=\pm
hx^{h-1}\tau^\ell$, $d_{\ell-1}(t)=\pm \ell x^h \tau^{\ell-1}$. There
are no ungrading problems in the spectral sequence.
\end{theorem}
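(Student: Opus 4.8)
The plan is to compute the spectral sequence \eqref{eq:HHss} for $\fa=B$ explicitly, starting from the $E^2$ page, which is the classical Hochschild cohomology of the commutative algebra $H_*B=k[x]\otimes\Lambda(t)=k[x,t]/(t^2+x^h\cdot 0)$ — wait, in the non-exceptional case $\ell>2$ the homology ring is genuinely $k[x]\otimes\Lambda(t)$ with $t^2=0$, so $H_*B$ is a complete intersection (indeed a hypersurface is not even needed). First I would recall the standard computation of $\HH^*$ of $k[x]\otimes\Lambda(t)$: since this is a tensor product of the polynomial algebra $k[x]$ and the exterior algebra $\Lambda(t)$, and Hochschild cohomology is multiplicative for tensor products of algebras (a Künneth argument on the bar complexes, valid because $k[x]$ is $k$-flat), we get $\HH^*(k[x]\otimes\Lambda(t))\cong\HH^*(k[x])\otimes\HH^*(\Lambda(t))$. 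Now $\HH^*(k[x])=k[x]\otimes\Lambda(\xi)$ with $\xi$ the Hochschild class dual to $x$ in homological-Hochschild degree $-1$, and $\HH^*(\Lambda(t))=k[\tau]\otimes\Lambda(t)$ with $\tau$ in Hochschild degree $-1$ (here one uses that $\Lambda(t)$ is a Frobenius/Koszul algebra; its Hochschild cohomology is the polynomial-times-exterior ring on one generator of each parity). Assembling, $E^2=k[x,\tau]\otimes\Lambda(t,\xi)$, and then I would carefully bookkeep the three gradings (Hochschild degree, internal homological degree, internal weight) to confirm $|x|=(0,-2a,-\ell)$, $|t|=(0,-2b-1,-h)$, $|\xi|=(-1,2a,\ell)$, $|\tau|=(-1,2b+1,h)$; the internal degrees of $\xi$ and $\tau$ are forced to be the negatives of those of $x$ and $t$ up to a shift, because a Hochschild $n$-cochain $\Hom_k((H_*B)^{\otimes n},H_*B)$ carries the internal degree of its target minus that of its arguments.

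Next I would locate the differentials. Because the spectral sequence is multiplicative and $E^2$ is generated as an algebra by $x,t,\xi,\tau$, it suffices to compute $d_r$ on these four classes. The classes $x$ and $t$ are permanent cycles: $x$ survives because it is the image of an honest element $x\in H_*B$ sitting in Hochschild degree $0$ (the bottom row, on which all $d_r$ vanish for degree reasons, since $d_r$ lowers Hochschild degree and there is nothing below row $0$), and similarly $t$. So the only possible differentials act on $\xi$ and $\tau$. The $d_r$ page differential is the piece of the bar differential involving $m_{r+1}$; since the only higher operation on $B$ is $m_\ell$, the only possibly nonzero differential is $d_{\ell-1}$. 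To evaluate $d_{\ell-1}(\xi)$ and $d_{\ell-1}(t)$ I would represent $\xi$ and $\tau$ by explicit Hochschild cocycles on the $E^1$ page — the standard representatives coming from the Koszul-type resolution of $H_*B$ over itself — and then apply the formula for the Hochschild differential with the $m_\ell$ term inserted, i.e. compute the class of $[a_1|\dots|a_{\ell-1}]\mapsto f[\dots|m_\ell(\dots)|\dots]$ evaluated on the relevant tuples of copies of $t$. Since $m_\ell(t,\dots,t)=(-1)^{\ell(\ell-1)/2}x^h$, inserting this into the cocycle representing $\xi$ (which ``detects $x$'') produces a multiple of $x^{h-1}\tau^\ell$ — the $\tau^\ell$ appearing because $\ell-1$ of the bar slots get used up and the duality pairing with the exterior generator contributes the $\tau$-power — and inserting it into the cocycle representing $\tau$ (which ``detects $t$'') produces a multiple of $x^h\tau^{\ell-1}$; the coefficients $h$ and $\ell$ are the combinatorial multiplicities coming from the $h$, resp.\ $\ell$, ways of distributing the insertion, exactly as in the analogous computation for a hypersurface singularity or a complete intersection. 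Hence $d_{\ell-1}(\xi)=\pm hx^{h-1}\tau^\ell$ and $d_{\ell-1}(t)=\pm\ell x^h\tau^{\ell-1}$, and by the Leibniz rule this determines $d_{\ell-1}$ everywhere; all later pages carry no differential because the only higher $A_\infty$ operation has been used.

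Finally I would address convergence and ``ungrading problems'': the internal weight grading decomposes $E^2$ into finite-dimensional pieces (each fixed internal-homological-degree-and-weight summand of $k[x,\tau]\otimes\Lambda(t,\xi)$ is finite dimensional, using $a\neq 0$ exactly as in the remark after Lemma~\ref{le:H*R}), so the spectral sequence is strongly convergent by the discussion preceding Theorem~\ref{th:Boardman}, and there are no extension/ungrading ambiguities because the weight grading is preserved by all differentials and pins down each class. The main obstacle I anticipate is the explicit sign-and-coefficient computation of $d_{\ell-1}$ on $\xi$ and $t$: one must choose compatible cocycle representatives on $E^1$, track the Koszul signs in the bar differential formula after inserting $m_\ell$, and correctly count the combinatorial multiplicities that yield the factors $h$ and $\ell$ — everything else (the $E^2$ identification via Künneth, the permanence of $x,t$, the vanishing of all $d_r$ with $r\neq\ell-1$, convergence) is essentially formal. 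Since the statement only asserts the differential ``up to sign,'' the sign bookkeeping can be handled by naming conventions rather than pinned down precisely, which removes the most delicate part of the argument.
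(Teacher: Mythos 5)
Your identification of the $E^2$ page (K\"unneth for $k[x]\otimes\Lambda(t)$) and your overall strategy of evaluating the $m_\ell$-part of the Hochschild differential on explicit cocycle representatives are in line with the paper's proof, but the step where you locate the differentials contains a real error that contradicts the very statement you are proving. You assert that $x$ and $t$ are permanent cycles because they sit in Hochschild degree $0$ and ``$d_r$ lowers Hochschild degree and there is nothing below row $0$.'' This has the direction of the differential backwards: for Hochschild cochains the differential raises cochain degree (in the paper's homological indexing $d_n$ decreases the first coordinate $u$ by $n$), so classes in filtration degree $0$ cannot be \emph{hit} by differentials, but they can perfectly well support them. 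Indeed the theorem asserts $d_{\ell-1}(t)=\pm\ell x^h\tau^{\ell-1}$, which is non-zero when $p\nmid\ell$, and in the paper this is computed from the $0$-cochain $\tilde t\colon[\,]\mapsto t$. The permanent cycles are $x$ and $\tau$, not $x$ and $t$, and your later computation shows the confusion persisting: you propose to obtain $d_{\ell-1}(t)$ by inserting $m_\ell(t,\dots,t)$ into ``the cocycle representing $\tau$,'' which would compute $d_{\ell-1}(\tau)$ (and that is zero); as written, your proposal simultaneously claims $t$ is a permanent cycle and that $d_{\ell-1}(t)\neq0$.

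There is a second gap in the claim that ``all later pages carry no differential because the only higher $A_\infty$ operation has been used,'' which you describe as essentially formal. That reasoning rules out $d_n$ for $2\le n\le \ell-2$, but not differentials on pages after $E_\ell$: higher differentials in a filtered complex are built from zig-zags and can be non-zero even when only $m_2$ and $m_\ell$ are present (one corrects by an $m_2$-coboundary and applies $m_\ell$ again). The paper excludes them by a genuinely non-formal argument: using the third grading and the normal vector $N=(\ell-2,\ell,-2a)$, the only values of $N\cdot(u,v,w)$ on $E^2$ are $0$, $2-\ell$, $4-2\ell$; $d_n$ raises this by $2n-\ell$; and, crucially using that $h$ and $\ell$ are coprime, $d_{\ell-1}$ is injective on the part with $N$-value $4-2\ell$, so no sources survive for later differentials. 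Similarly, ``the weight grading pins down each class'' is not enough for the ungrading statement: one must check, as the paper does, that no elements of lower filtration have the same internal degree $w$ (and admissible $N$-value) as the relations $hx^{h-1}\tau^\ell=0$, $\ell x^h\tau^{\ell-1}=0$, $\xi^2=0$, $t^2=0$. Your proposal needs to be repaired at these points before it constitutes a proof.
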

\begin{proof}
The element $t$ on the $E^2$ page corresponds to the cochain $\tilde t\colon [\,] \mapsto t$
in the Hochschild complex. Applying the formula for the differential,
we have 
\begin{align*}  
(d\tilde t)[\,\underbrace{t,\dots,t}_{\ell-1}\,]&=
m_\ell(\tilde t[\,],t,\dots,t)+m_\ell(t,\tilde
t[\,],\dots,t)+\dots+m_\ell(t,t,\dots,\tilde t[\,])\\
&=\ell m_\ell(t,\dots,t)=\ell x^h. 
\end{align*}
Using this, and the rather simple form of the $A_\infty$ structure on
$B$, it is not hard to see that $d\tilde t = \pm \ell x^h\tau^{\ell-1}$
since the two cochains take the same value on all elements of the bar resolution.

The element $\xi$ on the $E^2$ page corresponds to the cochain
$\tilde\xi\colon [x^i] \mapsto ix^{i-1}$, $[tx^i]\mapsto itx^{i-1}$. Then
applying the formula for the differential, we have
\[ (d\tilde\xi)[\,\underbrace{t,\dots,t}_{\ell}\,]=
\tilde\xi(m_\ell(t,\dots,t))=
\tilde\xi(x^h)=hx^{h-1}. \]
Using this, again we find that $d\tilde\xi = \pm hx^{h-1}\tau^\ell$
since both cochains take the same value on all elements of the bar resolution.

Examining the locations of these terms in the filtration of the bar
complex giving rise to the spectral sequence, we deduce that these
correspond to the differential $d_{\ell-1}$ taking $t$ to $\pm \ell
x^h\tau^{\ell-1}$ and $\xi$ to $\pm hx^{h-1}\tau^\ell$.

Next, we show that the possible values of $n$ for which
$d_n$ is non-zero are very restricted.
The possible tridegrees $(u,v,w)$ at
the $E^2$-term lie in three parallel planes.
Take $N=(\ell -2, \ell, -2a)$ as normal direction and consider the dot
products $N\cdot (u,v,w)=(\ell -2)u+\ell v-2a w$. We have $N\cdot
|x|=0$, $N\cdot |t|=2-\ell$,   $N\cdot |\xi|=2-\ell$, and $N\cdot
|\tau|=0$.
So the only possible values of $N\cdot (u,v,w)$ on the $E^2$ page are
$0$, $2-\ell$, and $4-2\ell$. Furthermore, the elements with $N\cdot
(u,v,w)=4-2\ell$ are multiples of $t\xi$.

The differential $d_n$ decreases $u$ by $n$, increases $v$
by $n-1$, and leaves $w$ unchanged. It therefore increases
$N\cdot (u,v,w)$ by $2n-\ell$. Since $n \ge 2$, we 
first deduce that all differentials are zero on elements with $N\cdot
(u,v,w)=0$, and hence on the polynomial generators $x$ and $\tau$. Next, we
deduce that
the smallest value of $n$ for which $d_n\ne 0$ is 
when $(2n-\ell)+(2-\ell)=0$, so $n=\ell-1$. 
We computed above the value of $d_{\ell-1}$ on the exterior
generators $t$ and $\xi$. Finally, since $h$ and $\ell$ are coprime,
$d_{\ell-1}$ is injective on elements with $N\cdot(u,v,w)=4-2\ell$,
and so there is no room for further differentials.

For the ungrading problem, we note that moving down one place in the
filtration replaces $(u,v,w)$ by $(u-1,v+1,w)$ and so the
dot product with $N$ increases by $N\cdot (-1,1,0)=2$, while $w$ is
unchanged. The relations $hx^{h-1}\tau^\ell=0$ and $\ell
x^h\tau^{\ell-1}=0$ therefore have no ungrading problems, and hold in
$\HH^*B$. The relations $\xi^2=0$ and $t^2=0$ have no ungrading
problems, because there are no candidates with the correct value of
$w$ and with larger dot product with $N$.
\end{proof}

\begin{theorem}\label{th:HHB3cases}
There are three cases for $\HH^*Q\cong\HH^*A\cong\HH^*B$, according
to the characteristic of the field $k$. 
\begin{enumerate}
\item[\rm (i)] If $p\mid h$ then $\HH^*B \cong
k[x,\tau]/(x^h\tau^{\ell-1})\otimes \Lambda(\xi)$.
\item[\rm (ii)]
 If $p\mid \ell$ then $\HH^*B \cong 
k[x,\tau]/(x^{h-1}\tau^\ell)\otimes \Lambda(t)$.
\item[\rm (iii)] If $p\nmid h$ and $p\nmid\ell$ then 
$\HH^*B\cong
(k[x,\tau]\otimes\Lambda(u))/(x^{h-1}\tau^\ell,x^h\tau^{\ell-1},x^{h-1}\tau^{\ell-1} u)$.
\end{enumerate}
Here, we have $|x|=(-2a,-\ell)$, $|t|=(-2b-1,-h)$,
$|\xi|=(2a-1,\ell)$, $|\tau|=(2b,h)$, and $|u|=(-1,0)$.
\end{theorem}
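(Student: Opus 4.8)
\emph{Proof proposal.} By the corollary preceding the statement it suffices to compute $\HH^*B$, and the plan is to feed the output of Theorem~\ref{th:HHB} into a short homological calculation (I assume $h,\ell>2$ as there; the cases $h=2$ or $\ell=2$ are handled directly from the hypersurface presentation of $A$ or $B$). That theorem identifies the $E^2$-page of the spectral sequence~\eqref{eq:HHss} for $\HH^*B$ with $k[x,\tau]\otimes\Lambda(t,\xi)$, shows that the only nonzero differential is the $S$-linear derivation $d_{\ell-1}$ over $S:=k[x,\tau]$ determined by
\[ d_{\ell-1}(x)=d_{\ell-1}(\tau)=0,\qquad d_{\ell-1}(\xi)=\pm hx^{h-1}\tau^{\ell},\qquad d_{\ell-1}(t)=\pm\ell x^{h}\tau^{\ell-1}, \]
and records that there are no ungrading problems, so that $\HH^*B$ is isomorphic \emph{as a ring} to $E^{\infty}=H_{*}\bigl(k[x,\tau]\otimes\Lambda(t,\xi),\,d_{\ell-1}\bigr)$. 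Everything therefore reduces to computing this homology ring, and the three cases arise according to which of $p\mid h$, $p\mid\ell$, $p\nmid h\ell$ holds --- exactly one does, since $\gcd(h,\ell)=1$.

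\emph{Cases \textup{(i)} and \textup{(ii)}.} If $p\mid h$ then $d_{\ell-1}(\xi)=0$, while $\ell$ is a unit in $k$, so after rescaling the exterior generator $t$ we may assume $d_{\ell-1}(t)=x^{h}\tau^{\ell-1}$. As a complex, $\bigl(k[x,\tau]\otimes\Lambda(t,\xi),\,d_{\ell-1}\bigr)$ is then the tensor product of $\Lambda(\xi)$, with zero differential, and the Koszul complex of $x^{h}\tau^{\ell-1}$ on the single generator $t$; as $x^{h}\tau^{\ell-1}$ is a non-zero-divisor in $S$, the latter has homology $S/(x^{h}\tau^{\ell-1})$ concentrated in a single degree, so $\HH^*B\cong k[x,\tau]/(x^{h}\tau^{\ell-1})\otimes\Lambda(\xi)$. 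The case $p\mid\ell$ is the mirror image, interchanging $(t,\ell)$ with $(\xi,h)$, and gives $\HH^*B\cong k[x,\tau]/(x^{h-1}\tau^{\ell})\otimes\Lambda(t)$.

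\emph{Case \textup{(iii)}.} Here $h$ and $\ell$ are both units, so after rescaling $\xi$ and $t$ we may assume $d_{\ell-1}(\xi)=f:=x^{h-1}\tau^{\ell}$ and $d_{\ell-1}(t)=g:=x^{h}\tau^{\ell-1}$; thus $\bigl(k[x,\tau]\otimes\Lambda(t,\xi),\,d_{\ell-1}\bigr)$ is the Koszul complex $K(f,g;S)$. I would compute its homology the usual way: $H_{0}=S/(f,g)$; $H_{2}=\mathrm{Ann}_{S}(f)\cap\mathrm{Ann}_{S}(g)=0$ since $f$ and $g$ are non-zero-divisors; and $H_{1}$ is the module of syzygies of $(f,g)$ modulo the Koszul syzygy. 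Cancelling the non-zero-divisor $x^{h-1}\tau^{\ell-1}$ turns a syzygy $af+bg=0$ into $a\tau=-bx$, whose solutions in the unique factorization domain $S$ form the rank-one module $S\cdot(x,-\tau)$, while the Koszul syzygy spans $S\cdot x^{h-1}\tau^{\ell-1}(x,-\tau)$; hence $H_{1}\cong S/(x^{h-1}\tau^{\ell-1})$, generated by the cycle $u:=x\xi-\tau t$. One then reads off the ring structure: $u^{2}=-x\tau(\xi t+t\xi)=0$ because $\xi$ and $t$ square to zero and anticommute; $x^{h-1}\tau^{\ell-1}u=d_{\ell-1}(t\xi)$ is a boundary, hence $0$; and since $H_{1}$ is generated over $H_{0}$ by $u$ with annihilator $(x^{h-1}\tau^{\ell-1})$, the ring $\HH^*B$ gets the presentation $(k[x,\tau]\otimes\Lambda(u))/(x^{h-1}\tau^{\ell},\,x^{h}\tau^{\ell-1},\,x^{h-1}\tau^{\ell-1}u)$. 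The stated bidegrees of $x,\tau,t,\xi$ are those inherited from the $E^2$-page, and $u=x\xi-\tau t$ has bidegree $(-1,0)$.

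The bookkeeping above is routine; the one place that needs care is case \textup{(iii)}, where one must verify not only the additive structure of $H_{1}$ but that $u$ generates it over $H_{0}$ and that the three displayed relations form a complete set. This is exactly what the non-zero-divisor property of $x^{h-1}\tau^{\ell-1}$ together with the absence of ungrading problems (Theorem~\ref{th:HHB}) provide.
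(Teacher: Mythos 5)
Your proposal is correct and takes essentially the paper's route: deduce everything from Theorem~\ref{th:HHB} by computing $E^\infty$ of the spectral sequence and ungrading, with the exceptional cases $h=2$, $\ell=2$ handled via the formal hypersurface presentation (the paper invokes the Buchweitz--Roberts method there). What you add is the explicit identification of $(E^2,d_{\ell-1})$ with a Koszul complex over $k[x,\tau]$ and the computation of its homology, which is exactly the calculation the paper leaves implicit; your three presentations agree with the statement. One point of divergence is in your favour: your class $u=x\xi-\tau t$ (that is, $\pm(\ell x\xi-h\tau t)$ before rescaling) really is the $d_{\ell-1}$-cycle spanning the extra summand, whereas the paper's proof says $u$ is represented by $\pm ax\xi\pm bt\tau$; that element is not a cycle, since with appropriate signs $d_{\ell-1}(\pm ax\xi\pm bt\tau)=(ah-b\ell)x^h\tau^\ell=x^h\tau^\ell$, which is precisely the computation used for Corollary~\ref{co:x^htau^ell=0}, so your identification corrects a small slip. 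The only thing I would add is a sentence on ungrading in case (iii), since the paper phrases this check as part of the present proof: the relations $u^2=0$ and $x^{h-1}\tau^{\ell-1}u=0$ admit no lower-filtration corrections (for instance the only monomial in the bidegree of $u^2$ is $x^h\tau^\ell$, which already vanishes), by the same bidegree bookkeeping as in the proof of Theorem~\ref{th:HHB}; this is routine and does not affect correctness.
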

\begin{proof}
If $h>2$ and $\ell>2$ then this follows from Theorem~\ref{th:HHB},
after checking that there are no ungrading problems.
The element $u$ in case (iii) represents ${}\pm ax\xi \pm bt\tau$ in
$E^\infty$ (recall that $ah-b\ell=1$).
If $h=2$ then $B$ is the formal $A_\infty$ algebra $k[x,t]/(t^2+x^h)$,
and we can use the method of Buchweitz and
Roberts~\cite{Buchweitz/Roberts:2015a} to compute $\HH^*B$. If
$\ell=2$ then we can use the same method on $\HH^*A$.
\end{proof}

\begin{corollary}\label{co:x^htau^ell=0}
We have $x^h\tau^\ell=0$ in $\HH^*Q\cong\HH^*A\cong\HH^*B$.
\end{corollary}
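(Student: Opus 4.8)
The plan is to read the identity $x^h\tau^\ell=0$ directly off the three cases of Theorem~\ref{th:HHB3cases}, checking that in each case the monomial $x^h\tau^\ell$ lies in the defining ideal. First I would dispose of case~(i), where $p\mid h$: there $\HH^*B\cong k[x,\tau]/(x^h\tau^{\ell-1})\otimes\Lambda(\xi)$, and since $x^h\tau^\ell=(x^h\tau^{\ell-1})\cdot\tau$ is a multiple of the relator $x^h\tau^{\ell-1}$, it vanishes. Symmetrically, in case~(ii), where $p\mid\ell$, the relator is $x^{h-1}\tau^\ell$ and $x^h\tau^\ell=x\cdot(x^{h-1}\tau^\ell)$ again lies in the ideal, so it is zero.

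The only case needing a word more is~(iii), where $p\nmid h$ and $p\nmid\ell$, and $\HH^*B\cong(k[x,\tau]\otimes\Lambda(u))/(x^{h-1}\tau^\ell,\,x^h\tau^{\ell-1},\,x^{h-1}\tau^{\ell-1}u)$. Here $x^h\tau^\ell=x\cdot(x^{h-1}\tau^\ell)$ is a multiple of the first relator, or equally $=\tau\cdot(x^h\tau^{\ell-1})$ a multiple of the second, so in either reading it lies in the defining ideal and hence is zero in $\HH^*B$. Since Proposition~\ref{pr:Keller} and its corollary give $\HH^*Q\cong\HH^*A\cong\HH^*B$ as rings, the vanishing transports to $\HH^*Q$ and $\HH^*A$ as well.

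I would also note, for the reader who prefers to see it at the level of the spectral sequence rather than from the presentation, that this is transparent from Theorem~\ref{th:HHB}: on the $E^2$ page $\HH^*H_*B\cong k[x,\tau]\otimes\Lambda(t,\xi)$ the differential is $d_{\ell-1}(\xi)=\pm hx^{h-1}\tau^\ell$ and $d_{\ell-1}(t)=\pm\ell x^h\tau^{\ell-1}$, so $x^{h-1}\tau^\ell$ and $x^h\tau^{\ell-1}$ are boundaries; multiplying the first by $x$ (or the second by $\tau$) shows $x^h\tau^\ell$ is a boundary too, hence zero on $E^\infty$, and Theorem~\ref{th:HHB} records that there are no ungrading problems, so $x^h\tau^\ell=0$ in $\HH^*B$. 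There is no real obstacle here; the statement is a formal consequence of the explicit presentations already established, and the only thing to be careful about — already handled inside the proof of Theorem~\ref{th:HHB3cases} — is that no ungrading ambiguity intervenes when passing from $E^\infty$ to $\HH^*B$.
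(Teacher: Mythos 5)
Your proof is correct and takes essentially the same route as the paper: the paper's proof also simply reads $x^h\tau^\ell=0$ off the three presentations of Theorem~\ref{th:HHB3cases}, adding the same spectral-sequence aside (phrased there via $d_{\ell-1}(\pm ax\xi\pm bt\tau)=x^h\tau^\ell$, which uses $ah-b\ell=1$ to work uniformly in all characteristics). One small caveat in your aside: when $p\mid h$ (resp.\ $p\mid\ell$) the monomial $x^{h-1}\tau^\ell$ (resp.\ $x^h\tau^{\ell-1}$) is \emph{not} itself a boundary --- only $hx^{h-1}\tau^\ell$ and $\ell x^h\tau^{\ell-1}$ are --- but since $h$ and $\ell$ are coprime at least one of your two routes to $x^h\tau^\ell$ always applies, so the conclusion is unaffected.
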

\begin{proof}
This follows from Theorem~\ref{th:HHB3cases}:
in all three cases we have $x^h\tau^\ell=0$. Note that if $h>2$ and
$\ell>2$ then we see directly that the differential $d_{\ell-1}$ in Theorem~\ref{th:HHB}
takes $\pm a\xi \pm bt$ (with appropriate signs) to $x^h\tau^\ell$.
\end{proof}

\section{The derived category}\label{sec:Db}

Suppose first that $\fa$ is a DG algebra. 
We write $\D(\fa)$ for the derived
category of $\fa$. This is the triangulated category having 
as objects the left DG $\fa$-modules, and as
arrows the homotopy classes of morphisms of DG modules, with the
quasi-isomorphisms inverted. The shift functor is the suspension
$\Sigma$ defined by $(\Sigma M)_n=M_{n-1}$, so the
triangles take the form $X \to Y\to Z \to \Sigma X$. 

In the case where $H_*\fa$ is a Noetherian graded ring, we write
$\Db(\fa)$ for the thick subcategory of $\D(a)$ whose objects are
the $\fa$-modules $X$ such that $H_*X$ is finitely generated as an
$H_*\fa$-module. We regard this as the analogue of the bounded
derived category in this context; an extended discussion motivating
this can be found in Greenlees and Stevenson~\cite{Greenlees/Stevenson:2020a}.

\begin{definition}
An $\fa$-module is \emph{homotopically projective} if the functor
$\Hom_\fa(X,-)$ preserves quasi-isomorphisms (see
Section~9.1 of Avramov, Foxby and Halperin~\cite{Avramov/Foxby/Halperin:dgha}). Semifree modules
are homotopically projective (Lemma~9.3.5
of~\cite{Avramov/Foxby/Halperin:dgha}) and every module has a semifree
resolution (Theorem~8.3.2 of~\cite{Avramov/Foxby/Halperin:dgha}). It
follows that given any module $X$ there exists a homotopically
projective module $X'$ and a surjective quasi-isomorphism $X' \to X$. We call
this a \emph{homotopically projective resolution} of $X$.
\end{definition}

Homomorphisms in $\D(\fa)$ may be described as follows. Given DG
$\fa$-modules $X$ and
$Y$, choose a homotopically projective module $X'$ and a
quasi-isomorphism $X'\to X$. Then
\[ \Hom_{\D(\fa)}(X,Y) \cong H_*(\Hom_\fa(X',Y)). \]
It can be seen that $X'\to X$ is a \emph{fibrant replacement} with
respect to the projective model structure (see Hovey~\cite{Hovey:1999a})
on $\fa$-modules, and $\D(\fa)$ is the
corresponding homotopy category.\bigskip

Next, we describe the derived category of an $A_\infty$ algebra.
Suppose that $\fa$ is an $A_\infty$ algebra. In this case, the modules
do not form an abelian category, because of the definition of morphism
of $A_\infty$ modules. This time, the derived category $\D(\fa)$ is
the triangulated category having as objects the left $A_\infty$
modules over $\fa$, and as arrows the homotopy classes of $A_\infty$
morphisms. Unlike in the DG context, $A_\infty$ quasi-isomorphisms automatically
have $A_\infty$ inverses. This is again a
triangulated category, with triangles of the form $X \to Y \to Z \to
\Sigma X$. For details, see Keller~\cite{Keller:2001a,Keller:2002a}.
As before, in the case where $H_*\fa$ is Noetherian, 
we write $\Db(\fa)$ for the thick subcategory
whose objects are the modules with finitely generated homology.

In the case where $\fa$ is a DG algebra regarded as an $A_\infty$
algebra with $m_i=0$ for $i>2$, the two definitions agree up to
canonical equivalences of triangulated categories. If $X$ and $Y$ are
DG $\fa$-modules, the homotopy classes of morphisms 
of $A_\infty$ modules from $X$ to $Y$
are canonically isomorphic to the homotopy classes of 
morphisms of DG modules from $X'$ to
$Y$, where $X'$ is a homotopically projective resolution of $X$.
A suitable set of details can be found in Th\'eor\`eme~2.2.2.2 and
Sections~2.4 and~4.1 of the thesis of
Lef\`evre-Hasegawa~\cite{Lefevre-Hasegawa:2003a}.
See also Theorem~4.5 of Keller~\cite{Keller:2006b}.

In the case of the DG algebra $Q$ of Section~\ref{se:Q} and the
$A_\infty$ algebra $A$ of Section~\ref{se:A}, we have the following.

\begin{proposition}\label{pr:DbQsimDbA}
The bounded derived category $\Db(Q)$ is equivalent to $\Db(A)$.
\end{proposition}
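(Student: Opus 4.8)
The plan is to leverage Theorem~\ref{th:QsimA}, which provides a quasi-isomorphism $Q \to A$ of $A_\infty$ algebras (viewing the DG algebra $Q$ as an $A_\infty$ algebra with $m_i = 0$ for $i > 2$). A quasi-isomorphism of $A_\infty$ algebras $\phi\colon Q \to A$ induces a restriction-of-scalars functor $\phi^*\colon \D(A) \to \D(Q)$ on the level of $A_\infty$ modules, and the standard fact (see Keller~\cite{Keller:2001a,Keller:2002a}) is that this functor is an equivalence of triangulated categories: $A_\infty$ quasi-isomorphisms induce equivalences of derived categories. So the first step is to invoke this to get $\D(Q) \simeq \D(A)$, where on the $Q$-side we may either work with DG modules (using the comparison recalled in the excerpt, via Lef\`evre-Hasegawa~\cite{Lefevre-Hasegawa:2003a} and Keller~\cite{Keller:2006b}) or directly with $A_\infty$ modules over $Q$.

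The second step is to check that this equivalence restricts to an equivalence of the bounded subcategories $\Db(Q) \simeq \Db(A)$. For this I need two things: first, that $H_*Q$ is Noetherian, so that $\Db(Q)$ is defined — this follows from Corollary~\ref{co:HQ}, since $H_*Q$ is either $k[\tau]\otimes\Lambda(\xi)$ or $k[\tau,\xi]/(\xi^2+\tau^\ell)$, both Noetherian (indeed finitely generated $k$-algebras); and likewise $H_*A \cong H_*Q$ by Theorem~\ref{th:QsimA}. Second, and this is the only real point to verify, that an object $X \in \D(A)$ has $H_*X$ finitely generated over $H_*A$ if and only if $\phi^* X$ has $H_*(\phi^*X)$ finitely generated over $H_*Q$. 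But restriction of scalars along $\phi$ does not change the underlying graded $k$-module or its homology; it only changes the module structure, and the two module structures are intertwined by the ring isomorphism $H_*(\phi)\colon H_*Q \xrightarrow{\ \sim\ } H_*A$. Hence finite generation over one is equivalent to finite generation over the other. This shows $\phi^*$ carries $\Db(A)$ into $\Db(Q)$, and since $\phi^*$ is an equivalence with quasi-inverse also compatible with homology, it carries $\Db(Q)$ into $\Db(A)$ as well.

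I expect the only genuine obstacle to be bookkeeping about what ``$\Db$'' means on the $A_\infty$ side versus the DG side and making sure the equivalence is compatible with the passage to the thick subcategory of objects with finitely generated homology — but this is exactly the content of the discussion of $\D(\fa)$ and $\Db(\fa)$ in Section~\ref{sec:Db}, together with the cited references, so no new ideas are required. A clean way to phrase the argument is: the equivalence $\D(Q) \simeq \D(A)$ is the identity on underlying homology up to the fixed ring isomorphism $H_*Q \cong H_*A$, and $\Db$ is defined as a full subcategory by a condition (finite generation of homology) that is manifestly preserved and reflected. Therefore the equivalence restricts to $\Db(Q) \simeq \Db(A)$, as claimed.
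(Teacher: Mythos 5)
Your proposal is correct and follows essentially the same route as the paper: the paper simply invokes Theorem~\ref{th:QsimA} together with the fact (cited from Lef\`evre-Hasegawa, Section~4.1.3) that a quasi-isomorphism of $A_\infty$ algebras induces an equivalence of derived categories. The only difference is that you spell out the (correct, and routine) verification that the equivalence restricts to the subcategories of objects with finitely generated homology, which the paper leaves implicit.
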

\begin{proof}
A quasi-isomorphim of $A_\infty$ algebras induces an equivalence of derived
categories, see for example~\cite{Lefevre-Hasegawa:2003a},
Section~4.1.3. So it follows from
Theorem~\ref{th:QsimA} that $\Db(Q)$ is equivalent to $\Db(A)$.
\end{proof}

\begin{remark}
Although the $A_\infty$ algebras $Q$, $A$ and $B$ carry an internal
grading to make them bigraded, we do not require that the
objects in the derived category carry an internal grading respected by
the morphisms. Nonetheless, we shall make use of internal gradings
in identifying Auslander--Reiten triangles in $\Dsg(B)$ in 
Section~\ref{sec:ARtriangles}. As we shall see, the reason this works
is that the duality established in Section~\ref{se:duality} respects
grading for objects that admit one.
\end{remark}

\section{A spectral sequence}\label{se:Ass}

In this section, we give a brief reminder of the construction and
convergence properties of the spectral
sequence for computing Homs in the derived category $\D(\fa)$ of an $A_\infty$
algebra $\fa$:
\begin{equation}\label{eq:Ass} 
\Ext^{**}_{H_*\fa}(H_*X,H_*Y) \Rightarrow \Hom_{\D(\fa)}(X,Y). 
\end{equation}
We shall make use of this in Section~\ref{se:W} to compute some
endomorphism rings, as a preliminary to applying Auslander--Reiten theory.
The construction is taken from Adams~\cite{Adams:1969a}, and a
discussion of convergence may be found in 
Boardman~\cite{Boardman:1999a}.

Let $\fa$ be an $A_\infty$ algebra. If $X$ is an $A$-module, then
taking homology gives isomorphisms
\[ \Hom_{\D(\fa)}(A,X)\cong\Hom_{H_*\fa}(H_*\fa,H_*X)\cong H_*X. \]
Choosing a set of generators of $H_*X$, we obtain a morphism $F_0\to
X$, where $F_0$ is a direct sum of shifts of $\fa$, with the property
that $H_*F_0\to H_*X$ is surjective. Setting $X_0=X$, we complete to a
triangle 
\[ F_0 \xrightarrow{k} X_0 \xrightarrow{i} X_1\xrightarrow{j} \Sigma F_0 \]
in $\D(\fa)$, and the map $i\colon X_0 \to X_1$ is zero in homology. Repeating
this construction, we obtain a sequence of triangles
\[ \xymatrix{X \ar@{=}[r] & X_0 \ar[rr]^i&&
X_1\ar[rr]^i\ar[dl]^j&&X_2 \ar[rr]^i\ar[dl]^j&& \cdots \\
&&F_0\ar[ul]^k&&F_1\ar[ul]^k&&F_2\ar[ul]^k} \]
where the maps marked $j$ involve a degree shift.  This has the
property that the resulting sequence
\[ \cdots \xrightarrow{(jk)_*} \Sigma^{-2}H_*F_2 \xrightarrow{(jk)_*} \Sigma^{-1}H_*F_1 
\xrightarrow{(jk)_*} H_*F_0 \xrightarrow{k_*}
  H_*X \to 0 \]
is a free resolution of $H_*X$ as an $H_*\fa$-module. 

\begin{lemma}\label{le:colim}
We have $\displaystyle\lim_{\substack{\to \\ i}} X_i \simeq 0$.
\end{lemma}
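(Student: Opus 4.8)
The plan is to show that the filtered colimit $X_\infty = \varinjlim_i X_i$ is acyclic, i.e.\ $H_*X_\infty = 0$, and then observe that an object of $\D(\fa)$ with vanishing homology is zero. The key input is the behaviour of the maps $i\colon X_i \to X_{i+1}$ on homology: by construction, the resulting long sequence $\cdots \to \Sigma^{-1}H_*F_1 \to H_*F_0 \to H_*X \to 0$ is a \emph{free resolution} of $H_*X$, which means precisely that each connecting map $i_*\colon H_*X_i \to H_*X_{i+1}$ is zero. Indeed, from the triangle $F_i \xrightarrow{k} X_i \xrightarrow{i} X_{i+1} \xrightarrow{j} \Sigma F_i$ one gets a long exact sequence in homology, and exactness together with the resolution property forces $i_*$ to be the zero map at every stage.

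First I would recall that homology commutes with filtered colimits in $\D(\fa)$ (the colimit here is a sequential homotopy colimit, computed as the cone on $1 - \mathrm{shift}$ on $\bigoplus_i X_i$, and $H_*$ is exact and commutes with direct sums). Hence $H_*(X_\infty) \cong \varinjlim_i H_*(X_i)$, the colimit being taken over the maps $i_*$. But a sequential colimit of abelian groups along maps that are all zero is itself zero: any element of the colimit is represented at some finite stage $X_i$, and its image in $X_{i+1}$ already vanishes. Therefore $H_*(X_\infty) = 0$. Finally, since an object of $\D(\fa)$ is quasi-isomorphic to $0$ exactly when its homology vanishes, we conclude $X_\infty \simeq 0$.

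The only real point requiring care, and the step I would expect to be the main obstacle to write cleanly, is the identification $H_*(\varinjlim_i X_i) \cong \varinjlim_i H_*(X_i)$ in the $A_\infty$ setting: one must make sense of the sequential colimit of $A_\infty$-modules and check that it represents the homotopy colimit, so that taking homology is exact and commutes with it. This is standard for DG modules, and in the $A_\infty$ world one can either transport the question along a quasi-isomorphism to a DG model (e.g.\ using a DG-algebra replacement of $\fa$ and the equivalence of derived categories) or appeal directly to the fact that $\D(\fa)$ is a compactly generated triangulated category with $\fa$ a compact generator, so that $\Hom_{\D(\fa)}(\fa, -)= H_*(-)$ commutes with coproducts and hence with sequential homotopy colimits. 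Once that is in hand, the vanishing of $i_*$ makes the rest immediate.
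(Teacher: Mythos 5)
Your proposal is correct and follows essentially the same route as the paper: the paper's proof also observes that any map $\Sigma^j\fa\to\varinjlim_i X_i$ factors through some $X_i$ (i.e.\ uses compactness of $\fa$, so $H_*=\Hom_{\D(\fa)}(\Sigma^*\fa,-)$ commutes with the sequential colimit) and then dies in $X_{i+1}$ because each $i\colon X_i\to X_{i+1}$ is zero on homology, whence $H_*\varinjlim_i X_i=0$ and the colimit vanishes. Your extra care about modelling the homotopy colimit in the $A_\infty$ setting is a reasonable elaboration of a point the paper leaves implicit, but the underlying argument is the same.
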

\begin{proof}
Any map $\Sigma^j\fa \to \displaystyle\lim_{\substack{\to \\ i}} X_i$ factors
through some $X_i$, and then the composite 
\[ \Sigma^j\fa\to X_i\to X_{i+1}\] 
is zero. Thus $H_*\displaystyle\lim_{\substack{\to \\ i}} X_i
=0$ and so $\displaystyle\lim_{\substack{\to \\ i}} X_i \simeq 0$.
\end{proof}

If $Y$ is another $\fa$-module, then taking Homs in $\D(\fa)$ from the
above resolution of $X$ to $Y$, we obtain a diagram of long exact
sequences 
\[ \xymatrix@C=-8mm{
\cdots\ \ \ar[rr]^(.4){i^*} && \Hom_{\D(\fa)}(X_2,Y) \ar[rr]^{i^*}\ar[dl]^(.4){k^*}&& 
\Hom_{\D(\fa)}(X_1,Y)\ar[rr]^{i^*}\ar[dl]^(.4){k^*}&&\Hom_{\D(\fa)}(X_0,Y).\ar[dl]^(.4){k^*} \\
&\Hom_{\D(\fa)}(F_2,Y)&&\Hom_{\D(\fa)}(F_1,Y)\ar[ul]^(.56){j^*} &&\Hom_{\D(\fa)}(F_0,Y)\ar[ul]^(.56){j^*}} \]
The direct sum of all these long exact sequences is an exact couple
\[ \xymatrix@C=-8mm{\Hom_{\D(\fa)}(X_*,Y) \ar[rr] &&
    \Hom_{\D(\fa)}(X_*,Y).\ar[dl] \\
&\Hom_{\D(\fa)}(F_*,Y)\ar[ul]} \]
The spectral sequence of this exact couple has as its $E^1$ term
\[ \Hom_{\D(\fa)}(F_*,Y)\cong\Hom_{\fa}(H_*F_*,H_*Y). \] 
The differential is the composite
\[ \Hom_{\D(\fa)}(F_j,Y) \xrightarrow{(jk)^*} \Hom_{\D(\fa)}(F_{j+1},Y), \]
and so we have
\[ E^2 \cong \Ext^{**}_{H_*\fa}(H_*X,H_*Y), \]
and the abutment of the spectral sequence is
$\Hom_{\D(\fa)}(X,Y)$. Thus we have a filtration
\[ F_j\,\Hom_{\D(\fa)}(X,Y) = \textrm{Image of }\Hom_{\D(\fa)}(X_j,Y)
  \to \Hom_{\D(\fa)}(X,Y), \]
and 
\[ F_j\,\Hom_{\D(\fa)}(X,Y)_n/F_{j+1}\,\Hom_{\D(\fa)}(X,Y)_n \cong
E^\infty_{j,n-j}. \]
In this filtration, $F_0$ is the whole thing, and by Lemma~\ref{le:colim}
we have
\[ \bigcap_j F_j\,\Hom_{\D(\fa)}(X,Y) =
  \lim_{\substack{\leftarrow\\j}}\Hom(X_j,Y) =
\Hom_{\D(\fa)}(\lim_{\substack{\to\\j}}X_j,Y) = 0. \]

As in the spectral sequence for Hochschild cohomology described in
Section~\ref{se:HH}, the spectral sequence is conditionally
convergent, and strongly convergent if and only if 
$\displaystyle\lim_{\substack{\to \\ i}} F_i=0$, which is equivalent
to $\displaystyle\lim_{\substack{\to \\ i}} E^i=0$. In particular, if
each graded piece of $E^i$ is finite dimensional, the the spectral
sequence is strongly convergent.

\section{\texorpdfstring{Inverting $\tau$}{Inverting 𝜏}}\label{se:inverting-tau}

The advantage of the explicit model $Q$ is that the element $\tau$ is
represented by a central cycle, so it is elementary to invert it. 

\begin{definition}
We write $K$ for the graded field $k[\tau, \tau^{-1}]$,  and we define
\[ Q[\tau^{-1}]=K\otimes_{k[\tau]}Q, \] 
as a  DG algebra over $K$.
If $X$ is a DG $Q$-module, we write 
\[ X[\tau^{-1}] = K\otimes_{k[\tau]} X \]
as a DG  $Q[\tau^{-1}]$-module.
\end{definition}

\begin{lemma}\label{le:Hom-tau^-1}
If $X$ and $Y$ are objects in $\Db(Q)$ then 
\[ \Hom_{\Db(Q[\tau^{-1}])}(X[\tau^{-1}],Y[\tau^{-1}])= K\otimes_{k[\tau]}
  \Hom_{\Db(Q)}(X,Y), \]
which we write as $\Hom_{\Db(Q)}(X,Y)[\tau^{-1}]$.
\end{lemma}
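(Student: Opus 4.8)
The plan is to reduce the statement to the fact that localization at the central cycle $\tau$ is exact and commutes with the relevant $\Hom$-computation. First I would pick a homotopically projective resolution $X' \to X$ over $Q$, which may be chosen to consist of (shifts of) free $Q$-modules since $X$ lies in $\Db(Q)$ and $H_*Q$ is Noetherian; then $K\otimes_{k[\tau]} X'$ is a homotopically projective resolution of $X[\tau^{-1}]$ over $Q[\tau^{-1}]$, because $-\otimes_{k[\tau]} K$ is exact (localization of the graded PID $k[\tau]$), takes free $Q$-modules to free $Q[\tau^{-1}]$-modules, and preserves quasi-isomorphisms. Hence
\[
\Hom_{\Db(Q[\tau^{-1}])}(X[\tau^{-1}],Y[\tau^{-1}]) \cong H_*\,\Hom_{Q[\tau^{-1}]}\!\big(K\otimes_{k[\tau]}X',\,K\otimes_{k[\tau]}Y\big).
\]

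Next I would identify the cochain complex on the right with $K\otimes_{k[\tau]}\Hom_Q(X',Y)$. The natural map $K\otimes_{k[\tau]}\Hom_Q(X',Y)\to \Hom_{Q[\tau^{-1}]}(K\otimes_{k[\tau]}X', K\otimes_{k[\tau]}Y)$ is an isomorphism of DG $K$-modules: this is the standard ``localization commutes with $\Hom$ out of a finitely generated (here finitely built, i.e.\ perfect) module'' statement, and it holds degreewise because $X'$ can be taken to be built from finitely many free summands in each internal degree (using that $X$ has finitely generated homology and the relevant gradings are bounded below in the appropriate sense, so the resolution is degreewise finite). Finally, since $k[\tau]$ is a graded PID and $K$ is a flat (indeed filtered colimit of free) $k[\tau]$-module, taking homology commutes with $K\otimes_{k[\tau]}-$, giving
\[
H_*\big(K\otimes_{k[\tau]}\Hom_Q(X',Y)\big) \cong K\otimes_{k[\tau]} H_*\Hom_Q(X',Y) \cong K\otimes_{k[\tau]}\Hom_{\Db(Q)}(X,Y),
\]
which is the claim.

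The main obstacle, and the only point requiring genuine care, is the degreewise-finiteness needed to justify that $K\otimes_{k[\tau]}-$ commutes with $\Hom_Q(X',-)$: this is not automatic for an arbitrary semifree resolution, so I would invoke the hypothesis $X\in\Db(Q)$ together with the internal grading on $Q$ (each graded piece of $Q$ is finite dimensional over $k$, as noted after Definition~\ref{def:wt} and in Section~\ref{se:Q}) to arrange $X'$ to have finitely many free generators in each internal bidegree. With such an $X'$, both $\Hom_Q(X',Y)$ and its localization are computed degreewise by finite sums, and the interchange of $K\otimes_{k[\tau]}-$ with $\Hom$ and with homology is routine. I would phrase the final isomorphism so that it is manifestly natural in $X$ and $Y$, which is needed for later compatibilities, and record the abbreviation $\Hom_{\Db(Q)}(X,Y)[\tau^{-1}]$ for the resulting $K$-module.
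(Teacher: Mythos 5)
There is a genuine gap at the central step, namely the claim that the chain-level comparison map $K\otimes_{k[\tau]}\Hom_Q(X',Y)\to \Hom_{Q[\tau^{-1}]}(K\otimes_{k[\tau]}X',K\otimes_{k[\tau]}Y)$ is an isomorphism. Your first justification --- that $X'$ may be taken ``finitely built, i.e.\ perfect'' --- is not available: objects of $\Db(Q)$ are only required to have $H_*X$ finitely generated over $H_*Q$, and they do not lie in $\Thick(Q)$ in general (if they did, the singularity and cosingularity categories studied in this paper would be trivial; in particular $k$ is not perfect over $Q$). Your fallback --- degreewise finiteness of the resolution --- does not repair this. Even for a minimal semifree $X'$ with finitely many cells in each degree, the cells occur in unboundedly many degrees, so a single graded piece $\Hom_Q(X',Y)_n\cong\prod_\alpha Y_{d_\alpha+n}$ is an \emph{infinite} product (the $d_\alpha$ tend to $\pm\infty$ and $Y$ is unbounded in the same direction; take $Y=Q$ to see this). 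Localizing at $\tau$ is a filtered colimit, and filtered colimits do not commute with infinite products: an element of $\prod_\alpha (Y[\tau^{-1}])_{d_\alpha+n}$ may require unboundedly large denominators $\tau^{-m_\alpha}$, and a chain-level $\tau$-torsion element of the product may have unbounded torsion exponents, so the comparison map need be neither surjective nor injective before passing to homology. Thus the one step where the hypothesis ``$H_*X$ finitely generated over $H_*Q$'' must do real work is exactly the step left unjustified; the other ingredients of your outline (base change of a semifree resolution, flatness of $K$, commuting homology with $K\otimes_{k[\tau]}-$) are fine but do not carry the weight.

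A correct argument passes the localization through at the level of homology rather than cochains. For instance, using the resolutions of Section~\ref{se:Ass}: since $H_*Q$ is Noetherian and $H_*X$ is finitely generated, each $F_j$ is a \emph{finite} sum of shifts of $Q$, so $\Hom_{\D(Q)}(F_j,Y)[\tau^{-1}]\cong\Hom_{\D(Q)}(F_j,Y[\tau^{-1}])$ honestly; because $\tau$ is a central cycle, one can localize the whole exact couple (localization is exact) and compare it with the couple computing $\Hom_{\D(Q)}(X,Y[\tau^{-1}])\cong\Hom_{\D(Q[\tau^{-1}])}(X[\tau^{-1}],Y[\tau^{-1}])$, concluding with the conditional-convergence comparison of Theorem~\ref{th:Boardman}. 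This is what the paper's one-line proof (centrality of $\tau$ plus finite generation of $H_*X$) is pointing at; as written, your argument does not establish the lemma.
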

\begin{proof}
This follows from the fact that $\tau$ is central in $Q$ with
$d\tau=0$, together with the fact that $H_*(X)$ is finitely generated
over $H_*(Q)$.
\end{proof}

Similarly,
we write $A[\tau^{-1}]$ for $K\otimes_{k[\tau]}A$ as an
$A_\infty$-algebra, and we have a quasi-isomorphism
$Q[\tau^{-1}]\simeq A[\tau^{-1}]$ coming from Theorem~\ref{th:QsimA}.
If $X$ is an $A$-module, we write
$X[\tau^{-1}]=K\otimes_{k[\tau^{-1}]}X$ as an $A[\tau^{-1}]$-module.

\begin{proposition}\label{pr:DbQsimDbAtau^-1}
We have an equivalence of bounded derived categories
\begin{equation*}
\Db(Q[\tau^{-1}])\simeq \Db(A[\tau^{-1}]).
\end{equation*}
If $X$ and $Y$ are objects in $\Db(A)$ then
\[ \Hom_{\Db(A[\tau^{-1}])}(X[\tau^{-1}],Y[\tau^{-1}]) =
  K\otimes_{k[\tau]}\Hom_{\Db(A)}(X,Y) =
  \Hom_{\Db(A)}(X,Y)[\tau^{-1}]. \]
\end{proposition}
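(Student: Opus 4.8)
The plan is to deduce this proposition directly from what has already been assembled, treating the two claims in parallel with the DG statements of the previous section.

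\medskip

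For the equivalence $\Db(Q[\tau^{-1}])\simeq\Db(A[\tau^{-1}])$: the key input is the quasi-isomorphism $Q[\tau^{-1}]\simeq A[\tau^{-1}]$ of $A_\infty$ algebras noted just above, obtained by applying $K\otimes_{k[\tau]}(-)$ to the quasi-isomorphism $Q\to A$ of Theorem~\ref{th:QsimA} (this is exact since $K$ is flat over $k[\tau]$, indeed a localization). A quasi-isomorphism of $A_\infty$ algebras induces an equivalence of derived categories (as in~\cite{Lefevre-Hasegawa:2003a}, Section~4.1.3, exactly as invoked in Proposition~\ref{pr:DbQsimDbA}); one only needs to check that this equivalence restricts to the full subcategories of objects with finitely generated homology. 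That follows because the underlying homology rings $H_*(Q[\tau^{-1}])$ and $H_*(A[\tau^{-1}])$ agree under the quasi-isomorphism, and both are Noetherian (they are localizations of $H_*Q\cong H_*A$, which by Corollary~\ref{co:HQ} is a finitely generated $k[\tau]$-algebra and hence Noetherian), so ``finitely generated homology'' is preserved in both directions. This is the same argument pattern as Proposition~\ref{pr:DbQsimDbA}, just carried out after inverting $\tau$.

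\medskip

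For the Hom identity: the right-hand equality $K\otimes_{k[\tau]}\Hom_{\Db(A)}(X,Y)=\Hom_{\Db(A)}(X,Y)[\tau^{-1}]$ is just notation. For the left-hand equality, transport $X,Y$ across the equivalence $\Db(Q)\simeq\Db(A)$ of Proposition~\ref{pr:DbQsimDbA} to DG $Q$-modules, and likewise transport along $\Db(Q[\tau^{-1}])\simeq\Db(A[\tau^{-1}])$; since both equivalences are induced by the (compatible) quasi-isomorphisms $Q\to A$ and $Q[\tau^{-1}]\to A[\tau^{-1}]$ and the localization square commutes, one reduces to the statement for $Q$, which is exactly Lemma~\ref{le:Hom-tau^-1}. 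Concretely: $\Hom_{\Db(A[\tau^{-1}])}(X[\tau^{-1}],Y[\tau^{-1}])\cong\Hom_{\Db(Q[\tau^{-1}])}(X[\tau^{-1}],Y[\tau^{-1}])\cong K\otimes_{k[\tau]}\Hom_{\Db(Q)}(X,Y)\cong K\otimes_{k[\tau]}\Hom_{\Db(A)}(X,Y)$, where the middle isomorphism is Lemma~\ref{le:Hom-tau^-1} and the outer two are the derived equivalences (which are $k[\tau]$-linear, since $\tau$ acts centrally and the equivalences fix $\tau$). One should make sure the $k[\tau]$-module structures on both sides match, i.e.\ that the equivalences of Proposition~\ref{pr:DbQsimDbA} are linear over $\HH^*$ in the relevant sense; this is automatic because $\tau\in\HH^0$ and a quasi-isomorphism induces a ring isomorphism on Hochschild cohomology (as recorded after Proposition~\ref{pr:Keller}).

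\medskip

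The main obstacle is the bookkeeping of compatibility: one must check that the equivalence $\Db(Q)\simeq\Db(A)$ and the equivalence $\Db(Q[\tau^{-1}])\simeq\Db(A[\tau^{-1}])$ fit into a commuting square with the localization functors $(-)[\tau^{-1}]$ on each side, so that $X[\tau^{-1}]$ computed on the $A$-side corresponds to $X[\tau^{-1}]$ computed on the $Q$-side. This holds because both equivalences come from the \emph{same} map $Q\to A$ of $A_\infty$ algebras (before and after applying the flat base change $K\otimes_{k[\tau]}-$), and induction/restriction along $Q\to A$ commutes with $K\otimes_{k[\tau]}-$; but it is worth stating explicitly rather than leaving implicit, and it is the only point where more than a one-line invocation of an earlier result is needed.
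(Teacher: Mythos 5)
Your argument is correct and follows essentially the same route as the paper, which simply cites the quasi-isomorphism $Q[\tau^{-1}]\simeq A[\tau^{-1}]$ (recorded just before the proposition), the equivalence of Proposition~\ref{pr:DbQsimDbA}, and Lemma~\ref{le:Hom-tau^-1}. The extra points you spell out (flatness of $K$ over $k[\tau]$, preservation of finitely generated homology, and the commuting localization square) are exactly the details the paper leaves implicit.
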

\begin{proof}
This follows from Proposition~\ref{pr:DbQsimDbA} and Lemma~\ref{le:Hom-tau^-1}.
\end{proof}

\section{Koszul duality and singularity categories}\label{se:B}

We recapitulate the development of \cite{Greenlees/Stevenson:2020a} in 
our more concrete setting.

\begin{definition}
Let $\fa$ be an $A_\infty$ algebra with Noetherian homology.
\begin{enumerate}
\item[\rm (i)]
The \emph{singularity category}  $\Dsg(\fa)$ is the quotient of $\Db(\fa)$ by
$\Thick(\fa)$. 
\item[\rm (ii)]
The \emph{cosingularity category}  $\Dcsg(\fa)$ is the quotient of $\Db(\fa)$
by $\Thick(k)$.
\end{enumerate}
\end{definition}

\begin{lemma}\label{le:Thick(k)}
Suppose that $\fa$ is an $A_\infty$ algebra such that $H_*\fa$ is
local with residue field $k$.
If $M$ is an $\fa$-module such that $H_*M$ has finite length, then
$M$ is in $\Thick(k)$. 
\end{lemma}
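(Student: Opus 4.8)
The plan is to induct on the length of $H_*M$. First I would observe that $\Thick(k)$ is, by definition, a thick subcategory of $\Db(\fa)$, so it is closed under shifts, cones, and retracts; in particular it contains all finite direct sums of shifts of $k$, and is closed under extensions (two-out-of-three in a triangle). So it suffices to build $M$ up from copies of $k$ by finitely many such operations.

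The base case is length $0$ (i.e.\ $H_*M=0$), where $M\simeq 0$ lies in any thick subcategory. For the inductive step, suppose $H_*M$ has finite length $n\ge 1$. Since $H_*\fa$ is local with residue field $k$, the socle of $H_*M$ is nonzero, so there is a nonzero map $k\to H_*M$ of $H_*\fa$-modules hitting a socle element; equivalently, using the edge map / bottom of the filtration in the spectral sequence $\Ext^{**}_{H_*\fa}(H_*k,H_*M)\Rightarrow \Hom_{\D(\fa)}(k,M)$ of Section~\ref{se:Ass} (note $\Hom_{\D(\fa)}(k,M)$ surjects onto $E^\infty_{0,*}\subseteq\Hom_{H_*\fa}(k,H_*M)$ since $F_0$ is everything and $F_1$ maps into positive Ext-filtration), this lifts to a morphism $f\colon \Sigma^j k\to M$ in $\D(\fa)$ whose effect on homology is the chosen socle inclusion, up to a unit. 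Complete $f$ to a triangle $\Sigma^j k\xrightarrow{f} M\to M'\to \Sigma^{j+1}k$. The long exact sequence in homology shows $H_*M'$ is an extension of (a quotient of $H_*M$ by a one-dimensional submodule) by a subspace of $H_*k$; in any case $H_*M'$ has length $\le n-1<n$ and is still finite. Wait — I should be slightly careful: $H_*(\Sigma^j k)\to H_*M$ being injective forces the connecting map $H_*M'\to H_{*-1}(\Sigma^j k)$ to be zero, so actually $H_*M'\cong H_*M/\mathrm{im}(f_*)$ has length exactly $n-1$. By the inductive hypothesis $M'\in\Thick(k)$, and since $\Thick(k)$ is triangulated and contains $\Sigma^{j+1}k$, the triangle $M\to M'\to\Sigma^{j+1}k\to\Sigma M$ shows $M\in\Thick(k)$.

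The main obstacle is the lifting step: producing an honest morphism $f\colon\Sigma^j k\to M$ in the derived category that realizes a prescribed $H_*\fa$-module map on homology. The $E^2 = \Ext^0$ row of the spectral sequence \eqref{eq:Ass} consists of $H_*\fa$-module homomorphisms $H_*k\to H_*M$, and one must check that the chosen homomorphism survives to $E^\infty$ — i.e.\ that it is not killed by a differential $d_r$ — and then that the resulting class in the associated graded of $\Hom_{\D(\fa)}(\Sigma^j k,M)$ can be represented by an actual morphism inducing the right map on homology. For $d_r$ to vanish on it one can argue by finiteness/boundedness (the filtration on $\Hom_{\D(\fa)}(\Sigma^jk,M)$ has $F_0$ everything and $\bigcap_jF_j=0$ by Lemma~\ref{le:colim}, and conditional convergence applies since the relevant $\Ext$-groups over the Noetherian local ring $H_*\fa$, with $H_*M$ of finite length and $H_*k$ finite-dimensional, are finite-dimensional in each degree). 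The induced map on homology of such a representative is exactly the $E^2$-class we started with, since the edge homomorphism $\Hom_{\D(\fa)}(\Sigma^jk,M)\to E^\infty_{0,*}\hookrightarrow \Hom_{H_*\fa}(H_*k,H_*M)$ is the "effect on homology" map. Everything else — closure of $\Thick(k)$ under the operations used, and the homology bookkeeping in the defining triangle — is routine.
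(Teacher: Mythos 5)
Your overall strategy is the same as the paper's: induct on the length of $H_*M$, split off a copy of $k$ in homology via a morphism $\Sigma^j k\to M$, take the cone, and use closure of $\Thick(k)$ under triangles and shifts. All of that bookkeeping (base case, injectivity of $f_*$ forcing the connecting map to vanish, rotation of the triangle) is fine. The genuine gap is exactly at the step you yourself flag as the main obstacle: realizing a chosen $H_*\fa$-module map $k\to H_*M$ by an honest morphism in $\D(\fa)$. Your justification --- that the differentials $d_r$ on the class in the $\Ext^0$-line of the spectral sequence \eqref{eq:Ass} vanish ``by finiteness/boundedness and conditional convergence'' --- does not establish this: convergence of a spectral sequence (conditional or strong) says nothing about whether a \emph{particular} $E_2$-class is a permanent cycle. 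The differentials emanating from $\Ext^0_{H_*\fa}(H_*k,H_*M)$ are precisely the Massey-product-type obstructions to lifting a homology-level map to the derived category, and for the non-formal algebras of this paper such obstructions are genuinely present (compare the nonzero differential $d_{\ell-1}$ in Theorem~\ref{th:HHB}, or the discussion in Section~\ref{se:worlds} that $X_{h+1}$ fails to exist because the $h$-fold Massey power of $\xi$ is nonzero --- that is exactly a homology-level construction that does not lift). So for an \emph{arbitrary} socle copy of $k$ in $H_*M$, which is what your locality argument produces, the lifting claim is unproved, and there is no reason for it to hold in general.

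The repair is the choice the paper makes: take a copy of $k$ sitting in the extreme degree of $H_*M$ --- lowest degree when the maximal ideal of $H_*\fa$ lowers degrees (the coconnected case, e.g.\ $B$), highest when it raises them. Extremality does two jobs at once. First, the extreme-degree class is automatically annihilated by the maximal ideal, so it does span a copy of $k$ inside $H_*M$; no separate socle argument is needed. Second, it kills the lifting obstructions for degree reasons: in the spectral sequence of Section~\ref{se:Ass} the targets of all differentials on this class lie in positive filtration and in total degree one less than that of the class, and these groups are subquotients of $\Hom_{H_*\fa}(P_r,H_*M)$ for a minimal resolution $P_\bullet$ of $k$ whose $r$-th term is generated in degrees that force every such homogeneous map to land strictly beyond the extreme degree of $H_*M$, hence to vanish. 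With the copy of $k$ chosen this way, your induction goes through verbatim and coincides with the paper's proof; with an arbitrary socle element it does not.
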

\begin{proof}
A copy of $k$ of lowest degree in $H_*M$ lifts to a map $k\to M$. 
Completing to a triangle $k\to M \to N$, the length of $H_*N$ is
one less than the length of $H_*M$. So by induction on length of
$H_*M$, it follows that $M$ is in $\Thick(k)$.
\end{proof}

For the $A_\infty$ algebras $Q$ and $A$ introduced in
Sections~\ref{se:Q} and~\ref{se:A} and the $A_\infty$ algebras
$Q[\tau^{-1}]$ and $A[\tau^{-1}]$ described in
Section~\ref{se:inverting-tau}  we have the following 
calculation.

\begin{theorem}\label{th:DcsgA}
We have equivalences of triangulated categories
\[ \Dcsg(Q) \simeq \Db(Q[\tau^{-1}]) \simeq \Db(A[\tau^{-1}])\simeq
  \Dcsg(A). \]
\end{theorem}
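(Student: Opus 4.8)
The plan is to prove the chain of equivalences one link at a time, exploiting the fact that $Q$ is an honest DG algebra in which $\tau$ is a central cycle, so that localisation is transparent. The two outer equivalences $\Db(Q[\tau^{-1}])\simeq\Db(A[\tau^{-1}])$ and (once $\Dcsg$ is identified) the analogue for $A$ are essentially Proposition~\ref{pr:DbQsimDbAtau^-1} and Proposition~\ref{pr:DbQsimDbA}, since a quasi-isomorphism of $A_\infty$ algebras induces an equivalence on bounded derived categories and respects thick subcategories generated by the unit and by the residue field. So the real content is the single equivalence $\Dcsg(Q)\simeq\Db(Q[\tau^{-1}])$, and the outer links are formal transport along the quasi-isomorphism $Q\simeq A$.

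For the core equivalence, I would first observe that localisation at $\tau$ gives an exact functor $\Db(Q)\to\Db(Q[\tau^{-1}])$, $X\mapsto X[\tau^{-1}]$, by Lemma~\ref{le:Hom-tau^-1}; this is essential because it says $\Hom$ is computed by inverting $\tau$ in the $\Hom$ in $\Db(Q)$. The kernel of this functor is exactly the $\tau$-torsion objects, i.e.\ those $X\in\Db(Q)$ with $H_*(X)[\tau^{-1}]=0$; since $H_*(X)$ is finitely generated over $H_*(Q)$, being $\tau$-torsion means $H_*(X)$ is a finite-length module over $H_*(Q)$, hence (by the local structure of $H_*(Q)$ and Lemma~\ref{le:Thick(k)}) $X\in\Thick(k)$. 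Conversely $k$ itself is $\tau$-torsion, so $\Thick(k)$ is contained in the kernel; therefore the kernel equals $\Thick(k)$. Thus the localisation functor induces a fully faithful functor $\Db(Q)/\Thick(k)=\Dcsg(Q)\hookrightarrow\Db(Q[\tau^{-1}])$ by the universal property of Verdier quotients (one must check the induced maps on $\Hom$-sets are isomorphisms, which again comes down to the computation that a morphism in $\Db(Q[\tau^{-1}])$ is a fraction $s^{-1}f$ with $f,s$ coming from $\Db(Q)$ and $s$ having $\tau$-torsion cone — standard calculus of fractions). Finally, essential surjectivity: every object of $\Db(Q[\tau^{-1}])$ is $Y[\tau^{-1}]$ for some $Y\in\Db(Q)$, because a finitely generated $H_*(Q[\tau^{-1}])=H_*(Q)[\tau^{-1}]$-module is the localisation of a finitely generated $H_*(Q)$-module, which one lifts to $\Db(Q)$ by the standard cell-structure/Adams-resolution argument (building $Y$ as an iterated extension of shifts of $Q$ along a finite free presentation of the homology, then checking the localised object has the prescribed homology).

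The main obstacle I expect is the verification that the kernel of $X\mapsto X[\tau^{-1}]$ is precisely $\Thick(k)$ rather than merely containing it: the easy direction is that torsion objects are killed, but showing a $\tau$-torsion object of $\Db(Q)$ lies in $\Thick(k)$ uses that $H_*(Q)$ is, up to the exterior factor, the local ring $k[\tau]$ with residue field $k$, so a finitely generated torsion module has finite length and Lemma~\ref{le:Thick(k)} applies — this is where the explicit description $H_*(Q)\cong k[\tau]\otimes\Lambda(\xi)$ (or $k[\tau,\xi]/(\xi^2+\tau^\ell)$ when $h=2$) from Corollary~\ref{co:HQ} is indispensable. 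A secondary nuisance is keeping track that all the categories in sight are the finitely-generated-homology versions $\Db$, so that localisation behaves well (Lemma~\ref{le:Hom-tau^-1} needs finite generation); I would state the argument for $Q$ and then simply note that the three further equivalences follow by applying the quasi-isomorphisms $Q\simeq A$, $Q[\tau^{-1}]\simeq A[\tau^{-1}]$ together with Propositions~\ref{pr:DbQsimDbA} and~\ref{pr:DbQsimDbAtau^-1}, since equivalences of $A_\infty$ derived categories carry $\Thick(k)$ to $\Thick(k)$ and hence descend to the cosingularity quotients.
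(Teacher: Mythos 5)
Your proposal is correct and follows essentially the same route as the paper: localise at the central cycle $\tau$ via $K\otimes_{k[\tau]}-$, identify the kernel of this functor with $\Thick(k)$ using Lemma~\ref{le:Thick(k)} and the finite generation of homology, and transport everything between $Q$ and $A$ via Propositions~\ref{pr:DbQsimDbA} and~\ref{pr:DbQsimDbAtau^-1}. You merely spell out the full faithfulness (via Lemma~\ref{le:Hom-tau^-1}) and essential surjectivity steps that the paper's proof leaves implicit in the phrase ``kills exactly $\Thick(k)$, and therefore induces an equivalence.''
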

\begin{proof}
By Propositions~\ref{pr:DbQsimDbA} and~\ref{pr:DbQsimDbAtau^-1}, 
we have compatible equivalences $\Db(Q)\simeq\Db(A)$ and
$\Db(Q[\tau^{-1}])\simeq\Db(A[\tau^{-1}])$.
Since $K$ is flat over $k[\tau]$, we have a functor 
\[ K\otimes_{k[\tau]} - \colon \Db(Q) \to\Db(Q[\tau^{-1}]). \] 
Using Lemma~\ref{le:Thick(k)}, it kills exactly $\Thick(k)$, and therefore induces an
equivalence $\Dcsg(Q)\to \Db(Q[\tau^{-1}])$.
Similarly, 
\[ K \otimes_{k[\tau]} - \colon \Db(A) \to \Db(A[\tau^{-1}]) \]
kills exactly $\Thick(k)$, and therefore induces an equivalence
$\Dcsg(A) \to \Db(A[\tau^{-1}])$.
\end{proof}

\begin{theorem}\label{th:DbAsimDbB}
The functor
$\iHom_A(k,-)$ induces a triangulated equivalence of derived categories 
\[ \Db(A) \xrightarrow{\sim} \Db(B), \] 
that sends $A$ to $k$ and $k$ to $B$. 
It induces triangulated equivalences 
\[ \Dsg(A) \xrightarrow{\sim} \Dcsg(B),\qquad \Dcsg(A)
\xrightarrow{\sim} \Dsg(B). \]
\end{theorem}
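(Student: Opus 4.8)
The plan is to exhibit the Koszul duality functor as a genuine triangulated equivalence and then track how the two distinguished objects $A$ and $k$ are interchanged. First I would set up the functor $\Phi = \iHom_A(k,-) \colon \Db(A) \to \Db(B)$. The source for this is the Koszul duality statement recalled at the end of Section~\ref{se:A}: $B$ is quasi-isomorphic to $\iEnd_{\Db(A)}(k)$, so $\iHom_A(k,-)$ naturally takes values in $B$-modules. To see that it lands in $\Db(B)$ (finitely generated homology) and is an equivalence onto it, I would invoke the general Koszul duality machinery — the cleanest route is to cite the framework of~\cite{Greenlees/Stevenson:2020a}, which sets this up in exactly the generality we need, or alternatively Keller's treatment via the bar/cobar adjunction together with the fact (used already in Proposition~\ref{pr:Keller}) that the cobar construction $C$ on $Q$ is finite dimensional in each degree. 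The key computational inputs are that $\Phi(A) = \iHom_A(k,A)$ is quasi-isomorphic to $k$ (the standard ``the Koszul dual of the regular module is the residue field'' computation), and $\Phi(k) = \iHom_A(k,k) = \iEnd_A(k)$ is, by definition, quasi-isomorphic to $B$.

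Granting the equivalence $\Db(A) \xrightarrow{\sim} \Db(B)$ with $A \mapsto k$ and $k \mapsto B$, the induced equivalences of singularity and cosingularity categories are then formal. By definition, $\Dsg(A)$ is the Verdier quotient $\Db(A)/\Thick(A)$ and $\Dcsg(A) = \Db(A)/\Thick(k)$, and likewise for $B$. A triangulated equivalence carries thick subcategories to thick subcategories, and since $\Phi$ sends the generator $A$ of $\Thick(A)$ to the generator $k$ of $\Thick_B(k)$, it restricts to an equivalence $\Thick_A(A) \xrightarrow{\sim} \Thick_B(k)$; similarly $\Phi$ sends $\Thick_A(k)$ to $\Thick_B(B)$. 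Passing to Verdier quotients then gives
\[ \Dsg(A) = \Db(A)/\Thick_A(A) \xrightarrow{\sim} \Db(B)/\Thick_B(k) = \Dcsg(B), \]
\[ \Dcsg(A) = \Db(A)/\Thick_A(k) \xrightarrow{\sim} \Db(B)/\Thick_B(B) = \Dsg(B). \]
Here I would cite the universal property of the Verdier quotient (an equivalence of triangulated categories inducing an equivalence on a pair of thick subcategories descends to an equivalence of the quotients) rather than re-proving it.

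The main obstacle is the first step: verifying that $\iHom_A(k,-)$ really is an equivalence $\Db(A) \simeq \Db(B)$ rather than merely a functor, and in particular that it preserves the finiteness condition defining the bounded derived categories in both directions. The subtlety is that $A$ is only an $A_\infty$ algebra, not a DG algebra, so one must be careful that $\iHom_A(k,-)$ is computed with a genuine $A_\infty$-projective (or semifree) resolution of $k$, and that the resulting $B$-module structure is the correct $A_\infty$ one; this is precisely where the framework of~\cite{Greenlees/Stevenson:2020a} — which was already cited in the introduction as extending BGG/Koszul duality to the $A_\infty$ setting — does the work. A secondary point to check is that the two conditions ``$H_*X$ finitely generated over $H_*A$'' and ``$H_*\Phi(X)$ finitely generated over $H_*B$'' correspond under $\Phi$; this should follow because $\Thick_A(A)$ is generated by $A$, $\Thick_B(B)$ by $B$, and $\Db$ on either side is the thick subcategory generated by $A$ (resp.\ $B$) together with $k$ (resp.\ $B$) — so $\Db(A)$ and $\Db(B)$ are each $\Thick$ of the two special objects, which $\Phi$ visibly matches up. Everything after establishing the equivalence is a routine quotient argument.
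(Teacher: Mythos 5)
Your proposal is correct and follows essentially the same route as the paper: the paper's proof simply cites Greenlees--Stevenson~\cite{Greenlees/Stevenson:2020a} (Theorem~9.1 and Example~(10.6)) for the equivalence $\Db(A)\simeq\Db(B)$ sending $A\mapsto k$ and $k\mapsto B$, with the passage to singularity and cosingularity categories being exactly the formal Verdier-quotient bookkeeping you spell out.
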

\begin{proof}
This follows from Greenlees and Stevenson~\cite{Greenlees/Stevenson:2020a},
Theorem~9.1 and Example~(10.6).
\end{proof}

\begin{corollary}\label{co:DsgBsimDbAtau-1}
We have an equivalence of categories
$\Dsg(B)\simeq\Db(A[\tau^{-1}])$, taking $k$ to $\iEnd_{\Dsg(B)}(k)\cong A[\tau^{-1}]$.
\end{corollary}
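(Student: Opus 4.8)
The plan is to assemble the equivalence from pieces that have already been established. By Theorem~\ref{th:DcsgA} we have $\Dcsg(A)\simeq\Db(A[\tau^{-1}])$, and by Theorem~\ref{th:DbAsimDbB} the functor $\iHom_A(k,-)$ induces an equivalence $\Dcsg(A)\xrightarrow{\sim}\Dsg(B)$. Composing these gives the desired equivalence $\Dsg(B)\simeq\Db(A[\tau^{-1}])$ directly. So the only real content is the identification of the image of $k$ under this composite, and the claim that $\iEnd_{\Dsg(B)}(k)\cong A[\tau^{-1}]$.

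For the object-level statement, first track $k$ through the equivalence $\Db(A)\xrightarrow{\sim}\Db(B)$ of Theorem~\ref{th:DbAsimDbB}: it sends $k$ to $B$ and $A$ to $k$. Under the passage $\Db(B)\to\Dsg(B)$, the object $B$ dies (it lies in $\Thick(B)$), so to find the object of $\Dsg(B)$ matching $k\in\Db(A[\tau^{-1}])$ one should instead start from the other description. Here is the cleaner route: the equivalence $\Dcsg(A)\simeq\Db(A[\tau^{-1}])$ of Theorem~\ref{th:DcsgA} is induced by $K\otimes_{k[\tau]}-$, so the image of $k\in\Db(A)$ in $\Dcsg(A)$ corresponds to $k[\tau^{-1}]=K\otimes_{k[\tau]}k$; but $k$ is $\tau$-torsion, so $k[\tau^{-1}]=0$. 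Thus $k$ does \emph{not} survive to $\Db(A[\tau^{-1}])$ along that arrow. The correct statement is the reverse: it is $A$ — or rather $A[\tau^{-1}]$ — that is the image of $k\in\Db(B)$ under the inverse equivalence $\Dsg(B)\xrightarrow{\sim}\Dcsg(A)$ followed by $K\otimes_{k[\tau]}-$. Indeed $k\in\Db(B)$ corresponds to $A\in\Db(A)$ under (the inverse of) $\iHom_A(k,-)$, hence to $A[\tau^{-1}]$ in $\Db(A[\tau^{-1}])$. So the equivalence $\Dsg(B)\simeq\Db(A[\tau^{-1}])$ takes $k$ to $A[\tau^{-1}]$, as claimed.

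It remains to compute the endomorphism ring. Since $A$ and $B$ are Koszul dual (Section~\ref{se:A}), $B$ is quasi-isomorphic to $\iEnd_{\Db(A)}(k)$ and $A$ to $\iEnd_{\Db(B)}(k)$; more to the point, under the equivalence $\Db(A)\xrightarrow{\sim}\Db(B)$ sending $A\mapsto k$, the endomorphism ring $\iEnd_{\Db(B)}(k)$ is identified with $\iEnd_{\Db(A)}(A)\simeq A$. Passing to the quotient $\Dsg(B)$ and using that Homs in $\Dsg(B)$ from $k$ are computed, via Theorem~\ref{th:DbAsimDbB} and Theorem~\ref{th:DcsgA}, by inverting $\tau$ on the corresponding Homs in $\Db(A)$ (as in Lemma~\ref{le:Hom-tau^-1} and Proposition~\ref{pr:DbQsimDbAtau^-1}), we get $\iEnd_{\Dsg(B)}(k)\simeq \iEnd_{\Db(A)}(A)[\tau^{-1}]\simeq A[\tau^{-1}]$.

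The main obstacle is purely bookkeeping: making sure the object $k\in\Db(B)$ is tracked consistently through the two equivalences and that the localization $K\otimes_{k[\tau]}-$ is applied on the correct side, so that the identification of $\iEnd_{\Dsg(B)}(k)$ as an $A_\infty$ algebra (not merely as a graded ring) is the one induced by the Koszul-duality quasi-isomorphism of Section~\ref{se:A}. Everything else is formal.
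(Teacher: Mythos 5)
Your proof is correct and takes the same route as the paper, whose proof simply composes the equivalence $\Dcsg(A)\simeq\Db(A[\tau^{-1}])$ of Theorem~\ref{th:DcsgA} with the equivalence $\Dcsg(A)\xrightarrow{\sim}\Dsg(B)$ of Theorem~\ref{th:DbAsimDbB}. Your bookkeeping — that the $k$ in the statement is the $B$-module $k$, corresponding under the inverse of $\iHom_A(k,-)$ to $A$ and hence to $A[\tau^{-1}]$, with $\iEnd_{\Dsg(B)}(k)\cong A[\tau^{-1}]$ via the localization results of Section~\ref{se:inverting-tau} — correctly fills in details the paper leaves implicit.
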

\begin{proof}
This follows from Theorems~\ref{th:DcsgA} and~\ref{th:DbAsimDbB}.
\end{proof}

Thus we can regard the central element $\tau$ of $A$ as a periodicity operator on $\Dsg(B)$ of
degree $2b$, namely a natural isomorphism from the identity to
$\Sigma^{2b}$. In Section~\ref{se:indecomposables}, we shall see explicitly how to
interpret this element in terms of resolutions.

\begin{corollary}\label{co:HomDsgB}
If $X$ and $Y$ are objects in $\Db(B)$ then
\[ \Hom_{\Dsg(B)}(X,Y) \cong \Hom_{\Db(B)}(X,Y)[\tau^{-1}]. \]
\end{corollary}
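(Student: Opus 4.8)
The plan is to deduce this directly from the equivalences already in place. By Corollary~\ref{co:DsgBsimDbAtau-1}, the equivalence $\Db(A)\xrightarrow{\sim}\Db(B)$ of Theorem~\ref{th:DbAsimDbB} carries $\Dsg(B)$ onto $\Db(A[\tau^{-1}])$, compatibly with the functor $K\otimes_{k[\tau]}-$ of Theorem~\ref{th:DcsgA}. So first I would identify the image of the given objects $X,Y\in\Db(B)$ under the inverse equivalence with objects $X',Y'\in\Db(A)$; then the chain of equivalences gives
\[
\Hom_{\Dsg(B)}(X,Y)\cong\Hom_{\Dcsg(A)}(X',Y')\cong\Hom_{\Db(A[\tau^{-1}])}(X'[\tau^{-1}],Y'[\tau^{-1}]).
\]

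Next I would apply Proposition~\ref{pr:DbQsimDbAtau^-1}, which says exactly that for objects of $\Db(A)$ the Hom after inverting $\tau$ is $K\otimes_{k[\tau]}\Hom_{\Db(A)}(X',Y')$, i.e. $\Hom_{\Db(A)}(X',Y')[\tau^{-1}]$. Transporting back along the equivalence $\Db(A)\simeq\Db(B)$, and using that this is a $k[\tau]$-linear equivalence (the central element $\tau$ of $A$ acts on $\Db(B)$ via the periodicity operator discussed after Corollary~\ref{co:DsgBsimDbAtau-1}, so $\Hom_{\Db(A)}(X',Y')\cong\Hom_{\Db(B)}(X,Y)$ as $k[\tau]$-modules), we obtain
\[
\Hom_{\Dsg(B)}(X,Y)\cong K\otimes_{k[\tau]}\Hom_{\Db(B)}(X,Y)=\Hom_{\Db(B)}(X,Y)[\tau^{-1}],
\]
as required.

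The one point that needs a little care, and which I expect to be the main obstacle, is the $k[\tau]$-linearity of the identifications: one must check that the action of $\tau$ on $\Hom_{\Db(B)}(X,Y)$ induced through the Koszul-duality equivalence agrees with the action of $\tau\in k[\tau]=Z(H_*A)$ on $\Hom_{\Db(A)}(X',Y')$, so that inverting $\tau$ on one side matches inverting $\tau$ on the other. This is precisely the content of regarding $\tau$ as a natural transformation $\Id\to\Sigma^{2b}$ on $\Dsg(B)$, as recorded just after Corollary~\ref{co:DsgBsimDbAtau-1}, so the verification amounts to tracking that natural transformation through Theorem~\ref{th:DbAsimDbB}; everything else is a formal concatenation of the cited results.
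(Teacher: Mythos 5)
Your argument is correct and is essentially the paper's own proof: the paper simply cites Proposition~\ref{pr:DbQsimDbAtau^-1} together with the equivalence of Corollary~\ref{co:DsgBsimDbAtau-1}, which is exactly the chain you spell out. Your extra care about the $k[\tau]$-linearity of the Koszul duality equivalence (matching the $\tau$-action via the periodicity operator) is a reasonable unpacking of what the paper leaves implicit, not a different route.
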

\begin{proof}
This follows from Proposition~\ref{pr:DbQsimDbAtau^-1} using the
equivalence of categories given in Corollary~\ref{co:DsgBsimDbAtau-1}.
\end{proof}

The $A_\infty$ algebras $A[\tau^{-1}]$ and $Q[\tau^{-1}]$ are regular, in the following sense.

\begin{theorem}\label{th:ThickAtau-1}
Every object in $\Db(A[\tau^{-1}])$ is in
$\Thick(A[\tau^{-1}])$.
\end{theorem}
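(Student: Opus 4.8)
The plan is to show that $A[\tau^{-1}]$ generates $\Db(A[\tau^{-1}])$ as a thick subcategory, i.e. that the algebra behaves like a regular ring after inverting $\tau$. The starting observation is that $H_*(A[\tau^{-1}]) = K \otimes_{k[\tau]}(k[\tau]\otimes\Lambda(\xi)) = K\otimes\Lambda(\xi)$, where $K = k[\tau,\tau^{-1}]$ is a graded field. So homologically the ring is just a (graded) exterior algebra over a field on one generator $\xi$ of odd degree. The crux is that the nontrivial $A_\infty$ operation $m_h(\xi,\dots,\xi) = \pm\tau^\ell$ becomes, after inverting $\tau$, an \emph{invertible} element. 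I would exploit exactly this: the relation $\xi^h \sim \tau^\ell$ (in the Massey/$A_\infty$ sense) says that over $K$ the module $k$ is built from $A[\tau^{-1}]$ in finitely many steps, because the obstruction to splitting off $k$ is the class $\tau^\ell$, which is now a unit.

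Concretely, the key step is: \emph{$k$ (the residue module) lies in $\Thick(A[\tau^{-1}])$ in $\Db(A[\tau^{-1}])$.} To see this, one runs the Adams-type resolution of Section~\ref{se:Ass} for $X = k$: the $E^2$ page is $\Ext^{**}_{K\otimes\Lambda(\xi)}(k,k)$, which (over the graded field $K$) is a polynomial ring on one generator dual to $\xi$. But the $A_\infty$ structure feeds in a differential killing the top of this polynomial ring after $h$ steps — precisely because of $m_h(\xi,\dots,\xi)=\pm\tau^\ell$ and the invertibility of $\tau$. More structurally: the relevant nilpotency is governed by $\HH^*$, and by Corollary~\ref{co:x^htau^ell=0} we have $x^h\tau^\ell = 0$ in $\HH^*A$; inverting $\tau$ makes $x^h = 0$ in $\HH^*(A[\tau^{-1}])$, where $x$ is the Koszul-dual polynomial generator. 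A nilpotent element of $\HH^*$ that becomes zero after localization gives, via the standard argument (nilpotent elements of Hochschild cohomology bound the length of certain towers of modules — the mechanism advertised just before the statement, to be used in proving Theorem~\ref{th:ThickAtau-1}), a finite filtration of $k$ by copies of shifts of $A[\tau^{-1}]$. Hence $k \in \Thick(A[\tau^{-1}])$.

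Once $k \in \Thick(A[\tau^{-1}])$, the rest is formal. Any $X \in \Db(A[\tau^{-1}])$ has $H_*X$ finitely generated over $H_*(A[\tau^{-1}]) = K\otimes\Lambda(\xi)$. Since $K\otimes\Lambda(\xi)$ is a finite-dimensional graded algebra over the graded field $K$, every finitely generated module over it has finite length, so by (the graded analogue of) Lemma~\ref{le:Thick(k)}, $X \in \Thick(k)$. Combining with $k \in \Thick(A[\tau^{-1}])$ gives $X \in \Thick(A[\tau^{-1}])$, as required. One should do this over $Q[\tau^{-1}]$ as well, or simply transport along the equivalence $\Db(Q[\tau^{-1}])\simeq\Db(A[\tau^{-1}])$ of Proposition~\ref{pr:DbQsimDbAtau^-1}.

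The main obstacle is the first step: making precise that $k \in \Thick(A[\tau^{-1}])$. The clean way is the Hochschild-cohomology route — one needs that the relevant class in $\HH^*(A[\tau^{-1}])$ controlling the building of $k$ out of free modules is nilpotent (indeed here it should be zero or a unit multiple of something that squares/powers to zero once $\tau$ is a unit), and one needs the general principle that such a class gives a finite tower. That principle is precisely what the paper says it will establish for use in this theorem, so modulo citing it, the argument is short; the only real work is pinning down which class it is and checking it vanishes in the localized $\HH^*$, which follows from Corollary~\ref{co:x^htau^ell=0} after inverting $\tau$. An alternative, more hands-on route — explicitly writing down a finite semifree-type resolution of $k$ over $Q[\tau^{-1}]$ using the generators $\xi_1,\dots,\xi_{h-1},\tau^{\pm 1}$ and the differential of Section~\ref{se:Q}, and checking it terminates because $\tau$ is invertible — would also work and might be more transparent, at the cost of a bookkeeping computation.
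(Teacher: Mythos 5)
Your overall strategy --- first put the residue field into $\Thick(A[\tau^{-1}])$, then build everything from the residue field --- cannot work, because the residue field of $A[\tau^{-1}]$ does not exist as an object of $\Db(A[\tau^{-1}])$. The $A$-module $k$ has $\tau$-torsion homology, so it becomes zero after inverting $\tau$; hence the ``residue module'' in your first step must mean an $A[\tau^{-1}]$-module $S$ with $H_*S\cong K=k[\tau,\tau^{-1}]$, lifting the residue field of $H_*(A[\tau^{-1}])=K\otimes\Lambda(\xi)$. No such object exists: constructing it amounts to capping off the tower $A[\tau^{-1}]\cup_\xi e^{2a}\cup_\xi\cdots$ after $h$ steps, and the obstruction is precisely the Massey product $\langle\xi,\dots,\xi\rangle=-\tau^\ell$, which after inverting $\tau$ is a unit, hence emphatically nonzero. (A posteriori this is visible in the classification: every indecomposable of $\Db(A[\tau^{-1}])\simeq\Dsg(B)$ has homology of rank two over $K$; in the group-algebra case it is the familiar fact that a nonprojective module for a cyclic $p$-group has nonzero Tate cohomology in both parities.) So the invertibility of $\tau^\ell$ is the obstruction to your first step, not the mechanism that makes it work. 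Your second, ``formal'' step fails for the same reason: the graded analogue of Lemma~\ref{le:Thick(k)} over $A[\tau^{-1}]$ would require both the residue-field object and a lowest-degree argument, and neither is available once everything is $\tau$-periodic.

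The paper's proof avoids this entirely and uses the Hochschild class in a different way. The class $x\in\HH^*B\cong\HH^*A$ acts naturally on every object; for any $N\in\Db(A)$ the fibre $F$ of $x\colon N\to\Sigma^{-2a}N$ lies in $\Thick(A)$ --- and this is where Lemma~\ref{le:Thick(k)} is genuinely used, but on the Koszul dual side: the fibre of $x$ on a $B$-module with finitely generated homology has finite length homology, hence lies in $\Thick(k)$ over $B$, and the equivalence $\Db(B)\simeq\Db(A)$ carries $k$ to $A$. Then Corollary~\ref{co:x^htau^ell=0} gives $x^h\tau^\ell=0$, so after inverting $\tau$ the class $x^h$ acts as zero on $\Db(A[\tau^{-1}])$; consequently $N$ is a direct summand of the fibre of $x^h\colon N\to\Sigma^{-2ah}N$, which is finitely built from fibres of $x$ and therefore lies in $\Thick(A[\tau^{-1}])$, whence so does $N$. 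Your proposal contains the germ of the second ingredient (that $x^h$ dies after localization) but applies it to a nonexistent object, and it is missing the first ingredient --- the passage through $B$, where the residue field does exist and corresponds to $A$ --- which is the actual content of the proof.
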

\begin{proof}
Consider the element $x\in B$. For any $B$-module
$M$ with $H_*M$ finitely generated, 
the fibre $F$ of $x\colon M \to \Sigma^{-2a}M$ has the property that
$H_*F$ has finite length. So by Lemma~\ref{le:Thick(k)}, $F$ is in
$\Thick(k)$. Now under the equivalence $\Db(B) \simeq \Db(A)$, the
image of $k$ is $A$. So
regarding $x$ as an element of
$\HH^*B\cong\HH^*A$, we have an action of $x$ on $\Db(A)$, and for any
$A$-module $N$ in $\Db(A)$, the fibre of $x \colon N \to \Sigma^{-2a}N$ is in
$\Thick(A)$.
So we have a triangle
$F \to N \xrightarrow{x} \Sigma^{-2a}N$ with $F$ in $\Thick(A)$.
Inverting $\tau$, we have such a triangle in $\Db(A[\tau^{-1}])$
with $F$ in $\Thick(A[\tau^{-1}])$.
Now by Corollary~\ref{co:x^htau^ell=0},
we have $x^h\tau^\ell=0$ in $\HH^*A$. Since $\tau$ is an isomorphism in 
$\Db(A[\tau^{-1}])$, it follows that $x^h$ acts as zero on $\Db(A[\tau^{-1}])$.
Therefore the fibre of $x^h\colon N \to \Sigma^{-2ah}N$ is 
$N \oplus \Sigma^{-2ah-1}N$. It is also
in $\Thick(F)$, and therefore in $\Thick(A[\tau^{-1}])$. Hence so is $N$.
\end{proof}

\begin{corollary}\label{co:ThickQtau-1}
Every object in $\Db(Q[\tau^{-1}])$ is in $\Thick(Q[\tau^{-1}])$.
\end{corollary}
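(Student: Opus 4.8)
The plan is to deduce Corollary~\ref{co:ThickQtau-1} directly from Theorem~\ref{th:ThickAtau-1} via the equivalence of triangulated categories already established. First I would invoke Proposition~\ref{pr:DbQsimDbAtau^-1}, which gives an equivalence $\Db(Q[\tau^{-1}])\simeq\Db(A[\tau^{-1}])$. Any triangulated equivalence carries thick subcategories to thick subcategories, and since the equivalence sends the free module $Q[\tau^{-1}]$ to $A[\tau^{-1}]$ (this is the shape of the quasi-isomorphism $Q[\tau^{-1}]\simeq A[\tau^{-1}]$ coming from Theorem~\ref{th:QsimA}, which sends $\tau$ to $\tau$ and $\xi_1$ to $\xi$), it identifies $\Thick(Q[\tau^{-1}])$ with $\Thick(A[\tau^{-1}])$. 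Then Theorem~\ref{th:ThickAtau-1}, which says every object of $\Db(A[\tau^{-1}])$ lies in $\Thick(A[\tau^{-1}])$, transports across the equivalence to the statement that every object of $\Db(Q[\tau^{-1}])$ lies in $\Thick(Q[\tau^{-1}])$.

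Since this is such a short deduction, there is essentially no obstacle: the only thing to be careful about is that the equivalence of Proposition~\ref{pr:DbQsimDbAtau^-1} really does match up the generating objects $Q[\tau^{-1}]$ and $A[\tau^{-1}]$, so that the thick subcategories they generate correspond. This is immediate from the fact that the equivalence is induced by the quasi-isomorphism of $A_\infty$ algebras $Q[\tau^{-1}]\to A[\tau^{-1}]$, under which restriction of scalars sends the free rank-one module to the free rank-one module.

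\begin{proof}
By Proposition~\ref{pr:DbQsimDbAtau^-1} there is an equivalence of
triangulated categories $\Db(Q[\tau^{-1}])\simeq\Db(A[\tau^{-1}])$,
induced by the quasi-isomorphism of $A_\infty$ algebras
$Q[\tau^{-1}]\simeq A[\tau^{-1}]$ from Theorem~\ref{th:QsimA}. This
equivalence carries the free module $Q[\tau^{-1}]$ to
$A[\tau^{-1}]$, and hence identifies $\Thick(Q[\tau^{-1}])$ with
$\Thick(A[\tau^{-1}])$. By Theorem~\ref{th:ThickAtau-1}, every object
of $\Db(A[\tau^{-1}])$ lies in $\Thick(A[\tau^{-1}])$; transporting
this statement across the equivalence shows that every object of
$\Db(Q[\tau^{-1}])$ lies in $\Thick(Q[\tau^{-1}])$.
\end{proof}
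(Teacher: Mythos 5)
Your proof is correct and matches the paper's argument: both deduce the result from Theorem~\ref{th:ThickAtau-1} by transporting across the equivalence $\Db(Q[\tau^{-1}])\simeq\Db(A[\tau^{-1}])$ and noting it matches the generators $Q[\tau^{-1}]$ and $A[\tau^{-1}]$ (the paper cites this equivalence via Theorem~\ref{th:DcsgA}, you via Proposition~\ref{pr:DbQsimDbAtau^-1}, but it is the same equivalence).
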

\begin{proof}
This follows from Theorem~\ref{th:ThickAtau-1}, using the fact that
the equivalence $\Db(A[\tau^{-1}])\simeq\Db(Q[\tau^{-1}])$ of
Theorem~\ref{th:DcsgA} sends $A[\tau^{-1}]$ to $Q[\tau^{-1}]$.
\end{proof}

\begin{corollary}\label{co:dualisable}
If $X$ and $Y$ are objects in $\Db(Q[\tau^{-1}])$ then we have natural equivalences
\begin{enumerate}
\item[\rm (i)] $\Hom_{\Db(Q[\tau^{-1}])}(X,Y) \simeq
\Hom_{\Db(Q[\tau^{-1}])}(X,Q[\tau^{-1}]) \otimes_{Q[\tau^{-1}]} Y$,
\item[\rm (ii)]
$\Hom_{\Db(Q[\tau^{-1}])}(\Hom_{\Db(Q[\tau^{-1}])}(X,Q[\tau^{-1}]),Q[\tau^{-1}])
\simeq X$.
\end{enumerate}
\end{corollary}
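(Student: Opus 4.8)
The plan is to deduce both statements from Corollary~\ref{co:ThickQtau-1}, which says that $Q[\tau^{-1}]$ generates $\Db(Q[\tau^{-1}])$ as a thick subcategory, i.e. every object is built from $Q[\tau^{-1}]$ by finitely many shifts, cones and retracts. This is exactly the condition that makes every object \emph{perfect}, and the two natural maps in question are standard ``evaluation'' and ``biduality'' maps that are manifestly isomorphisms when $X=Q[\tau^{-1}]$. The strategy is therefore a devissage: show that the class of $X$ for which the map in question is an isomorphism (for all $Y$ in (i), and unconditionally in (ii)) is a thick subcategory containing the unit, hence everything.

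In more detail, for part (i) I would first define the natural transformation
\[
\Hom_{\Db(Q[\tau^{-1}])}(X,Q[\tau^{-1}]) \otimes_{Q[\tau^{-1}]} Y
\longrightarrow
\Hom_{\Db(Q[\tau^{-1}])}(X,Y),
\]
composing a map $X\to Q[\tau^{-1}]$ with the structure of $Y$ as a module (concretely, $f\otimes y \mapsto (x\mapsto f(x)y)$, derived appropriately). When $X=Q[\tau^{-1}]$ both sides are $Y$ and the map is the identity. The collection of $X$ for which this map is an isomorphism for every fixed $Y$ is closed under shifts (both functors are triangulated in $X$ up to the usual sign/shift bookkeeping), under cones (five lemma applied to the two long exact sequences obtained by mapping a triangle $X'\to X\to X''$ into the two sides), and under retracts (a retract of an isomorphism is an isomorphism). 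Since it contains $Q[\tau^{-1}]$, Corollary~\ref{co:ThickQtau-1} forces it to be all of $\Db(Q[\tau^{-1}])$.

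For part (ii) I would run the same devissage on the biduality map $X \to \Hom(\Hom(X,Q[\tau^{-1}]),Q[\tau^{-1}])$, $x \mapsto (f \mapsto (-1)^{|x||f|}f(x))$; it is the identity when $X=Q[\tau^{-1}]$, and the source and target are both triangulated functors of $X$ (note $\Hom(-,Q[\tau^{-1}])$ is contravariant and exact, so applying it twice gives a covariant exact functor), so the class of $X$ on which it is an isomorphism is again thick and contains the unit. The only point requiring a little care — and the main technical obstacle — is to make sure the two functors appearing really are well-defined triangulated functors on $\Db(Q[\tau^{-1}])$ and that the natural maps are genuinely natural; this is where one needs that $Q[\tau^{-1}]$ is a (graded) field-flavored DG algebra over $K=k[\tau,\tau^{-1}]$, so that $\Hom$ and $\otimes$ in the derived category behave well and $\Hom(X,Q[\tau^{-1}])$ lands back in $\Db(Q[\tau^{-1}])$ for perfect $X$. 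Given the earlier results this is routine, but it is the step I would write out most carefully; everything after it is the formal devissage described above.
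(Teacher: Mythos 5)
Your proposal is correct and follows essentially the same route as the paper: the paper's proof is exactly the observation that both natural maps are isomorphisms for $X=Q[\tau^{-1}]$ and that the class of $X$ for which they are isomorphisms is thick, so Corollary~\ref{co:ThickQtau-1} gives the result. You simply spell out the devissage (shifts, cones via the five lemma, retracts) in more detail than the paper does.
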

\begin{proof}
These hold for $X=Q[\tau^{-1}]$, and hence for every object in
$\Thick(Q[\tau^{-1}])$, which by Corollary~\ref{co:ThickQtau-1} is
every object in $\Db(Q[\tau^{-1}])$.
\end{proof}

\section{\texorpdfstring{Duality for $Q[\tau^{-1}]$-modules}
{Duality for Q[𝜏⁻¹]-modules}}\label{se:duality}

Let $Q[\tau^{-1}]$ be the algebra described in Section~\ref{se:inverting-tau}.
In this section, we prove a form of Tate duality for the
bounded derived category of $Q[\tau^{-1}]$-modules, see
Theorem~\ref{th:Qduality}.  This  combine the dualities
$\Hom_K(-,K)$ (Brown--Comenetz duality) and
$\Hom_{Q[\tau^{-1}]}(-,Q[\tau^{-1}])$ (Spanier--Whitehead duality).
The proof makes essential use of Corollary~\ref{co:dualisable}.

\begin{definition}
We write $Q[\tau^{-1}]^*$ for the dual of $Q$ with respect
to $K$, $\Hom_{k[\tau]}(Q,K)$.
Left and right multiplication make $Q[\tau^{-1}]^*$ into a
$Q$-bimodule, and simultaneously a $K$-module with
compatible actions of $\tau$, and hence a
$Q[\tau^{-1}]$-bimodule. 
Note that 
\begin{align*} 
Q[\tau^{-1}]^* &=
\Hom_{k[\tau]}(Q,K) \cong \Hom_{k[\tau]}(Q,\Hom_K(K,K)) \\
&\cong\Hom_K(K\otimes_{k[\tau]}Q,K) = \Hom_K(Q[\tau^{-1}],K), 
\end{align*}
so we can equally well regard $Q[\tau^{-1}]^*$
as $\Hom_K(Q[\tau^{-1}],K)$.

Similarly, if $X$ is any $K$-vector space (i.e., graded $K$-module),  
we write $X^*$ for $\Hom_K(X,K)$. In particular, 
if $X$ is a left $Q[\tau^{-1}]$-module then $X^*$ is
a right $Q[\tau^{-1}]$-module and if $X$ is a
right $Q[\tau^{-1}]$-module then $X^*$ is a left $Q[\tau^{-1}]$-module.
\end{definition}

\begin{proposition}\label{pr:Qtau-1*}
There is a quasi-isomorphism of 
$Q[\tau^{-1}]$-bimodules $Q[\tau^{-1}] \to \Sigma^{|\xi_1|}
Q[\tau^{-1}]^*$
\end{proposition}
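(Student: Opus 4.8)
The plan is to construct the required bimodule quasi-isomorphism by writing down an explicit ``top class'' map and then checking it is a quasi-isomorphism using the periodicity and duality already available. First I would fix notation: since $Q$ is free over $k[\tau]$ on the standard monomials (Definition~\ref{def:standard}), the module $Q[\tau^{-1}]$ is free over $K$ on the standard monomials too, and there is a distinguished standard monomial $\omega = \xi_1\xi_2\cdots\xi_{h-1}$ (or rather $\xi_{h-1}$ times the monomial of all the lower $\xi_i$, suitably ordered), which is the unique standard monomial of top height. Its internal degree is $|\omega| = \sum_{i=1}^{h-1}|\xi_i|$, and in the homological grading its degree works out to $|\xi_1| = 2a-1$ modulo a shift by a multiple of $2b$ coming from powers of $\tau$; this is exactly the shift $\Sigma^{|\xi_1|}$ in the statement once one absorbs $\tau$-powers into the identification $K \cong \Sigma^{2b}K$.

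Next I would define the bimodule map $\phi\colon Q[\tau^{-1}] \to \Sigma^{|\xi_1|}Q[\tau^{-1}]^*$ by sending $1$ to the $K$-linear functional $\lambda$ that reads off the coefficient of the top monomial $\omega$ in the standard-monomial expansion, suitably normalized by a power of $\tau$. Extending $K$-linearly and $Q$-bilinearly, $\phi(z)$ is the functional $z' \mapsto \lambda(z z')$ on one side; bimodularity is then automatic from associativity, and one must check $\phi$ is well-defined (i.e. the two $Q$-actions are compatible, which reduces to the fact that $\omega$ spans a one-dimensional ``socle'' of $Q[\tau^{-1}]$ over $K$ on which left and right multiplication by any $\xi_i$ vanish, since $\xi_i \omega = 0 = \omega \xi_i$ as $\omega$ already contains $\xi_{h-1}$ and $\xi_{h-1}^2=0$ together with the straightening relations). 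I would also need to verify that $\phi$ is a chain map: $d(\omega)$ and the way $d$ interacts with the pairing $\langle z, z'\rangle = \lambda(zz')$ must match the differential on $Q[\tau^{-1}]^*$, which is the $K$-dual of $d$ up to the usual sign; this is a finite check using the explicit differential from Lemma~\ref{le:Qd2=0}.

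Finally, to see $\phi$ is a quasi-isomorphism I would argue homologically. By Corollary~\ref{co:HQ}, after inverting $\tau$ we have $H_*(Q[\tau^{-1}]) \cong K \otimes \Lambda(\xi)$ (for $h>2$; the case $h=2$ is the formal algebra $k[\tau,\xi]/(\xi^2+\tau^\ell)$, which is already self-dual over $K$ and can be handled directly). Both source and target of $\phi$ then have homology a free $K$-module of rank $2$, concentrated in degrees $0$ and $|\xi_1|$ on one side and $0$ and $-|\xi_1|$ on the target before the shift — so the shift $\Sigma^{|\xi_1|}$ lines the degrees up. It therefore suffices to check $H_*(\phi)$ is nonzero in each of the two relevant degrees, equivalently that the induced pairing $H_*(Q[\tau^{-1}]) \times H_*(Q[\tau^{-1}]) \to K$ is non-degenerate; concretely $1 \mapsto$ a generator of $(H_*Q[\tau^{-1}])^*$ in the appropriate degree, and $\xi \mapsto$ the dual generator, which one reads off from the homotopy $\delta$ of Theorem~\ref{th:QsimA} identifying representatives. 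Since a bimodule map inducing an isomorphism on homology is a quasi-isomorphism of bimodules, this finishes it. The main obstacle I expect is the sign bookkeeping in showing $\phi$ is a chain map and in pinning down the exact shift — i.e. confirming that the grading discrepancy between $d$ on $Q[\tau^{-1}]$ and its $K$-dual, combined with the internal degree of $\omega$, collapses precisely to $\Sigma^{|\xi_1|}$ after absorbing $\tau$-powers — rather than anything conceptually deep.
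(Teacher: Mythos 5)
Your construction rests on a premise that is false for this algebra: $Q[\tau^{-1}]$ has no ``top'' standard monomial. By the lemma preceding Definition~\ref{def:standard}, the generators $\xi_1,\dots,\xi_{h-2}$ satisfy no relations, so the standard monomials include \emph{all} words in these free generators (and $\xi_{h-1}$ times such words); for $h\ge 3$ there are standard monomials of every height and $Q[\tau^{-1}]$ has infinite rank over $K$. Consequently $\omega=\xi_1\cdots\xi_{h-1}$ is not a socle element and the claimed annihilation $\xi_i\omega=0=\omega\xi_i$ fails: for instance $\omega\xi_1$ is itself a nonzero standard monomial, and $\xi_1\omega$ acquires a nonzero $\tau^\ell$-term from the relation $\sum_{j+k=h}\xi_j\xi_k=-\tau^\ell$. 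So the Frobenius-style trace pairing $\langle z,z'\rangle=\lambda(zz')$ with $\lambda$ ``coefficient of $\omega$'' has no foundation here (and, as an aside, bimodularity of $z\mapsto\lambda(z\,\cdot\,)$ is not automatic from associativity; it needs $\lambda$ to be a graded trace). There is also a concrete grading obstruction you dismissed as bookkeeping: $|\omega|=\sum_{i=1}^{h-1}(2ia-1)=(h-1)b\ell$, which in general is not congruent to $2a-1$ modulo $2b$ — already for $h$ odd it is even while $2a-1$ is odd — so no normalization by powers of $\tau$ can make your map land in $\Sigma^{|\xi_1|}Q[\tau^{-1}]^*$.

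The duality being asserted is homological, not a Frobenius duality of the underlying algebra: after inverting $\tau$ the homology is just $K\otimes\Lambda(\xi)$ (Corollary~\ref{co:HQ}), and the correct functionals are supported in \emph{low} height, not top height. The paper's map sends $1$ to the functional taking value $1$ on $\xi_1$ and $0$ on all other standard monomials, sends $\xi_1$ to the functional dual to $1$, and sends every other standard monomial to $0$; one then checks directly that this is a map of $Q[\tau^{-1}]$-bimodules and, using Corollary~\ref{co:HQ}, that it is a quasi-isomorphism. Your final step (both sides have homology free of rank $2$ over $K$, so it suffices to see the map is nonzero on homology in the two degrees) is sound and is essentially how the paper concludes, but the map it is applied to must be the one dual to $\{1,\xi_1\}$, not a trace against a nonexistent top class.
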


\begin{remark}
For brevity, we have recorded the shift as $|\xi_1|=2a-1$, 
however in the presence of the $\tau$-periodicity, this is only well
defined modulo $2b$. It is more helpful to say that the shift in Tate
duality is one less than the Gorenstein shift ($2a-2b$ in this case) of $A$ as in
\cite[Proposition~4.1]{Greenlees/Stojanoska:2018a}. 

We should also keep track of the internal degrees, and write 
\[ Q[\tau^{-1}] \xrightarrow{\sim} \Sigma^{2a-1,\ell}Q[\tau^{-1}]^*. \]
\end{remark}

\begin{proof}
The standard monomials of Definition~\ref{def:standard} form a
$K$-basis for $Q[\tau^{-1}]$.
We construct a $K$-module homomorphism $Q[\tau^{-1}]\to
\Sigma^{|\xi_1|}Q[\tau^{-1}]^*$ as follows.
It takes all standard monomials to zero except $1$ and $\xi_1$.
It takes $1$ to the element of $Q[\tau^{-1}]^*$ taking value $1$ on $\xi_1$ and
value zero on all other standard monomials, and it takes $\xi_1$ to the element of
$Q[\tau^{-1}]^*$ taking value $1$ on $1$ and value zero on all other standard monomials. 
It is easy to check that
this is a map of $Q[\tau^{-1}]$-bimodules, and using
Corollary~\ref{co:HQ} that it a quasi-isomorphism.
\end{proof}

\begin{proposition}\label{pr:hom-tensor}
If $X$ is a left $Q[\tau^{-1}]$-module and $Y$ is a right
$Q[\tau^{-1}]$-module,  then there is a natural
isomorphism of $K$-vector spaces
\[ \Hom_{Q[\tau^{-1}]}(X,\Hom_K(Y,K)) \cong
  \Hom_K(Y\otimes_{Q[\tau^{-1}]} X,K). \]
If $Y$ is a $Q[\tau^{-1}]$-bimodule, this is an isomorphism of left
$Q[\tau^{-1}]$-modules. 
\end{proposition}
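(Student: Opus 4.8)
The plan is to exhibit the isomorphism directly, at the level of DG $Q[\tau^{-1}]$-modules, as the familiar tensor--hom adjunction. First I would define
\[ \Phi\colon \Hom_{Q[\tau^{-1}]}(X,\Hom_K(Y,K)) \longrightarrow \Hom_K\!\big(Y\otimes_{Q[\tau^{-1}]}X,\,K\big), \qquad \Phi(f)(y\otimes x)=(-1)^{|f|\,|y|}f(x)(y), \]
with the Koszul sign chosen so that $\Phi$ has homological and internal degree $0$, together with the candidate inverse $\Psi(g)(x)(y)=(-1)^{|g|\,|y|}\,g(y\otimes x)$. The content is entirely in routine verifications, which I would carry out in the order below.

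First, $\Phi(f)$ is well defined on the balanced tensor product: using $f(rx)=r\cdot f(x)$ (the $Q[\tau^{-1}]$-linearity of $f$) and $(r\cdot\phi)(y)=\pm\phi(yr)$ (the left action on $\Hom_K(Y,K)$ coming from the right action on $Y$), one gets $\Phi(f)(y\otimes rx)=\Phi(f)(yr\otimes x)$, so $\Phi(f)$ factors through $Y\otimes_{Q[\tau^{-1}]}X$. Second, $\Phi$ is a chain map and preserves the internal grading, by the usual compatibility of the differentials and gradings on the $\Hom$- and $\otimes$-complexes with the signs fixed in the definition of $\Phi$. Third, $\Phi$ and $\Psi$ are mutually inverse, which is immediate from the two formulas. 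Fourth, naturality in $X$ and in $Y$ is formal. Since $\Phi$ is an isomorphism of complexes, this gives the asserted isomorphism of $K$-vector spaces.

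For the last sentence, suppose $Y$ is a $Q[\tau^{-1}]$-bimodule. Then only one of its two module structures is used in forming each of the two complexes above, and the residual structure survives: it makes $Y\otimes_{Q[\tau^{-1}]}X$ into a $Q[\tau^{-1}]$-module, hence $\Hom_K(Y\otimes_{Q[\tau^{-1}]}X,K)$ into one, and likewise $\Hom_{Q[\tau^{-1}]}(X,\Hom_K(Y,K))$. I would then read off from the formula for $\Phi$ that it intertwines these residual actions. The one point requiring genuine care is the left/right bookkeeping: dualizing over $K$ interchanges left and right actions (as in the Definition preceding this Proposition), so one must record each residual action on the correct side, exactly as was done for the bimodule structure on $Q[\tau^{-1}]^*$ in Proposition~\ref{pr:Qtau-1*}.

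I do not expect a genuine obstacle: this is the standard tensor--hom adjunction, and the work is sign- and convention-bookkeeping. The only mildly delicate step is the final one --- matching the residual $Q[\tau^{-1}]$-actions on the two sides while keeping the left and right conventions straight.
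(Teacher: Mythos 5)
Your proposal is correct and is exactly the argument the paper has in mind: the paper's proof consists of the single line ``This is standard,'' referring to the tensor--hom adjunction that you spell out explicitly, including the sign and left/right bookkeeping.
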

\begin{proof}
This is standard.
\end{proof}

\begin{corollary}\label{co:duals}
If $X$ is homotopically projective then we have a quasi-isomorphism
\[ \Hom_{Q[\tau^{-1}]}(X,Q[\tau^{-1}]) \simeq \Sigma^{|\xi_1|}\Hom_K(X,K). \]
\end{corollary}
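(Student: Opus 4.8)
The plan is to combine Proposition~\ref{pr:Qtau-1*} with the hom--tensor adjunction of Proposition~\ref{pr:hom-tensor} and the dualizability statement of Corollary~\ref{co:dualisable}. First I would record the chain of natural quasi-isomorphisms
\[ \Hom_{Q[\tau^{-1}]}(X,Q[\tau^{-1}]) \simeq
   \Hom_{Q[\tau^{-1}]}(X,\Sigma^{|\xi_1|}Q[\tau^{-1}]^*)
   \cong \Sigma^{|\xi_1|}\Hom_{Q[\tau^{-1}]}(X,Q[\tau^{-1}]^*), \]
where the first step is Proposition~\ref{pr:Qtau-1*} (applied on the coefficient side, so it induces a quasi-isomorphism of the Hom-complexes since $X$ is homotopically projective), and the second is just moving the shift out of the second variable.

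Next I would apply Proposition~\ref{pr:hom-tensor} with $Y=Q[\tau^{-1}]$ (a bimodule) to rewrite
\[ \Hom_{Q[\tau^{-1}]}(X,Q[\tau^{-1}]^*) =
   \Hom_{Q[\tau^{-1}]}(X,\Hom_K(Q[\tau^{-1}],K))
   \cong \Hom_K\bigl(Q[\tau^{-1}]\otimes_{Q[\tau^{-1}]} X,\,K\bigr)
   \cong \Hom_K(X,K), \]
using the canonical identification $Q[\tau^{-1}]\otimes_{Q[\tau^{-1}]}X\cong X$. Stringing these together yields the desired quasi-isomorphism $\Hom_{Q[\tau^{-1}]}(X,Q[\tau^{-1}])\simeq\Sigma^{|\xi_1|}\Hom_K(X,K)$, and one checks it is natural in $X$ since every map used is natural.

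The one point that needs care is the first step: Proposition~\ref{pr:Qtau-1*} gives a quasi-isomorphism of bimodules, but to conclude that $\Hom_{Q[\tau^{-1}]}(X,-)$ carries it to a quasi-isomorphism I need $X$ homotopically projective (which is the hypothesis) so that $\Hom_{Q[\tau^{-1}]}(X,-)$ preserves quasi-isomorphisms. I would spell this out explicitly. One could alternatively phrase the whole argument as: the statement is a natural quasi-isomorphism of functors $\Db(Q[\tau^{-1}])^{\mathrm{op}}\to\D(K)$, it holds tautologically for $X=Q[\tau^{-1}]$ (where both sides are $\Sigma^{|\xi_1|}Q[\tau^{-1}]^*$ by Proposition~\ref{pr:Qtau-1*}), both sides are exact and send coproducts to products, and hence by Corollary~\ref{co:dualisable} (more precisely Corollary~\ref{co:ThickQtau-1}, that $\Db(Q[\tau^{-1}])=\Thick(Q[\tau^{-1}])$) it holds for all $X$. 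I expect this "thick subcategory" formulation to be the cleanest, with the adjunction computation above serving to pin down the identification on the generator; the main (modest) obstacle is bookkeeping the shift $\Sigma^{|\xi_1|}=\Sigma^{2a-1,\ell}$ consistently through the internal grading, exactly as flagged in the remark after Proposition~\ref{pr:Qtau-1*}.
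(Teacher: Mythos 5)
Your proposal is correct and follows essentially the same route as the paper: the paper's proof is exactly your chain, applying $\Hom_{Q[\tau^{-1}]}(X,-)$ (which preserves quasi-isomorphisms since $X$ is homotopically projective) to the bimodule quasi-isomorphism of Proposition~\ref{pr:Qtau-1*}, then using Proposition~\ref{pr:hom-tensor} with $Y=Q[\tau^{-1}]$ and the identification $Q[\tau^{-1}]\otimes_{Q[\tau^{-1}]}X\cong X$. The alternative ``thick subcategory'' phrasing you sketch is unnecessary here (and would only cover $X$ with finitely generated homology), but your primary argument matches the paper's.
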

\begin{proof}
Using
Propositions~\ref{pr:Qtau-1*} and~\ref{pr:hom-tensor}, and the fact
that $X$ is homotopically projective, we have
\begin{align*}
\Hom_{Q[\tau^{-1}]}(X,Q[\tau^{-1}]) 
&\simeq\Hom_{Q[\tau^{-1}]}(X,\Sigma^{|\xi_1|}Q[\tau^{-1}]^*) \\
&\cong \Sigma^{|\xi_1|}\Hom_{Q[\tau^{-1}]}(X,\Hom_K(Q[\tau^{-1}],K)) \\
&\cong \Sigma^{|\xi_1|}\Hom_K(Q[\tau^{-1}]\otimes_{Q[\tau^{-1}]} X,K) \\
&\cong\Sigma^{|\xi_1|}\Hom_K(X,K).
\qedhere
\end{align*}
\end{proof}

\begin{theorem}\label{th:Qduality}
Let $X$ and $Y$ be objects in $\Db(Q[\tau^{-1}])$.
Then there is a functorial duality of graded $K$-modules
\[ \Hom_{\Db(Q[\tau^{-1}])}(X,Y)^* \cong
\Hom_{\Db(Q[\tau^{-1}])}(Y, \Sigma^{-|\xi_1|}X). \]
\end{theorem}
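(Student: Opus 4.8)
The plan is to reduce the Tate duality statement to the already-established Spanier--Whitehead duality of Corollary~\ref{co:dualisable} combined with the Brown--Comenetz style identification of Corollary~\ref{co:duals}. First I would replace $X$ and $Y$ by homotopically projective representatives, so that $\Hom_{\Db(Q[\tau^{-1}])}(X,Y)$ is computed on the nose as $H_*\Hom_{Q[\tau^{-1}]}(X,Y)$, and likewise for the other Hom-group; since $K$ is a graded field, taking $\Hom_K(-,K)$ is exact and commutes with homology, so it suffices to produce a natural quasi-isomorphism of complexes
\[
\Hom_K\bigl(\Hom_{Q[\tau^{-1}]}(X,Y),\,K\bigr)\;\simeq\;\Hom_{Q[\tau^{-1}]}\bigl(Y,\Sigma^{-|\xi_1|}X\bigr).
\]

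The key chain of identifications runs as follows. By Corollary~\ref{co:dualisable}(i), $\Hom_{Q[\tau^{-1}]}(X,Y)\simeq \Hom_{Q[\tau^{-1}]}(X,Q[\tau^{-1}])\otimes_{Q[\tau^{-1}]} Y$; abbreviate $DX=\Hom_{Q[\tau^{-1}]}(X,Q[\tau^{-1}])$, which is again an object of $\Db(Q[\tau^{-1}])$ (a right module), and note $D(DX)\simeq X$ by Corollary~\ref{co:dualisable}(ii). Now apply $\Hom_K(-,K)$ to $DX\otimes_{Q[\tau^{-1}]}Y$ and use Proposition~\ref{pr:hom-tensor} (hom-tensor adjunction) to rewrite it as $\Hom_{Q[\tau^{-1}]}(Y,\Hom_K(DX,K))$. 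By Corollary~\ref{co:duals} applied to $DX$ (after choosing a homotopically projective model), $\Hom_K(DX,K)\simeq \Sigma^{-|\xi_1|}\Hom_{Q[\tau^{-1}]}(DX,Q[\tau^{-1}])=\Sigma^{-|\xi_1|}D(DX)\simeq \Sigma^{-|\xi_1|}X$. Substituting gives $\Hom_K(\Hom_{Q[\tau^{-1}]}(X,Y),K)\simeq \Hom_{Q[\tau^{-1}]}(Y,\Sigma^{-|\xi_1|}X)$, which is exactly the claimed duality after passing to homology. Functoriality in $X$ and $Y$ is inherited from the functoriality of each of the intermediate natural (quasi-)isomorphisms, and the internal grading is tracked by carrying the bidegree $(2a-1,\ell)$ through Proposition~\ref{pr:Qtau-1*}.

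The main obstacle I anticipate is bookkeeping rather than conceptual: one must be careful that all the equivalences of Corollaries~\ref{co:dualisable} and~\ref{co:duals} can be realised simultaneously and naturally at the chain level, i.e.\ that choosing a homotopically projective model of $X$ produces one of $DX$ for which Corollary~\ref{co:duals} applies and for which the hom-tensor isomorphism of Proposition~\ref{pr:hom-tensor} is strictly natural. Since every object of $\Db(Q[\tau^{-1}])$ lies in $\Thick(Q[\tau^{-1}])$ (Corollary~\ref{co:ThickQtau-1}), a clean way to handle this is to verify the required naturality first for $X=Q[\tau^{-1}]$ (where every statement is transparent) and then propagate along exact triangles and direct summands, exactly as in the proof of Corollary~\ref{co:dualisable}; this also automatically takes care of the shift $\Sigma^{-|\xi_1|}$ and the compatibility with triangulated structure. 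A secondary point to check is that the double-dual identification $D(DX)\simeq X$ used above is the \emph{same} isomorphism as in Corollary~\ref{co:dualisable}(ii), so that the final duality is involutive up to the expected sign; this is routine but worth a sentence.
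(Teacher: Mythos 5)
Your proposal is correct and follows essentially the same route as the paper: replace $X$ and $Y$ by homotopically projective resolutions, combine Corollary~\ref{co:dualisable}\,(ii) with Corollary~\ref{co:duals} to identify $\Hom_{Q[\tau^{-1}]}(X,Q[\tau^{-1}])^*\simeq\Sigma^{-|\xi_1|}X$, and then chain Corollary~\ref{co:dualisable}\,(i) with Proposition~\ref{pr:hom-tensor} to obtain the duality. The extra remarks on naturality and reduction to $X=Q[\tau^{-1}]$ are reasonable but not needed beyond what the paper's argument already supplies.
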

\begin{proof}
We may replace $X$ and $Y$ by homotopically projective resolutions and
work with $Q[\tau^{-1}]$-module homomorphisms.
Combining Corollary~\ref{co:dualisable}\,(ii) with Corollary~\ref{co:duals}, we have
\[ \Hom_{Q[\tau^{-1}]}(X,Q[\tau^{-1}])^* \simeq \Sigma^{-|\xi_1|}X. \]
Hence using Proposition~\ref{pr:hom-tensor} and Corollary~\ref{co:dualisable}\,(i), we have
\begin{align*}
\Hom_{Q[\tau^{-1}]}(X,Y)^*
&=\Hom_K(\Hom_{Q[\tau^{-1}]}(X,Y),K) \\
&\simeq\Hom_K(\Hom_{Q[\tau^{-1}]}(X,Q[\tau^{-1}])\otimes_{Q[\tau^{-1}]} Y,K) \\
&\cong\Hom_{Q[\tau^{-1}]}(Y,\Hom_K(\Hom_{Q[\tau^{-1}]}(X,Q[\tau^{-1}]),K)) \\
&\simeq\Hom_{Q[\tau^{-1}]}(Y,\Sigma^{-|\xi_1|}X).
\qedhere
\end{align*}
\end{proof}

\begin{corollary}\label{co:Aduality}
Let $X$ and $Y$ be objects in $\Db(A[\tau^{-1}])$. Then there is a
functorial duality of graded $K$-modules 
\[ \Hom_{\Db(A[\tau^{-1}])}(X,Y)^* \cong 
  \Hom_{\Db(A[\tau^{-1}])}(Y,\Sigma^{1-2a}X). \]
If $X$ and $Y$ carry an internal grading, the shift is $\Sigma^{1-2a,-\ell}$.
\end{corollary}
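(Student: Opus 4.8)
The plan is to transport the duality of Theorem~\ref{th:Qduality} from $\Db(Q[\tau^{-1}])$ to $\Db(A[\tau^{-1}])$ along the equivalence established in Proposition~\ref{pr:DbQsimDbAtau^-1}. Since that equivalence $\Db(Q[\tau^{-1}])\simeq\Db(A[\tau^{-1}])$ is triangulated and carries $Q[\tau^{-1}]$ to $A[\tau^{-1}]$, it takes the internal bimodule dual $Q[\tau^{-1}]^*$ (as a dualizing object) to the corresponding dualizing object for $A[\tau^{-1}]$, and hence identifies the two contravariant duality functors up to natural isomorphism. So the content of the corollary is entirely about bookkeeping of shifts.

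First I would recall that $|\xi_1|=2a-1$ in $Q$, so the shift $\Sigma^{|\xi_1|}$ appearing in Theorem~\ref{th:Qduality} becomes $\Sigma^{2a-1}$. The duality there reads
\[ \Hom_{\Db(Q[\tau^{-1}])}(X,Y)^* \cong \Hom_{\Db(Q[\tau^{-1}])}(Y,\Sigma^{1-2a}X), \]
since $-|\xi_1| = -(2a-1) = 1-2a$. Pulling this back along the equivalence yields exactly
\[ \Hom_{\Db(A[\tau^{-1}])}(X,Y)^* \cong \Hom_{\Db(A[\tau^{-1}])}(Y,\Sigma^{1-2a}X), \]
because the equivalence commutes with $\Sigma$ and with $K$-linear dualization (the latter since it is $K$-linear, $K$ being central). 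For the internal grading statement, I would invoke the internal-degree refinement of Proposition~\ref{pr:Qtau-1*}, namely $Q[\tau^{-1}]\xrightarrow{\sim}\Sigma^{2a-1,\ell}Q[\tau^{-1}]^*$: tracing this through Corollary~\ref{co:duals} and Theorem~\ref{th:Qduality} shows the internal shift is $-\ell$ (the internal degree of $\xi_1$ is $\ell$), and the equivalence of Proposition~\ref{pr:DbQsimDbAtau^-1} respects internal gradings on objects that carry one, by the Remark following Proposition~\ref{pr:DbQsimDbAtau^-1}. Hence the internal shift is $\Sigma^{1-2a,-\ell}$.

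The only mild subtlety — and the step I would be most careful about — is checking that the duality functor really does transport, i.e.\ that the equivalence $\Db(Q[\tau^{-1}])\simeq\Db(A[\tau^{-1}])$ intertwines $\Hom_K(-,K)$ on both sides. This is where one uses that the equivalence is $K$-linear and that $K$-linear duals are computed the same way in either category; no new computation is needed, but it should be stated. Everything else is a substitution of $|\xi_1|=2a-1$ and reading off internal degrees.
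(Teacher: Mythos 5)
Your proposal is correct and matches the paper's proof, which likewise deduces the corollary by transporting Theorem~\ref{th:Qduality} through the equivalence $\Db(Q[\tau^{-1}])\simeq\Db(A[\tau^{-1}])$ of Proposition~\ref{pr:DbQsimDbAtau^-1}; your shift bookkeeping ($-|\xi_1|=1-2a$, internal degree $-\ell$) is exactly what is intended. The only quibble is that the remark about internal gradings appears after Proposition~\ref{pr:DbQsimDbA}, not after Proposition~\ref{pr:DbQsimDbAtau^-1}, but this does not affect the argument.
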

\begin{proof}
This follows from Theorem~\ref{th:Qduality}, using the equivalence of
categories described in\linebreak[3] Proposition~\ref{pr:DbQsimDbAtau^-1}.
\end{proof}

\begin{theorem}\label{th:Bduality}
Let $X$ and $Y$ be objects in $\Dsg(B)$. Then there is a functorial
duality of graded $K$-modules
\[ \Hom_{\Dsg(B)}(X,Y)^* \cong \Hom_{\Dsg(B)}(Y,\Sigma^{1-2a}X). \]
If $X$ and $Y$ carry an internal grading, the shift is $\Sigma^{1-2a,-\ell}$.
\end{theorem}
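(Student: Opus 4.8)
The plan is to transport the duality statement established for $\Db(A[\tau^{-1}])$ in Corollary~\ref{co:Aduality} across the equivalence of categories relating $\Dsg(B)$ to $\Db(A[\tau^{-1}])$. By Corollary~\ref{co:DsgBsimDbAtau-1}, the functor $\iHom_A(k,-)$ (or rather the composite through $\Dcsg(A)$) induces an equivalence $\Dsg(B)\simeq\Db(A[\tau^{-1}])$. Since a triangulated equivalence preserves Hom-groups and the shift functor $\Sigma$ up to natural isomorphism, the functorial $K$-module duality
\[ \Hom_{\Db(A[\tau^{-1}])}(X',Y')^* \cong \Hom_{\Db(A[\tau^{-1}])}(Y',\Sigma^{1-2a}X') \]
of Corollary~\ref{co:Aduality} pulls back directly to the asserted isomorphism
\[ \Hom_{\Dsg(B)}(X,Y)^* \cong \Hom_{\Dsg(B)}(Y,\Sigma^{1-2a}X). \]

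The one point requiring a little care is the $K$-module structure: the duality in Corollary~\ref{co:Aduality} is a duality of graded \emph{$K$-modules}, where $K=k[\tau,\tau^{-1}]$ acts via the central element $\tau$. To make the transported statement meaningful, I would note that by Corollary~\ref{co:HomDsgB} the Hom-groups in $\Dsg(B)$ are already $\tau$-periodic, i.e. modules over $K$, and that under the equivalence of Corollary~\ref{co:DsgBsimDbAtau-1} this $K$-action corresponds to the $K$-action on $\Db(A[\tau^{-1}])$ coming from the regular central element $\tau$ of $A[\tau^{-1}]$ (this is exactly the identification $\iEnd_{\Dsg(B)}(k)\cong A[\tau^{-1}]$). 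Hence the equivalence is $K$-linear, and the $*$-dual computed over $K$ is preserved.

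For the statement about internal gradings, I would simply observe that the equivalences in question respect the internal bigrading on objects that carry one — this is the content of the remark following Proposition~\ref{pr:DbQsimDbA}, together with the fact that $\iHom_A(k,-)$ and the inversion of $\tau$ are both compatible with the second grading on $Q$, $A$, $B$. So the shift $\Sigma^{1-2a}$ refines to $\Sigma^{1-2a,-\ell}$ exactly as in Corollary~\ref{co:Aduality}.

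I do not expect a serious obstacle here: the theorem is essentially a corollary of Corollary~\ref{co:Aduality} and Corollary~\ref{co:DsgBsimDbAtau-1}, and the proof is a matter of spelling out that the relevant equivalence is $K$-linear and grading-compatible. If anything, the mildly delicate step is making sure the internal-grading claim is justified rather than merely asserted, since $\Dsg(B)$ itself was defined without reference to an internal grading on objects; the justification goes through the concrete model $Q[\tau^{-1}]$, where the bigrading is manifestly preserved by all the functors involved.
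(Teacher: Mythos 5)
Your proposal is correct and is exactly the paper's argument: Theorem~\ref{th:Bduality} is deduced by transporting Corollary~\ref{co:Aduality} across the equivalence $\Dsg(B)\simeq\Db(A[\tau^{-1}])$ of Corollary~\ref{co:DsgBsimDbAtau-1}. Your additional remarks on $K$-linearity and the internal grading are consistent with the paper's own comments (notably the remark following Proposition~\ref{pr:DbQsimDbA}) and simply spell out details the paper leaves implicit.
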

\begin{proof}
This follows from Corollary~\ref{co:Aduality} using the equivalence of
categories given in Corollary~\ref{co:DsgBsimDbAtau-1}.
\end{proof}

\section{A tour of the two worlds}
\label{se:worlds}
Our aim is to understand the singularity category of $B$, and in 
particular to construct indecomposable objects. In this interlude 
we give a topological account of the strategy before returning to 
give an elementary implementation in algebra. The section can be
entirely ignored by those with a strong algebraic compass. 

Bearing in mind that
$B$ itself is trivial in the singularity category, it is natural to
start with $X_1=k$ and then construct objects from that. Since 
$\Hom_{\Dsg (B)}(k,k)=A[\tau^{-1}]$ it is natural to focus on $\xi$,
which supplies a map $\Sigma^{2a-1}k\to k$, with mapping cone
$X_2$ with cells in degree 0 and $2a$. We then attempt to iterate this
construction by forming 
$$X_s=k\cup_\xi e^{2a}_k\cup_\xi \cdots \cup_\xi e^{2(s-1)a}_k. $$
In fact $X_s$ exists if and only if the $(s-1)$-fold Massey product $\langle \xi, 
\xi, \cdots  ,
\xi\rangle$
exists and contains zero. In our case the indeterminacy is always
zero. We may thus construct $X_1, \ldots, X_{h}$, but not $X_{h+1}$
since the $h$-fold Massey product is nonzero. One may also check inductively that
these complexes are unique up to equivalence. Our main result will 
show that up to suspension this does give all indecomposables. 

The counterparts $Y_s =\Hom_B(k,X_s)$ in $A$-modules will have a cell
structure
\[ Y_s =A\cup_\xi e^{2a}_A\cup_\xi \cdots \cup_\xi  e^{2(s-1)a}_A. \]

Under the derived equivalence between finitely generated $B$-modules
and $A$-modules, small
$B$-modules (i.e., modules in $\Thick(B)$) 
correspond to $A$-modules with finite dimensional
homology, and vice versa. Thus a $B$-module $N$ is small if and only 
if $[k,N]^B_*$ is finite dimensional. Similarly, $N$ corresponds to a small 
$A$-module if and only if $H_*(N)$ is finite dimensional. 

Exchanging the roles of $A$ and $B$, we may construct $A$-modules
$$V_s=k\cup_t e^{2b}_k\cup_t \cdots \cup_t e^{2(s-1)b}_k. $$
for $s=1, 2, \ldots , \ell$ but not $V_{\ell +1}$ since the
$\ell$-fold Massey product is nonzero. These correspond
to the small $B$-modules
$$W_s= B \cup_t e^{2b}_B\cup_t \cdots \cup_t e^{2(s-1)b}_B. $$

Our actual method will take full advantage of the explicit and easily described
models, and give concrete representatives for the objects $W_s$.

\begin{remark}
\label{rem:formal}
For the graded rings $H_*(A)$ with $h>2$ and $H_*(B)$ with $\ell>2$ 
(i.e.,  the formal case  where the algebras are polynomial tensor
exterior and all 
products $m_i$ are zero for $i>2$) the singularity and cosingularity
categories are  well understood, for example through the theory of
maximal Cohen--Macaulay modules. See for 
example Proposition~2.6 of Ene and Popescu~\cite{Ene/Popescu:2008a},
which discusses modules over the ungraded completion, but the  modules
over the graded ring are very similar.  In that case 
the objects $X_s, Y_s, V_s, W_s$ exist for all $s\ge 0$ and the terms in each sequence are 
inequivalent.  By contrast with our case, the singularity and 
cosingularity categories for $H_*(A)$ and $H_*(B)$ each contain 
infinitely many indecomposable objects. 
\end{remark}

\section{The $B$-modules $W_i$  in $\Thick(B)$}\label{se:W}

Following the pattern set out in Section \ref{se:worlds} we will construct
explicit small $B$-modules (i.e., modules in $\Thick(B)$) 
$W_1, W_2, \ldots , W_\ell$, starting with
$W_1=B$, and we will do so in a bigraded fashion. Background on modules
over $A_\infty$-algebras may be found in \cite{Keller:2006b}.

Consider the map $t\colon \Sigma^{-2b-1,-h}B \to B=W_1$. Complete this to a
triangle in $\Db(B)$, 
\[ \Sigma^{-2b-1,-h}B \xrightarrow{t} W_1 \to W_2. \]
Then using the long exact sequence in homology, as a module over
$H_*B=k[x]\otimes\Lambda(t)$ we have
\[ H_*W_2= k[x].u_2 \oplus k[x].v_2, \] 
where $k[x]=H_*B/(t)$, $|u_2|=(0,0)$ and $|v_2|=(-4b-1,-2h)$. The $A_\infty$
structure is given by 
\begin{align*} 
m_3(t,t,u_2)&=v_2 \\
m_{\ell-1}(t,\dots,t,v_2)&=(-1)^{(\ell-2)(\ell-1)/2}x^hu_2.
\end{align*}
The element $v_2$ defines a map $\Sigma^{-4b-1}B\to W_2$, which we
complete to a triangle 
\[ \Sigma^{-4b-1,-2h}B \xrightarrow{v_2} W_2 \to W_3. \]
We have
\[ H_*W_3=k[x].u_3 \oplus k[x].v_3, \]
with $|u_3|=(0,0)$, $|v_3|=(-6b-1,-3h)$. The $A_\infty$ structure is given by
\begin{align*}
m_4(t,t,t,u_3)&=v_3 \\
m_{\ell-2}(t,\dots,t,v_3)&=(-1)^{(\ell-4)(\ell-1)/2}x^hu_3.
\end{align*}
Continuing this way, we construct objects $W_i$ in $\Db(B)$ ($1\le
s\le \ell$), finitely
built from $B$, with
\[ H_*W_i = k[x].u_i \oplus k[x].v_i, \]
with $|u_i|=(0,0)$, $|v_i|=(-2ib-1,-ih)$, and
\begin{align*}
m_{i+1}(\underbrace{t,\dots,t}_{\text{$i$ copies}},u_i)&=v_i \\
m_{\ell-i+1}(\underbrace{t,\dots,t}_{\text{$\ell-i$ copies}},v_i)&=(-1)^{(\ell-2i+2)(\ell-1)/2}x^hu_i.
\end{align*}
and $v_i$ defines a triangle
\[ \Sigma^{-2ib-1,-ih}B \stackrel{v_i}\to W_i \to W_{i+1}. \]
The second to last stage of this process is a module $W_{\ell-1}$ with
\begin{align*}
m_\ell(t,\dots,t,u_{\ell-1})&=v_{\ell-1} \\
tv_{\ell-1}=m_2(t,v_{\ell-1})&=(-1)^{\ell(\ell-1)/2}x^hu_{\ell-1}.
\end{align*}
Then something different happens. The map $\Sigma^{-2(\ell-1)b-1,-(\ell-1)h}B\to
W_{\ell-1}$ still defines a triangle
\[ \Sigma^{-2(\ell-1)b-1,-(\ell-1)h}B\to
W_{\ell-1} \to W_\ell, \]
but the long exact sequence in homology now gives 
$H_*W_\ell \cong k[x]/(x^h)$, and the process cannot be continued any
further. The composite $B=W_1\to W_2\to \cdots \to W_\ell$ shows that
$W_\ell$ is equivalent to the quotient of $B$ by the ideal $(x^h,t)$. What we have seen is the
following.

\begin{theorem}\label{th:kx/x^h}
The $B$-module $k[x]/(x^h)=B/(x^h,t)$ is in the subcategory $\Thick(B)$
of $\Db(B)$. It is built from $\ell$ shifts of copies of $B$.\qed
\end{theorem}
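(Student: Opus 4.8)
The plan is to realize $k[x]/(x^h)$ as the final term $W_\ell$ of the iterated cofibering construction sketched before the statement, and to verify at each stage that the construction behaves as claimed. The starting observation is the elementary identification $\Hom_{\Db(B)}(\Sigma^{s,t}B,M)\cong (H_*M)_{s,t}$, so that a homology class of the right bidegree determines a morphism out of a shift of $B$; since $\Thick(B)$ is a triangulated subcategory it is closed under cones, so any object assembled from $B$ by finitely many such cofiberings lies in $\Thick(B)$ and is built from the corresponding number of shifted copies of $B$.

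Then I would run the induction. Set $W_1=B$. Assuming $W_i$ has been constructed with $H_*W_i=k[x].u_i\oplus k[x].v_i$ and the stated $A_\infty$ structure, and $i\le \ell-1$, use $v_i$ to define a map $\Sigma^{-2ib-1,-ih}B\to W_i$ and let $W_{i+1}$ be its cone; this keeps us inside $\Thick(B)$ and adjoins one further shifted copy of $B$. The homology of $W_{i+1}$ is read off from the long exact sequence: the key point is that for $i<\ell-1$ the element $t$ annihilates both $u_i$ and $v_i$, so $(v_i)_*$ has image $k[x].v_i$ and kernel a shift of $t\,k[x]$, giving $H_*W_{i+1}=k[x].u_{i+1}\oplus k[x].v_{i+1}$ with the asserted degrees. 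At $i=\ell-1$ the behaviour changes, since now $m_2(t,v_{\ell-1})=\pm x^hu_{\ell-1}\ne 0$, and the long exact sequence instead yields $H_*W_\ell\cong k[x]/(x^h)$, after which the process cannot continue.

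The main obstacle is pinning down the $A_\infty$ module structure on each intermediate $W_{i+1}$, namely the relations $m_{i+2}(t,\dots,t,u_{i+1})=v_{i+1}$ and $m_{\ell-i}(t,\dots,t,v_{i+1})=\pm x^hu_{i+1}$: the homology is forced by the long exact sequence, but the higher products are not. I would establish them either by transferring the $A_\infty$ structure along the defining triangle — the connecting map encodes precisely the relevant Massey-type product of copies of $t$ — or by a direct computation in an explicit model followed by Kadeishvili's theorem. Here the bigrading is the crucial simplifying device: together with $\gcd(h,\ell)=1$ it rules out all products except the displayed ones, so that although the bookkeeping is delicate it is finite.

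Finally I would conclude. After $\ell$ steps $W_\ell$ satisfies $H_*W_\ell\cong k[x]/(x^h)=H_*(B/(x^h,t))$, and the composite $B=W_1\to W_2\to\cdots\to W_\ell$ induces on homology the natural surjection $H_*B\twoheadrightarrow k[x]/(x^h)$; hence it is a quasi-isomorphism, so $W_\ell\simeq B/(x^h,t)$ in $\Db(B)$. Since exactly one shifted copy of $B$ is adjoined at each of the $\ell$ stages (counting $W_1=B$ itself), $B/(x^h,t)$ is built from $\ell$ shifts of copies of $B$ and therefore lies in $\Thick(B)$.
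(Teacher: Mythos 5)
Your proposal is correct and follows essentially the same route as the paper: the theorem is stated with \qed precisely because its proof is the iterated cone construction of the modules $W_1,\dots,W_\ell$ carried out in Section~\ref{se:W}, tracking $H_*W_i$ and the higher products at each stage and identifying $W_\ell$ with $B/(x^h,t)$ via the composite $B=W_1\to\cdots\to W_\ell$. Your extra care about transferring the $A_\infty$ module structure along the triangles (using the bigrading and $\gcd(h,\ell)=1$ to constrain the possible products) is a reasonable way to justify steps the paper states without further comment, but it is the same argument.
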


\section{Indecomposables objects in $\Dsg(B)$}\label{se:indecomposables}

In this section, we discuss the indecomposable objects which we eventually
wish to show form a complete list in $\Dsg(B)$. They come with an internal
grading,  which we keep track of.
The ideal in $B$ generated by $x^h$ 
and $t$ is a bigraded $A_\infty$ ideal, and the quotient 
\[ \bar B=B/(x^h,t)\cong k[x]/(x^h) \] 
is a formal $A_\infty$ algebra with $|x|=(-2a,-\ell)$. In other words, the only
non-zero operation $m_i$ on the quotient is the multiplication $m_2$.
We begin with a discussion of $\bar B$-modules. The indecomposable
modules are the shifts of the quotients $\bar M_i=k[x]/(x^i)$ for
$1\le i\le h$ of $\bar B$.
For $i<j$ we have short exact sequences of $\bar B$-modules
\begin{equation}\label{eq:Mj-i}
 0 \to \Sigma^{-2ia,-i\ell}\bar M_{j-i} \to\bar M_j \to \bar M_i \to 0. 
\end{equation}
We also have almost split sequences of $\bar B$-modules
\begin{gather*}
0 \to \Sigma^{-2a,-\ell}\bar M_1 \to\bar M_2 \to \bar M_1 \to 0  \\
0 \to \Sigma^{-2a,-\ell}\bar M_2 \to \Sigma^{-2a,-\ell}\bar M_1\oplus \bar M_3 \to
M_2 \to 0 \\
\cdots \\
0 \to \Sigma^{-2a,-\ell}\bar M_{h-1}\to \Sigma^{-2a,-\ell}\bar M_{h-2}\oplus 
\bar M_h \to \bar M_{h-1} \to 0.
\end{gather*}

Pulling back the $k[x]/(x^h)$-module $\bar M_i$ ($1\le i\le h$) to $B$, we obtain a
$B$-module $M_i$ and a map $B \to M_i$. 
In particular, $M_1$ is the field
object, with $H_*M_1\cong k$. We shall write $k$
for the $B$-module $M_1$. By Theorem~\ref{th:kx/x^h}, $M_h$ is in 
the subcategory $\Thick(B)$ of $\Db(B)$, and therefore vanishes in $\Dsg(B)$.
We write $X_i$ for the fibre of $B \to M_i$.
Applying $H_*$
to the triangle $X_i \to B \to M_i$, we see that $H_*X_i$ is the
ideal generated by $t$ and $x^i$ in $H_*B=k[x]\otimes \Lambda(t)$. 
In $\Dsg(B)$, we have $\Sigma X_i \simeq M_i$  and $X_h\simeq 0$.

The exact sequence~\eqref{eq:Mj-i} of $\bar B$-modules gives rise to
triangles in $\Db(B)$
\[ \Sigma^{-2ia,-i\ell}X_{j-i}\to X_j \to X_i. \]
In particular, taking $j=h$, we obtain the following, critical in
linking odd and even suspensions. 

\begin{proposition}\label{pr:X_{h-i}}
There is an equivalence
$ X_i \simeq \Sigma^{1-2ia,-i\ell}X_{h-i}$ in $\Dsg(B)$. \qed 
\end{proposition}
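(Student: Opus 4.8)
The plan is to chain together the triangles from the exact sequence \eqref{eq:Mj-i} with the vanishing $X_h \simeq 0$ in $\Dsg(B)$ already recorded in this section. Specifically, I would start from the short exact sequence of $\bar B$-modules
\[ 0 \to \Sigma^{-2ia,-i\ell}\bar M_{h-i} \to \bar M_h \to \bar M_i \to 0, \]
pull it back to $B$ and apply the fibre construction $X_{(-)}$ of the maps $B \to M_{(-)}$ to obtain, as stated just above, a triangle in $\Db(B)$
\[ \Sigma^{-2ia,-i\ell}X_{h-i}\to X_h \to X_i. \]

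First I would pass to $\Dsg(B)$, where $X_h \simeq 0$ by Theorem~\ref{th:kx/x^h} (since $M_h \in \Thick(B)$, so $X_h \in \Thick(B)$ as well). In a triangulated category, a triangle $U \to 0 \to V \to \Sigma U$ forces $V \simeq \Sigma U$; applying this to the above triangle gives $X_i \simeq \Sigma\big(\Sigma^{-2ia,-i\ell}X_{h-i}\big) = \Sigma^{1-2ia,-i\ell}X_{h-i}$ in $\Dsg(B)$, which is exactly the claimed equivalence. The internal-degree bookkeeping is immediate from the degrees attached to \eqref{eq:Mj-i}, and the homological shift of $1$ is just the connecting suspension in the triangle.

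The only points needing a word of care: one should check that the triangle $\Sigma^{-2ia,-i\ell}X_{h-i}\to X_h \to X_i$ is genuinely the octahedral consequence of the map of fibre sequences $X_j \to B \to M_j$ induced by the surjection $\bar M_h \to \bar M_i$ (so that the third term really is $\Sigma^{-2ia,-i\ell}X_{h-i}$, the fibre associated to the kernel $\Sigma^{-2ia,-i\ell}\bar M_{h-i}$) — but this is exactly what was asserted in the sentence preceding the proposition, so I would simply invoke it. One also wants $i$ in the stated range $1\le i\le h$ so that $\bar M_{h-i}$ makes sense (with $\bar M_0 = 0$ handling the boundary case $i=h$, where the statement reads $X_h \simeq \Sigma^{\cdots}\cdot 0 \simeq 0$, consistent with what we already know).

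The main obstacle — such as it is — is purely organizational rather than mathematical: making sure the triangle being used is the correct one and that the suspension in $\Dsg(B)$ behaves as in any triangulated category when one vertex is zero. There is no hard estimate or delicate convergence issue here; the content is entirely in the earlier construction of the $X_i$ and in Theorem~\ref{th:kx/x^h}, so the proof is a two-line deduction. Accordingly I expect the write-up to be essentially "apply the triangle $\Sigma^{-2ia,-i\ell}X_{h-i}\to X_h \to X_i$ in $\Dsg(B)$ and use $X_h\simeq 0$," with the \texttt{\textbackslash qed} already in place in the statement indicating the authors regard it as immediate.
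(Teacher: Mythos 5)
Your argument is exactly the paper's: take $j=h$ in the triangle $\Sigma^{-2ia,-i\ell}X_{h-i}\to X_h\to X_i$ coming from \eqref{eq:Mj-i}, note $X_h\simeq 0$ in $\Dsg(B)$ (via Theorem~\ref{th:kx/x^h}), and rotate the triangle. The paper treats this as immediate (hence the \qed in the statement), and your write-up, including the bookkeeping of internal degrees, matches it.
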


Similarly, the almost split sequences of $\bar B$-modules give
rise to triangles in $\Db(B)$
\begin{gather}
\Sigma^{-2a,-\ell}X_1 \to X_2 \to X_1 \notag \\
\Sigma^{-2a,-\ell}X_2 \to \Sigma^{-2a,-\ell}X_1\oplus X_3 \to X_2 \notag \\
\cdots \label{eq:ass} \\
\Sigma^{-2a,-\ell}X_{h-1}\to \Sigma^{-2a,-\ell}X_{h-2}\oplus X_h \to X_{h-1}.\notag
\end{gather}
Bearing in mind that $X_h$ is zero in $\Dsg(B)$, the last of these
becomes
\[ \Sigma^{-2a,-\ell}X_{h-1} \to \Sigma^{-2a,-\ell}X_{h-2} \to X_{h-1}. \]
We shall eventually see that these are almost split triangles in $\Dsg(B)$.

Next, we compute the spectral sequence \eqref{eq:Ass}
\begin{equation}\label{eq:ss} 
\Ext^{*,*}_{H_*B}(H_*X_i,H_*X_j)\Rightarrow\Hom_{\Db(B)}(X_i,X_j).
\end{equation}
We first assume that $\ell>2$, and later describe the modifications
necessary for the case $\ell=2$.
Resolving $H_*X_i$ as a module over $H_*B$, we obtain the sequence
\begin{multline*} 
\cdots \xrightarrow{\left(\begin{smallmatrix} t & x^i \\ 0 & -t \end{smallmatrix}\right)}
\Sigma^{-4b-2,-2h}H_*B \oplus \Sigma^{-2ia-2b-1,-h-i\ell}H_*B \\
\xrightarrow{\left(\begin{smallmatrix} t & x^i \\ 0 & -t \end{smallmatrix}\right)}
\Sigma^{-2b-1,-h}H_*B \oplus \Sigma^{-2ia,-i\ell}H_*B \xrightarrow{(t,x^i)} H_*X_i
\to 0. 
\end{multline*}
Thus $H_*X_i$ is a maximal Cohen--Macaulay $H_*B$-module corresponding
to the following matrix factorisation of the relation $t^2=0$.
\[ \begin{pmatrix} t & x^i \\ 0 & -t\end{pmatrix} 
\begin{pmatrix} t & x^i \\ 0 & -t\end{pmatrix} 
= t^2\begin{pmatrix} 1 & 0 \\ 0 & 1 \end{pmatrix} \]

If $X_i$ and $X_j$ are $B$-modules of this form, we take homomorphisms
from the resolution of $H_*X_i$ to $H_*X_j$ to obtain
\[ \cdots 
\xleftarrow{\left(\begin{smallmatrix}t&0\\x^i&-t\end{smallmatrix}\right)} 
\Sigma^{4b+2,2h}H_*X_{j} \oplus \Sigma^{2ia+2b+1,h+i\ell}H_*X_{j}
\xleftarrow{\left(\begin{smallmatrix}t&0\\x^i&-t\end{smallmatrix}\right)} 
\Sigma^{2b+1,h}H_*X_{j} \oplus \Sigma^{2ia,i\ell}H_*X_{j} \leftarrow 0.
\]
The differential sends
\[ \left(\begin{smallmatrix}t\\0\end{smallmatrix}\right) \to
\left(\begin{smallmatrix}0\\tx^i\end{smallmatrix}\right) \qquad
\left(\begin{smallmatrix}x^{j}\\0\end{smallmatrix}\right) \to
\left(\begin{smallmatrix}tx^{j}\\x^{i+j}\end{smallmatrix}\right) \qquad
\left(\begin{smallmatrix}0\\t\end{smallmatrix}\right) \to
\left(\begin{smallmatrix}0\\0\end{smallmatrix}\right) \qquad
\left(\begin{smallmatrix}0\\x^{j}\end{smallmatrix}\right) \to
\left(\begin{smallmatrix}0\\-tx^{j}\end{smallmatrix}\right).
\]

\subsubsection*{\bf (i) The case $j \ge i$:}

The kernel is generated by 
$\left(\begin{smallmatrix}0\\t\end{smallmatrix}\right)$
and
\[ x^{j-i}\left(\begin{smallmatrix}t\\0\end{smallmatrix}\right) +
\left(\begin{smallmatrix}0\\x^{j}\end{smallmatrix}\right) =
\left(\begin{smallmatrix}tx^{j-i}\\x^{j}\end{smallmatrix}\right). \]
The image contains $t$ times these and $x^{i}$ times these.

We write $\alpha$ for the element of the kernel represented by 
the first of these elements $\left(\begin{smallmatrix}0\\t\end{smallmatrix}\right)$, and $\beta$ for the element represented 
by the second $\left(\begin{smallmatrix}tx^{j-i}\\x^{j}\end{smallmatrix}\right)$. Thus as a module over $H_*B$,
the degree zero homology of the above complex is $\Hom_{H_*B}(H_*X_i,H_*X_j)$, and is
generated by $\alpha$ and $\beta$ with relation $x^j\alpha=t\beta$.
Recalling that $H_*(A)= \Hom_{\Db(B)}(X_1,X_1)$, the periodicity
element of degree $(-1,2b+1,h)$ in the above resolution represents the element
$\tau\in H_{2b}A$, so we shall call it $\tau$ by abuse of notation. 
As a module over $H_*B[\tau]$ we have
\[ \Ext^{*,*}_{H_*B}(H_*X_i,H_*X_{j})=\langle \alpha,\beta\rangle/
(t\alpha,x^{j}\alpha-t\beta,t\tau\beta,x^i\tau\beta) \]
with $\alpha$ in degree $(0,2ja-2b-1,j\ell-h)$ and $\beta$ in degree $(0,0,0)$.

\subsubsection*{\bf (ii) The case $i\ge j$:}

The kernel is generated by 
$\left(\begin{smallmatrix}0\\t\end{smallmatrix}\right)$ and
\[ \left(\begin{smallmatrix}t\\0\end{smallmatrix}\right) +
x^{i-j}\left(\begin{smallmatrix}0\\x^{j}\end{smallmatrix}\right) =
\left(\begin{smallmatrix}t\\x^{i}\end{smallmatrix}\right). \]
The image contains $t$ times these, and $x^{j}$
times these.

We again write $\alpha$ for the element of the kernel represented by
the first of these $\left(\begin{smallmatrix}0\\t\end{smallmatrix}\right)$ 
and $\beta$ for the element represented by the
second
$\left(\begin{smallmatrix}t\\x^{i}\end{smallmatrix}\right)$.
This time, 
\[ \Ext^{*,*}_{H_*B}(H_*X_i,H_*X_{j})=\langle \alpha,\beta\rangle/
(t\alpha,x^{i}\alpha-t\beta,t\tau\beta,x^j\tau\beta) \]
with $\alpha$ in degree $(0,2ia-2b-1,i\ell-h)$ and $\beta$ in degree $(0,0,0)$.

\subsubsection*{\bf (iii) The case $i=j$}

The cases $i=j$ in (i) and (ii) coincide, but there is one extra
piece of structure to determine, namely the ring structure. This
is determined by the statements that $\beta$ is the identity
endomorphism of $X_i$, and $\alpha$ is the endomorphism
sending $x^i$ to $t$ and $t$ to zero. So the extra relations
are $\alpha^2=0$ and $\beta=1$. Thus $x^i\alpha=t$, so $t$ is a
redundant generator, and the presentation becomes
\begin{equation}\label{eq:XiE2} 
\Ext^{*,*}_{H_*B}(H_*X_i,H_*X_i)\cong
  k[x,\tau,\alpha]/(\alpha^2,x^i\tau). 
\end{equation}

For the computations above, we assumed that $\ell>2$. We now explain
the modifications necessary for the case $\ell=2$. In this case, the
matrix in the resolution  of $H_*X_i$ is
$ \left(\begin{smallmatrix} t & x^i \\ x^{h-i} & -t \end{smallmatrix}\right)$.
This corresponds to the following matrix factorisation of the relation
$t^2+x^h=0$.
\[ \begin{pmatrix} t & x^i \\ x^{h-i} & -t \end{pmatrix}
\begin{pmatrix} t & x^i \\ x^{h-i} & -t \end{pmatrix}= (t^2+x^h)
\begin{pmatrix} 1 & 0 \\ 0 & 1 \end{pmatrix}. \]
Taking homomorphisms into $H_*X_j$, the matrix for the differential is
the transpose,
$\left(\begin{smallmatrix} t & x^{h-i} \\ x^i & -t \end{smallmatrix}\right)$.
The differential sends
\[ \left(\begin{smallmatrix}t\\0\end{smallmatrix}\right) \to
\left(\begin{smallmatrix}-x^h\\tx^i\end{smallmatrix}\right) \qquad
\left(\begin{smallmatrix}x^{j}\\0\end{smallmatrix}\right) \to
\left(\begin{smallmatrix}tx^{j}\\x^{i+j}\end{smallmatrix}\right) \qquad
\left(\begin{smallmatrix}0\\t\end{smallmatrix}\right) \to
\left(\begin{smallmatrix}tx^{h-i}\\x^h\end{smallmatrix}\right) \qquad
\left(\begin{smallmatrix}0\\x^{j}\end{smallmatrix}\right) \to
\left(\begin{smallmatrix}x^{h+j-i}\\-tx^{j}\end{smallmatrix}\right).
\]
There are more cases this time, but
we content ourselves with computing $\Ext^{*,*}_{H_*B}(X_i,X_j)$ in
the case $i=j\le h/2$, which is all we need. 
In this case, the kernel is generated by the element
$\alpha$ representing 
$\left(\begin{smallmatrix} -x^{h-i} \\ t \end{smallmatrix}\right)$ 
and $\beta$ representing
$\left(\begin{smallmatrix} t \\ x^i \end{smallmatrix}\right)$.
Again $\beta$ is the identity endomorphism of $X_i$
and $\alpha$ is the endomorphism sending $x^i$ to $t$ and $t$ to
$-x^{h-i}$, so $\alpha^2$ is multiplication by $-x^{h-2i}$. This time
the presentation becomes
\begin{equation}\label{eq:Xi,l=2} 
\Ext^{*,*}_{H_*B}(H_*X_i,H_*X_i) \cong
  k[x,\tau,\alpha]/(\alpha^2+x^{h-2i},x^i\tau). 
\end{equation}

\begin{theorem}\label{th:EndDbXi}
If $2i \le h$, we have 
\[ \End_{\Db(B)}(X_i)\cong
k[x,\tau,\alpha]/(\alpha^2+\lambda x^{h-2i}\tau^{\ell-2},x^i\tau) \] 
with $|x|=(-2a,-\ell)$,
$|\tau|=(2b,h)$, $|\alpha|=(2ia-2b-1,i\ell-h)$, and $\lambda$ some
scalar in $k$.
\end{theorem}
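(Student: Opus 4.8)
The plan is to run the spectral sequence \eqref{eq:ss} with $j=i$, that is
\[
E^2=\Ext^{*,*}_{H_*B}(H_*X_i,H_*X_i)\ \Longrightarrow\ \End_{\Db(B)}(X_i),
\]
whose $E^2$-page is the ring $k[x,\tau,\alpha]/(\alpha^2,x^i\tau)$ of \eqref{eq:XiE2} when $\ell>2$, and $k[x,\tau,\alpha]/(\alpha^2+x^{h-2i},x^i\tau)$ of \eqref{eq:Xi,l=2} when $\ell=2$; here $x$ and $\alpha$ lie in filtration degree $0$ and $\tau$ in filtration degree $1$. Because $x^i\tau=0$ on $E^2$, each homological/internal bidegree of $E^2$ receives contributions from only finitely many filtration degrees, so the spectral sequence is strongly convergent and the induced filtration on $\End_{\Db(B)}(X_i)$ is finite, hence complete, in each bidegree. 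Two things remain: collapse at $E^2$, and identification of the hidden multiplicative extensions in the abutment.

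First I would establish collapse. The classes $x$ and $\tau$ are permanent cycles, since they are the images of the Hochschild classes $x,\tau\in\HH^*B$ of Theorem~\ref{th:HHB3cases} acting on $\Db(B)$ — equivalently, they are central in $H_*B$ and so act on the $B$-module $X_i$ — and hence lift to honest self-maps of $X_i$. That $\alpha$ is a permanent cycle is a degree count: a monomial $x^m\tau^c\alpha^\varepsilon$ of filtration degree $c=r$ lying in the target bidegree of $d_r(\alpha)$ forces, via $\gcd(h,\ell)=1$, a divisibility $h\mid(i+m)$ or $h\mid m$ that is incompatible with $0\le m\le i-1$ and $2i\le h$ once $r\ge 2$; so the target is zero. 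As $E^2$ is generated as a ring by $x,\tau,\alpha$ and the spectral sequence is multiplicative (every $d_r$ is a derivation, hence determined by its values on generators), it follows that $E^\infty=E^2$.

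Next I would pin down the hidden extensions. Relative to the complete filtration, $\End_{\Db(B)}(X_i)$ is generated by chosen lifts of $x,\tau,\alpha$, and the $E^\infty$-relations may acquire lower-filtration corrections. A degree count, using $ah-b\ell=1$ together with $2i\le h$ and coprimality, shows there is nothing in filtration $\ge 2$ in the bidegree of $x^i\tau$, so $x^i\tau=0$ exactly. For $\alpha^2$: its leading ($E^\infty$, filtration $0$) term is the module endomorphism $\alpha\circ\alpha$, which is $0$ for $\ell>2$ — so $\alpha^2$ then lies in filtration $\ge 1$ — and is multiplication by $-x^{h-2i}$ for $\ell=2$. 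Using $ah-b\ell=1$ one computes that the bidegree of $\alpha^2$ is precisely that of $x^{h-2i}\tau^{\ell-2}$, and a further degree count shows the filtration-$\ge 1$ part of that bidegree is at most one-dimensional, spanned by the class of $x^{h-2i}\tau^{\ell-2}$ (and is zero exactly when $x^{h-2i}\tau^{\ell-2}$ already vanishes on $E^2$). Therefore $\alpha^2=\lambda\,x^{h-2i}\tau^{\ell-2}$ for some $\lambda\in k$ (with $\lambda=1$, up to sign, when $\ell=2$).

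Finally, to see there are no further relations, consider the filtered surjection
\[
k[x,\tau,\alpha]/(\alpha^2-\lambda x^{h-2i}\tau^{\ell-2},\,x^i\tau)\longrightarrow\End_{\Db(B)}(X_i)
\]
(surjective because $E^\infty$ is generated by $x,\tau,\alpha$ and the filtration is complete). The source is spanned over $k[x,\tau]/(x^i\tau)$ by $1$ and $\alpha$, so in each bidegree it has dimension at most $\dim E^\infty=\dim\End_{\Db(B)}(X_i)$; a surjection between spaces of equal finite dimension is an isomorphism. After replacing $\lambda$ by $-\lambda$ this is the stated presentation. The step I expect to be the main obstacle is the hidden-extension analysis for $\alpha^2$ — not the existence of the correction term (which, once collapse is known, is forced) but keeping the three gradings (filtration, homological, internal) straight and verifying that the unique bidegree available in positive filtration is exactly that of $x^{h-2i}\tau^{\ell-2}$; the relations $\gcd(h,\ell)=1$ and $2i\le h$ are used throughout these degree counts.
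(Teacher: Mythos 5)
Your proposal is essentially the paper's proof: run the spectral sequence \eqref{eq:ss} with $j=i$, show collapse at $E^2$ by degree counts resting on $\gcd(h,\ell)=1$, $ah-b\ell=1$ and $2i\le h$, and then resolve the hidden extensions, identifying the bidegree of $\alpha^2$ as that of $x^{h-2i}\tau^{\ell-2}$ and checking that $x^i\tau$ ungrades to zero; the paper packages the same Diophantine counts via dot products with the normal vector $N=(\ell-2,\ell,-2a)$, and handles $\ell=2$ by formality, exactly as you do.

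The one sub-step I would not accept as written is your justification that $\tau$ is a permanent cycle. The parenthetical ``equivalently, they are central in $H_*B$'' is false for $\tau$: $H_*B=k[x]\otimes\Lambda(t)$ does not contain $\tau$, which lives in Ext-filtration $1$ (it is the periodicity element of the matrix factorisation, of tridegree $(-1,2b+1,h)$). The Hochschild-action argument alone also does not immediately give survival of the $E^2$-class $\tau$: one must know that the induced self-map $\tau_{X_i}$ is non-zero and is detected precisely in filtration one, which is not yet available at this point. The clean repair is within your own toolkit: the same parity-plus-coprimality count you run for $\alpha$ shows that no $d_r$ with $r\ge 2$ can be non-zero on any monomial $x^m\tau^c$ (the degree equations force $s(ah-b\ell)=0$, so $r=1$), which is exactly how the paper's planes argument disposes of $x$ and $\tau$. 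With that substitution, and granting the standard multiplicativity of the spectral sequence that you invoke (the paper instead checks all monomials directly, so it never needs it for the differentials), your argument goes through, and your closing dimension count is a reasonable way to make the paper's implicit ``no further relations'' step explicit.
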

\begin{proof}
First assume that $\ell > 2$.
We have to show that there are no non-zero differentials
in the spectral sequence
\[ \Ext^{*,*}_{H_*B}(H_*X_i,H_*X_i) \Rightarrow \End_{\Db(B)}(X_i) \]
(see Section~\ref{se:Ass}) whose $E^2$ page is given 
by~\eqref{eq:XiE2}, and we then have to address the ungrading
problem for the $E^\infty$ term.
This is easier if we take into
account the internal degrees, which have to be preserved by the
differentials. So elements in the spectral sequence are triply graded,
with $|x|=(0,-2a,-\ell)$, $|\tau|=(-1,2b+1,h)$, 
$|\alpha|=(0,2ia-2b-1,i\ell-h)$. As in the proof of
Theorem~\ref{th:HHB}, the possible tridegrees $(u,v,w)$ at
the $E^2$-term are very restricted. This time they lie in two parallel planes.
We use the same normal vector $N=(\ell -2, \ell, -2a)$, and consider the dot
products $N\cdot (u,v,w)=(\ell -2)u+\ell v-2a w$. We note that $N\cdot |x|=0, N\cdot |\tau|=0$
and  $N\cdot |\alpha|=2-\ell$. 
Hence  $N\cdot |x^i\tau^j|=0$ and $N\cdot |x^i\tau^j\alpha|=2-\ell$,
and these are   the only possible values of $N\cdot (u,v,w)$ for degrees of
elements at $E^2$.  

The differential $d_n$ decreases $u$ by $n$, increases $v$
by $n-1$, and leaves $w$ unchanged. It therefore increases
$N\cdot (u,v,w)$ by $2n-\ell$. Since $n \ge 2$, we deduce that
either $d_n=0$ or $(2n-\ell)+ (2-\ell) = 0$, so $n=\ell-1$.
In the latter case, $d_{\ell-1}$ sends $x$ and $\tau$ to zero,
and $\alpha$ to an element of degree
$(-\ell+1,2ia-2b+\ell-3,i\ell-h)$. There is only one such monomial in $x$
and $\tau$, namely $x^{h-i}\tau^{\ell-1}$. The assumption that 
$2i \le h$ implies that this is zero, since $x^i\tau=0$.

For the ungrading problem, since $\alpha^2$ has even degree
it cannot involve $\alpha x^{i_1}\tau^{i_2}$. If $\alpha^2$ has the same bidegree as
$x^{i_1}\tau^{i_2}$ then equating bidegree and solving, we find that
$i_1=h-2i$, $i_2=\ell-2$. So we have that $\alpha^2$
is a multiple of $x^{h-2i}\tau^{\ell-2}$.

The element $x^i\tau$ has degree
$(-1,2b-1-2ia,h-i\ell)$. There are no non-zero monomials in $E^\infty$
with this internal degree, so $x^i\tau$ ungrades to zero.

Finally, in the case $\ell=2$, $B$ and $X_i$ are formal, and the $E^2$
page~\eqref{eq:Xi,l=2} of
the spectral sequence computes $\End_{\Db(B)}(X_i)$ with no non-zero
differentials or ungrading problems.
\end{proof}

\begin{remark}\label{rk:lambda}
If $\ell>2$ and $i\le h/3$  then the element $x^{h-2i}\tau^{\ell-2}$ is zero, so
the value of $\lambda$ only matters when $h/3 < i \le h/2$. 
Using the models we produce in Section~\ref{se:models}, it turns out
that $\lambda=1$ always holds, as we saw above in the case $\ell=2$. 
We shall not need this information in what follows.
\end{remark}

\begin{theorem}\label{th:EndDsgXi}
We have 
\[ \End_{\Dsg(B)}(X_i)\cong 
\begin{cases} K[x,\alpha]/(\alpha^2+\lambda x^{h-2i}\tau^{\ell-2},x^i) & 2i \le h \\
K[x,\alpha]/(\alpha^2+\lambda x^{2i-h}\tau^{\ell-2},x^{h-i}) & 2i \ge h
\end{cases} \] 
(recall $K=k[\tau,\tau^{-1}]$). This has finite length over $K$, and 
in particular, it is an Artinian local graded ring.
The (homological) degree zero part is $k[x^{|b|}\tau^{|a|}]$; note
that $x^{|b|}\tau^{|a|}$ is nilpotent, and often zero.
As a module over $\End_{\Dsg(B)}(X_i)$, $\Hom_{\Dsg(B)}(X_i,X_j)$ has
finite length. The Krull--Schmidt theorem holds for finite direct sums
of shifts of the objects $X_i$.
\end{theorem}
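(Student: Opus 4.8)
The plan is to deduce the whole statement from the $\Db(B)$-computation of Theorem~\ref{th:EndDbXi} by inverting $\tau$, together with Corollary~\ref{co:HomDsgB}, the symmetry $X_i\simeq\Sigma^{1-2ia,-i\ell}X_{h-i}$ in $\Dsg(B)$ of Proposition~\ref{pr:X_{h-i}}, and the facts (Corollaries~\ref{co:DsgBsimDbAtau-1} and~\ref{co:ThickQtau-1} and Theorem~\ref{th:ThickAtau-1}) that $\Dsg(B)=\Thick(k)$ and that $\End_{\Dsg(B)}(k)\cong A[\tau^{-1}]$ is free of rank $2$ over $K$.

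First I would establish the presentation. By Corollary~\ref{co:HomDsgB}, $\End_{\Dsg(B)}(X_i)\cong\End_{\Db(B)}(X_i)[\tau^{-1}]=K\otimes_{k[\tau]}\End_{\Db(B)}(X_i)$ as graded $K$-algebras, inverting the central cycle $\tau$ being exact and compatible with composition. For $2i\le h$, inserting the presentation of Theorem~\ref{th:EndDbXi} and noting that $\tau$ has become a unit (so the relation $x^i\tau=0$ becomes $x^i=0$) gives exactly $K[x,\alpha]/(\alpha^2+\lambda x^{h-2i}\tau^{\ell-2},x^i)$. For $2i\ge h$, Proposition~\ref{pr:X_{h-i}} identifies $\End_{\Dsg(B)}(X_i)$ with $\End_{\Dsg(B)}(X_{h-i})$ as graded rings (a common shift of both arguments leaves a $\Hom$-group unchanged), and since $2(h-i)\le h$ the first case applied to $h-i$ yields the second formula.

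Writing $R=\End_{\Dsg(B)}(X_i)$, reducing $\alpha^2$ by the relation and using $x^i=0$ (or $x^{h-i}=0$) shows $R$ is spanned over $K$ by the finitely many monomials $x^m$ and $x^m\alpha$, so $R$ is a finite-dimensional graded $K$-vector space, hence of finite length over $K$ and graded-Artinian. For graded-locality I would note that $(x)$ is nilpotent and $R/(x)$ is $K[\alpha]/(\alpha^2)$, or $K[\alpha]/(\alpha^2+\lambda\tau^{\ell-2})$ when $2i=h$; since $|\alpha|$ is odd, every nonzero homogeneous element of $R/(x)$ is a unit scalar times a power of $\tau$ or times $\tau^n\alpha$ (the latter a unit whenever $\alpha^2$ is), so $R/(x)$, and hence $R$, has a unique maximal homogeneous ideal. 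For the degree-zero part, a homogeneous element of homological degree $0$ cannot involve $\alpha$ (whose degree $2ia-2b-1$ is odd), while $x^m\tau^n$ has degree $0$ exactly when $am=bn$, i.e. (as $\gcd(a,b)=1$) exactly when $x^m\tau^n$ is a power of $x^{|b|}\tau^{|a|}$; so $R_0=k[x^{|b|}\tau^{|a|}]$, with this generator nilpotent because $x^i=0$ (and zero as soon as $|b|\ge i$), whence $R_0$ is a local ring.

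For the last two assertions: since $\Dsg(B)=\Thick(k)$ and $\Hom_{\Dsg(B)}(k,k)\cong A[\tau^{-1}]$ has finite $K$-dimension, an induction over the shifts, cones and retracts building an object $Z\in\Thick(k)$ — using that $\Hom_{\Dsg(B)}(k,-)$ is additive and sends triangles to long exact sequences of $K$-modules — shows $\Hom_{\Dsg(B)}(k,Z)$ has finite $K$-dimension; a second such induction in the first variable gives that $\Hom_{\Dsg(B)}(X_i,X_j)$ has finite $K$-dimension, hence finite length over $K$ and a fortiori over $R$. Finally, the genuine (degree-zero) endomorphism ring of each $X_i$, and of each shift $\Sigma^nX_i$, is the local ring $R_0$ of the previous paragraph, so any finite direct sum of shifts of the $X_i$ is a finite direct sum of objects with local endomorphism rings, and the Krull--Remak--Schmidt--Azumaya theorem gives the existence and uniqueness of its decomposition into indecomposables. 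I do not expect a serious obstacle; the fussiest point is the graded-locality of $R$ and the exact shape of $R_0$, where one must track the odd-degree generator $\alpha$ (precisely what keeps $R$ graded-local even when $\alpha^2$ is a unit) and the sign cases for $a$ and $b$, and remember that the ring relevant to Krull--Schmidt is $R_0$ rather than the graded ring $R$.
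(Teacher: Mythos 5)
Your proposal is correct, and its skeleton is the same as the paper's: invert $\tau$ via Corollary~\ref{co:HomDsgB} and feed in Theorem~\ref{th:EndDbXi} (the relation $x^i\tau=0$ becoming $x^i=0$), handle $2i\ge h$ by the shift-invariance coming from Proposition~\ref{pr:X_{h-i}}, compute the degree-zero part from the parity of $|\alpha|$ and coprimality of $a$ and $b$, and get Krull--Schmidt from locality of the (degree-zero) endomorphism rings. The one genuine divergence is the finite-length claim for $\Hom_{\Dsg(B)}(X_i,X_j)$: the paper observes that the $E^2$ page of the spectral sequence~\eqref{eq:ss} is finitely generated over $k[\tau]$, so $\Hom_{\Db(B)}(X_i,X_j)[\tau^{-1}]$ has finite length over $K$; you instead use that $\Dsg(B)=\Thick(k)$ (via Corollary~\ref{co:DsgBsimDbAtau-1} and Theorem~\ref{th:ThickAtau-1}) together with the finite $K$-rank of $\End_{\Dsg(B)}(k)\cong A[\tau^{-1}]$ and a two-variable induction over shifts, cones and retracts. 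Both arguments are sound; the paper's is shorter given that the spectral sequence computation is already on the table, while yours is more intrinsic to the quotient category and avoids convergence bookkeeping. Your added care about which ring governs Krull--Schmidt (the degree-zero local ring $k[x^{|b|}\tau^{|a|}]$ rather than the graded ring, which stays graded-local even when $2i=h$ makes $\alpha$ invertible modulo $x$) is a worthwhile amplification of the paper's one-line justification.
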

\begin{proof}
By Corollary~\ref{co:HomDsgB} we have
$\End_{\Dsg(B)}(X_i)\cong \End_{\Db(B)}(X_i)[\tau^{-1}]$, so if $2i
\le h$ it
follows from Theorem~\ref{th:EndDbXi} that this is isomorphic to
$K[x,\alpha]/(\alpha^2+\lambda x^{2h-i}\tau^{\ell-2},x^i)$. The structure for $2i \ge h$ 
then follows from the isomorphism $X_i \cong \Sigma^{1-2ia}X_{h-i}$
given in Proposition~\ref{pr:X_{h-i}}.

Next we compute the homological degree zero part.
The degree of $\alpha$ is $2ia-2b-1$, which is odd, while
the degrees of $x$ and $\tau$ are even. If $x^j\tau^m$ has degree zero
then $ja=mb$. Since $a$ and $b$ are coprime, this implies that $j$ is
divisible by $b$, $m$ is divisible by $a$, and the element is a power
of $x^{|b|}\tau^{|a|}$ (note that $ah-b\ell=1$, and $h$ and $\ell$ are
positive, $a$ and $b$ have the same sign).

The $E^2$ page of the spectral sequence \eqref{eq:ss} is finitely
generated over $k[\tau]$, and therefore
$\Hom_{\Dsg(B)}(X_i,X_j)=\Hom_{\Db(B)}(X_i,X_j)[\tau^{-1}]$ has finite
length over $K$.

The final statement about the Krull--Schmidt theorem follows from the
fact that the endomorphism rings of the $X_i$ are local rings.
\end{proof}

\begin{corollary}\label{co:socle}
The socle of the homological degree $2a-1$ part of 
$\End_{\Dsg(B)}(X_i)$, as a module over the degree
zero part, is spanned 
by the element $\alpha x^{i-1}\tau$ in degree $(2a-1,\ell)$.
None of the other monomials of homological degree $2a-1$ has internal degree $\ell$.
\end{corollary}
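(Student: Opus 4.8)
The plan is to extract everything from the explicit presentation of $\End_{\Dsg(B)}(X_i)$ in Theorem~\ref{th:EndDsgXi}, keeping careful track of both the homological and the internal grading. One may assume $2i\le h$: otherwise $x^{i-1}=0$ already in $\End_{\Dsg(B)}(X_i)$ (compare Proposition~\ref{pr:X_{h-i}}), so the proposed generator vanishes and there is nothing to interpret. I would also take $a,b>0$ for definiteness, the opposite-sign case being identical after replacing $b,a$ by $|b|,|a|$. Then $\End_{\Dsg(B)}(X_i)\cong K[x,\alpha]/(\alpha^2+\lambda x^{h-2i}\tau^{\ell-2},x^i)$ has $k$-basis $\{x^j\tau^m,\ \alpha x^j\tau^m : 0\le j\le i-1,\ m\in\bZ\}$, and since $\alpha$ carries the only odd homological degree, every monomial of homological degree $2a-1$ is of the form $\alpha x^j\tau^m$.

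Next I would pin down exactly which such monomials occur. The homological-degree equation $2ia-2b-1-2aj+2bm=2a-1$ rearranges to $a(i-1-j)=b(1-m)$; since $ah-b\ell=1$ forces $\gcd(a,b)=1$, its solutions are $i-1-j=bt$, $1-m=at$ for a unique integer $t$, and the constraint $0\le j\le i-1$ confines $t$ to the range $0\le t\le T:=\lfloor(i-1)/b\rfloor$. So the homological degree $2a-1$ part $M$ has $k$-basis $e_t:=\alpha x^{i-1-bt}\tau^{1-at}$ for $0\le t\le T$, with $e_0=\alpha x^{i-1}\tau$.

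The second assertion of the corollary is then a one-line internal-degree computation: $e_t$ has internal degree $(i\ell-h)-(i-1-bt)\ell+(1-at)h=\ell+t(b\ell-ah)=\ell-t$, again using $ah-b\ell=1$. Hence $e_0=\alpha x^{i-1}\tau$ is the unique monomial of homological degree $2a-1$ with internal degree $\ell$ (and the $e_t$ have pairwise distinct internal degrees, so $\dim_k M=T+1$). For the socle statement I would recall from Theorem~\ref{th:EndDsgXi} that the homological degree zero subring is $R_0=k[y]$ with $y=x^b\tau^a$ of bidegree $(0,ah-b\ell)=(0,1)$, local and nilpotent ($x^i=0$ gives $y^{\lceil i/b\rceil}=0$), with maximal ideal $(y)$. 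Direct multiplication gives $y\cdot e_t=e_{t-1}$ for $t\ge1$ and $y\cdot e_0=\alpha x^{i-1+b}\tau^{1+a}=0$ since $i-1+b\ge i$; thus $M=R_0e_T$ is cyclic with $e_t=y^{T-t}e_T$, and a general element $m=\sum_t c_te_t$ satisfies $ym=\sum_{s\ge0}c_{s+1}e_s$, so $ym=0$ iff $c_t=0$ for all $t\ge1$. Therefore the socle of $M$ over $R_0$ is $k\cdot e_0=k\cdot\alpha x^{i-1}\tau$, which is the claim.

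No genuine obstacle arises here: the corollary is essentially a bookkeeping consequence of Theorem~\ref{th:EndDsgXi} together with the coprimality $ah-b\ell=1$. The two points I expect to require a little care are that one must not attempt to invert the nilpotent element $y$ when parametrising the $e_t$ (the range of $t$ has to be deduced from the exponent bound $0\le j\le i-1$ instead), and that the internal-grading argument in the third paragraph is legitimate precisely because $X_i$ admits an internal grading — exactly the point flagged in the Remark following Proposition~\ref{pr:DbQsimDbA} — even though general objects of $\Dsg(B)$ do not.
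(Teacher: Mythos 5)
Your argument is correct and is essentially the paper's own proof: the paper likewise reads off from Theorem~\ref{th:EndDsgXi} that the homological degree $2a-1$ monomials are $\alpha x^{i-1-j|b|}\tau^{1-j|a|}$, with internal degree $\ell \pm j$ by $ah-b\ell=1$, so only $j=0$ gives internal degree $\ell$. The only difference is that you spell out the multiplication $y\cdot e_t=e_{t-1}$, $y\cdot e_0=0$ (which the paper leaves implicit) and flag the $2i\le h$ convention; both are harmless and compatible with the paper's use of Proposition~\ref{pr:X_{h-i}}.
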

\begin{proof}
Using Theorem~\ref{th:EndDsgXi},
the monomials of homological degree $2a-1$ in $\End_{\Dsg(B)}(X_i)$
are the elements $\alpha x^{i-1-j|b|}\tau^{1-j|a|}$ in degree $(2a-1,\ell+j(\ell|b|-h|a|))=(2a-1,\ell\pm j)$.
\end{proof}

\begin{corollary}\label{co:socle-ass}
The socles of the homological degree $2a-1$ parts of 
$\End_{\Dsg(B)}(X_i)$ for $1\le i\le h$ are represented by
the triangles~\eqref{eq:ass} coming from the almost split sequences of
$\bar B$-modules.
\end{corollary}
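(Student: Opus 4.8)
The plan is to identify, for each $i$, the connecting morphism of the $i$-th triangle in~\eqref{eq:ass} with a nonzero element of the one-dimensional graded piece of $\End_{\Dsg(B)}(X_i)$ singled out in Corollary~\ref{co:socle}; since that piece is the socle in question, this will prove the corollary. Fix $i$ with $1\le i\le h-1$ (for $i=h$ there is nothing to prove, as $X_h\simeq 0$ in $\Dsg(B)$ by Theorem~\ref{th:kx/x^h}). Writing the $i$-th triangle of~\eqref{eq:ass} with the convention $X_0=X_h=0$,
\[ \Sigma^{-2a,-\ell}X_i \to \Sigma^{-2a,-\ell}X_{i-1}\oplus X_{i+1} \to X_i \xrightarrow{\partial_i} \Sigma^{1-2a,-\ell}X_i , \]
the connecting map $\partial_i$ lies in $\Hom_{\Dsg(B)}(X_i,\Sigma^{1-2a,-\ell}X_i)$, which is precisely the part of $\End_{\Dsg(B)}(X_i)$ in homological degree $2a-1$ and internal degree $\ell$; by Corollary~\ref{co:socle} this part is one-dimensional, spanned by $\alpha x^{i-1}\tau$, and coincides with the socle of the homological-degree-$(2a-1)$ part of $\End_{\Dsg(B)}(X_i)$. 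So the corollary reduces to the claim that $\partial_i\ne 0$, i.e.\ that the triangle does not split in $\Dsg(B)$.

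To establish non-splitting I would argue by Krull--Schmidt inside $\Dsg(B)$. Suppose the triangle splits; then $\Sigma^{-2a,-\ell}X_{i-1}\oplus X_{i+1}\cong\Sigma^{-2a,-\ell}X_i\oplus X_i$ in $\Dsg(B)$. By Theorem~\ref{th:EndDsgXi}, for $1\le j\le h-1$ the object $X_j$ is nonzero with local Artinian endomorphism ring, hence indecomposable, and $\End_{\Dsg(B)}(X_j)$ has $K$-dimension $2\min(j,h-j)$; since an isomorphism $X_j\cong\Sigma^{p,q}X_{j'}$ induces a ring isomorphism of endomorphism rings, it forces $\min(j,h-j)=\min(j',h-j')$, i.e.\ $j'\in\{j,h-j\}$. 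Applying Krull--Schmidt to the displayed isomorphism (deleting any zero summand) shows that each of $X_{i-1}$ and $X_{i+1}$ is isomorphic to a shift of $X_i$, whence $i-1=h-i$ and simultaneously $i+1=h-i$, which is impossible. In the boundary cases $i=1$ and $i=h-1$ one left-hand summand is $0$, and the isomorphism directly contradicts the indecomposability of $X_2$, respectively $X_{h-2}$ (all of which are nonzero since $h\ge3$). This contradiction gives $\partial_i\ne0$, so $\partial_i$ spans the socle.

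The only substantive point is the non-splitting \emph{in $\Dsg(B)$} rather than merely in $\Db(B)$: a morphism can become zero under the quotient $\Db(B)\to\Dsg(B)$, so it is not enough to observe (as one can) that the almost split sequence of $\bar B$-modules pulls back to a non-split short exact sequence of $B$-modules. Working in $\Dsg(B)$ from the outset, as above, uses only the local endomorphism rings of Theorem~\ref{th:EndDsgXi} and the shift-relation $X_i\simeq\Sigma^{1-2ia,-i\ell}X_{h-i}$ of Proposition~\ref{pr:X_{h-i}}. (An alternative is to compute $\partial_i$ directly as $c\,\alpha x^{i-1}\tau$ with $c\ne0$ and note via Theorem~\ref{th:EndDbXi} that no power of $\tau$ annihilates $\alpha x^{i-1}\tau$ in $\End_{\Db(B)}(X_i)$, so it survives the inversion of $\tau$; but that route requires separately matching $\partial_i$ with the class of the pulled-back sequence.)
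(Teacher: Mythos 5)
Your proposal is correct, and its core is the same identification the paper makes: the connecting map of the $i$-th triangle in~\eqref{eq:ass} is homogeneous of bidegree $(2a-1,\ell)$, and by Corollary~\ref{co:socle} the only thing a non-zero element of that bidegree can be is a non-zero multiple of the socle generator $\alpha x^{i-1}\tau$. The difference is in how the non-vanishing is handled: the paper simply asserts that the connecting homomorphism is non-zero in $\Dsg(B)$, whereas you prove it, correctly observing that non-splitness of the almost split sequence of $\bar B$-modules (or of the triangle in $\Db(B)$) does not by itself survive the Verdier quotient, and then ruling out splitting in $\Dsg(B)$ by Krull--Schmidt (available from the last statement of Theorem~\ref{th:EndDsgXi}) together with the invariant $\dim_K\End_{\Dsg(B)}(X_j)=2\min(j,h-j)$, which pins down $j$ up to $j\leftrightarrow h-j$ and yields the incompatible equalities $i-1=h-i$ and $i+1=h-i$. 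This is a genuine strengthening of the written proof rather than a different route, and the dimension count is a clean way to separate $X_{i\pm1}$ from shifts of $X_i$; one could instead invoke the internal grading or the later classification, but your argument uses only material available at this point in the paper. One small boundary remark: your treatment of the extreme cases assumes $h\ge 3$, while the paper allows $h=2$; in that degenerate case both outer summands of the middle term vanish ($X_0\simeq X_h\simeq 0$ in $\Dsg(B)$), and splitting would give $0\cong\Sigma^{-2a,-\ell}X_1\oplus X_1$, contradicting $X_1\neq 0$ --- a one-line patch entirely in the spirit of what you wrote.
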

\begin{proof}
The connecting homomorphism for the triangle~\eqref{eq:ass} for $X_i$
is a
non-zero element of degree $(2a-1,\ell)$ in $\End_{\Dsg(B)}(X_i)$.
By Corollary~\ref{co:socle}, it is therefore a non-zero multiple of
$\alpha x^{i-1}\tau$.
\end{proof}

\begin{remark}
The images in $\Db(A)$ of the objects $X_i$ of 
$\Db(B)$ are, up to shifts, the analogues of
the modules $W_i$ described in Section~\ref{se:W}.

Write $Y_i=\iHom_B(k,X_i)$ for the image of $X_i$ in $\Db(A)$.
Then $Y_i$ is a free $k[\tau]$-module on two 
generators, $u_i$ of degree $2a-1$ and 
$v_i$ in degree $-2(i-1)a$. The action of $\xi$ is given by 
\begin{align*}
m_{i+1}(\underbrace{\xi,\dots,\xi}_{\text{$i$ copies}},u_i)&=v_i, \\
m_{h-i+1}(\underbrace{\xi,\dots,\xi}_{\text{$h-i$ copies}},v_i)&=(-1)^{(h-2i+2)(h-1)/2}\tau^\ell u_i
\end{align*}
and the remaining $m_j$ vanish.
\end{remark}

\section{Auslander--Reiten triangles}
\label{sec:ARtriangles}

For background on Auslander--Reiten triangles in triangulated
categories, see Happel~\cite{Happel:1987a,Happel:1988a}. We shall
construct them in the category $\Dsg(B)$ and use them to classify the
indecomposables. Although we make use of the internal grading to
identify these triangles, the grading does not interfere with their
existence and use for classification.

Suppose that $X$ is an indecomposable, internally gradable object in
$\Dsg(B)$ with local endomorphism ring
$\End_{\Dsg(B)}(X)$. A map from another, not necessarily internally 
gradable object $Y$ to $X$
has a right inverse (i.e., induces an isomorphism from a direct summand of
$Y$ to $X$) if and only if the induced map
$\Hom_{\Dsg(B)}(X,Y) \to \End_{\Dsg(B)}(X)$ is surjective.

Using Theorem~\ref{th:Bduality}, the dual
$\Hom_{\Dsg(B)}(X,\Sigma^{1-2a}X)$ has a simple socle as a
module over
$\End_{\Dsg(B)}(X)$, and the socle is a map from $X$ to $\Sigma^{1-2a,-\ell}X$.
For the objects $X_i$, this socle is identified
in Corollary~\ref{co:socle}.
Choose a non-zero morphism
$\alpha_X\colon X \to \Sigma^{1-2a,-\ell}X$ in this socle. This has the
property that a map $Y\to X$ has a right inverse if and only if the
composite $Y \to X \xrightarrow{\alpha_X} \Sigma^{1-2a}X$ is non-zero. 

Complete to a triangle
\[ \Sigma^{-2a,-\ell} X \to Z \to X \xrightarrow{\alpha_X} \Sigma^{1-2a,-\ell}X \]
in $\Db(A[\tau^{-1}])$. This then has the following lifting property. If a map
$Y\to X$ does not have a right inverse, then it lifts to a map $Y\to
Z$.
\[ \xymatrix@+3mm{&&Y \ar[d] \ar@{.>}[dl] & \\
\Sigma^{-2a}X \ar[r] & Z \ar[r] & X \ar[r]^(.4){\alpha_X} & \Sigma^{1-2a}X} \]
Similarly, if $\Sigma^{-2a}X \to Y$ does not have a left inverse, then  
it extends to a map $Z \to Y$.
\[ \xymatrix@+3mm{\Sigma^{-1}X\ar[r]^{\Sigma^{-1}\alpha_X} 
&\Sigma^{-2a}X \ar[r] \ar[d] & Z \ar[r] \ar@{.>}[dl] 
& X \ar[r]^(.4){\alpha_X} & \Sigma^{1-2a}X \\ &Y} \]
These are the defining properties of an \emph{Auslander--Reiten triangle},
sometimes also called an \emph{almost split triangle}.

\begin{theorem}
The Auslander--Reiten triangles in $\Dsg(B)$ for the objects $X_i$ are
the triangles~\eqref{eq:ass}, and the Auslander--Reiten translate is
$\Sigma^{-2a,-\ell}$. 
\end{theorem}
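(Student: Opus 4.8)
The plan is to verify directly that the triangles~\eqref{eq:ass} satisfy the defining lifting and extension properties of Auslander--Reiten triangles recalled above, reusing the machinery already assembled in Sections~\ref{se:duality} and~\ref{se:indecomposables}. First I would recall that each $X_i$ (for $1\le i\le h-1$) is indecomposable in $\Dsg(B)$ with local endomorphism ring, by Theorem~\ref{th:EndDsgXi}, so the general discussion of this section applies to $X=X_i$. The key input is that $\Hom_{\Dsg(B)}(X_i,\Sigma^{1-2a,-\ell}X_i)$ has simple socle as a module over $\End_{\Dsg(B)}(X_i)$; this is exactly Theorem~\ref{th:Bduality} combined with the identification of the socle generator in Corollary~\ref{co:socle} as the element $\alpha x^{i-1}\tau$. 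So the canonical choice of $\alpha_{X_i}$ is (a non-zero scalar multiple of) this element, and by the general nonsense in the section, the triangle built on any morphism in this socle is an Auslander--Reiten triangle.

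The substance of the theorem is then the claim that the triangle one gets this way is isomorphic to~\eqref{eq:ass}, i.e.\ that the connecting map of~\eqref{eq:ass} lands in the socle. This is precisely the content of Corollary~\ref{co:socle-ass}: the connecting homomorphism of~\eqref{eq:ass} for $X_i$ is a non-zero element of internal degree $(2a-1,\ell)$ in $\End_{\Dsg(B)}(X_i)$, hence by Corollary~\ref{co:socle} a non-zero multiple of $\alpha x^{i-1}\tau$, which spans the socle. Therefore the triangle~\eqref{eq:ass}, namely
\[ \Sigma^{-2a,-\ell}X_i \to \Sigma^{-2a,-\ell}X_{i-1}\oplus X_{i+1} \to X_i \xrightarrow{\ \partial\ } \Sigma^{1-2a,-\ell}X_i \]
(with the convention $X_0=0$ and, in $\Dsg(B)$, $X_h\simeq 0$, so the extreme cases degenerate as already noted after~\eqref{eq:ass}) is, up to isomorphism of triangles, the one constructed from the socle element $\alpha_{X_i}$. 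Since the latter is an Auslander--Reiten triangle by the defining-property discussion, so is~\eqref{eq:ass}. Reading off the left-hand term, the Auslander--Reiten translate of $X_i$ is $\Sigma^{-2a,-\ell}X_i$, i.e.\ the translation functor is $\Sigma^{-2a,-\ell}$ on all the $X_i$; forgetting internal degrees this is $\Sigma^{-2a}$, matching the functor $T$ of Theorem~\ref{th:main}.

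The one point requiring a little care — and the step I expect to be the main obstacle — is the non-degeneracy: one must check that the connecting map $\partial$ in~\eqref{eq:ass} is genuinely non-zero in $\Dsg(B)$, not merely non-zero in $\Db(B)$. This is where it matters that $X_h\simeq 0$ in $\Dsg(B)$ while $X_h\ne 0$ in $\Db(B)$: the triangles~\eqref{eq:ass} are defined in $\Db(B)$ but their AR property is asserted in $\Dsg(B)$. For $1<i<h-1$ non-vanishing of $\partial$ follows because the middle term $\Sigma^{-2a,-\ell}X_{i-1}\oplus X_{i+1}$ is not a direct sum of copies of $X_i$ (the $X_j$ being pairwise non-isomorphic indecomposables up to the shifts already accounted for), so the triangle is non-split; for the boundary cases $i=1$ and $i=h-1$ one uses instead that $\End_{\Dsg(B)}(X_i)$ is genuinely larger than $k$ in the relevant degree, i.e.\ that $\alpha x^{i-1}\tau\ne0$ there, which is visible from the explicit presentation in Theorem~\ref{th:EndDsgXi} together with the constraint $2i\le h$ or its mirror. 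Once non-vanishing is in hand, the rest is a bookkeeping of internal degrees, already done in Corollaries~\ref{co:socle} and~\ref{co:socle-ass}, and the proof is essentially a citation of those results.
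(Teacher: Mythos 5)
Your argument is essentially the paper's proof: the paper simply cites Corollary~\ref{co:socle-ass}, which together with the general construction in Section~\ref{sec:ARtriangles} of Auslander--Reiten triangles from socle elements (via Theorem~\ref{th:Bduality} and Corollary~\ref{co:socle}) gives exactly your identification of the triangles~\eqref{eq:ass} and of the translate $\Sigma^{-2a,-\ell}$. The non-vanishing of the connecting map in $\Dsg(B)$ that you single out is asserted inside Corollary~\ref{co:socle-ass} rather than argued separately, so your Krull--Schmidt check there is extra care along the same route rather than a different approach.
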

\begin{proof}
This follows from Corollary~\ref{co:socle-ass}.
\end{proof}

If $Z'$ is a direct summand of $Z$ then the composite $Z' \to Z \to X$
is called an \emph{irreducible morphism}. It has the property that it
is not an isomorphism, but for any factorisation $Z' \to U \to X$,
either $Z'\to U$ has a left inverse or $U \to X$ has a right inverse.
This property is symmetric, so that if $Z'$ is a direct summand of $Z$
then the composite $\Sigma^{-2a}X \to Z \to Z'$ is also an irreducible
morphism. 

The \emph{Auslander--Reiten quiver} is the quiver (directed graph)
whose vertices correspond to the isomorphism classes of
indecomposable objects and whose directed edges correspond to the 
irreducible morphisms. This comes with a translation $\Sigma^{-2a}$
with the property that there is an arrow from $Z$ to $X$ if and only
if there is an arrow from $\Sigma^{-2a} X$ to $Z$.

For the objects $\Sigma^jX_i$,  the Auslander--Reiten quiver has the following form.
\[ 
\xymatrix@=4mm{
&\Sigma^{-2a}X_1\ar[dr] && X_1 \ar[dr] && \Sigma^{2a}X_1 &\\
\cdots&&X_2\ar[ur]\ar[dr]&&\Sigma^{2a}X_2\ar[ur]\ar[dr]&&\cdots\\
&X_3\ar[ur]\ar[dr]&&\Sigma^{2a}X_3\ar[ur]\ar[dr]&&\Sigma^{4a}X_3&\\
&\vdots&\ar[ur]&\vdots&\ar[ur]&\vdots&} 
\]

This wraps round to form a cylinder, since $\Sigma^{2b}X_i$ is
isomorphic to $X_i$ via $\tau$. Since $a$ and $b$ are coprime, the
circumference of the cylinder is $b$. The height is $h-1$,
and $X_{h-1}\cong \Sigma^{2a-1}X_1$ is in the bottom row.
This gives a total of $b(h-1)$ isomorphism classes of objects.
In the usual language of translation quivers, we therefore have the
following. 

\begin{theorem}\label{th:AR-quiver}
The objects $\Sigma^jX_i$ in $\Dsg(B)$ form a connected component 
of the Auslander--Reiten quiver, isomorphic to $\bZ
A_{h-1}/T^{|b|}$, where $T$ is the translation functor.\qed
\end{theorem}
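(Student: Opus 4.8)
The plan is to assemble the pieces already in place. We have a collection of indecomposable objects $\Sigma^j X_i$ for $1\le i\le h-1$ and $j\in\bZ$, each with local endomorphism ring by Theorem~\ref{th:EndDsgXi}, so the Krull--Schmidt theorem applies to direct sums of these. For each such object we have an Auslander--Reiten triangle: the triangles~\eqref{eq:ass} from the almost split sequences of $\bar B$-modules, shown to be Auslander--Reiten triangles in $\Dsg(B)$ via the socle computation of Corollaries~\ref{co:socle} and~\ref{co:socle-ass} together with the Tate duality of Theorem~\ref{th:Bduality}. The Auslander--Reiten translate is $\Sigma^{-2a,-\ell}$, i.e. $T=\Sigma^{-2a}$ on the underlying homological grading. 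First I would record that the middle terms of the triangles~\eqref{eq:ass} display, for $2\le i\le h-2$, exactly two irreducible maps into $\Sigma^j X_i$ (namely from $\Sigma^j X_{i-1}$ and from $\Sigma^{j-2a}X_{i+1}$, reading off the summands $\Sigma^{-2a}X_{i-1}\oplus X_{i+1}$), while the top row $X_1$ receives only one (from $X_2$) and the bottom row $X_{h-1}$ receives only one (from $\Sigma^{-2a}X_{h-2}$, using $X_h\simeq 0$ in $\Dsg(B)$). This is precisely the shape of the translation quiver $\bZ A_{h-1}$: vertices $(i,j)$ with $1\le i\le h-1$, arrows $(i,j)\to(i-1,j)$ and $(i,j)\to(i+1,j-1)$, translation $\tau(i,j)=(i,j-1)$ corresponding to $T=\Sigma^{-2a}$.

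Next I would pin down the periodicity. The element $\tau\in H_{2b}A$ acts as a natural isomorphism $\Id\xrightarrow{\sim}\Sigma^{2b}$ on $\Dsg(B)$ (Corollaries~\ref{co:DsgBsimDbAtau-1} and~\ref{co:HomDsgB}, or directly from Theorem~\ref{th:DcsgA}), so $\Sigma^{2b}X_i\cong X_i$; hence the shift orbit of each $X_i$ is periodic of period dividing $2b$ in the exponent of $\Sigma$, i.e. $T^{|b|}=\Sigma^{-2ab}$ acts as the identity on the component (note $2ab\equiv 0$ only after applying $\tau^{\pm a}$, but $\Sigma^{2b}\cong\Id$ gives $\Sigma^{2ab}\cong\Id$ directly). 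Thus the column direction of $\bZ A_{h-1}$ gets identified modulo $T^{|b|}$. I would then check that the period is exactly $|b|$, not a proper divisor: if $T^m X_i\cong X_i$ with $0<m<|b|$ then $\Sigma^{-2am}$ would be the identity on $X_i$, forcing $\tau^{?}$ to trivialize $\Sigma^{-2am}$, which since $\Sigma^{2b}\cong\Id$ and $\gcd(a,b)=1$ (from $ah-b\ell=1$) would force $b\mid m$, a contradiction. So the circumference is exactly $|b|$.

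Finally I would identify the closed-up diagram. Having shown the component is a connected stable translation quiver of tree class $A_{h-1}$ (height $h-1$, with the two ends being the rows $i=1$ and $i=h-1$) on which the translate is $T=\Sigma^{-2a}$ and which is periodic with $T^{|b|}=\Id$, the standard classification of such quivers identifies it with $\bZ A_{h-1}/T^{|b|}$ — a cylinder of height $h-1$ and circumference $|b|$, with $|b|(h-1)$ isomorphism classes. Connectedness follows because each Auslander--Reiten triangle~\eqref{eq:ass} links $X_i$ to $X_{i\pm1}$ (up to shift), so all the $X_i$ lie in one component, and all shifts $\Sigma^j X_i$ are reached by iterating $T$. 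The remaining point is that this component is \emph{all} of $\Dsg(B)$, i.e. the $\Sigma^j X_i$ exhaust the indecomposables; I would defer that to the main classification theorem (Theorem~\ref{th:classification}, asserted earlier), since the present statement only claims that the $\Sigma^j X_i$ \emph{form} a component of the prescribed shape.

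The main obstacle I expect is the bookkeeping of internal degrees in verifying that the triangles~\eqref{eq:ass} really give \emph{all} the irreducible morphisms into each $X_i$ and no more — i.e. that the middle term of each Auslander--Reiten triangle has exactly the two indecomposable summands claimed, with no extra summands and no hidden identifications among the $\Sigma^j X_i$ beyond $\Sigma^{2b}\cong\Id$. This is where the internal grading earns its keep: the bidegrees $(2ia-2b-1,i\ell-h)$ of the socle generators $\alpha x^{i-1}\tau$ computed in Corollary~\ref{co:socle}, combined with $\gcd(a,b)=1$ and $\gcd(h,\ell)=1$, rigidify the situation enough that the only coincidence is the $\tau$-periodicity, giving circumference exactly $|b|$ rather than a proper divisor.
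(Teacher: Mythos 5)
Your overall strategy is the same as the paper's (read the cylinder off the Auslander--Reiten triangles \eqref{eq:ass}, use the $\tau$-periodicity $\Sigma^{2b}X_i\cong X_i$ and $\gcd(a,b)=1$ to get circumference exactly $|b|$), but there is one genuine gap: you never invoke Proposition~\ref{pr:X_{h-i}}, the equivalence $X_i\simeq\Sigma^{1-2ia,-i\ell}X_{h-i}$, and in fact your closing paragraph asserts the opposite, namely that there are ``no hidden identifications among the $\Sigma^jX_i$ beyond $\Sigma^{2b}\cong\Id$.'' That identification is exactly what links odd and even suspensions (the paper calls it ``critical'' for this reason, and uses it in the form $X_{h-1}\cong\Sigma^{2a-1}X_1$ to place an odd shift of $X_1$ in the bottom row). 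Without it, your connectedness argument fails for odd shifts: iterating $T=\Sigma^{-2a}$ together with $\Sigma^{2b}\cong\Id$ only reaches the even suspensions $\Sigma^{2r}X_i$, since $2$ divides every exponent you can produce, so the odd suspensions $\Sigma^{2r+1}X_i$ are not ``reached by iterating $T$'' as you claim. Moreover, if the $\tau$-periodicity really were the only identification, the full set of objects $\Sigma^jX_i$ would split into two disjoint cylinders (even shifts and odd shifts), giving $2|b|(h-1)$ isomorphism classes rather than $|b|(h-1)$, contradicting the statement being proved; this is precisely what happens for the formal algebra $H_*(B)$, as the paper's remark after the classification theorem points out, where the absence of Proposition~\ref{pr:X_{h-i}} yields two semi-infinite cylinders separating even and odd suspensions.

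To repair the argument you need to add: by Proposition~\ref{pr:X_{h-i}}, $\Sigma^{2r+1}X_i\cong\Sigma^{2(r+1-ia)}X_{h-i}$, so every odd suspension of an $X_i$ is an even suspension of $X_{h-i}$ and hence lies in the cylinder already built from the even shifts; this identification (together with $X_{h/2}\cong\Sigma^{1-ah}X_{h/2}$ when $h$ is even) is also what makes $\Sigma$ act by switching the two ends of the cylinder and yields the count $|b|(h-1)$ and the $[h/2]$ orbits of $\Sigma$. The remaining ingredients of your write-up (local endomorphism rings from Theorem~\ref{th:EndDsgXi}, the AR triangles via Corollaries~\ref{co:socle} and~\ref{co:socle-ass} and Theorem~\ref{th:Bduality}, and the exactness of the circumference $|b|$, which can be made precise by noting that homogeneous units of $\End_{\Dsg(B)}(X_i)$ have degrees forced by the nilpotence of $x$ and $\alpha$) do match the paper's reasoning.
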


For example, if $a=6$, $h=6$, $b=5$, $\ell=7$, we get the following
quiver isomorphic to $\bZ A_5/T^5$.
\[ \xymatrix@=2.8mm{
X_1\ar[dr]\ar@{.}[dd]&&\Sigma^2X_1\ar[dr]&&\Sigma^4X_1\ar[dr]&&
\Sigma^6X_1\ar[dr]&&\Sigma^8X_1\ar[dr]&&X_1\ar@{.}[dd]\\
&\Sigma^2X_2\ar[ur]\ar[dr]&&\Sigma^4X_2\ar[dr]\ar[ur]&&
\Sigma^6X_2\ar[dr]\ar[ur]&&\Sigma^8X_2\ar[dr]\ar[ur]&&X_2\ar[dr]\ar[ur]&\\
\Sigma^2X_3\ar[dr]\ar[ur]\ar@{.}[dd]&&\Sigma^4X_3\ar[dr]\ar[ur]&&
\Sigma^6X_3\ar[dr]\ar[ur]&&\Sigma^8X_3\ar[dr]\ar[ur]&&
X_3\ar[dr]\ar[ur]&&\Sigma^2X_3\ar@{.}[dd] \\
&\Sigma^7X_2\ar[dr]\ar[ur]&&\Sigma^9X_2\ar[dr]\ar[ur]&&
\Sigma X_2\ar[dr]\ar[ur]&&\Sigma^3X_2\ar[dr]\ar[ur]&&
\Sigma^5X_2\ar[dr]\ar[ur]& \\
\Sigma^5X_1\ar[ur]&&\Sigma^7 X_1\ar[ur]&&\Sigma^9X_1\ar[ur]&&
\Sigma X_1\ar[ur]&&\Sigma^3X_1\ar[ur]&&\Sigma^5X_1
} \]
Here, the left and right ends are identified along the dotted lines 
to form a cylinder of height $5$ and circumference $5$, for a total of
$25$ isomorphism classes of objects.
Note that in this example we have $X_i\simeq \Sigma^{10}X_i$ for each
$i$, and also for the middle row $X_3\simeq \Sigma^5X_3$. In general,
$\Sigma^b$ acts as a reflection on the quiver, about the horizontal line going through
the middle. There are $[h/2]$ orbits of the shift functor. If $h$ is
odd, they all have 
length $2|b|$, while if $h$ is even there is just one of them with
length $|b|$, namely the middle one, and the rest have length $2|b|$.

\section{Classification of indecomposables}

The goal of this section is to show that every indecomposable object 
in $\Dsg(B)$ is isomorphic to some $\Sigma^j X_i$, so that the
Auslander--Reiten quiver is that given in Theorem~\ref{th:AR-quiver}.

The following proposition plays a role analogous to that of the
Harada--Sai lemma in the representation theory of finite dimensional
algebras, see for example Lemma~4.14.1 of~\cite{Benson:1998c}.

\begin{proposition}\label{pr:zero}
The composite of any $h$ composable irreducible morphisms between the
objects $\Sigma^j X_i$ is equal to zero.
\end{proposition}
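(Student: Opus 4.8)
The plan is to exploit the explicit $\mathbb Z A_{h-1}/T^{|b|}$ shape of the quiver together with the duality of Theorem~\ref{th:Bduality} and the finite-length computations of Theorem~\ref{th:EndDsgXi}. Since all the objects $\Sigma^j X_i$ are internally gradable and their Hom-spaces in $\Dsg(B)$ are internally graded and of finite length over $K$, the degree of a morphism (both homological and internal) is a meaningful invariant. The key observation is that each irreducible morphism in the Auslander--Reiten quiver — coming from a summand of the middle term of one of the triangles~\eqref{eq:ass} — has internal degree either $0$ or $-\ell$ in the internal grading, and its homological degree is controlled: the triangles~\eqref{eq:ass} have connecting map of internal degree $(2a-1,\ell)$, so the two constituent irreducible maps $\Sigma^{-2a,-\ell}X_j\to Z'$ and $Z'\to X_{j}$ have total internal degree $(-1-(2a-1),-\ell-\ell)$... more usefully, each irreducible map $X_i\to X_{i\pm1}$ or $X_i\to \Sigma^{\pm 2a}X_{i\mp 1}$ shifts the internal degree in a fixed way, so a composite of $N$ of them lands in a Hom-space of a specific internal degree.

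First I would pin down, using Corollary~\ref{co:socle} and the shape of~\eqref{eq:ass}, exactly which graded component of $\Hom_{\Dsg(B)}(\Sigma^j X_i,\Sigma^{j'}X_{i'})$ each irreducible morphism occupies; the two types of irreducible arrow (``going up'' to $X_{i-1}$ and ``going down'' to $X_{i+1}$, with appropriate $\Sigma^{2a}$-twists) have internal degrees that I expect to read off as $(0,0)$ and $(2a-1,\ell)$ up to the unavoidable $\Sigma$-bookkeeping. Next I would argue that a composable string of $h$ irreducible morphisms starting at some $\Sigma^j X_i$ must, by the combinatorics of $\mathbb Z A_{h-1}$ (a path of length $h$ in $\mathbb Z A_{h-1}$ necessarily ``bounces off'' both ends or travels far enough that it factors through the translate), either be forced by degree reasons to land in a component of $\Hom_{\Dsg(B)}(\Sigma^jX_i,\Sigma^{j''}X_{i''})$ that is zero by Theorem~\ref{th:EndDsgXi}, or else factors through $X_h\simeq 0$. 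Concretely: a length-$h$ path in $\mathbb Z A_{h-1}$ must include a sub-path that goes from the top row down to the bottom row and back, or wraps around via a reflection, and tracking the cumulative internal degree against the finite-length presentation in Theorem~\ref{th:EndDsgXi} forces a monomial of internal degree not realised there.

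The cleanest route — and the one I would actually write — is probably to reduce to the endomorphism case. By the Auslander--Reiten property, any composite of $N$ irreducible morphisms $\Sigma^{j_0}X_{i_0}\to\cdots\to\Sigma^{j_N}X_{i_N}$ lies in the $N$-th power of the radical of the category; and when $N=h$ the source and target are such that, after composing with a suitable further irreducible or iso to return to the same object, one obtains an element of $\mathrm{rad}^h\big(\End_{\Dsg(B)}(\Sigma^jX_i)\big)$. Now $\End_{\Dsg(B)}(X_i)\cong K[x,\alpha]/(\alpha^2+\lambda x^{h-2i}\tau^{\ell-2},x^i)$ (or the $2i\ge h$ variant), which is a local ring whose radical is generated by $x$ and $\alpha$ with $\alpha^2\in(x)$ and $x^i=0$, so $\mathrm{rad}^{?}=0$ for an explicitly bounded exponent; one checks that exponent is at most $h$ in every case $1\le i\le h-1$ (the worst case being near $i=h/2$). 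The main obstacle I anticipate is the bookkeeping needed to promote ``a length-$h$ path between possibly distinct objects'' to ``a radical-$h$ element of a single endomorphism ring'': because the path can meander, one must either use the reflection symmetry ($\Sigma^b$ acting as the reflection of $\mathbb Z A_{h-1}$) together with $X_h\simeq 0$ to close the path up, or argue directly with internal degrees that the relevant off-diagonal Hom-space component vanishes. I expect the degree argument, mirroring the normal-vector computation $N=(\ell-2,\ell,-2a)$ used in Theorems~\ref{th:HHB} and~\ref{th:EndDbXi}, to be the decisive and slightly fiddly step.
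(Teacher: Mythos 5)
Your overall strategy differs from the paper's, and as it stands it has genuine gaps at exactly the points you flag as ``the main obstacle''. The paper's proof is much more direct: the mesh relations coming from the Auslander--Reiten triangles~\eqref{eq:ass} (the sum of the two composites through the middle term vanishes) are used to \emph{rewrite} an arbitrary composite of $h$ irreducible morphisms, deforming the path without moving its endpoints until it passes through $\Sigma^{j-2a}X_1\to\Sigma^jX_2\to\Sigma^jX_1$; at the top row the mesh has a single middle term, so that composite is zero. No degree bookkeeping or radical estimates are needed. Your plan replaces this rewriting step by two alternatives, neither of which works as stated. First, the reduction to a single endomorphism ring is logically invalid: post-composing with a further morphism to ``return to the same object'' only shows that the closed-up composite lies in a vanishing radical power of $\End_{\Dsg(B)}(\Sigma^jX_i)$, and from $fg=0$ you cannot conclude $g=0$. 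Second, the combinatorial claim that a length-$h$ path in $\mathbb Z A_{h-1}$ must ``bounce off both ends or factor through the translate'' is false once $h\ge 4$: a path can zigzag indefinitely between two interior rows and never reach either boundary, so there is no a priori factorisation through $X_1$ or through $X_h\simeq 0$ to exploit.

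The purely degree-theoretic variant is also not established and is not obviously true: what the bidegree of the endpoints pins down is which graded component of $\Hom_{\Dsg(B)}(\Sigma^{j_0}X_{i_0},\Sigma^{j_h}X_{i_h})$ the composite lies in, but the vanishing asserted in the proposition is a property of the specific composite, enforced by the mesh relations, and you would have to \emph{prove} that the entire relevant graded component vanishes for every possible endpoint configuration of a length-$h$ path (including the zigzag paths above, whose target Hom-spaces are computed by the presentations in Theorem~\ref{th:EndDsgXi} and are not visibly zero in the required bidegree). You have not done this, and I do not see that it holds without essentially redoing the mesh-relation argument. So the proposal, while it correctly identifies the relevant ingredients (the triangles~\eqref{eq:ass}, the local finite-length endomorphism rings, the internal grading), does not yet contain a proof; the missing idea is to use the mesh relations as rewriting rules on composites, pushing any length-$h$ path to the top edge of the cylinder.
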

\begin{proof}
For each $i$ and $j$, the sum of the composites
$\Sigma^{j-2a}X_i \to \Sigma^jX_{i+1} \to \Sigma^jX_i$ and $\Sigma^{j-2a}X_i \to
\Sigma^{j-2a}X_{i-1}\to \Sigma^jX_i$ is the composite of two adjacent maps in an
Auslander--Reiten triangle, and therefore equal to zero. Using these relations, 
any composite of $h$
composable irreducible morphisms can be rewritten as a composite
involving $\Sigma^{j-2a} X_1 \to \Sigma^j X_2 \to \Sigma^j X_1$, 
which is zero. In other words, we can deform any path of length $h$ so
that it hits the top edge of the cylinder, without moving the ends of
the path.
\end{proof}

\begin{proposition}\label{pr:map}
If $X$ is any non-zero object in $\Dsg(B)$ then for some $n\in
\bZ$ there is a non-zero morphism $ \Sigma^nX_1 \to X$.
\end{proposition}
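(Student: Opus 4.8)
The plan is to show that a nonzero object $X$ in $\Dsg(B)$ cannot be ``invisible'' to the object $k=X_1$, using the duality of Theorem~\ref{th:Bduality} to convert the existence of a map \emph{into} $X$ into the non-vanishing of a map \emph{out of} $X$. First I would recall that, by Corollary~\ref{co:DsgBsimDbAtau-1}, $\Dsg(B)\simeq\Db(A[\tau^{-1}])$, and by Theorem~\ref{th:ThickAtau-1} (via Corollary~\ref{co:ThickQtau-1}) every object of $\Db(A[\tau^{-1}])$ lies in $\Thick(A[\tau^{-1}])$; under the equivalence this says every object of $\Dsg(B)$ lies in $\Thick(k)$, the thick subcategory generated by $X_1=k$. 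Hence if $X\ne 0$ in $\Dsg(B)$ then $X$ is built from finitely many shifts of $k$ by triangles and retracts, and in particular $\Hom_{\Dsg(B)}(\Sigma^m k,X)\ne 0$ for some $m\in\bZ$: otherwise $X$ would be in the kernel of all the functors $\Hom_{\Dsg(B)}(\Sigma^m k,-)$, but since $k$ generates the whole category this kernel is zero (an object on which all maps from a generator vanish, and which is built from that generator, must be zero — induct on the number of cones and retracts used to build $X$ from $k$).

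Alternatively, and perhaps more cleanly, I would argue by duality: suppose for contradiction that $\Hom_{\Dsg(B)}(\Sigma^n X_1,X)=0$ for all $n$. By Theorem~\ref{th:Bduality} we have
\[
\Hom_{\Dsg(B)}(\Sigma^n X_1,X)^*\cong\Hom_{\Dsg(B)}(X,\Sigma^{1-2a-n}X_1),
\]
so this would force $\Hom_{\Dsg(B)}(X,\Sigma^m X_1)=0$ for all $m$ as well. But $X_1=k$ generates $\Dsg(B)\simeq\Db(A[\tau^{-1}])=\Thick(A[\tau^{-1}])$, so the (thick) subcategory of objects $X$ with $\Hom_{\Dsg(B)}(X,\Sigma^m X_1)=0$ for all $m$ is a thick subcategory not containing the generator $X_1$ itself (since $\End_{\Dsg(B)}(X_1)\ne 0$); one then checks it must be zero, giving $X\simeq 0$, a contradiction.

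The main obstacle is making precise the step ``$X_1$ generates, hence an object orthogonal to all shifts of $X_1$ is zero.'' This is where Theorem~\ref{th:ThickAtau-1} / Corollary~\ref{co:ThickQtau-1} does the real work: it guarantees $\Dsg(B)=\Thick(X_1)$, so one can run an induction on a finite tower $X^{(0)}=0,\ X^{(1)},\dots,X^{(r)}$ with $X$ a retract of $X^{(r)}$ and each $X^{(j)}$ fitting into a triangle $\Sigma^{n_j}X_1\to X^{(j)}\to X^{(j-1)}$; applying $\Hom_{\Dsg(B)}(\Sigma^n X_1,-)$ (respectively $\Hom_{\Dsg(B)}(-,\Sigma^m X_1)$) and using $\End_{\Dsg(B)}(X_1)\ne 0$ (Theorem~\ref{th:EndDsgXi} with $i=1$) at the bottom of the tower shows the Hom groups cannot all vanish unless $X^{(r)}$, and hence its retract $X$, is zero. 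Everything else — the duality, the shift bookkeeping, the fact that $\End_{\Dsg(B)}(X_1)$ is a nonzero Artinian local ring — is already in hand from the earlier sections.
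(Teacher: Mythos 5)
Your route is genuinely different from the paper's, and it can be made to work, but the crucial step -- ``$X_1$ generates, hence an object orthogonal to all shifts of $X_1$ is zero'' -- is not correctly justified in either of the two forms you give. In the duality variant, the inference ``a thick subcategory not containing the generator must be zero'' is a non sequitur: \emph{every} proper nonzero thick subcategory fails to contain a generator, so this gives no information. In the tower variant the variance is backwards: applying $\Hom_{\Dsg(B)}(\Sigma^n X_1,-)$ up the tower cannot establish non-vanishing of $\Hom_{\Dsg(B)}(\Sigma^n X_1,X^{(r)})$, because the long exact sequences permit cancellation; and even if it did, non-vanishing for $X^{(r)}$ would not pass to the retract $X$, since the nonzero maps could land entirely in the complementary summand. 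The correct way to exploit Theorem~\ref{th:ThickAtau-1} is contravariant in the generating object: assuming $\Hom_{\Dsg(B)}(\Sigma^n X_1,X)=0$ for all $n$, the full subcategory $\{Y : \Hom_{\Dsg(B)}(\Sigma^n Y,X)=0 \text{ for all } n\}$ is thick and contains $X_1$; since $\Dsg(B)=\Thick(X_1)$ (Theorem~\ref{th:ThickAtau-1} read through Corollary~\ref{co:DsgBsimDbAtau-1}), it is all of $\Dsg(B)$, in particular it contains $X$, so $\mathrm{id}_X=0$ and $X\simeq 0$. Equivalently, in your tower show by induction on $j$ that each $X^{(j)}$ is left-orthogonal to $X$, and then pass to the retract. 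With this correction your argument is complete; the detour through Theorem~\ref{th:Bduality} is harmless but not needed.

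For comparison, the paper's proof is shorter and uses neither Theorem~\ref{th:ThickAtau-1} nor any orthogonality argument: under the equivalence of Corollary~\ref{co:DsgBsimDbAtau-1} the object $X_1=k$ corresponds to the free module $A[\tau^{-1}]$, so if $Y$ denotes the image of $X$ in $\Db(A[\tau^{-1}])$ then $\Hom_{\Dsg(B)}(\Sigma^n X_1,X)\cong \Hom_{\Db(A[\tau^{-1}])}(\Sigma^n A[\tau^{-1}],Y)\cong H_nY$; if these all vanish then $H_*Y=0$, so $Y\simeq 0$ and hence $X\simeq 0$. In other words, the paper only needs that maps out of shifts of the free module compute homology and that homology detects the zero object, whereas your approach buys the same conclusion at the cost of invoking the regularity theorem, which is a strictly heavier input.
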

\begin{proof}
By Corollary~\ref{co:HomDsgB}, 
the image of $X_1=k$ under the equivalence $\Dsg(B)\simeq
\Db(A[\tau^{-1}])$ is $A[\tau^{-1}]$. So if $Y$ is the image of $X$ in
$\Db(A[\tau^{-1}])$ then 
\begin{equation*}
 \Hom_{\Dsg(B)}(X_1,X) \cong \Hom_{\Db(A[\tau^{-1}])}(A[\tau^{-1}],Y) 
\cong H_*Y. 
\end{equation*}
if there is no non-zero morphism from any $\Sigma^n X_1$ to $X$ then
$H_*Y=0$, so $Y$ is quasi-isomorphic to zero in $\Db(A[\tau^{-1}])$
and hence $X$ is quasi-isomorphic to zero in $\Dsg(B)$. 
\end{proof}

\begin{theorem}\label{th:classification}
Every indecomposable object in $\Dsg(B)$ is isomorphic to 
some $\Sigma^j X_i$ with $1\le i< h$,  $0\le j<|b|$.
\end{theorem}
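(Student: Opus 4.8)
The plan is to show that the collection of objects $\Sigma^j X_i$ ($1\le i<h$) is ``all there is'' by a standard Auslander--Reiten-theoretic argument: once we know every non-zero object receives a non-zero map from some $\Sigma^n X_1$, and once we have a nilpotency statement for composites of irreducible morphisms, we can peel off a summand isomorphic to some $\Sigma^j X_i$ from any indecomposable.

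First I would fix an indecomposable $X$ in $\Dsg(B)$. By Proposition~\ref{pr:map} there is a non-zero morphism $f\colon \Sigma^n X_1 \to X$ for some $n$. The idea is to factor $f$ through the mesh (Auslander--Reiten) structure as far as possible: since $\Sigma^n X_1$ has local endomorphism ring (Theorem~\ref{th:EndDsgXi}), either $f$ is a split monomorphism --- in which case $\Sigma^n X_1$ is a direct summand of the indecomposable $X$, so $X\cong \Sigma^n X_1$ and we are done --- or $f$ is not a split mono, and hence (by the universal property established in Section~\ref{sec:ARtriangles}) $f$ lifts through the Auslander--Reiten triangle ending at $\Sigma^n X_1$, i.e. $f$ factors through the irreducible morphism(s) $\Sigma^n X_2 \to \Sigma^n X_1$ (and $\Sigma^{n-2a}X_0 = 0$, so only the $X_2$ term survives). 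Repeating, either at some stage the map out of $\Sigma^{n'} X_i$ is a split mono, giving $X\cong \Sigma^{n'}X_i$, or we get an arbitrarily long factorisation of $f$ as a composite of irreducible morphisms between the $\Sigma^j X_k$. But by Proposition~\ref{pr:zero}, any composite of $h$ composable irreducible morphisms between these objects vanishes; since the shape of the quiver (Theorem~\ref{th:AR-quiver}, a cylinder $\bZ A_{h-1}/T^{|b|}$) means lifting across one mesh strictly increases the length of the irreducible factorisation, after at most $h$ steps the composite would be zero, contradicting $f\ne 0$. Hence the process must terminate in a split mono, and $X\cong \Sigma^{n'} X_i$ for some $i$ with $1\le i\le h$.

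It then remains to normalise the indices. Since $X_h\simeq 0$ in $\Dsg(B)$ (established in Section~\ref{se:indecomposables}, via Theorem~\ref{th:kx/x^h}), we may assume $1\le i<h$; actually using $X_{h-1}\cong \Sigma^{2a-1}X_1$ and more generally Proposition~\ref{pr:X_{h-i}}, $X_i \simeq \Sigma^{1-2ia,-i\ell}X_{h-i}$, one could even reduce to $2i\le h$, but the statement as given only requires $1\le i<h$. For the shift, $\tau$ gives an isomorphism $\Sigma^{2b}X_i\cong X_i$, so $j$ is only well-defined modulo $|b|$ (using that $b$ and $|b|$ differ by a sign, and $\tau$ is invertible), and we may choose the representative with $0\le j<|b|$.

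The main obstacle --- the one place where care is needed --- is making the ``lifting across the mesh strictly lengthens the irreducible factorisation'' argument precise, i.e. organising the induction so that non-split maps are genuinely replaced by longer composites of \emph{irreducible} morphisms (not merely longer composites of arbitrary non-isomorphisms), so that Proposition~\ref{pr:zero} applies. This is exactly the role played by the Harada--Sai lemma in the finite-dimensional setting, and the analogy flagged before Proposition~\ref{pr:zero} is the guide: one shows that a non-split map out of $\Sigma^{n'} X_i$ factors through the direct sum of the irreducible maps into it (the ``middle term'' of the Auslander--Reiten triangle), and one keeps track of the sum of the lengths of the resulting irreducible components, which is bounded by the Harada--Sai-type bound $h$ coming from Proposition~\ref{pr:zero}. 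One must also confirm that the objects $\Sigma^j X_i$ with $1\le i<h$ are pairwise non-isomorphic (up to the stated range of $j$), which is visible from the structure of the Auslander--Reiten quiver in Theorem~\ref{th:AR-quiver}, together with the internal grading bookkeeping in Theorem~\ref{th:EndDsgXi} and Corollary~\ref{co:socle}.
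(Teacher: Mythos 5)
Your proposal follows essentially the same route as the paper's proof: start from a non-zero map $\Sigma^n X_1\to X$ supplied by Proposition~\ref{pr:map}, repeatedly factor non-split maps through the middle terms of the Auslander--Reiten triangles (keeping track of sums of composites, so that at each stage at least one path stays non-zero), and use the Harada--Sai-type vanishing of Proposition~\ref{pr:zero} on composites of $h$ irreducible morphisms to force the process to terminate in a split monomorphism, then normalise $i$ and $j$ via $X_h\simeq 0$ and $\tau$-periodicity. The only blemishes are minor bookkeeping slips (you should use the triangle \emph{starting} at $\Sigma^n X_1$, so $f$ factors through $\Sigma^n X_1\to\Sigma^{n+2a}X_2$, and the suspensions should shift by $2a$ at each step), which do not affect the argument.
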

\begin{proof}
First we note that the set of $\Sigma^j X_i$ with $1\le i < h$ and
$0\le j < b$ is the set of vertices in the 
Auslander--Reiten quiver described in Section~\ref{sec:ARtriangles}.

Let $X$ be an indecomposable object. Then by Proposition~\ref{pr:map}
there is a non-zero morphism $ \Sigma^n X_1\to X$ for some $n\in
\bZ$. Since $X$ is indecomposable, if this has a left 
inverse then it is an isomorphism, and we are done. Otherwise, it
factors as $\Sigma^n X_1 \to \Sigma^{n+2a} X_2\to X$. 
If $\Sigma^{n+2a}X_2\to X$ has a left inverse, again we are done. 
Otherwise, we obtain a factorisation 
\[ \Sigma^n X_1 \to \Sigma^{n+2a}X_2 \to \Sigma^{n+2a}X_1\oplus
  \Sigma^{n+4a} X_3 \to X. \]
Since the composite $\Sigma^n X_1 \to \Sigma^{n+2a}X_2\to
\Sigma^{n+2a}X_1$ is zero, it follows that the composite $\Sigma^n X_1
\to \Sigma^{n+2a}X_2 \to \Sigma^{n+4a}X_3 \to X$ is
non-zero. If $\Sigma^{n+4a}X_3\to X$ is not an isomorphism, then at 
the next stage, we obtain a statement that a sum of two
composites is non-zero, so at least one of them has to be non-zero.
Continuing this way, we obtain either an isomorphism
$\Sigma^jX_i\to X$ for some $i,j$, or a factorisation through a
composite of at least $h$ irreducible morphisms between the objects $X_i$.
In the latter case, by
Proposition~\ref{pr:zero}, it follows that $ \Sigma^n X_1\to X$ is the
zero map, contradicting the way it was chosen. 
\end{proof}

\begin{corollary}\label{co:AR-quiver}
The Auslander--Reiten quiver of $\Dsg(B)$ is isomorphic to $\bZ
A_{h-1}/T^{|b|}$.
\end{corollary}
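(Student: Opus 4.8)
The plan is to assemble the two main results already in place and observe that the corollary is pure bookkeeping. First I would recall that $\Dsg(B)$ is a Krull--Schmidt triangulated category: by Theorem~\ref{th:EndDsgXi} the objects $X_i$ have local endomorphism rings, and more generally (via the equivalence $\Dsg(B)\simeq\Db(A[\tau^{-1}])$) every object decomposes into indecomposables with local endomorphism rings, so the Auslander--Reiten quiver is well-defined. The section on Auslander--Reiten triangles produced genuine almost split triangles~\eqref{eq:ass} ending in each $X_i$, with Auslander--Reiten translate $\Sigma^{-2a,-\ell}$; hence every indecomposable $\Sigma^j X_i$ sits at the end of an Auslander--Reiten triangle, the irreducible morphisms between these objects are exactly those displayed, and Theorem~\ref{th:AR-quiver} identifies the resulting connected translation subquiver with $\bZ A_{h-1}/T^{|b|}$.

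The remaining point is that this subquiver is in fact the whole quiver, and this is immediate from Theorem~\ref{th:classification}: every indecomposable object of $\Dsg(B)$ is isomorphic to some $\Sigma^j X_i$ with $1\le i<h$ and $0\le j<|b|$, i.e.\ to one of the vertices already appearing in the component of Theorem~\ref{th:AR-quiver}. Since the Auslander--Reiten quiver of a Krull--Schmidt category with Auslander--Reiten triangles is the disjoint union of its connected components, and here a single component exhausts all isomorphism classes of indecomposables, the Auslander--Reiten quiver of $\Dsg(B)$ is $\bZ A_{h-1}/T^{|b|}$.

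There is essentially no obstacle here. If anything needs care it is only the remark that the $X_i$ for $1\le i<h$ are genuinely pairwise non-isomorphic up to the shifts listed, so that the cylinder $\bZ A_{h-1}/T^{|b|}$ has exactly $|b|(h-1)$ vertices and no spurious further identifications; but this was already pinned down using the internal grading together with Proposition~\ref{pr:X_{h-i}} and the count carried out in Section~\ref{sec:ARtriangles}, so I would simply cite that discussion.
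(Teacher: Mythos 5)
Your argument is correct and is essentially the paper's own proof: the corollary is obtained by combining Theorem~\ref{th:AR-quiver} (the $\Sigma^j X_i$ form a component isomorphic to $\bZ A_{h-1}/T^{|b|}$) with Theorem~\ref{th:classification} (every indecomposable is some $\Sigma^j X_i$). Your additional remarks on the Krull--Schmidt property and the absence of extra identifications are just explicit versions of what the paper leaves implicit.
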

\begin{proof}
This follows from Theorems~\ref{th:AR-quiver} and~\ref{th:classification}.
\end{proof}

\begin{corollary}
The Krull--Schmidt theorem holds in $\Dsg(B)$.
\end{corollary}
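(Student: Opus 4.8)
The plan is to reduce the Krull--Schmidt property to a single remaining point. By Theorem~\ref{th:classification} the indecomposable objects of $\Dsg(B)$ are, up to isomorphism, the finitely many $\Sigma^j X_i$ with $1\le i<h$ and $0\le j<|b|$, and by Theorem~\ref{th:EndDsgXi} each of these has a local (indeed Artinian) endomorphism ring, and direct sum decompositions into such objects are unique. So it suffices to show that \emph{every} object of $\Dsg(B)$ is isomorphic to a finite direct sum of objects $\Sigma^j X_i$; the Krull--Schmidt theorem then follows formally from the local endomorphism ring condition.

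The one input needed beyond what is already proved is a finiteness bound. I would first check that every morphism space in $\Dsg(B)$ is finite-dimensional over the graded field $K=k[\tau,\tau^{-1}]$. Transporting along the equivalence $\Dsg(B)\simeq\Db(A[\tau^{-1}])$ of Corollary~\ref{co:DsgBsimDbAtau-1}, and using that this category coincides with $\Thick(A[\tau^{-1}])$ by Theorem~\ref{th:ThickAtau-1}, it is enough to note that for a fixed object $Y$ the full subcategory of those $Z$ with $\dim_K\Hom(Z,Y)<\infty$ is thick and contains $A[\tau^{-1}]$: the latter because $\Hom_{\Db(A[\tau^{-1}])}(A[\tau^{-1}],Y)\cong H_*Y$ is finitely generated over $H_*A[\tau^{-1}]$, which is itself finite-dimensional over $K$ by Corollary~\ref{co:HQ}. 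In particular $\End_{\Dsg(B)}(X)$ is a finite-dimensional $K$-algebra for every object $X$.

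Next I would prove by induction on $\dim_K\End_{\Dsg(B)}(X)$ that $X$ is a finite direct sum of objects $\Sigma^j X_i$. If $X=0$ there is nothing to do. If $X\ne 0$, Proposition~\ref{pr:map} supplies a non-zero map $\Sigma^n X_1\to X$, and I would re-run the factorisation argument from the proof of Theorem~\ref{th:classification}: that argument uses only Proposition~\ref{pr:map}, the lifting property of the Auslander--Reiten triangles~\eqref{eq:ass}, and Proposition~\ref{pr:zero}, and nowhere uses that $X$ is indecomposable, so it produces a split monomorphism $\Sigma^j X_i\to X$ for some $1\le i<h$. A split monomorphism in a triangulated category exhibits its source as a direct summand (the complement being its cone), so $X\cong \Sigma^j X_i\oplus X'$; since $\End(\Sigma^j X_i)$ is non-zero by Theorem~\ref{th:EndDsgXi} we get $\dim_K\End(X)\ge\dim_K\End(\Sigma^j X_i)+\dim_K\End(X')\ge 1+\dim_K\End(X')$, and the inductive hypothesis applies to $X'$. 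Combining the resulting existence of decompositions with the uniqueness clause of Theorem~\ref{th:EndDsgXi} gives that $\Dsg(B)$ is Krull--Schmidt.

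The only step requiring any care is ensuring that this ``peel off one indecomposable summand at a time'' process terminates, which is exactly what the $K$-finiteness of endomorphism rings from the second step guarantees; everything else is a reassembly of Theorems~\ref{th:classification}, \ref{th:EndDsgXi} and~\ref{th:ThickAtau-1}.
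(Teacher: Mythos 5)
Your proof is correct and follows essentially the same route as the paper, whose proof simply combines Theorem~\ref{th:classification} with the last statement of Theorem~\ref{th:EndDsgXi}. The material you add --- the $K$-finite-dimensionality of Hom spaces via Theorems~\ref{th:ThickAtau-1} and~\ref{co:HomDsgB}, and the induction peeling off split summands of an arbitrary object, using the observation that the factorisation argument in the proof of Theorem~\ref{th:classification} never uses indecomposability of the target --- is precisely the decomposition-existence step that the paper's two-line proof leaves implicit, and it is supplied correctly.
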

\begin{proof}
This follows from the last statement of Theorem~\ref{th:EndDsgXi} together
with Theorem~\ref{th:classification}.
\end{proof}

\begin{remark}
In Remark \ref{rem:formal}  we noted that for the formal graded rings
$H_*(B)$ the objects $X_s$ exist for all $s\ge 1$ and are
inequivalent. The singularity category retains the $\tau$-periodicity,
since $\Dsg (H_*(B))\simeq \Dcsg (H_*(A))\simeq \Db (H_*(A)[1/\tau])$,  
but now (in the absence of Proposition~\ref{pr:X_{h-i}})
 the Auslander--Reiten quiver consists of two semi-infinite cylinders
 of circumference $|b|$, one containing the even suspensions of the
 $X_i$ and one containing the  odd suspensions of the $X_i$. 
\end{remark}

\section{\texorpdfstring{Models for $\Dsg(B)$}{Models for Dsg(B)}}%
\label{se:models}

In this section we exhibit some more familiar looking categories that
are equivalent as triangulated categories to $\Dsg(B)$.

Theorem~\ref{th:classification} gives us a presentation for the category
$\Dsg(B)$ in terms of the \emph{mesh category} of the quiver. The definition of
the mesh category $k(\Gamma)$ of a translation quiver $\Gamma$
comes from Riedtmann~\cite{Riedtmann:1980a} (see also Bongartz and
Gabriel~\cite{Bongartz/Gabriel:1982a}), and can be found in
Section~I.5.6 on pages 54--55 of Happel~\cite{Happel:1988a}. 

In the case of the Auslander-Reiten quiver $\bZ A_{h-1}/T^{|b|}$ of
Corollary~\ref{co:AR-quiver}, 
the generators for the morphisms in the mesh category are the irreducible morphisms between
the $\Sigma^j X_i$. The mesh relations come from the Auslander--Reiten
triangles, and say that for each $i$ and $j$, the sum of the composites
$\Sigma^{j-2a}X_i \to \Sigma^jX_{i+1} \to \Sigma^jX_i$ and 
$\Sigma^{j-2a}X_i \to \Sigma^{j-2a}X_{i-1}\to \Sigma^jX_i$ is 
equal to zero. At the boundaries, $i=1$
and $i=h-1$, only one of these composites makes sense, and the
corresponding relation says that this composite is equal to zero.

The classification of Krull--Schmidt triangulated
categories with finitely many isomorphism classes of indecomposables is
described in Section~6 of Amiot~\cite{Amiot:2007a}, see also Chapter~2 
of Amiot~\cite{Amiot:2008a} and the paper of 
Xiao and Zhu~\cite{Xiao/Zhu:2002a}. Let $\Gamma$ be the translation
quiver $\bZ A_{h-1}/T^{|b|}$. Then it is shown in Theorem~6.5
of~\cite{Amiot:2007a} that given 
a triangulated category $\cT$ with Auslander--Reiten quiver $\Gamma$, 
we have an equivalence of $k$-linear categories between the full subcategory $\ind(\cT)$ of
indecomposables in $\cT$ and the mesh category $k(\Gamma)$. This
induces an equivalence between $\cT$ and
the additive closure of $k(\Gamma)$.
Applying this to $\Dsg(B)$, we obtain a $k$-linear equivalence 
$k(\bZ A_{h-1}/T^{|b|})\to\ind\Dsg(B)$, the full subcategory of
indecomposables, which then extends to an equivalence
from the additive closure of $k(\bZ A_{h-1}/T^{|b|})$ to $\Dsg(B)$.

There is another triangulated category with the same Auslander--Reiten
quiver. Let $\Db(kA_{h-1})$ be the bounded derived category of modules
for the quiver $A_{h-1}$ over $k$. The Auslander--Reiten quiver of
$\Db(kA_{h-1})$ is the quiver $\bZ A_{h-1}$. The translation $T$ of
this quiver lifts to the translation $\sfT$ of $\Db(kA_{h-1})$.
It is shown in
Keller~\cite{Keller:2005a} that the \emph{orbit category} 
$\Db(kA_{h-1})/\sfT^{|b|}$,
whose Hom sets are by definition
\[ \bigoplus_{n\in\bZ}\Hom_{\Db(kA_{h-1})}(X,\sfT^{n|b|}(Y)), \] 
is a triangulated category in such a way that the canonical functor
\[ \Db(kA_{h-1}) \to \Db(kA_{h-1})/\sfT^{|b|} \] 
is a triangle functor. The Auslander--Reiten quiver of the orbit
category $\Db(kA_{h-1})/\sfT^{|b|}$ is isomorphic to $\bZ A_{h-1}/T^{|b|}$.
So Theorem~6.5 of~\cite{Amiot:2007a} shows that there is a $k$-linear
equivalence 
\begin{equation*}
\Dsg(B) \simeq \Db(kA_{h-1})/\sfT^{|b|}
\end{equation*} 
inducing the isomorphism of Auslander--Reiten quivers. We would like
to know that this is an equivalence of triangulated categories. This
proves to be more delicate, but another theorem of Amiot comes to our rescue.

\begin{theorem}\label{th:DbkAh-1/T^b}
There is a triangulated equivalence $\Dsg(B)\simeq \Db(kA_{h-1})/\sfT^{|b|}$.
\end{theorem}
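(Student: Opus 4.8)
The strategy is to derive the triangle equivalence from Amiot's classification of algebraic triangulated categories with finitely many indecomposable objects, once it is recorded that $\Dsg(B)$ is algebraic. All the combinatorial input is already available: by Corollary~\ref{co:AR-quiver} the Auslander--Reiten quiver of $\Dsg(B)$ is the translation quiver $\bZ A_{h-1}/T^{|b|}$, with Auslander--Reiten translate $\Sigma^{-2a,-\ell}$ by the results of Section~\ref{sec:ARtriangles}; the Krull--Schmidt property holds by Theorem~\ref{th:EndDsgXi}; and Theorem~6.5 of~\cite{Amiot:2007a} already identifies the full subcategory of indecomposables with the mesh category $k(\bZ A_{h-1}/T^{|b|})$, so $\Dsg(B)$ is \emph{standard}. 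The orbit category $\Db(kA_{h-1})/\sfT^{|b|}$ has the same Auslander--Reiten quiver and is standard for the same reason.

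First I would record that $\Dsg(B)$ is algebraic. By Theorem~\ref{th:DcsgA} and Corollary~\ref{co:DsgBsimDbAtau-1} there is a triangulated equivalence $\Dsg(B)\simeq\Db(A[\tau^{-1}])\simeq\Db(Q[\tau^{-1}])$, and $Q[\tau^{-1}]$ is an honest DG algebra over the graded field $K$; hence $\Db(Q[\tau^{-1}])$ is a thick subcategory of the derived category of a DG algebra, and in particular an algebraic triangulated category. On the other side, $\Db(kA_{h-1})/\sfT^{|b|}$ is triangulated by Keller~\cite{Keller:2005a}, and it too is algebraic, being constructed from the derived category of the path algebra $kA_{h-1}$.

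The main point is then Amiot's classification theorem (see~\cite{Amiot:2008a}, building on~\cite{Amiot:2007a}): a connected, standard, algebraic triangulated category over a field with finitely many indecomposables and Auslander--Reiten quiver of the form $\bZ A_n/G$ is triangle equivalent to the triangulated orbit category $\Db(kA_n)/G$. I would apply this with $n=h-1$ and $G=\sfT^{|b|}$. To match $G$ on the nose one uses that the Auslander--Reiten translate of $\Dsg(B)$ is $\Sigma^{-2a}$, so that quotienting by $T^{|b|}$ on the $\bZ A_{h-1}$ side corresponds to quotienting by $\sfT^{|b|}$; the suspension is then matched by comparing the actions of $\Sigma$ and of the shift of $\Db(kA_{h-1})$ on the quiver, for which the description in Section~\ref{sec:ARtriangles} ($\Sigma$ switches the two ends of the cylinder, $\Sigma^b$ acts as a reflection) is exactly what is needed.

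The hard part is precisely the passage from the $k$-linear equivalence of Theorem~6.5 of~\cite{Amiot:2007a} to a \emph{triangle} equivalence: in general the triangulated structure is extra data that the Auslander--Reiten quiver does not see, and two triangulated categories can share an Auslander--Reiten quiver without being triangle equivalent. What rescues us is exactly the algebraicity and standardness of $\Dsg(B)$; in that setting Amiot's theorem shows the triangulated structure is rigid, determined up to triangle equivalence by the pair $(A_{h-1},\sfT^{|b|})$. The remaining work --- checking that $\Dsg(B)$ satisfies the hypotheses (connected, finitely many indecomposables, the stated Auslander--Reiten quiver, and, if the cited form of Amiot's theorem requires it, base-changing to $\bar k$, which does not affect the conclusion) --- is routine given the earlier sections.
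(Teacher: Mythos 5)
Your proposal is correct and follows essentially the same route as the paper: verify that $\Dsg(B)$ is connected, standard (via the mesh category identification from Amiot's Theorem~6.5), and algebraic (via the equivalence $\Dsg(B)\simeq\Db(Q[\tau^{-1}])$ with the derived category of an honest DG algebra), then invoke Amiot's classification theorem and read off $\Gamma=A_{h-1}$ and $\Phi=\sfT^{|b|}$ from the Auslander--Reiten quiver $\bZ A_{h-1}/T^{|b|}$. The paper does exactly this, citing Amiot's Theorem~7.2 and Keller's results for algebraicity.
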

\begin{proof}
We would like to apply 
Amiot~\cite[Theorem~7.2]{Amiot:2007a}. This states 
that given a finite triangulated category $\cT$ which is connected,
algebraic, and standard, there exists a Dynkin diagram $\Gamma$ of
type $A$, $D$ or $E$, and a self-equivalence $\Phi$ of $\Db(k\Gamma)$,
such that $\cT$ is triangle equivalent to Keller's orbit category
$\Db(k\Gamma)/\Phi$. To apply the theorem, we need to check the
conditions. 

To say that $\cT$ is connected means that the Auslander--Reiten quiver
is connected, so we have already established that $\Dsg(B)$ is connected.
To say that $\cT$ is standard means that it is equivalent to a mesh 
category as a $k$-linear category, so we have also already established that 
$\Dsg(B)$ is standard. 

To say that $\cT$ is algebraic means that there is a
Frobenius category $\cE$ such that $\cT$ is triangle equivalent to the
stable category $\underline\cE$, see 
Keller~\cite[Section~3.6]{Keller:2006a}. 
By Proposition~\ref{pr:DbQsimDbAtau^-1}
and Corollary~\ref{co:DsgBsimDbAtau-1} we have
$\Db(Q[\tau^{-1}])\simeq\Db(A[\tau^{-1}])\simeq\Dsg(B)$, and 
$\Db(Q[\tau^{-1}])$ is algebraic 
by~\cite[Lemma~3.3 and Theorem~3.9]{Keller:2006a}.
It follows that $\Dsg(B)$ is algebraic.

We have therefore checked the conditions for applying the theorem of
Amiot. Since the Auslander--Reiten quiver of $\Dsg(B)$ is $\bZ
A_{h-1}/T^{|b|}$, we have $\Gamma=A_{h-1}$ and $\Phi=\sfT^{|b|}$, and the
theorem now follows.
\end{proof}

There is another category that looks very similar, and we apply
similar techniques to make the comparison.
We write $\bar B$ for the formal $A_\infty$ algebra $k[x]/(x^h)$ where
$|x|=-2a$. Thus there is an obvious map $B \to \bar B$ sending $x$ to
$x$ and $t$ to zero.
We consider the bounded derived category $\Db(\bar B)$ 
and its quotient, the singularity category $\Dsg(\bar B)$ formed
by quotienting out all objects finitely built from the ring. We have
objects $\bar M_i=k[x]/(x^i)$ in this category for $1\le i\le h$, and 
$\bar M_h$ is zero. The analogue of 
Corollary~\ref{co:Aduality} in this situation says that 
$\Hom_{\Dsg(\bar B)}(X,Y)$ is the graded vector space dual of
$\Hom_{\Dsg(\bar B)}(Y,\Sigma^{1-2a}X)$, and hence we have
Auslander--Reiten triangles 
\[ \Sigma^{-2a}\bar M_i \to
\Sigma^{-2a}\bar M_{i-1}\oplus \bar M_{i+1} \to \bar M_i. \]
However, in contrast to the situation for $\Dsg(B)$, the
periodicity is given by 
\[ \bar M_i\cong \Sigma^{2\ell b}\bar M_i, \]
since $\Omega^2\bar M_i\cong \Sigma^{2ha}\bar M_i$, and $2ha-2=2\ell b$.  
The Auslander--Reiten quiver again consists of the $\Sigma^j\bar
M_i$. It is in the form of a cylinder, and again the
height of the cylinder is $h-1$, but the circumference is $\ell |b|$
instead of $|b|$. The generators and relations for this category are
given in the same way as that of $\Dsg(B)$ in terms of the
irreducible morphisms and the Auslander--Reiten triangles.

\begin{theorem}
There is a triangulated equivalence $\Dsg(\bar B)\simeq
\Db(kA_{h-1})/\sfT^{\ell|b|}$.
\end{theorem}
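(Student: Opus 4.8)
The plan is to follow, essentially verbatim, the proof of Theorem~\ref{th:DbkAh-1/T^b}, replacing $B$ by $\bar B=k[x]/(x^h)$ and $|b|$ by $\ell|b|$ throughout, and to deduce the equivalence from Amiot's structure theorem~\cite[Theorem~7.2]{Amiot:2007a}. Thus I would show that $\Dsg(\bar B)$ is a finite, connected, algebraic, standard triangulated category whose Auslander--Reiten quiver is the cylinder $\bZ A_{h-1}/T^{\ell|b|}$ already identified in the discussion preceding the theorem; Amiot's theorem then produces a Dynkin diagram $\Gamma$ and a self-equivalence $\Phi$ of $\Db(k\Gamma)$ with $\Dsg(\bar B)\simeq\Db(k\Gamma)/\Phi$ as triangulated categories, and comparison of Auslander--Reiten quivers forces $\Gamma=A_{h-1}$ and $\Phi=\sfT^{\ell|b|}$.

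For the finiteness I would use that $\bar B=k[x]/(x^h)$ is a graded self-injective Nakayama algebra, so that $\Dsg(\bar B)$ is equivalent to the stable module category $\underline{\mmod}^{gr}\bar B$, the stable category of the Frobenius category $\mmod^{gr}\bar B$, whose indecomposable objects are exactly the shifts of the $\bar M_i=k[x]/(x^i)$ with $1\le i\le h-1$. Alternatively, one transcribes Section~\ref{se:W} onwards: the duality quoted before the theorem gives the Auslander--Reiten triangles $\Sigma^{-2a}\bar M_i\to\Sigma^{-2a}\bar M_{i-1}\oplus\bar M_{i+1}\to\bar M_i$, the analogue of the Harada--Sai Proposition~\ref{pr:zero} kills composites of $h$ irreducible morphisms, and, $\bar M_1=k$ being a generator, the argument of Theorem~\ref{th:classification} shows every indecomposable is some $\Sigma^j\bar M_i$. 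The periodicity $\bar M_i\cong\Sigma^{2\ell b}\bar M_i$ recorded above together with $\gcd(a,\ell b)=1$ (a consequence of $ah-b\ell=1$) gives the circumference $\ell|b|$, while the syzygy relation $\Omega\bar M_i\cong\Sigma^{-2ia}\bar M_{h-i}$ (the analogue of Proposition~\ref{pr:X_{h-i}}) links homological and internal shifts and makes the quiver a single connected cylinder $\bZ A_{h-1}/T^{\ell|b|}$. Krull--Schmidt holds because the endomorphism rings $\End_{\Dsg(\bar B)}(\bar M_i)$ are local (finite-length local graded rings, just as for the $X_i$ in Theorem~\ref{th:EndDsgXi}).

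It then remains to check Amiot's three hypotheses. Connectedness is immediate from the cylinder. Standardness follows from Amiot~\cite[Theorem~6.5]{Amiot:2007a}: a finite Krull--Schmidt triangulated category with Auslander--Reiten quiver $\bZ A_{h-1}/T^{\ell|b|}$ has its subcategory of indecomposables $k$-linearly equivalent to the mesh category $k(\bZ A_{h-1}/T^{\ell|b|})$. Algebraicity is immediate from the identification $\Dsg(\bar B)\simeq\underline{\mmod}^{gr}\bar B$ with the stable category of a Frobenius category (equivalently, $\Dsg(\bar B)=\Db(\bar B)/\Thick(\bar B)$ is a Verdier quotient of the algebraic category $\Db(\bar B)$). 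With all hypotheses in place, Amiot's theorem yields the asserted triangulated equivalence. As in Theorem~\ref{th:DbkAh-1/T^b}, the only genuinely delicate point is this last step — promoting the manifest $k$-linear equivalence with the orbit category to a \emph{triangulated} one — and that is precisely what Amiot's theorem is designed to supply; everything else is a routine rerun of the $\Dsg(B)$ analysis with $|b|$ replaced by $\ell|b|$.
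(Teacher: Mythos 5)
Your proposal is correct and follows exactly the paper's route: the paper proves this theorem by repeating the argument of Theorem~\ref{th:DbkAh-1/T^b} (checking that $\Dsg(\bar B)$ is finite, connected, algebraic and standard, with Auslander--Reiten quiver $\bZ A_{h-1}/T^{\ell|b|}$ as established in the preceding discussion) and then invoking Amiot~\cite[Theorem~7.2]{Amiot:2007a}. Your additional details (algebraicity via the Frobenius/stable-category description of $\bar B$-modules, standardness via Amiot's Theorem~6.5, and the classification of indecomposables with the periodicity $\Sigma^{2\ell b}$ giving circumference $\ell|b|$) are precisely the steps the paper leaves implicit in "proved in the same way."
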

\begin{proof}
This is proved in the same way as Theorem~\ref{th:DbkAh-1/T^b}, using 
Amiot~\cite[Theorem~7.2]{Amiot:2007a}.
\end{proof}

The functor $\Dsg(\bar B) \to \Dsg(B)$ sends $\bar M_i$ to $\Sigma X_i$
and irreducible morphisms to irreducible morphisms. It wraps 
the Auslander--Reiten quiver of $\Dsg(\bar B)$ around
that of $\Dsg(B)$ exactly $\ell$ times. Thus it corresponds to the
functor on orbit categories
\[ \Db(kA_{h-1})/\sfT^{\ell|b|} \to \Db(kA_{h-1})/\sfT^{|b|}. \]

There is another way to achieve this wrapping around. Namely, instead
of considering differential $\bZ$-graded modules for $\bar B$, we
consider differential $\bZ/2|b|$-graded modules. Let us write
$\Db(\bar B,\bZ/2|b|)$ for this bounded derived category
$\Dsg(\bar B,{\bZ/2b})$ for the corresponding singularity 
category. 

\begin{theorem}
There is a triangulated equivalence $\Dsg(\bar B,\bZ/2|b|)\simeq 
\Db(kA_{h-1})/\sfT^{|b|}$.
There is an
equivalence of categories $\Dsg(\bar B,{\bZ/2|b|})\to
\Db(B)$ making the following diagram commute.
\[ \xymatrix{\Dsg(\bar B) \ar[r] \ar[d] & \Dsg(B) \\
\Dsg(\bar B,{\bZ/2|b|}) \ar[ur]_\simeq} \]
\end{theorem}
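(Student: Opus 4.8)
The plan is to rerun the analysis of Sections~\ref{se:indecomposables}--\ref{se:models} once more with the grading group $\bZ$ replaced throughout by $\bZ/2|b|$, and then to match the outcome with $\Dsg(B)$ by means of restriction of scalars along the quotient map $B\twoheadrightarrow\bar B$. For the first statement I would argue exactly as in Theorem~\ref{th:DbkAh-1/T^b}, via Amiot~\cite[Theorem~7.2]{Amiot:2007a}. Since $\bar B=k[x]/(x^h)$ is a finite-dimensional graded self-injective algebra, $\Dsg(\bar B,\bZ/2|b|)$ is equivalent to the stable category of its $\bZ/2|b|$-graded modules, hence is algebraic; its indecomposables are, as in the paragraph preceding the theorem, the shifts of the $\bar M_i=k[x]/(x^i)$ for $1\le i\le h-1$, it is connected, and it is standard by Amiot~\cite[Theorem~6.5]{Amiot:2007a}, with Auslander--Reiten triangles $\Sigma^{-2a}\bar M_i\to\Sigma^{-2a}\bar M_{i-1}\oplus\bar M_{i+1}\to\bar M_i$ and translate $\Sigma^{-2a}$. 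The one thing that differs from $\Dsg(\bar B)$ is the periodicity: the functor $\Sigma^{2|b|}$ is now the identity (the grading being modulo $2|b|$), so the Eisenbud-type relation $\bar M_i\simeq\Sigma^{2\ell b}\bar M_i$ becomes vacuous, and since $\gcd(a,b)=1$ the translate $\Sigma^{-2a}$ has order exactly $|b|$; hence the Auslander--Reiten quiver is the cylinder $\bZ A_{h-1}/T^{|b|}$, of height $h-1$ and circumference $|b|$ rather than $\ell|b|$. Amiot's theorem then yields the triangulated equivalence $\Dsg(\bar B,\bZ/2|b|)\simeq\Db(kA_{h-1})/\sfT^{|b|}$.

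Combining this with Theorem~\ref{th:DbkAh-1/T^b} gives a triangulated equivalence $\Dsg(\bar B,\bZ/2|b|)\simeq\Dsg(B)$, and it remains to choose it compatibly with the diagram. The horizontal functor $\Dsg(\bar B)\to\Dsg(B)$ is restriction of scalars along $B\twoheadrightarrow\bar B$; it is well defined into $\Dsg(B)$ since $\bar B$ pulls back to $B/(x^h,t)$, which lies in $\Thick(B)$ by Theorem~\ref{th:kx/x^h}, and on objects it sends $\bar M_i\mapsto M_i\simeq\Sigma X_i$, taking irreducible morphisms to irreducible morphisms. The vertical functor is coarsening of the grading, sending $\bar M_i\mapsto\bar M_i$ and again a triangle functor preserving irreducibility. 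After composing the equivalence of the previous paragraph with a suitable shift so that it carries $k=\bar M_1$ to $k\simeq\Sigma X_1$, the horizontal functor and the composite of the vertical functor with this equivalence are two $k$-linear triangle functors out of $\Dsg(\bar B)$ that agree on the generating object $k$ and induce the same map of the (finite, standard) Auslander--Reiten quivers; since a standard triangulated category is, up to isomorphism of additive functors, controlled by its mesh category, the two functors are isomorphic, which is the asserted commutativity. (Equivalently, one builds the diagonal functor directly as $\bZ/2|b|$-graded restriction $\Dsg(\bar B,\bZ/2|b|)\to\Dsg(B,\bZ/2|b|)$ followed by a quasi-inverse of the coarsening functor $\Dsg(B)\to\Dsg(B,\bZ/2|b|)$, whereupon the triangle commutes on the nose because restriction of scalars commutes with coarsening.)

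The step I expect to be the main obstacle is exactly this last identification: that the Amiot equivalence can be taken compatible with restriction along $B\to\bar B$, or, in the alternative, that the coarsening functor $\Dsg(B)\to\Dsg(B,\bZ/2|b|)$ is an equivalence. A priori, coarsening the grading is a covering-type functor, full but not faithful, whose effect on $\Hom$-spaces is to sum over the coset $2|b|\bZ$, so it can both collapse morphisms and create new ones (and new indecomposables). What makes it harmless here is precisely the $\tau$-periodicity of Sections~\ref{se:inverting-tau}--\ref{se:duality}: $\tau$ is a central unit of homological degree $2b\equiv0\pmod{2|b|}$ on $\Dsg(B)\simeq\Db(A[\tau^{-1}])$, so $\Sigma^{2|b|}\simeq\Id$ there and passing to the $\bZ/2|b|$-graded category adds nothing. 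This is the $B$-side mirror of the fact that on the $\bar B$-side the very same coarsening cuts the circumference of the Auslander--Reiten cylinder down from $\ell|b|$ to $|b|$; making it precise, through the \emph{covering theory} relating $\bZ$-graded and $\bZ/2|b|$-graded modules, is the only part of the proof that is not a routine repetition of arguments already in the paper.
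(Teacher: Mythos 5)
Your main argument follows the same route as the paper: the paper's entire proof of this theorem is the appeal to Amiot~\cite[Theorem~7.2]{Amiot:2007a}, with the hypothesis checks (algebraic via the graded stable category of the self-injective algebra $\bar B$, connected, standard, finitely many indecomposables) and the identification of the Auslander--Reiten quiver as $\bZ A_{h-1}/T^{|b|}$ left implicit, and with the commutativity of the triangle read off from the mesh-category description exactly as in your second paragraph; your computation that $\Sigma^{-2a}$ has order $|b|$ modulo $2|b|$, and the comparison of the two functors out of $\Dsg(\bar B)$ through standardness, are the details the paper omits. To that extent the proposal is a faithful, more detailed version of the paper's proof.

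The one step that would fail is the parenthetical alternative. The coarsening functor $\Dsg(B)\to\Dsg(B,\bZ/2|b|)$ is \emph{not} an equivalence, and the heuristic ``$\tau$-periodicity gives $\Sigma^{2|b|}\simeq\Id$, so passing to $\bZ/2|b|$-gradings adds nothing'' points in the wrong direction. Covering theory for the grading-group quotient $\bZ\to\bZ/2|b|$ gives, on objects with finitely generated homology,
\[ \Hom_{\Dsg(B,\,\bZ/2|b|)}\bigl(X,Y\bigr)\;\cong\;\bigoplus_{n\in\bZ}\Hom_{\Dsg(B)}\bigl(X,\Sigma^{2|b|n}Y\bigr), \]
and it is precisely the $\tau$-periodicity that makes every summand isomorphic to $\Hom_{\Dsg(B)}(X,Y)$: the coarsened Hom-space is therefore infinite dimensional whenever the original one is non-zero, so the functor is not fully faithful (it is when the shift being trivialised is isomorphic to the identity that a coarsened or orbit category differs most from the original, Hom-spaces being fattened rather than unchanged). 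So the ``equivalently'' in your construction of the diagonal functor is not available; keep the construction you give first --- Amiot's theorem together with the uniqueness, up to isomorphism, of functors with prescribed effect on the mesh category of a standard category --- which is also what the paper intends, and drop the covering-theory reformulation.
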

\begin{proof}
Again, this follows by applying Amiot~\cite[Theorem~7.2]{Amiot:2007a}.
\end{proof}

We put all these equivalences together in the following theorem.

\begin{theorem}\label{th:frt}
We have equivalences of triangulated categories
\[ \Db(kA_{h-1})/\sfT^{|b|}\simeq \Dsg(\bar B,\bZ/2|b|) \simeq
  \Dsg(B)\simeq \Dcsg(A)\simeq \Db(A[\tau^{-1}])\simeq \Db(Q[\tau^{-1}]). \]
Each of these is a finite Krull--Schmidt category with $|b|(h-1)$ isomorphism classes of indecomposable
objects, in $[h/2]$ orbits of the shift functor. The Auslander--Reiten
quiver is $\bZ A_{h-1}/T^{|b|}$.\qed
\end{theorem}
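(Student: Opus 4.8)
The plan is to obtain this as a bookkeeping corollary of the results already proved: every equivalence in the displayed chain has been established individually, so the task is to compose them in a compatible order and then read off the numerical invariants from the Auslander--Reiten quiver. First I would record the five equivalences in the chain. The equivalences $\Dcsg(A)\simeq\Db(A[\tau^{-1}])\simeq\Db(Q[\tau^{-1}])$ are Theorem~\ref{th:DcsgA}; the equivalence $\Dsg(B)\simeq\Dcsg(A)$ is part of Theorem~\ref{th:DbAsimDbB} (realised by $\iHom_A(k,-)$, or rather its quotient); the equivalence $\Dsg(B)\simeq\Db(kA_{h-1})/\sfT^{|b|}$ is Theorem~\ref{th:DbkAh-1/T^b}; and $\Dsg(\bar B,\bZ/2|b|)\simeq\Db(kA_{h-1})/\sfT^{|b|}$ is the theorem immediately preceding the present one. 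Splicing these together gives all the asserted triangulated equivalences, with $\Db(kA_{h-1})/\sfT^{|b|}$ and $\Dsg(B)$ serving as the two hubs through which everything factors.

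For the invariants I would argue as follows. By Corollary~\ref{co:AR-quiver} the Auslander--Reiten quiver of $\Dsg(B)$ is $\bZ A_{h-1}/T^{|b|}$, which, as explained in Section~\ref{sec:ARtriangles}, is a cylinder of height $h-1$ and circumference $|b|$ and hence has exactly $|b|(h-1)$ vertices; by Theorem~\ref{th:classification} these vertices are precisely the isomorphism classes of indecomposable objects, and since each $\Dsg(B)$-invariant is transported along the equivalences, the same count holds throughout the chain. That each of these categories is Krull--Schmidt is the corollary to Theorem~\ref{th:classification}. Finally, the count of $[h/2]$ shift orbits comes from the discussion following Theorem~\ref{th:AR-quiver}: the functor $\Sigma$ interchanges the two ends of the cylinder and $\Sigma^{2b}\simeq\Id$ via $\tau$, so on a row not fixed by the end-swap the orbit of $\Sigma$ has length $2|b|$, while the single exceptional row — the middle one, occurring only when $h$ is even — is fixed and its orbit has length $|b|$; counting rows modulo the end-swap involution on $\{1,\dots,h-1\}$ then gives $\lfloor h/2\rfloor=[h/2]$ orbits in all cases.

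The \emph{main obstacle} is essentially absent here: all the substantive work — the Hochschild and Ext computations of Sections~\ref{se:HH} and~\ref{se:indecomposables}, the Tate duality of Theorem~\ref{th:Bduality}, the Harada--Sai-type vanishing of Proposition~\ref{pr:zero}, the classification of indecomposables, and the verification of the hypotheses of Amiot's theorem — has been carried out in the earlier sections, and the present theorem is purely a summary. The only point requiring care is ensuring that the various equivalences are composed in a mutually compatible way (so that, for instance, $k$, $A[\tau^{-1}]$, and the vertices $\Sigma^jX_i$ correspond to one another consistently) and that the orbit count is stated correctly; no new ideas are needed.
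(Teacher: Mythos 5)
Your proposal is correct and follows essentially the same route as the paper, which presents Theorem~\ref{th:frt} as a summary assembled from Theorems~\ref{th:DcsgA}, \ref{th:DbAsimDbB}, \ref{th:DbkAh-1/T^b}, the preceding theorem on $\Dsg(\bar B,\bZ/2|b|)$, together with Theorem~\ref{th:classification}, Corollary~\ref{co:AR-quiver}, the Krull--Schmidt corollary, and the orbit count discussed after Theorem~\ref{th:AR-quiver}. Your accounting of the $[h/2]$ shift orbits (via the end-swap involution $i\mapsto h-i$ and $\tau$-periodicity) matches the paper's reasoning exactly.
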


\section{\texorpdfstring{$H^*BG$ and $H_*\Omega BG\phat$}
{H*BG and HᕯΩBGphat}}\label{se:BG}

In this section, we apply our main results to the $A_\infty$ algebras
$H^*BG$ and $H_*\Omega BG\phat$ for $G$ a finite group with cyclic
Sylow $p$-subgroups, discussed in~\cite{Benson/Greenlees:bg8}.

We are interested in the following occurrences of the $A_\infty$
algebra $A$ of Section~\ref{se:A}.
Let $p$ be an odd prime and $k$ a field of characteristic $p$.
Let $G$ be a finite group with cyclic Sylow subgroup $P$ of order
$p^n$ and inertial index $q=|N_G(P){:}C_G(P)|>1$. 
Let $C^*BG$ be the
cochains on the classifying space $BG$ with coefficients in $k$, and
$C_*\Omega BG\phat$ be the chains on  the $p$-completed loop space of
$BG$, again with coefficients in $k$.

The $A_\infty$
algebra structures on $H^*BG$ and on $H_*\Omega BG\phat$ coming from
the DG algebras $C^*BG$ and $C_*\Omega BG\phat$ are described
in~\cite{Benson/Greenlees:bg8}. They are Koszul dual, and we can apply 
the results of this paper either with $A=H^*BG$, $B=H_*\Omega
BG\phat$, or with $A=H_*\Omega BG\phat$ and $B=H^*BG$.

In the case of $A=H_*\Omega BG\phat$, we have $a=q$, $b=q-1$,
$h=p^n-(p^n-1)/q$ and $\ell=p^n$, and we must assume that
$q>1$.
In the case of $A=H^*BG$ (homologically graded in negative degrees), 
the roles are reversed and 
we have $a=-(q-1)$, $b=-q$,  $h=p^n$, and $\ell=p^n-(p^n-1)/q$.

\begin{proof}[Proof of Theorem~\ref{th:main}]
By Proposition~\ref{pr:DbQsimDbA}, the bounded derived categories of the
DGA algebras $C^*BG$ and $C_*\Omega BG\phat$ are equivalent to those
of the $A_\infty$ algebras $H^*BG$ and $H_*\Omega BG\phat$. 
The equivalences of categories follow by applying
Theorem~\ref{th:DbAsimDbB} to the 
$A_\infty$ algebras
$H^*BG$ and $H_*\Omega BG\phat$. The classification of the
indecomposable objects in these categories follow from Theorem~\ref{th:frt}.
\end{proof}

\section{Brauer trees and Hecke algebras}

In this section, we describe what our main theorem tells us about
Brauer tree algebras. 
In general, a Brauer tree algebra is described by a planar embedding
of a tree with $e$ edges corresponding to the simple modules. The
vertices are assigned multiplicities, which are all equal to one with
the possible exception of a single vertex of multiplicity
$\lambda>1$; otherwise we set $\lambda=1$. This parameter $\lambda$ is
called the \emph{exceptional multiplicity}, even when it equals one.
These data are sufficient to describe the algebra up to Morita
equivalence, and an algorithm for computing projective resolutions was
described by Green~\cite{Green:1974a}. Brauer tree algebras were first
introduced in order to describe blocks of defect one in the
representation theory of finite groups by Brauer~\cite{Brauer:1941a},
and the analysis was extended to all blocks of cyclic defect by
Dade~\cite{Dade:1966a}. A nice treatment in this context may be found
in the book of Alperin~\cite{Alperin:1986a}.
They also appear in many other contexts in representation theory, 
and we shall give an example of this in characteristic zero below.

We say that a simple module is a \emph{leaf module} if it
corresponds to an edge one end of which has valency one and
multiplicity one. The leaf modules are all syzygies of each other,
so they have isomorphic Ext algebras.

\begin{theorem}\label{th:Brauer-tree}
Let $\mathbf A$ be a Brauer tree algebra with $e>1$ edges and
exceptional multiplicity $\lambda$, and let $M$ be a simple leaf
module for $\mathbf A$.  Then the $A_\infty$ algebra 
$\Ext^*_{\mathbf A}(M,M)$ is the algebra $B$ described in
Section~\ref{se:A}, with parameters $a=e$, $b=e-1$, $\ell=\lambda e +
1$ and $h=\ell-\lambda=\lambda(e-1)+1$.

The singularity category of $\Ext^*_{\mathbf A}(M,M)$ has finite
representation type, with $\lambda(e-1)^2$ isomorphism classes of
indecomposable objects, in $[(\lambda(e-1)+1)/2]$ orbits of the shift functor
$\Sigma$. The Auslander--Reiten quiver is isomorphic to $\bZ
A_{\lambda(e-1)}/T^{e-1}$, where $T$ is the translation functor
$\Sigma^{-2e}$. 

The cosingularity category of $\Ext^*_{\mathbf A}(M,M)$ has finite
representation type, with $\lambda e^2$ isomorphism classes of
indecomposable objects, in $[(\lambda e +1)/2]$ orbits of the shift
functor $\Sigma$. The Auslander--Reiten quiver is isomorphic to $\bZ
A_{\lambda e}/T^e$, where $T$ is the translation functor $\Sigma^{-2(e-1)}$.
\end{theorem}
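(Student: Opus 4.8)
\medskip

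The plan is to prove the theorem in two stages. First I would identify the $A_\infty$ algebra $\Ext^*_{\mathbf A}(M,M)=H_*\iEnd_{\Db(\mathbf A)}(M)$ with the algebra $B$ of Section~\ref{se:A} for the stated values of $a,b,h,\ell$; then, with that identification in hand, the two displayed assertions follow by substituting these values into Theorem~\ref{th:frt} and into its role-reversed companion (via the equivalence $\Dcsg(B)\simeq\Dsg(A)$ of Theorem~\ref{th:DbAsimDbB}).

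\smallskip

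\textbf{Identifying the underlying graded algebra.} Since $\iEnd_{\Db(\mathbf A)}(M)$ depends only on $\Db(\mathbf A)$, and since by a theorem of Rickard all Brauer tree algebras with $e$ edges and exceptional multiplicity $\lambda$ are derived equivalent, I would first replace $\mathbf A$ by the Brauer star algebra with its exceptional vertex at the centre; under this reduction $M$ corresponds, up to shift, to a simple module $S$, every edge of a star being a leaf and all leaf modules lying in a single $\Omega$-orbit. The star algebra is Nakayama with uniserial projectives of Loewy length $\lambda e+1$, so Green's algorithm~\cite{Green:1974a} describes the minimal projective resolution of $S$ completely: indexing the simples cyclically as $S_1,\dots,S_e$, one has $\Omega S_i$ uniserial of length $\lambda e$ and $\Omega^2 S_i\cong S_{i+1}$, hence $\Omega^{2e}S\cong S$. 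Computing $\Ext^n(S,S)=\underline{\Hom}(\Omega^n S,S)$ then shows this group is one-dimensional in the degrees $0,2e,4e,\dots$ and in the degrees $2e-1,4e-1,\dots$, and zero otherwise, so as a graded ring $\Ext^*_{\mathbf A}(M,M)\cong k[x]\otimes\Lambda(t)$ with $x$ the periodicity class of cohomological degree $2e$ (homological degree $-2e=-2a$) and $t$ the class of cohomological degree $2e-1$ (homological degree $-(2e-1)=-2b-1$), where $a=e$ and $b=e-1$. The second, internal grading is read off from the radical filtration on the resolution, and one checks it puts the algebra in the exact form required of $B$ with $\ell=\lambda e+1$ and $h=\ell-\lambda=\lambda(e-1)+1$; the identity $ah-b\ell=e(\lambda(e-1)+1)-(e-1)(\lambda e+1)=1$ is the required compatibility. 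Since $e>1$ we have $\ell\ge 3$ and $h\ge 2$, so $B$ lies in the range covered by Section~\ref{se:A} and by Theorem~\ref{th:frt}.

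\smallskip

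\textbf{Identifying the $A_\infty$ structure.} This is the step I expect to be the real obstacle. One must show that the only non-vanishing higher operation on $\Ext^*_{\mathbf A}(M,M)$ is the $\ell$-fold Massey power $\langle t,\dots,t\rangle=\pm x^h$, that is, the operation $m_\ell$ of $B$. The degree bookkeeping already constrains matters severely: the $r$-fold Massey power of $t$ lands in cohomological degree $r(2e-1)-(r-2)=2r(e-1)+2$, which can coincide with the degree $2eh$ of a power $x^h$ of $x$ only when $eh=r(e-1)+1$; bringing in the internal grading and arguing exactly as in the proof of Theorem~\ref{th:HHB} pins the first occurrence at $r=\ell$, where it must land on $x^h$, and rules out both an earlier Massey power and any other non-zero $m_i$. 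What remains is the non-vanishing at $r=\ell$. This holds because $S$ has infinite projective dimension (the star is self-injective and not semisimple), so its minimal resolution never degenerates; concretely, the length-$\ell$ staircase attached to $t$ cannot be filled precisely because the resolution closes up via $\Omega^{2e}S\cong S$, and the resulting obstruction class is the non-zero element $\pm x^h$. This is the Brauer-tree version of the computation carried out for groups with cyclic Sylow $p$-subgroups in~\cite{Benson/Greenlees:bg8} (the special case $\lambda=(p^n-1)/q$); the argument there uses only the uniserial shape of the projectives and the cyclic pattern of syzygies, so it applies verbatim for arbitrary $\lambda$. Together with the previous step this gives $\Ext^*_{\mathbf A}(M,M)\simeq B$.

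\smallskip

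\textbf{Reading off the two categories.} With $a=e$, $b=e-1$, $h=\lambda(e-1)+1$ and $\ell=\lambda e+1$, Theorem~\ref{th:frt} gives $\Dsg\bigl(\Ext^*_{\mathbf A}(M,M)\bigr)=\Dsg(B)\simeq\bZ A_{h-1}/T^{|b|}=\bZ A_{\lambda(e-1)}/T^{e-1}$ with $T=\Sigma^{-2a}=\Sigma^{-2e}$, hence $|b|(h-1)=\lambda(e-1)^2$ isomorphism classes of indecomposables in $[h/2]=[(\lambda(e-1)+1)/2]$ orbits of $\Sigma$; this is the first displayed assertion. For the cosingularity statement I would combine Theorem~\ref{th:DbAsimDbB}, which gives $\Dcsg(B)\simeq\Dsg(A)$, with the role-reversed form of Theorem~\ref{th:frt} obtained by swapping $h$ with $\ell$ and replacing $a$ by $-b$ and $b$ by $-a$: this produces $\Dcsg\bigl(\Ext^*_{\mathbf A}(M,M)\bigr)\simeq\bZ A_{\ell-1}/T^{|a|}=\bZ A_{\lambda e}/T^{e}$ with translation functor $\Sigma^{-2(e-1)}$, hence $|a|(\ell-1)=\lambda e^2$ isomorphism classes of indecomposables in $[\ell/2]=[(\lambda e+1)/2]$ orbits of $\Sigma$, which is the second displayed assertion. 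Of the four steps, the derived-invariance reduction, the explicit Nakayama computation, and the final substitution are routine; the weight of the proof is the Massey-product computation identifying the $A_\infty$ structure.
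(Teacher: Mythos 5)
Your overall strategy coincides with the paper's: reduce to the Brauer star algebra, identify $\Ext^*_{\mathbf A}(M,M)$ with $B$ by the same uniserial-resolution and Massey-product computation as in \cite{Benson/Greenlees:bg8}, and then substitute $a=e$, $b=e-1$, $h=\lambda(e-1)+1$, $\ell=\lambda e+1$ into Theorem~\ref{th:frt} and its role-reversed form; your final bookkeeping ($\lambda(e-1)^2$ and $\lambda e^2$ indecomposables, quivers $\bZ A_{\lambda(e-1)}/T^{e-1}$ and $\bZ A_{\lambda e}/T^{e}$) agrees with the paper, and your sketch of the Massey-product step is in fact more detailed than the paper's, which simply cites \cite{Benson/Greenlees:bg8}.

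The one place you genuinely diverge is the reduction step, and it is also the one step that is not justified as written. You invoke Rickard's derived equivalence \cite{Rickard:1989a} and assert that under it the leaf module $M$ ``corresponds, up to shift, to a simple module $S$'' of the star. That is not automatic: images of simple modules under tree-to-star tilting equivalences are in general honest complexes rather than shifted modules, and the reasons you offer (every edge of a star is a leaf; all leaf modules of the star lie in one $\Omega$-orbit) concern the target algebra only and say nothing about where $M$ lands. The paper deliberately avoids this issue by using the weaker result of Gabriel and Riedtmann \cite{Gabriel/Riedtmann:1979a} that all Brauer tree algebras with the same $(e,\lambda)$ are \emph{stably} equivalent, under which the leaf modules correspond to simple modules or first syzygies of simple modules of the Brauer star algebra; that is exactly what is needed to transport Green's walk \cite{Green:1974a} and the $A_\infty$ computation of \cite{Benson/Greenlees:bg8}, whose input is only the uniserial projectives and the cyclic syzygy pattern. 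If you either replace your derived-equivalence reduction by this stable-equivalence argument, or prove that a tilting complex can be chosen taking $M$ to a shifted simple, the remainder of your argument matches the paper's proof.
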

\begin{proof}
It was shown by Gabriel and Riedtmann~\cite{Gabriel/Riedtmann:1979a}
that all Brauer tree algebras with $e$
edges and exceptional multiplicity $\lambda$ are stably equivalent
(indeed, even more is true:
Rickard~\cite{Rickard:1989a},
see also Aihara~\cite{Aihara:2014a}, showed that these are all derived
equivalent, but we don't need to go that far).
In particular, they are all stably equivalent to the \emph{Brauer star
  algebra} which has $e$ vertices of valency one and multiplicity
one, surrounding the one remaining vertex in the middle, which has
multiplicity $\lambda$. The Brauer star algebras have exactly the same
presentation as the algebras in
Section~2 of~\cite{Benson/Greenlees:bg8}, except for the change of
characteristic. In more detail,  the projective
indecomposables are uniserial, and the radical filtration is isomorphic to its
associated graded. Some readers may find it helpful to refer to the
paper  Bogdanic~\cite{Bogdanic:2010a} which recalls details of Brauer tree
algebras and shows that the stable grading comes from a grading
of  the Brauer tree algebras themselves.

Under such a stable equivalence, the
leaf modules correspond to simple modules or first syzygies of simple
modules, for the corresponding Brauer
star algebra. It follows that we may compute the $A_\infty$ structure
on these Ext algebras using exactly the same computation as 
in~\cite{Benson/Greenlees:bg8}, and the result is as stated in the
theorem.

Theorem~\ref{th:main} now describes the singularity and cosingularity categories.
\end{proof}

\begin{remark}
In the remaining case $e=1$, a Brauer tree algebra is Morita equivalent to a truncated
polynomial algebra, so the $A_\infty$ structure on the Ext ring of the
simple module is just like that of a cyclic $p$-group.
\end{remark}

As an example, let $\cH=\cH(n,q)$ be the Hecke algebra of the symmetric group of
degree $n$ over a field $k$ of characteristic zero, where $q$ is a
primitive $\ell$th root of unity with $\ell\ge 2$. This has generators
$T_1,\dots,T_{n-1}$ satisfying braid relations together with the
relations $(T_i+1)(T_i-q)=0$.
The cohomology $H^*(\cH,k)=\Ext^*_{\cH}(k,k)$ was computed by 
Benson, Erdmann and Mikaelian~\cite{Benson/Erdmann/Mikaelian:2010a}.

In the case where $n=\ell > 2$, $H^*(\cH,k)$ has the form
$k[x] \otimes \Lambda(t)$ where $|x|=-2n+2$ and $|t|=-2n+3$. 
This corresponds to the fact that in this case the principal block of 
$\cH$ is a Brauer tree
algebra for a tree which is a straight line with $n$ vertices, $n-1$ edges, and 
$\lambda=1$, with the trivial module $k$ at one end as a leaf module.
In particular, we can apply Theorem~\ref{th:Brauer-tree} in this context.
So the Massey product of $n$ copies of $t$ is equal to $-x^{n-1}$,
and writing $a=n-1$, $b=n-2$, $\ell=n$, $h=n-1$, we have 
$\Ext^*_{\cH}(k,k)=B$ as an $A_\infty$ algebra. 

\begin{theorem}\label{th:Hecke}
Let $\cH=\cH(n,q)$ be the Hecke algebra of the symmetric group of
degree $n$ over a field $k$ of characteristic zero, where $q$ is a
primitive $\ell$th root of unity.
In the case $n=\ell>2$, the singularity category of the $A_\infty$
algebra $\Ext^*_{\cH}(k,k)$ has finite representation type, with
$(n-2)^2$ isomorphism classes of indecomposables, in $[(n-1)/2]$ orbits of
the shift functor. The
Auslander--Reiten quiver is a cylinder of height $n-2$ and
circumference $n-2$.

The cosingularity category of the $A_\infty$ algebra $\Ext^*_{\cH}(k,k)$ also has finite
representation type, with $(n-1)^2$ isomorphism classes of
indecomposables, in $[n/2]$ orbits of the shift functor. The
Auslander--Reiten quiver is a cylinder of height $n-1$ and
circumference $n-1$.
\end{theorem}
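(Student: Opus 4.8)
The plan is to read off Theorem~\ref{th:Hecke} from Theorem~\ref{th:Brauer-tree} by specialising the Brauer tree parameters. The first step is to confirm that the principal block $\bfA$ of $\cH(n,q)$ with $n=\ell>2$ is a Brauer tree algebra to which Theorem~\ref{th:Brauer-tree} applies, with $e=n-1$ edges and exceptional multiplicity $\lambda=1$, and that $k$ is a simple leaf module for it. Since $q$ is a primitive $\ell$-th root of unity, the blocks of $\cH(n,q)$ are governed by $\ell$-cores, and the partitions of $n=\ell$ with empty $\ell$-core are exactly the $\ell$ hook partitions $(\ell-i,1^i)$ for $0\le i\le \ell-1$; hence the principal block has defect one and is a Brauer tree algebra whose tree is the straight line on $n$ vertices (the $n$ ordinary irreducibles) with $n-1$ edges and $\lambda=1$. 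This is classical, and is also consistent with the cohomology computation $H^*(\cH,k)\cong k[x]\otimes\Lambda(t)$ of Benson--Erdmann--Mikaelian~\cite{Benson/Erdmann/Mikaelian:2010a} recorded above; the trivial module $k$ corresponds to the end vertex $(\ell)$ of the line, so $k$ sits on an edge one of whose ends has valency one and multiplicity one, and is therefore a simple leaf module in the sense defined above.

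The second step is pure substitution. Theorem~\ref{th:Brauer-tree} with $e=n-1$, $\lambda=1$ identifies $\Ext^*_{\cH}(k,k)$ with the $A_\infty$ algebra $B$ having $a=e=n-1$, $b=e-1=n-2$, $\ell=\lambda e+1=n$, $h=\lambda(e-1)+1=n-1$, matching the parameters recorded before the statement. For the singularity category it gives $\lambda(e-1)^2=(n-2)^2$ isomorphism classes of indecomposables in $[(\lambda(e-1)+1)/2]=[(n-1)/2]$ orbits of $\Sigma$, with Auslander--Reiten quiver $\bZ A_{\lambda(e-1)}/T^{e-1}=\bZ A_{n-2}/T^{n-2}$; by Theorem~\ref{th:frt}, equivalently the discussion in Section~\ref{sec:ARtriangles}, such a quiver $\bZ A_m/T^c$ is a cylinder of height $m$ and circumference $c$, so here a cylinder of height $n-2$ and circumference $n-2$. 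For the cosingularity category, Theorem~\ref{th:Brauer-tree} gives $\lambda e^2=(n-1)^2$ indecomposables in $[(\lambda e+1)/2]=[n/2]$ orbits of $\Sigma$, with Auslander--Reiten quiver $\bZ A_{\lambda e}/T^e=\bZ A_{n-1}/T^{n-1}$, a cylinder of height $n-1$ and circumference $n-1$. This is exactly what is claimed.

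The only step that is not bookkeeping is the identification in the first paragraph; in particular one must check that $k$ genuinely sits at a leaf whose attaching vertex has valency one and multiplicity one, so that the leaf-module hypothesis of Theorem~\ref{th:Brauer-tree} is met. I expect this to be the (mild) main obstacle, and it can be settled either from the explicit planar Brauer tree of the principal block of $\cH(\ell,q)$ or, more cheaply, from the shape of $H^*(\cH,k)$ computed in~\cite{Benson/Erdmann/Mikaelian:2010a}, which is precisely the Ext-algebra signature of a leaf module. Once this is in place, the theorem follows by substituting $e=n-1$ and $\lambda=1$ into Theorem~\ref{th:Brauer-tree} and unwinding the cylinder description of $\bZ A_m/T^c$.
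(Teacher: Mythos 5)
Your proposal is correct and follows essentially the same route as the paper: the paper's argument is exactly the identification of the principal block of $\cH(\ell,q)$ as the Brauer tree algebra of a straight line with $n$ vertices, $n-1$ edges and $\lambda=1$, with $k$ a leaf module (asserted with reference to the Benson--Erdmann--Mikaelian computation of $H^*(\cH,k)$), followed by substituting $e=n-1$, $\lambda=1$ into Theorem~\ref{th:Brauer-tree}. Your extra justification of the tree shape via hook partitions with empty $\ell$-core, and your bookkeeping of the parameters $a=n-1$, $b=n-2$, $h=n-1$, $\ell=n$ and the resulting cylinder data, agree with the paper.
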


Other examples of Hecke algebras described by Brauer trees may be 
found in Geck~\cite{Geck:1992a}, Ariki~\cite{Ariki:2017a}.

\bibliographystyle{amsplain}
\bibliography{../repcoh}

\newcommand{\noopsort}[1]{}
\providecommand{\bysame}{\leavevmode\hbox to3em{\hrulefill}\thinspace}
\providecommand{\MR}{\relax\ifhmode\unskip\space\fi MR }
\providecommand{\MRhref}[2]{%
  \href{http://www.ams.org/mathscinet-getitem?mr=#1}{#2}
}
\providecommand{\href}[2]{#2}
\begin{thebibliography}{10}

\bibitem{Adams:1969a}
J.~F. Adams, \emph{{Lectures on generalised cohomology}}, {Category Theory,
  Homology Theory and their Applications, III}, Lecture Notes in Mathematics,
  vol.~99, Springer-Verlag, Ber\-lin/New York, 1969, pp.~1--138.

\bibitem{Aihara:2014a}
T.~Aihara, \emph{{Mutating Brauer trees}}, Math.\ J.\ Okayama Univ. \textbf{56}
  (2014), 1--16.

\bibitem{Alperin:1986a}
J.~L. Alperin, \emph{{Local representation theory}}, Cambridge Studies in
  Advanced Mathematics, vol.~11, Cambridge University Press, 1986.

\bibitem{Amiot:2007a}
C.~Amiot, \emph{{On the structure of triangulated categories with finitely many
  indecomposables}}, Bull.\ Soc.\ Math.\ France \textbf{135} (2007), no.~3,
  435--474.

\bibitem{Amiot:2008a}
\bysame, \emph{{Sur les petites cat\'egories triangul\'ees}}, {Th\`ese de
  Doctorat}, Universit\'e Paris VII, 2008.

\bibitem{Ariki:2017a}
S.~Ariki, \emph{{Representation type for block algebras of Hecke algebras of
  classical type}}, Adv.\ in Math. \textbf{317} (2017), 823--845.

\bibitem{Avramov/Foxby/Halperin:dgha}
L.~L. Avramov, H.-B. Foxby, and S.~Halperin, \emph{{Differential graded
  homological algebra}}, Preprint, 2003.

\bibitem{Benson:1998c}
D.~J. Benson, \emph{{Representations and Cohomology II: Cohomology of groups
  and modules}}, Cambridge Studies in Advanced Mathematics, vol. 31, Second
  Edition, Cambridge University Press, 1998.

\bibitem{Benson/Erdmann/Mikaelian:2010a}
D.~J. Benson, K.~Erdmann, and A.~Mikaelian, \emph{{Cohomology of Hecke
  algebras}}, Homology, Homotopy \& Appl. \textbf{12} (2010), no.~2, 353--370.

\bibitem{Benson/Greenlees:bg8}
D.~J. Benson and J.~P.~C. Greenlees, \emph{{Massey products in the homology of
  the loopspace of a $p$-completed classifying space: finite groups with cyclic
  Sylow $p$-subgroups}}, Preprint, 2020.

\bibitem{Boardman:1999a}
J.~M. Boardman, \emph{{Conditionally convergent specral sequences}}, Homotopy
  invariant algebraic strucurs (J.-P. Meyer, J.~Morava, and W.~S. Wilson,
  eds.), Contemp.\ Math., vol. 239, American Math.\ Society, 1999, pp.~49--84.

\bibitem{Bogdanic:2010a}
D.~Bogdanic, \emph{{Graded Brauer tree algebras}}, J.~Pure \& Applied Algebra
  \textbf{214} (2010), 1534--1552.

\bibitem{Bongartz/Gabriel:1982a}
K.~Bongartz and P.~Gabriel, \emph{{Covering spaces in representation-theory}},
  Invent.\ Math. \textbf{65} (1982), no.~3, 331--378.

\bibitem{Brauer:1941a}
R.~Brauer, \emph{{Investigations on group characters}}, Ann.\ of Math.
  \textbf{42} (1941), no.~4, 936--958.

\bibitem{Buchweitz/Roberts:2015a}
R.-O. Buchweitz and C.~Roberts, \emph{{The multiplicative structure on
  Hochschild cohomology of a complete intersection}}, J.~Pure \& Applied
  Algebra \textbf{219} (2015), 402--428.

\bibitem{Buijs/MorenoFernandez/Murillo:2020a}
U.~Buijs, J.~M. Moreno-Fern\'andez, and A.~Murillo, \emph{{$A_\infty$
  structures and Massey products}}, Mediterr.\ J.\ Math. \textbf{17} (2020),
  no.~1, Paper No. 31, 15pp.

\bibitem{Dade:1966a}
E.~C. Dade, \emph{{Blocks with cyclic defect groups}}, Ann.\ of Math.
  \textbf{84} (1966), 20--48.

\bibitem{Ene/Popescu:2008a}
V.~Ene and D.~Popescu, \emph{{On the structure of maximal Cohen-Macaulay
  modules over the ring $K[[x,y]]/(x^n)$}}, Algebras and Representation Theory
  \textbf{11} (2008), 191--205.

\bibitem{Gabriel/Riedtmann:1979a}
P.~Gabriel and Ch. Riedtmann, \emph{{Group representations without groups}},
  Comment.\ Math.\ Helvetici \textbf{54} (1979), 240--287.

\bibitem{Geck:1992a}
M.~Geck, \emph{{Brauer trees of Hecke algebras}}, Comm.\ Algebra \textbf{20}
  (1992), no.~10, 2937--2973.

\bibitem{Getzler/Jones:1990a}
E.~Getzler and J.~D.~S. Jones, \emph{{$A_\infty$-algebras and the cyclic bar
  complex}}, Illinois J.\ Math. \textbf{34} (1990), no.~2, 256--283.

\bibitem{Green:1974a}
J.~A. Green, \emph{{Walking around the Brauer tree}}, J.~Austral.\ Math.\ Soc.
  \textbf{17} (1974), 197--213.

\bibitem{Greenlees/May:1995a}
J.~P.~C. Greenlees and J.~P. May, \emph{{Generalized Tate cohomology}}, vol.
  113, Mem.\ AMS, no. 543, American Math.\ Society, 1995.

\bibitem{Greenlees/Stevenson:2020a}
J.~P.~C. Greenlees and G.~Stevenson, \emph{{Morita theory and singularity
  categories}}, Adv.\ in Math. \textbf{365} (2020), 107055, 51pp.

\bibitem{Greenlees/Stojanoska:2018a}
J.~P.~C. Greenlees and V.~Stojanoska, \emph{{Anderson and Gorenstein duality}},
  Geometric and topological aspects of the representation theory of finite
  groups (J.~F. Carlson, S.~B. Iyengar, and J.~Pevtsova, eds.), Springer Proc.
  Math. Stat., vol. 242, Springer-Verlag, Ber\-lin/New York, 2018,
  pp.~105--130.

\bibitem{Happel:1987a}
D.~Happel, \emph{{On the derived category of a finite-dimensional algebra}},
  Comment.\ Math.\ Helvetici \textbf{62} (1987), 339--389.

\bibitem{Happel:1988a}
\bysame, \emph{{Triangulated categories in the representation theory of finite
  dimensional algebras}}, London Math.\ Soc.\ Lecture Note Series, vol. 119,
  Cambridge University Press, 1988.

\bibitem{Hovey:1999a}
M.~Hovey, \emph{{Model categories}}, Mathematical Surveys and Monographs,
  vol.~63, American Math.\ Society, 1999.

\bibitem{Kadeishvili:1982a}
T.~V. Kadeishvili, \emph{{The algebraic structure in the homology of an
  $A(\infty)$-algebra}}, Soobshch.\ Akad.\ Nauk Gruzin.\ SSR \textbf{108}
  (1982), 249--252.

\bibitem{Keller:dih}
B.~Keller, \emph{{Derived invariance of higher structures on the Hochschild
  complex}}, Preprint, 2018.

\bibitem{Keller:2001a}
\bysame, \emph{{Introduction to $A$-infinity algebras and modules}}, Homology,
  Homotopy \& Appl. \textbf{3} (2001), 1--35, --- Addendum, ibid. {\bf 4}
  (2002), 25--28.

\bibitem{Keller:2002a}
\bysame, \emph{{$A$-infinity algebras in representation theory}},
  Representations of Algebras, Proceedings of the Ninth International
  Conference (Beijing 2000) (D.~Happel and Y.~B. Zhang, eds.), Beijing Normal
  University Press, 2002, Vol. I.

\bibitem{Keller:2005a}
\bysame, \emph{{On triangulated orbit categories}}, Doc.\ Math. \textbf{10}
  (2005), 551--581.

\bibitem{Keller:2006b}
\bysame, \emph{{A-infinity algebras, modules and functor categories}}, Trends
  in Representation Theory of Algebras and Related Topics (J.~A. de~la Pe\~na
  and R.~Bautista, eds.), Contemp.\ Math., vol. 406, 2006, pp.~67--93.

\bibitem{Keller:2006a}
\bysame, \emph{{On differential graded categories}}, International Congress of
  Mathematicians. Vol. II, Eur. Math. Soc., Z\"urich, 2006, pp.~151--190.

\bibitem{Keller:2021a}
\bysame, \emph{{A remark on Hochschild cohomology and Koszul duality}},
  Advances in the Representation Theory of Algebras (I.~Assem, C.~Gei\ss\, and
  S.~Trepode, eds.), Contemp.\ Math., vol. 761, 2021, pp.~131--136.

\bibitem{Lefevre-Hasegawa:2003a}
K.~Lef\`evre-Hasegawa, \emph{{Sur les $A_\infty$-cat\'egories}}, Th\`ese de
  doctorat, Universit\'e Paris VII, 2003.

\bibitem{Lu/Palmieri/Wu/Zhang:2009a}
D.-M. Lu, J.~H. Palmieri, Q.-S. Wu, and J.~J. Zhang, \emph{{$A$-infinity
  structure on Ext-algebras}}, J.~Pure \& Applied Algebra \textbf{213} (2009),
  no.~11, 2017--2037.

\bibitem{Rickard:1989a}
J.~Rickard, \emph{{Derived categories and stable equivalence}}, J.~Pure \&
  Applied Algebra \textbf{61} (1989), 303--317.

\bibitem{Riedtmann:1980a}
C.~Riedtmann, \emph{{Algebren, Darstellungsk\"ocher, Ueberlagerungen und
  zur\"uck}}, Comment.\ Math.\ Helvetici \textbf{55} (1980), 199--224.

\bibitem{Roitzheim/Whitehouse:2011a}
C.~Roitzheim and S.~Whitehouse, \emph{{Uniqueness of $A_\infty$-structures and
  Hochschild cohomology}}, Algebr.\ Geom.\ Topol. \textbf{11} (2011), 107--143.

\bibitem{Stasheff:1970a}
J.~D. Stasheff, \emph{{$H$-spaces from a homotopy point of view}}, Lecture
  Notes in Mathematics, vol. 161, Springer-Verlag, Ber\-lin/New York, 1970.

\bibitem{Xiao/Zhu:2002a}
J.~Xiao and B.~Zhu, \emph{{Relations for the Grothendieck groups of
  triangulated categories}}, J.~Algebra \textbf{257} (2002), 37--50.

\end{thebibliography}

\end{document}